\documentclass[11 pt,reqno]{amsart}

\usepackage[latin1]{inputenc}
\usepackage{amsmath,amsthm,amssymb,amscd,mathrsfs,mathtools,xcolor,accents}
\usepackage[shortlabels]{enumitem}
\usepackage[perpage]{footmisc}
\usepackage[normalem]{ulem}
\usepackage{graphicx}
\definecolor{darkgreen}{rgb}{0.0, 0.6, 0.13}

\usepackage{bbm} 
\usepackage{pifont} 
\usepackage{hyperref}

\newtheorem{thm}{Theorem}[section]
\newtheorem{cor}[thm]{Corollary}

\newtheorem{lem}[thm]{Lemma}
\newtheorem{prop}[thm]{Proposition}
\theoremstyle{definition}

\theoremstyle{remark}
\newtheorem{rem}[thm]{Remark}

\numberwithin{equation}{section}
\def\sqw{\hbox{\rlap{\leavevmode\raise.3ex\hbox{$\sqcap$}}$
\sqcup$}}
\def\findem{\ifmmode\sqw\else{\ifhmode\unskip\fi\nobreak\hfil
\penalty50\hskip1em\null\nobreak\hfil\sqw
\parfillskip=0pt\finalhyphendemerits=0\endgraf}\fi}

\allowdisplaybreaks[4]

\begin{document}
\title[Asymptotic Stability  of Domain Walls for 1D SM]{Asymptotic Stability  of Domain Walls for One Dimensional Schr\"odinger Map with Dzyaloshinskii-Moriya Interaction}

\author[Z. Li]{Ze Li}
\address{Ze Li
\newline\indent
School of Mathematics and Statistics, Ningbo University
\newline\indent
Ningbo, 315211, Zhejiang, P.R. China
}
\email{rikudosennin@163.com}

\author[Z. Wang]{Zifan Wang}
\address{School of Mathematical Sciences, University of Science and Technology of China,
\newline\indent
Hefei 230026, Anhui, China}
\email{math2013@mail.ustc.edu.cn}
\author[L. Zhao]{Lifeng Zhao}
\address{School of Mathematical Sciences, University of Science and Technology of China,
\newline\indent
Hefei 230026, Anhui, China}
\email{zhaolf@ustc.edu.cn}

\keywords{Schr\"odinger map, geometric reduction, asymptotic stability, modified scattering}

\dedicatory{}

\begin{abstract}
In this paper, we study the long time dynamics of small perturbations of static domain walls in weighted Sobolev spaces for the one dimensional Schr\"odinger map with constant Dzyaloshinskii-Moriya interaction. The perturbation equation contains derivative nonlinearities, and its linearized operator has a threshold resonance. The results are twofold. (i) We prove the asymptotic stability of the domain wall modulo the translation and rotation symmetries. More precisely, the corresponding modulation parameters converge as time tends to infinity, while the radiation decays at the sharp rate $t^{-1/2}$. (ii) We prove that the radiation exhibits modified scattering, with an explicit logarithmic phase correction generated by the long range cubic interaction. The proof develops geometric reduction to overcome the loss of derivatives and exploits structural cancellations in the nonlinear interactions.
\end{abstract}

\maketitle

\section{Introduction} 
\subsection{ Schr\"odinger Map with Dzyaloshinskii--Moriya interaction}

We consider the one dimensional Schr\"odinger map describing the normalized magnetization field $m:\mathbb R_t\times\mathbb R_x\to\mathbb S^2$ in a ferromagnetic nanowire with Dzyaloshinskii--Moriya interaction (DMI):
\begin{equation}\label{mLLM}
\begin{cases}
\partial_t m=m\wedge\left(\partial_x^2m+2\eta e_1\wedge\partial_xm+m_1e_1\right),\\
m(0,x)=m_0(x).
\end{cases}
\end{equation}
Here $\wedge$ denotes the cross product in $\mathbb R^3$ and $(e_1,e_2,e_3)$ is the standard orthonormal basis of $\mathbb R^3$. The term $m_1e_1$ describes the easy axis anisotropy, with $\pm e_1$ as the preferred magnetization directions. The DMI is represented by the derivative term $2\eta e_1\wedge\partial_xm$ where $\eta$ is the constant DMI coupling parameter, which arises from spin orbit coupling in magnetic systems with broken inversion symmetry and favors chiral magnetic textures \cite{D1958,Moriya1960,TRC2012,EmoriEtAl2013}.  Throughout the paper, we fix $|\eta|<1$ and set $\mu=\sqrt{1-\eta^2}$. The local and global well-posedness of \eqref{mLLM} are established in Appendix~\ref{applwp} and Appendix~\ref{appgwp}, respectively. 

The equation \eqref{mLLM} is invariant under translations and rotations about the easy axis. More precisely, if $m(t,x)$ is a solution, then $R_\phi m(t,x-y)$ is also a solution for every $y,\phi\in\mathbb R$, with initial data $R_\phi m_0(x-y)$, where $R_\phi$ is the rotation about $e_1$ given by
\begin{equation}\label{eRphi}
R_\phi=
\begin{pmatrix}
1&0&0\\
0&\cos\phi&-\sin\phi\\
0&\sin\phi&\cos\phi
\end{pmatrix}.
\end{equation}

Equation \eqref{mLLM} conserves the energy 
\begin{equation}\label{H2eta}
\mathcal H_\eta^1(m)=\int_{\mathbb R}\left(|\nabla_\eta m|^2+\mu^2(1-m_1^2)\right)\,dx,
\end{equation}
where $\nabla_\eta=\partial_x+\eta e_1\wedge$. In addition to the energy, the equation possesses an infinite sequence of higher order conservation laws, see
\cite{F1984,BR1981}. In particular, the second order energy
\begin{equation}\label{H4eta}
\begin{aligned}
\mathcal H_\eta^2(m)=&\int_{\mathbb R}\Bigl[|\nabla_\eta^2m|^2-\frac54|\nabla_\eta m|^4+\mu^2\left(\frac32m_1^2|\nabla_\eta m|^2-|\partial_xm_1|^2\right)+\frac{\mu^4}{4}(1-m_1^4)\Bigr]\,dx
\end{aligned}
\end{equation}
is also conserved. Both conservation laws are derived in Appendix~\ref{appcsv}.

The equation (\ref{mLLM}) admits finite energy stationary solutions  connecting the preferred magnetization states $-e_1$ and $e_1$, known as static magnetic domain walls. Magnetic domain walls are of interest for spintronic data storage: in racetrack memory, information is encoded in magnetic domain patterns and transported along nanowires through controlled domain wall motion \cite{PHT2008}. C\^ote and Ignat \cite{CoteIgnat} classified all such static domain walls up to translations and rotations about the easy axis and showed that they minimize the functional $\mathcal H_\eta^1$ subject to the boundary conditions $m(\pm\infty)=\pm e_1$. In this paper, we focus on the static domain wall given by
\begin{equation*}
\omega_*(x)=\left(T_\eta(x),S_\eta(x)\cos(\eta x),-S_\eta(x)\sin(\eta x)\right),
\end{equation*}
where $T_\eta(x)=\tanh(\mu x)$ and $S_\eta(x)=\operatorname{sech}(\mu x)$. 

The stability of domain walls has been extensively studied for Landau--Lifshitz--Gilbert equations with positive damping. In the absence of DMI and an applied magnetic field, Carbou and Labb\'e \cite{CarbouLabbe2006} proved orbital stability in $H^2$ and asymptotic stability in $H^1$ for Bloch walls, modulo translations and rotations about the easy axis. For a constant applied field, Jizzini \cite{Jizzini2011} identified the sharp field strength condition for $H^1$ stability. Goussev, Robbins, and Slastikov \cite{GRS2010} constructed explicit precessing domain wall solutions under longitudinal applied fields of arbitrary strength and time dependence. Gou, Goussev, Robbins, and Slastikov \cite{GouEtAl2011} subsequently established their linear asymptotic stability for sufficiently small applied fields. These results concern models without DMI. In the presence of DMI, C\^ote and Ignat \cite{CoteIgnat} proved nonlinear asymptotic stability in $H^1$ of precessing domain walls under a small time dependent longitudinal applied field, with exponential convergence modulo translations and rotations about the easy axis. C\^ote and Ferri\`ere \cite{CF2024} extended this framework to configurations of two well separated domain walls moving apart. C\^ote and Ignat \cite[Section~1.4]{CoteIgnat} pointed out: ``{\it In case of no damping, i.e., $\alpha=0$, the equation will have the flavour of a Schr\"odinger map equation, for which stability questions of domain walls are widely open}.'' 

In this paper, we establish the asymptotic stability of $\omega_*$ under \eqref{mLLM} for sufficiently small and localized perturbations with high Sobolev regularity, thereby answering this question in the perturbative regime considered here. No parity assumption is imposed on the initial perturbation. Both the position and rotation angle of the wall converge, and the original magnetization field admits an explicit asymptotic decomposition into a limiting wall and radiation with a logarithmic phase correction.

\subsection{Main result}

Let $\langle \xi\rangle_{\mu}=\sqrt{\mu^2+\xi^2}$ and $\langle \xi\rangle=\sqrt{1+\xi^2}$. We use the following tangent frame along $\omega_*$:
$$n_*(x)=R_{-\eta x}(-S_\eta(x),T_\eta(x),0),\qquad p_*(x)=R_{-\eta x}e_3,$$
so that $(\omega_*,n_*,p_*)$ is a positively oriented orthonormal frame along the reference domain wall. For a solution close to the domain wall, we write
\begin{align}\label{kcord}
 m(t,x)=R_{\phi(t)}\Big((1+\alpha)\omega_*+\beta n_*+\gamma p_*\Big)(t,x-y(t)),
\end{align}
where $\alpha$, $\beta$, and $\gamma$ are real valued functions of $(t,x)$ satisfying the sphere constraint $(1+\alpha)^2+\beta^2+\gamma^2=1$. Let $u=\beta+i\gamma$. 

\begin{thm}\label{thmres}
Let $|\eta|<1$ and let $N\ge6$ be an integer. There exists a sufficiently small constant $  \varepsilon_1>0$ such that, if the initial data $m_0:\mathbb R\to\mathbb S^2$ satisfies
\begin{equation}\label{inismall}
\|m_0-\omega_*\|_{H^N}+\|x(m_0-\omega_*)\|_{H^{N-1}} \ll \varepsilon_1,
\end{equation}
then the solution to \eqref{mLLM} is global and has the following properties:\\
{\rm (i) (Asymptotic stability)}
The solution admits the above decomposition for all $t\ge0$, with modulation parameters $y,\phi\in C^1([0,\infty), \mathbb R)$ satisfying the asymptotics
\begin{equation}\label{asphiy}
y(t)=y_\infty+O\!\left(\varepsilon_{1}\langle t\rangle^{-\delta_*}\right),\quad\phi(t)=\phi_\infty+O\!\left(\varepsilon_{1}\langle t\rangle^{-\delta_*}\right)
\end{equation}
for constants $y_\infty$,$\phi_\infty$ and some $\delta_*>0$.
Moreover, the magnetization converges uniformly to the modulated domain wall at the sharp dispersive rate:
\begin{equation}\label{Lindeca}
\bigl\|m(t)-R_{\phi(t)}\omega_*(\cdot-y(t))\bigr\|_{L^\infty_x}\lesssim\varepsilon_1\langle t\rangle^{-1/2}.
\end{equation}
{\rm (ii) (Modified scattering for the radiation)} For $t\ge1$. Let $\zeta=\frac{x}{2t}.$
There exists $W_\infty\in L^2(\mathbb R)\cap L^\infty(\mathbb R)$ such that, 
\begin{equation}\label{L2aspthm}
\Big\|m(t)-R_{\phi_\infty}\Big[\omega_*+\operatorname{Re}\mathbf W(t)n_*+\operatorname{Im}\mathbf W(t)p_*\Big](\cdot-y_\infty)\Big\|_{L^2_x}\longrightarrow0,\quad\text{as }t\to\infty,
\end{equation}
where
\begin{align*}
\mathbf W(t,x)=&\frac{i\zeta-\mu T_\eta(x)}{\sqrt{2t}\langle\zeta\rangle_\mu}W_\infty(\zeta)\exp\!\left(\frac{ix^2}{4t}-i\mu^2t-\frac{i\pi}{4}+\frac{i}{4}\langle\zeta\rangle_\mu^2|W_\infty(\zeta)|^2\log t\right).
\end{align*}
\end{thm}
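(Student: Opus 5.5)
The plan is to reduce \eqref{mLLM} near $\omega_*$ to a scalar dispersive equation for a gauge-transformed perturbation, and then run a modulation plus bootstrap argument in the space-time resonance / distorted Fourier framework.

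\textbf{Step 1: geometric reduction.} Substituting \eqref{kcord} gives an equation for $u=\beta+i\gamma$. It has the schematic form
\[
i\partial_t u=\mathcal L u+\dot y\,(\cdots)+\dot\phi\,(\cdots)+\mathcal N(u,\partial_x u,\partial_x^2u),
\]
where $\mathcal L=-\partial_x^2+\mu^2-2\mu^2S_\eta^2$ is a P\"oschl--Teller type operator (after conjugating away the $\eta$-rotation). This operator has a zero eigenvalue generated by the translation and rotation symmetries, and a threshold resonance at $\mu^2$. The quasilinear terms $\mathcal N$ lose derivatives, so I would not work with $u$ directly. Instead I would pass to the derivative field $q=\langle \nabla_\eta m, \cdot\rangle$ expressed in a parallel (Coulomb-type) frame along $m$, in the spirit of Hasimoto/Chang--Shatah--Uhlenbeck. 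The resulting equation is semilinear:
\[
i\partial_t q+\partial_x^2 q-\mu^2 q=V q+\text{cubic terms in }q\text{ without derivative loss},
\]
with a reflectionless potential. I would also factor $\mathcal L=D^*D$ with $D=\partial_x+\mu T_\eta$. Then $v=Du$ satisfies a flat Schr\"odinger equation $i\partial_tv+\partial_x^2v-\mu^2 v=\mathcal N'$; this is exactly where the factor $(i\zeta-\mu T_\eta)/\langle\zeta\rangle_\mu$ in $\mathbf W$ comes from. The point of the factorization is that it removes both the threshold resonance and the eigenvalue. The energies \eqref{H2eta} and \eqref{H4eta}, together with their higher analogues and the local well-posedness results of the appendices, give uniform $H^N$ control of the perturbation.

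\textbf{Step 2: modulation.} I would fix $y(t),\phi(t)$ by orthogonality of $u$ to the kernel of $\mathcal L$, namely $\langle u, S_\eta\rangle=0$ in real and imaginary parts. Differentiating these conditions gives $|\dot y|+|\dot\phi|\lesssim \|\langle x\rangle^{-2}u\|^2_{L^2}+\dots$. These bounds are quadratic in the localized part of $u$, so with local decay of order $t^{-1}$ or better for the localized quadratic terms they are integrable, which yields \eqref{asphiy}.

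\textbf{Step 3: bootstrap for $v$.} Set $f=e^{-it(\partial_x^2-\mu^2)}v$. The bootstrap norms are
\[
\|v\|_{H^{N-1}}\lesssim\varepsilon_1\langle t\rangle^{\delta},\qquad \|xf\|_{L^2}\lesssim\varepsilon_1\langle t\rangle^{\delta},\qquad \|\hat f\|_{L^\infty}\lesssim\varepsilon_1.
\]
These give the decay $\|v\|_{L^\infty}\lesssim\varepsilon_1 t^{-1/2}$, and inverting $D$ on the orthogonal complement recovers \eqref{Lindeca}.

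The main obstacle is the nonlinear analysis. Quadratic terms carry localized coefficients (powers of $S_\eta$), and I would treat them by local decay combined with normal forms. Cubic terms with localized coefficients are handled the same way. The remaining flat cubic term $|v|^2v$ is long range, and I expect genuine work here. One needs structural cancellations so that, after normal forms, the space-time resonant contributions are only the phase-rotating $|\hat f|^2\hat f$ term. The ODE
\[
\partial_t\hat f(t,\xi)=\frac{i c(\xi)}{t}|\hat f|^2\hat f+O(t^{-1-\delta})
\]
then gives the logarithmic phase with coefficient $\tfrac14\langle\zeta\rangle_\mu^2$, after translating back from $v$ to $u$. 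The threshold frequency $\xi=0$ must also be controlled via the vanishing of the symbol of $D$ there.

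\textbf{Step 4: modified scattering.} Integrating the ODE defines $W_\infty$ from $\lim_t\hat f(t)e^{-i\Phi}$. The stationary phase asymptotics of $v$, pulled back through $D^{-1}$ to $u$ and inserted into \eqref{kcord} with $y\to y_\infty$ and $\phi\to\phi_\infty$, give \eqref{L2aspthm}. The $\alpha$-component is quadratic in $u$ and so decays in $L^2$.
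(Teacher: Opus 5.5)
\textbf{Gap 1: the modulation step fails as written.} Your Step 2 is the crux of the convergence of $y,\phi$. Because of the threshold resonance, the radiation does not decay faster on compact sets than globally. For bounded $x$ one has $|u(t,x)|\approx(2t)^{-1/2}|T_\eta(x)||\widetilde g(t,0)|$, and since no parity is assumed, $\widetilde g(t,0)\neq0$ in general. Hence $\|\langle x\rangle^{-2}u\|_{L^2}^2\sim\varepsilon_1^2t^{-1}$. A bound $|\dot y|+|\dot\phi|\lesssim\|\langle x\rangle^{-2}u\|_{L^2}^2$ therefore gives only $t^{-1}$. That is not integrable and is compatible with logarithmic drift of the parameters.

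Passing to $v=Du$ does not help. $v$ solves a flat Schr\"odinger equation and also decays only like $t^{-1/2}$ at fixed $x$ when $\widehat f(0)\neq0$. What the paper needs is an exact cancellation in the leading quadratic functional $R_2(z)=\int S_\eta\mathcal N_2$:
\begin{itemize}
\item It replaces $z$ by $z^*=-2D_\eta H_\eta^{-1}Q$, with error $O(\varepsilon_1^3t^{-3/2})$, using the phase invariance of $R_2$.
\item It integrates by parts to get $R_2(z^*)=-2\mu\int S_\eta^2\overline{z^*}Q$.
\item It inserts stationary phase for both factors, giving the leading term $\frac{2\mu}{t}\int S_\eta^2\frac{-i\zeta-\mu T_\eta}{\zeta^2+\mu^2}|\widehat q(t,\zeta)|^2\,dx$.
\end{itemize}
The imaginary part of this term carries the factor $\zeta=x/2t$. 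The real part gains $t^{-1/2+\delta}$ only because $\int S_\eta^2T_\eta=0$ lets one subtract the zero-frequency value and use the $\tfrac12$-H\"older continuity of $\widehat q$. This is what yields $t^{-1-\delta_*}$.

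\textbf{Gap 2: the reduction is not specified.} You never name a variable whose equation is both semilinear and amenable to weighted estimates, and the two you mention do not qualify.
\begin{itemize}
\item The factorization $v=Du$ conjugates only the linear part. The $v$-nonlinearity is $D$ applied to the $u$-nonlinearity, so it keeps the quasilinear terms $u\partial_x^2\alpha-\alpha\partial_x^2u$. Even in the stereographic variable $z$, the cubic part $\tfrac12\bar z(\partial_xz)^2-\tfrac12V_1|z|^2z$ is non-localized ($V_1\to\mu^2$) and carries derivatives. So the $v$-equation is not ``localized quadratic + localized cubic + $|v|^2v$'', and $\|xf\|_{L^2}$ cannot be closed at top order.
\item The parallel-frame coordinate of $\nabla_\eta m$ has an $O(1)$ background at the wall. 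Because of the anisotropy, its evolution also involves the tangential part of $e_1$, so it does not give a closed equation in $q$.
\end{itemize}
The missing idea is to gauge the Bogomolny defect $\mathbf B=\partial_x\mathbf M-\mu(e_1-\mathbf M_1\mathbf M)$. This is the nonlinear analogue of $Du$ and vanishes exactly at the wall. One takes an $x$-parallel frame whose time connection is fixed by $\partial_x(\nabla_t\mathbf E_1\cdot\mathbf E_2+\mu\mathbf B_1)=-\tfrac12\partial_x|\mathbf B|^2$. Then $\mathbf Q$ solves exactly $i\partial_t\mathbf Q-H_\eta\mathbf Q+2|\mathbf Q|^2\mathbf Q=0$, with the wall as the zero solution, so the weighted bounds are standard. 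They are transferred to $z$ through $z=-2D_\eta H_\eta^{-1}[e^{-i\chi}(1+\tfrac14|z|^2)Q]-\tfrac\mu2D_\eta H_\eta^{-1}(S_\eta z^2)$ together with orthogonality.

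Even then, the asymptotics of $Q$ cannot simply be pulled back through $D^{-1}$, because the gauge factor $e^{i\chi}$ is not controlled on the dispersive scale. This is why the paper proves modified scattering directly for $g=e^{it\mathcal L_\eta}z$ in distorted Fourier variables. There the null structure $q_{\pm\pm}=\Phi_{\pm\pm}m_{\pm\pm}$ permits the quadratic normal form, and the transmission-coefficient identity makes the p.v.\ cubic term integrable. These are the specific ``structural cancellations'' your Step 3 leaves unnamed.
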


\begin{rem}
\eqref{Lindeca} is equivalent to
$$\|u(t)\|_{L^\infty_x}\lesssim\varepsilon_1\langle t\rangle^{-1/2},\qquad\|\alpha(t)\|_{L^\infty_x}\lesssim\varepsilon_1^2\langle t\rangle^{-1},$$
where $u:=\beta+i\gamma$ and $\alpha$ are defined by (\ref{kcord}). 
Together with \eqref{asphiy}, \eqref{Lindeca}  yields
$$m(t)=R_{\phi_\infty}\omega_*(\cdot-y_\infty)+O_{L^\infty_x}\!\left(\varepsilon_1\langle t\rangle^{-\delta_*}\right),\quad\text{as }t\to\infty.$$
\end{rem}

\begin{rem} The equation \eqref{mLLM} (or equivalently (\ref{LLM}) below) also  admits  traveling waves $u(x-ct)$ for all $c\in (-1,1)$,  see \cite{LG2018,LW2010} for the corresponding existence results. For (\ref{LLM}), de Laire-Gravejat  \cite{LG2015} studied the Cauchy problem and the orbital stability of the travelling waves, and Bahri \cite{B2018} proved the asymptotic stability in the weak topology for $c\neq 0$. Since domain walls are static, Theorem \ref{thmres} indeed corresponds to $c=0$. 
 As mentioned above, different from the nonlinear stability results \cite{CoteIgnat,CF2024} in the damping case,  Theorem \ref{thmres} tries to understand  the global dynamics near the domain walls in the dispersive case.
\end{rem}

\subsection{Strategy of the proof}
Our proof uses two complementary frames to describe the dynamics near the domain wall. The Frenet frame adapted to the reference domain wall provides the setting for proving full asymptotic stability through distorted Fourier analysis, modulation, and normal form transformations. The perturbation equation combines derivative nonlinearities with a threshold resonance of the linearized operator, making it difficult to close the estimates required for the long time analysis directly. To overcome this difficulty, we introduce a geometric reduction based on the domain wall and the complex structure of \eqref{mLLM}, constructing a frame along the evolving map and thereby transforming the Landau--Lifshitz equation into a complex scalar equation. The resulting equation can be estimated without loss of derivatives, and these estimates can then be transferred back to the perturbation equation in the Frenet frame, allowing us to complete the proof of Theorem~\ref{thmres}.

Let us begin by deriving the perturbation equation for $\mathbf M =R_{\eta x}m(t,x)$ in the Frenet frame adapted to the reference domain wall. It is easy to see that the equation (\ref{mLLM}) becomes the equation (\ref{LLM}) of $\mathbf M$.  We write
\begin{equation*}
\mathbf M(t,x)=R_{\phi(t)+\eta y(t)}
\Bigl[(1+\alpha)\Omega_*+\beta N_*+\gamma P_*\Bigr](x-y(t)).
\end{equation*}
Here $\Omega_*$ is defined in (\ref{ab}), $N_*$ and $P_*$ are defined in \eqref{fram}. Substituting this decomposition into \eqref{LLM} and letting $u=\beta+i\gamma$, we arrive at
\begin{equation}
\begin{aligned}
i\partial_tu-\mathcal L_\eta u
=&\mu S_\eta(u-\overline u)\partial_xu+u\partial_x^2\alpha-\alpha\partial_x^2u+\cdots,
\end{aligned}
\label{pereq}
\end{equation}
where $\mathcal L_\eta=-\partial_x^2+\mu^2(1-2S_\eta^2)$. To remove the quasilinear second order terms,  we introduce a complex variable $z$ that incorporates the sphere constraint into the perturbation equation:
\begin{equation*}
u=\frac{z}{1+\frac14|z|^2}.
\end{equation*}
Substituting these relations into \eqref{pereq}, we obtain
\begin{equation}
i\partial_tz-\mathcal L_\eta z = \mathcal N_2+\mathcal N_3+\mathcal N_{\ge4}+\mathcal Mod,
\label{eqz6}
\end{equation}
where
\begin{align*}
\mathcal N_2=-\frac12V_2z(z+\overline z)+\frac12V_3(z-\overline z)\partial_xz,\quad
\mathcal N_3=\frac12\overline z(\partial_xz)^2-\frac12V_1|z|^2z,  
\end{align*}
while the higher order term $\mathcal N_{\ge4}$ and the modulation term $\mathcal Mod$ are defined in~\eqref{zevol}. Note that the resulting equation still contains derivative nonlinearities, notably $\frac12\overline z(\partial_xz)^2$, which obstruct a direct closure of the weighted profile estimates.

\noindent\emph{Geometric reduction and decay estimates.}
The derivative nonlinearities in \eqref{eqz6} make it difficult to close the weighted profile estimates directly. To address this difficulty, we exploit the first order identity satisfied by the reference wall,
$$\partial_x\Omega_*=\mu\bigl(e_1-(\Omega_*)_1\Omega_*\bigr),$$
and introduce the tangent vector field
$$\mathbf B=\partial_x\mathbf M-\mu\bigl(e_1-\mathbf M_1\mathbf M\bigr).$$
This field measures the deviation of the magnetization from the reference wall and vanishes when $\mathbf M=\Omega_*$. Then $\mathbf B$ satisfies
\begin{equation*}
\nabla_t\mathbf B=\mathbf M\wedge\left(\nabla_x^2\mathbf B-(\mu^2+\mu\mathbf B_1)\mathbf B\right),
\end{equation*}
The operation $\mathbf M\wedge(\,\cdot\,)$ defines a complex structure on the tangent space $T_{\mathbf M}\mathbb S^2$, so that the covariant evolution of $\mathbf B$ can be viewed as an equation of Schr\"odinger type. To exploit this structure and avoid the loss of derivatives in \eqref{eqz6}, we choose an orthonormal frame $(\mathbf E_1,\mathbf E_2)$ along $\mathbf M$ that is parallel in $x$ direction. The key point in the choice of the frame is the identity
\begin{align*}
\partial_x\left(\nabla_t\mathbf E_1\cdot \mathbf E_2+\mu \mathbf B_1\right)
=-\frac12\partial_x|\mathbf B|^2.
\end{align*}
Indeed, after integration in $x$, this identity expresses the connection coefficient in the time direction in terms of $|\mathbf B|^2$, which closes the equation for the complex coordinate of $\mathbf B$. More precisely, defining
$$
\mathbf Q=\frac12\left(\mathbf B\cdot\mathbf E_1+i\mathbf B\cdot\mathbf E_2\right),
$$
we have $|\mathbf B|^2=4|\mathbf Q|^2$, and $\mathbf Q$ satisfies
\begin{equation*}
i\partial_t\mathbf Q+(\partial_x^2-\mu^2)\mathbf Q+2|\mathbf Q|^2\mathbf Q=0.
\end{equation*}
 Here, by exploiting the equation satisfied by the domain wall, the complex structure of the sphere, and an orthonormal parallel frame constructed to eliminate the loss of derivatives, we geometrically reduce the Schr\"odinger map to the cubic  Schr\"odinger equation. The weighted estimates for the profile of the complex scalar field $\mathbf Q$ are derived by the spacetime resonance method in Subsection~\ref{4}. To transfer these estimates back to the perturbation variable $z$, we apply the geometric reduction developed above to \eqref{phileft} and define the corresponding complex scalar field $Q$. The relation \eqref{phidef} between $M$ and $\mathbf{M}$ gives
\begin{equation*}
Q(t,x)=e^{i(\phi(t)+\eta y(t))}\mathbf Q\bigl(t,x+y(t)\bigr).
\end{equation*}
It remains to relate $Q$ to $z$. Since $Q$ is defined using the spatially parallel frame $(E_1,E_2)$ along $M$, we express this frame in terms of $z$. We first construct an explicit orthonormal frame $(\mathcal E_1,\mathcal E_2)$ along $M$. The two frames are related by
\begin{equation*}
E_1=\cos\chi\,\mathcal E_1-\sin\chi\,\mathcal E_2,\qquad
E_2=\sin\chi\,\mathcal E_1+\cos\chi\,\mathcal E_2,
\end{equation*}
where
\begin{equation*}
\chi(t,x)=-\int_{-\infty}^x
\frac{\frac12\operatorname{Im}(\overline z\,\partial_sz)+\mu S_\eta\operatorname{Im}z}
{1+\frac14|z|^2}(t,s){\rm d}s.
\end{equation*}
Thus we obtain the explicit relation
\begin{equation*}
Q=-\frac12e^{i\chi}\frac{D_\eta^*z+\frac{\mu S_\eta}{2}z^2}{1+\frac14|z|^2}.
\end{equation*}
which is the starting point for the decay estimates in Section~\ref{sectransf}.
The explicit relation above allows us to transfer the estimates established for the profile of $\mathbf Q$ back to $z$, see Section~\ref{sectransf}.
The above geometric reduction enables us to prove  the desired decay estimate (\ref{Lindeca})  in  $L^{\infty}$.   To obtain the asymptotic behaviors of both parameters $y(t)$, $\phi(t)$ and solution $m(t)$ as  (\ref{L2aspthm}), we need to further study the equation  (\ref{eqz6}) of $z$. 

\noindent\emph{Quadratic null structure.} The linearized operator  $\mathcal L_\eta$ has a threshold resonance at the bottom of its continuous spectrum. Indeed,
\begin{equation*}
\mathcal L_\eta T_\eta=\mu^2T_\eta,\qquad
T_\eta\in L^\infty(\mathbb R)\setminus L^2(\mathbb R).
\end{equation*}
The slow local decay associated with the threshold resonance makes the quadratic terms in \eqref{eqz6} difficult to treat by direct estimates. To exploit their oscillatory structure, we introduce the profile
$$g(t)=e^{it\mathcal L_\eta}z(t).$$
On the distorted Fourier side, 
\begin{align*}
\partial_t\widetilde g(t,\xi)
={}&\frac{i}{2}\int_{\mathbb R^2}
e^{it(\langle\xi\rangle_\mu^2-\langle a\rangle_\mu^2-\langle b\rangle_\mu^2)}
q_{++}(\xi,a,b)\widetilde g(t,a)\widetilde g(t,b){\rm d}a{\rm d}b\\
&+\frac{i}{2}\int_{\mathbb R^2}
e^{it(\langle\xi\rangle_\mu^2-\langle a\rangle_\mu^2+\langle b\rangle_\mu^2)}
q_{+-}(\xi,a,b)\widetilde g(t,a)\overline{\widetilde g(t,b)}{\rm d}a{\rm d}b+\cdots,
\end{align*}
where 
\begin{align*}
q_{++}(\xi,a,b)
&=\frac{1}{4\sqrt{2\pi}}\bigl(\langle\xi\rangle_\mu^2-\langle a\rangle_\mu^2-\langle b\rangle_\mu^2\bigr)\frac{\langle\xi\rangle_\mu}{\langle a\rangle_\mu\langle b\rangle_\mu}\operatorname{sech}\left(\frac{\pi(\xi-a-b)}{2\mu}\right),\\
q_{+-}(\xi,a,b)&=-\frac{1}{2\sqrt{2\pi}}\bigl(\langle\xi\rangle_\mu^2-\langle a\rangle_\mu^2+\langle b\rangle_\mu^2\bigr)\frac{\langle a\rangle_\mu}{\langle\xi\rangle_\mu\langle b\rangle_\mu}\operatorname{sech}\left(\frac{\pi(\xi-a+b)}{2\mu}\right).
\end{align*}
The key observation is that $q_{++}$ and $q_{+-}$ vanish on the time resonant sets:
$$
\{(a,b)\in\mathbb{R}\times\mathbb{R}:\langle\xi\rangle_\mu^2-\langle a\rangle_\mu^2-\langle b\rangle_\mu^2=0\}
\quad\text{and}\quad
\{(a,b)\in\mathbb{R}\times\mathbb{R}:\langle\xi\rangle_\mu^2-\langle a\rangle_\mu^2+\langle b\rangle_\mu^2=0\}
$$
This null structure allows us to divide by the phase in the normal form transformation without introducing a singularity on the time resonant sets. The cubic terms generated by the normal form are retained in the subsequent modified scattering analysis, together with the original cubic nonlinearity,  see Section~\ref{ssca}.\\

\noindent\emph{Convergence of the modulation parameters.}
The dispersive decay of the radiation does not by itself imply convergence of the modulation parameters. Indeed, a direct estimate of the quadratic terms in the modulation equations gives only
$$
|\dot y(t)|+|\dot\phi(t)|\lesssim\varepsilon_1^2\langle t\rangle^{-1},
$$
which is not integrable in time. To obtain the additional decay, we replace the leading quadratic functional $R_2(z)$ by $R_2(z^*)$, where $z^*=-2D_\eta H_\eta^{-1}Q$ and note that
$$R_2(z^*)=-2\mu\int_{\mathbb R}S_\eta^2\overline{z^*}Q\,{\rm d}x.$$
Combining this identity with the stationary phase expansions \eqref{Qsta1} and \eqref{zsta7} of $Q$ and $z^*$ reveals two additional structure that improve the decay: the imaginary part contains an factor $\frac{x}{2t}$, while in the real part the zero frequency contribution vanishes because
$$\int_{\mathbb R}S_\eta^2T_\eta\,{\rm d}x=0.$$
This cancellation uses the parity of the fixed wall coefficients, not a parity assumption on the perturbation. And this yields
$$ |\dot y(t)|+|\dot\phi(t)|\lesssim \varepsilon_1^2\langle t\rangle^{-1-\delta_*}$$ 
for some $\delta_*>0$. The resulting time integrability gives convergence of both modulation parameters, determining the limiting position and rotation angle of the domain wall.\\

\noindent\emph{Modified scattering.}
After the quadratic normal form transformation, stationary phase applied to the Dirac delta contribution yields the leading resonant term
$$\frac{i}{4t}\langle\xi\rangle_\mu^2|\widetilde g(t,\xi)|^2\widetilde g(t,\xi).$$
A crucial cancellation is found in the coefficient of the principal value contribution. More precisely, we have
\begin{align*}
1-\overline{t_\eta(\xi)}t_\eta(a)t_\eta(b)\overline{t_\eta(c)}
=&\partial_b\Phi_3\left(\frac{i\mu}{(\xi+i\mu)(a-i\mu)}-\frac{i\mu\overline{t_\eta(\xi)}t_\eta(a)}{(b-i\mu)(c+i\mu)}\right)-\frac{2i\mu p}{(\xi+i\mu)(a-i\mu)}.
\end{align*}
Here $c=a+b-\xi+p$, and $\partial_b$ is taken at fixed $(\xi,a,p)$. The factor $p$ cancels the singularity of
$$\operatorname{p.v.}\operatorname{csch}\left(\frac{\pi p}{2\mu}\right),$$
whereas the factor $\partial_b\Phi_3$ allows an integration by parts in $b$, producing the additional time decay needed to make the principal value contribution integrable.
All remaining contributions are integrable in time in $L^\infty_\xi$. Removing the logarithmic phase generated by the leading resonant term then yields modified scattering,  see Section~\ref{submodi}.

\subsection{Related literature}
Early results on the asymptotic stability of solitary waves for one dimensional nonlinear Schr\"odinger equations were obtained by Buslaev and Perelman \cite{BP1992}, \cite{BP1992b}, Buslaev and Sulem \cite{BS2003}, Cuccagna \cite{Cuccagna2001} and, for the integrable cubic NLS, Cuccagna and Pelinovsky \cite{CuccagnaPelinovsky2014}.  The analysis of the radiation is closely related to scattering for small solutions. Germain, Masmoudi and Shatah \cite{GMS2009,GMS2012} use the spacetime resonance method to study a class of quadratic Schr\"odinger equations in two and three space dimensions. By exploiting oscillations through integration by parts in time or frequency, they proved decay and scattering for small, localized solutions. See also \cite{GMS2012a}. For the one dimensional cubic nonlinear Schr\"odinger equation, Kato and Pusateri \cite{KP2011} gave a Fourier analytic proof of modified scattering, previously established by Hayashi and Naumkin \cite{HN1998}. Germain, Pusateri and Rousset \cite{GPR} used the distorted Fourier transform to prove asymptotic stability of the zero solution in $L^\infty_x$ and modified scattering for small, localized solutions in the presence of a generic potential. In the setting of solitary waves, Krieger and Schlag \cite{KriegerSchlag2006} constructed stable manifolds near unstable solitary waves for the one dimensional supercritical focusing NLS. Collot and Germain \cite{CG2025} combined modulation and renormalization with resonance analysis using the distorted Fourier transform. They proved asymptotic stability under spectral assumptions excluding threshold resonances, with modified scattering of the radiation when the nonlinearity is cubic at the origin.

The presence of a threshold resonance introduces an additional difficulty by slowing down the local decay of the radiation. L\"uhrmann and Schlag \cite{LS2023} exploited a factorization of the linearized operator to obtain sharp $L^\infty_x$ decay estimates and asymptotics in a long range scattering problem. For the cubic nonlinear Schr\"odinger equation with a potential, Chen and Pusateri \cite{ChenPusateri} weakened the assumptions on the potential and treated the resonant case under additional cancellation conditions at zero frequency. Their later work \cite{ChenPusateri2024} treated odd or even threshold resonance functions without imposing the earlier zero frequency cancellation conditions or any parity restriction on the initial data. For solitary waves of the one dimensional focusing cubic nonlinear Schr\"odinger equation, Li and L\"uhrmann \cite{LiLuhrmann2024} proved full asymptotic stability under small, localized even perturbations, and Yongming Li \cite{YongmingLi2025} subsequently removed the evenness assumption. Threshold resonances also act as a key obstruction in the stability of Klein--Gordon and sine-Gordon equations. For one dimensional Klein--Gordon equations with non-generic potentials and spatially localized quadratic nonlinearities, Lindblad, L\"uhrmann, Schlag and Soffer \cite{LLSS2023} showed that the interaction between the threshold resonance and the quadratic nonlinearity can produce a logarithmic slow down of the decay rate along certain rays. For the sine-Gordon kink, L\"uhrmann and Schlag \cite{LS2023} proved asymptotic stability under small, localized odd perturbations using the factorization of the linearized operator, a non-resonance property of the quadratic nonlinearity, and a variable coefficient quadratic normal form. Chen and L\"uhrmann \cite{ChenLuhrmann2024} subsequently proved full asymptotic stability of moving sine-Gordon kinks without any symmetry restriction, together with modified scattering of the radiation. Their proof combines distorted Fourier resonance analysis with modulation theory in a coupled bootstrap argument for the radiation and the modulation parameters, exploiting null structures in the quadratic terms of both equations.

For Schr\"odinger maps, the global dynamics for small data in critical spaces were clarified by Bejenaru, Ionescu, Kenig and Tataru  \cite{BIKT2011}. The stationary configurations of Schr\"odinger maps are harmonic maps. Gustafson, Kang and Tsai \cite{GKT2008} proved asymptotic stability of equivariant harmonic maps from $\mathbb R^2$ to $\mathbb S^2$ of degree at least four, and Gustafson, Nakanishi and Tsai \cite{GustafsonNakanishiTsai2010} extended this result to degree three, and recently Bejenaru, Pillai and Tataru \cite{BPT2024} solved the degree two case. On the hyperbolic plane, Lawrie, L\"uhrmann, Oh and Shahshahani \cite{LLS2023} and  Li \cite{Li2022} established asymptotic stability of harmonic maps under a suitable linearized stability condition and general perturbations. In one space dimension,  Li \cite{ZeLi2025} proved $t^{-1/2}$ decay toward constant maps into Riemannian surfaces under an intrinsic geometric vanishing condition at the reference point. The asymptotics are described by modified scattering when the curvature at that point is nonzero, and by ordinary scattering when it vanishes. The present work concerns the asymptotic stability of domain walls for the one dimensional Schr\"odinger map with Dzyaloshinskii--Moriya interaction.

\subsection{Organization of the paper}
The rest of the paper is organized as follows. The linear theory is developed in Section~\ref{2}, where the distorted Fourier transform is constructed and the linear estimates needed below are established. The perturbation equations are derived in Section~\ref{3}. The core of the proof consists of Section~\ref{georedu}, where we construct a parallel frame to reduce the Schr\"odinger map flow to a complex scalar equation and then establish weighted estimates for the corresponding profile, and Section~\ref{sectransf}, where these estimates are transferred back to the perturbation variable and combined with bounds for the modulation parameters to close the bootstrap. 
Section~\ref{ssca} is devoted to modified scattering. After removing the quadratic contribution by a normal form transformation, the resonant cubic interaction is isolated to identify the logarithmic phase correction, while the remaining contributions are shown to be integrable in time.

\subsection{Notations} The notation $A\lesssim B$ means there exists some $C>0$ such that $A\le CB$. Unless otherwise stated, the implicit constants throughout the paper may depend on the fixed parameters $\mu$ and $N$. The notation $A\ll B$ means that $A/B$ is sufficiently small.
In the following, we fix four small constants $\delta, \delta_*, \rho, \alpha$ to fulfill 
\begin{align}\label{Nota}
\delta,\rho\in(0,\frac{1}{2026}), \quad\delta_*\in(\frac{3}{2026},\frac{5}{2026}),\quad \alpha=\frac{505}{2026}\in(\frac15,\frac14).
\end{align}
Throughout the proof, the small constants are understood to satisfy the hierarchy
\begin{align}\label{Nota2}
0< \varepsilon_1\ll\min\{1,\varepsilon_*,r_*\},
\end{align}
where   $\varepsilon_1$ is an auxiliary bootstrap parameter, and $\varepsilon_*,r_*$ are the constants in Lemma~\ref{modula}.\\
We also use the weighted Sobolev space $$H^{0,1}(\mathbb R)=\left\{f\in L^2(\mathbb R):xf\in L^2(\mathbb R)\right\},$$
equipped with the norm $$\|f\|_{H^{0,1}}=\|f\|_{L^2}+\|xf\|_{L^2}.$$

\noindent{\bf Acknowledgments.} Z. Li was supported by the National Natural Science Foundation of China
under grants (No.1242211 and No.12371244). Z. Wang was supported by National Natural Science Foundation of China (No. 12271497).  L. Zhao was supported by National Natural Science Foundation of China (No. 12341102 and No. 12271497).

\section{Linear theory}\label{2}
Recall that $S_\eta(x)=\operatorname{sech}(\mu x)$, $T_\eta(x)=\tanh(\mu x)$, where $\mu=\sqrt{1-\eta^2}.$
We introduce the first order operators $D_\eta=\partial_x-\mu T_\eta$ and $D_\eta^*=-\partial_x-\mu T_\eta$ as densely defined operators on $L^2(\mathbb R)$ with domain $H^1(\mathbb R)$, and define \begin{align}\label{qa2}
\mathcal L_\eta=D_\eta D_\eta^*=-\partial_x^2+\mu^2\bigl(1-2S_\eta^2\bigr),\quad H_\eta=D_\eta^*D_\eta=-\partial_x^2+\mu^2  
\end{align}
as self-adjoint operators on $L^2(\mathbb R)$ with domain $H^2(\mathbb R)$. A direct computation gives $\mathcal L_\eta D_\eta=D_\eta H_\eta$, $D_\eta^*\mathcal L_\eta=H_\eta D_\eta^*$.

The equations $D_\eta^*f=0$ and $D_\eta f=0$ can be solved explicitly. In fact
$\ker D_\eta^*=\operatorname{span}\{S_\eta\}$, $\ker D_\eta=\{0\}$. 
Since $\mathcal L_\eta=D_\eta D_\eta^*$, we further have $\ker \mathcal L_\eta=\ker D_\eta^*=\operatorname{span}\{S_\eta\}$. 

Moreover, $\mathcal L_\eta$ is a rescaled P\"oschl--Teller Schr\"odinger operator, whose essential spectrum is $\sigma_{\mathrm e}(\mathcal L_\eta)=[\mu^2,\infty)$;  see, for instance, \cite[Lemma~4.14 and Remark~4.15]{CoteIgnat}.  Since $T_\eta(x)\to-1 \ \text{as }x\to-\infty, T_\eta(x)\to1 \ \text{as }x\to+\infty$, the corresponding reflection and transmission coefficients are $$r_\eta(\xi)=0, \quad t_\eta(\xi)=\frac{\xi+i\mu}{\xi-i\mu}.$$
For $f\in L^2(\mathbb R)$, define $$P_df=\frac{\mu}{2}S_\eta\int_{\mathbb R}S_\eta f{\rm d}x,\quad P_e=I-P_d.$$
Thus $P_d$ and $P_e$ are the orthogonal projections of $L^2(\mathbb R)$ onto $\operatorname{span}\{S_\eta\}$ and $S_\eta^\perp$, respectively.

\subsection{The distorted Fourier transform}

In this subsection, we construct the distorted Fourier transform and establish the basic properties that will be used in the subsequent analysis.
We use the flat Fourier transform
$$\widehat f(\xi)=(\mathcal Ff)(\xi)=\frac{1}{\sqrt{2\pi}}\int_{\mathbb R}e^{-ix\xi}f(x){\rm d}x.$$
We also write $\langle\xi\rangle_\mu=\sqrt{\xi^2+\mu^2}$. For every $\xi\in\mathbb R$, define
\begin{equation}
\label{gee}
\psi_\eta(x,\xi)=\frac{1}{\sqrt{2\pi}}\frac{i\xi-\mu T_\eta(x)}{\langle\xi\rangle_\mu}e^{ix\xi}.
\end{equation}
Then $\mathcal L_\eta\psi_\eta(x,\xi)=\langle\xi\rangle_\mu^2\psi_\eta(x,\xi).$ Accordingly, for $f,h\in\mathcal S(\mathbb R)$, we define the distorted Fourier transform associated with $\mathcal L_\eta$ and its inverse by
\begin{align}
(\widetilde{\mathcal F}f)(\xi)&=\int_{\mathbb R}\overline{\psi_\eta(x,\xi)}f(x){\rm d}x,\label{dF}\\
(\widetilde{\mathcal F}^{-1}h)(x)&=\int_{\mathbb R}\psi_\eta(x,\xi)h(\xi){\rm d}\xi.\label{indF}
\end{align}

In the following, we denote $\widetilde {\mathcal F} f$ by $\widetilde{f}$ for simplicity.

Direct computations give  the following four results, among which (\ref{freesch}) is well-known, see for instance \cite{HN1998}.

\begin{prop}\label{disfour}
Let $|\eta|<1$. For $\mathcal{L}_{\eta}$, $D^*_{\eta}$, $H_{\eta}$ and $\widetilde {\mathcal F}$ defined in (\ref{qa2}) and (\ref{dF}), 
there hold 
\begin{itemize}
\item For every $f\in L^1(\mathbb R)$, we have
\begin{align}\label{001}
\|\widetilde{\mathcal F}f\|_{L^\infty_\xi}\lesssim\|f\|_{L^1_x}.
\end{align}
\item   For \(f\in\mathcal S(\mathbb R)\), we have
$(\widetilde{\mathcal F}\mathcal L_\eta f)(\xi)=\langle\xi\rangle_\mu^2(\widetilde{\mathcal F}f)(\xi).$ 
\item 
For every $f\in H^1(\mathbb R)$, we have
\begin{align}\label{dftdbx}
\widetilde{f}(\xi)=\frac{\widehat{D_\eta^*f}(\xi)}{\langle\xi\rangle_\mu}.
\end{align}
\item Let \(f\in\mathcal S(\mathbb R)\) and \(t>0\). Then
\begin{align}\label{freesch}
 e^{-itH_\eta}f(x)=\frac{e^{-i\mu^2t-i\pi/4}}{\sqrt{2t}}e^{ix^2/(4t)}\mathcal F\left[e^{iy^2/(4t)}f(y)\right]\left(\frac{x}{2t}\right).
\end{align}
The identities extend to the corresponding $L^2$ based spaces by density.
\end{itemize} 
\end{prop}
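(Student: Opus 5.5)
The four items are direct computations from the explicit form \eqref{gee} of $\psi_\eta$, and I treat them in order.

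\emph{The $L^1\to L^\infty$ bound \eqref{001}.} Since $|T_\eta|\le1$, we have
\[
|\psi_\eta(x,\xi)|\le\frac{1}{\sqrt{2\pi}}\frac{|\xi|+\mu}{\langle\xi\rangle_\mu}\le\frac{\sqrt2}{\sqrt{2\pi}}
\]
uniformly in $(x,\xi)$. Hence $|\widetilde f(\xi)|\le\pi^{-1/2}\|f\|_{L^1}$, directly from \eqref{dF}.

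\emph{The identity \eqref{dftdbx}.} I would prove it before the intertwining property, because that property then follows from it. The key computation is
\[
D_\eta e^{ix\xi}=(i\xi-\mu T_\eta)e^{ix\xi},
\qquad\text{so}\qquad
\psi_\eta(\cdot,\xi)=\frac{1}{\sqrt{2\pi}\,\langle\xi\rangle_\mu}D_\eta e^{ix\xi}.
\]
Since $T_\eta$ is real, $\overline{D_\eta e^{ix\xi}}=D_\eta e^{-ix\xi}$. Integration by parts, with adjoint $(D_\eta)^*=D_\eta^*$, gives for $f\in\mathcal S$
\[
\widetilde f(\xi)=\frac{1}{\sqrt{2\pi}\,\langle\xi\rangle_\mu}\int e^{-ix\xi}D_\eta^*f\,{\rm d}x=\frac{\widehat{D_\eta^*f}(\xi)}{\langle\xi\rangle_\mu}.
\]
For $f\in H^1$ the right-hand side is well defined in $L^2_\xi$, since $D_\eta^*:H^1\to L^2$ is bounded. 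The formula therefore extends by density, and this serves as the definition of $\widetilde{\mathcal F}$ on $H^1$.

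\emph{The intertwining $\widetilde{\mathcal F}\mathcal L_\eta f=\langle\xi\rangle_\mu^2\widetilde{\mathcal F}f$.} Apply \eqref{dftdbx} to $\mathcal L_\eta f$ and use $D_\eta^*\mathcal L_\eta=H_\eta D_\eta^*$. Since $H_\eta=-\partial_x^2+\mu^2$ acts as the multiplier $\langle\xi\rangle_\mu^2$ on the flat Fourier side,
\[
\widehat{D_\eta^*\mathcal L_\eta f}=\widehat{H_\eta D_\eta^*f}=\langle\xi\rangle_\mu^2\widehat{D_\eta^*f}.
\]
This gives the claim. Alternatively, one can integrate by parts twice against $\overline{\psi_\eta}$, using $\mathcal L_\eta\psi_\eta=\langle\xi\rangle_\mu^2\psi_\eta$, which I would verify via $\mathcal L_\eta D_\eta=D_\eta H_\eta$ applied to $e^{ix\xi}$.

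\emph{The formula \eqref{freesch}.} Here $H_\eta$ is a constant-coefficient operator, so
\[
e^{-itH_\eta}=e^{-i\mu^2t}e^{it\partial_x^2},
\qquad
e^{it\partial_x^2}f(x)=(4\pi it)^{-1/2}\int e^{i(x-y)^2/(4t)}f(y)\,{\rm d}y.
\]
Expand
\[
(x-y)^2=x^2-2xy+y^2.
\]
Then pull $e^{ix^2/(4t)}$ outside the integral. The remaining integral is $\sqrt{2\pi}\,\mathcal F[e^{iy^2/(4t)}f](x/(2t))$, because $e^{-ixy/(2t)}=e^{-iy\,x/(2t)}$. Finally, $(4\pi it)^{-1/2}\sqrt{2\pi}=(2t)^{-1/2}e^{-i\pi/4}$ with the principal branch.

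\emph{Extensions.} These follow by density, using the boundedness of each side in the relevant $L^2$-based norms. The only point needing care is that $\widetilde{\mathcal F}$ is bounded from $L^2$ into $L^2_\xi$. That bound follows from \eqref{dftdbx} together with
\[
\|D_\eta^* f\|_{L^2}^2=\langle \mathcal L_\eta f,f\rangle\le\|\langle\partial_x\rangle_\mu f\|^2,
\]
interpreted via the functional calculus.

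I expect no real obstacle. The main care points are the sign and conjugation conventions in the integration by parts, and the branch of $\sqrt{i}$ in the free propagator.
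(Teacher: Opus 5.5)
Your proposal is correct and follows the paper's route. The paper simply asserts all four items as direct computations from \eqref{gee}, citing Hayashi--Naumkin for \eqref{freesch}. Your integration by parts against $\overline{\psi_\eta}=(2\pi)^{-1/2}\langle\xi\rangle_\mu^{-1}D_\eta e^{-ix\xi}$, the intertwining via $D_\eta^*\mathcal L_\eta=H_\eta D_\eta^*$, and the constant $(4\pi it)^{-1/2}\sqrt{2\pi}=(2t)^{-1/2}e^{-i\pi/4}$ all check out.

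There is one small slip in your density remark. The inequality $\|D_\eta^*f\|_{L^2}^2=\langle\mathcal L_\eta f,f\rangle\le\|\langle\partial_x\rangle_\mu f\|_{L^2}^2$ bounds $D_\eta^*H_\eta^{-1/2}$. But $\widetilde{\mathcal F}=\mathcal F H_\eta^{-1/2}D_\eta^*$ needs the reverse composition. That bound follows instead by duality from $\|D_\eta u\|_{L^2}^2=\langle H_\eta u,u\rangle$, exactly as in Lemma~\ref{Detapro}. Alternatively, $H_\eta^{-1/2}\partial_x$ and $H_\eta^{-1/2}T_\eta$ are each bounded on $L^2$.
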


Recall $D_{\eta}$ and $H_{\eta}$ defined in (\ref{qa2}).
\begin{lem}
\label{Detapro}
The operator $D_\eta H_\eta^{-1}D_\eta^*:H^1(\mathbb R)\to L^2(\mathbb R)$ admits a unique bounded extension to $L^2(\mathbb R)$, and this extension is the orthogonal projection onto $S_\eta^\perp$.
\end{lem}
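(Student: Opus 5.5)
The plan is to set $P=D_\eta H_\eta^{-1}D_\eta^*$ on $H^1$ and first prove that it is bounded in the $L^2$ norm, so that it extends uniquely to $L^2$ by density. Then I will identify the extension with $P_e=I-P_d$ using the factorization $\mathcal L_\eta=D_\eta D_\eta^*$, $H_\eta=D_\eta^*D_\eta$.

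For boundedness, take $f\in H^1$. Then $g=D_\eta^*f\in L^2$ and $h=H_\eta^{-1}g\in H^2$; note $H_\eta=-\partial_x^2+\mu^2$ is invertible since $\mu>0$. Using $H_\eta=D_\eta^*D_\eta$ and integrating by parts,
\[
\|D_\eta h\|_{L^2}^2=\langle D_\eta^*D_\eta h,h\rangle=\langle g,h\rangle=\langle D_\eta^*f,H_\eta^{-1}D_\eta^*f\rangle=\langle f,Pf\rangle .
\]
Hence $\|Pf\|_{L^2}^2=\langle f,Pf\rangle\le\|f\|_{L^2}\|Pf\|_{L^2}$, which gives $\|Pf\|_{L^2}\le\|f\|_{L^2}$. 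The same computation shows that $P$ is symmetric and nonnegative on $H^1$, so the extension $\bar P$ is bounded and self-adjoint.

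For idempotence, on $H^1$ we have $D_\eta^*Pf=(D_\eta^*D_\eta)H_\eta^{-1}D_\eta^*f=D_\eta^*f$, and therefore $P^2f=D_\eta H_\eta^{-1}D_\eta^*Pf=Pf$. Here $Pf\in H^1$ because $h\in H^2$. By density $\bar P^2=\bar P$, so $\bar P$ is an orthogonal projection.

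It remains to identify the range of $\bar P$, or equivalently its kernel.
\begin{itemize}
\item Since $D_\eta^*S_\eta=0$ and $S_\eta\in H^1$, we get $PS_\eta=0$, so $\operatorname{span}\{S_\eta\}\subset\ker\bar P$.
\item Conversely, let $f\in\ker\bar P$ and pick $f_n\in H^1$ with $f_n\to f$ in $L^2$. For any $\varphi\in H^1$,
\[
\langle D_\eta^*\varphi,H_\eta^{-1}D_\eta^*f_n\rangle=\langle\varphi,Pf_n\rangle\to0 .
\]
This weak formulation is awkward to close directly.
\item A cleaner route is to take $f\in(\operatorname{ran}\bar P)^\perp$. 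For $\psi\in H^2$, the function $\varphi=D_\eta\psi$ lies in $H^1$ and satisfies $P\varphi=D_\eta H_\eta^{-1}H_\eta\psi=\varphi$. Hence $\operatorname{ran}\bar P\supset D_\eta(H^2)$, and it suffices to show $\overline{D_\eta(H^2)}=S_\eta^\perp$.
\item Suppose $f\perp D_\eta\psi$ for all $\psi\in H^2$. Then $D_\eta^*f=0$ in the distributional sense, i.e.\ $f'=-\mu T_\eta f$. The solutions of this ODE are $f=cS_\eta$, so $f\in\operatorname{span}\{S_\eta\}$.
\item Thus $(\operatorname{ran}\bar P)^\perp\subset\operatorname{span}\{S_\eta\}$, while $S_\eta\in\ker\bar P=(\operatorname{ran}\bar P)^\perp$. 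So $(\operatorname{ran}\bar P)^\perp=\operatorname{span}\{S_\eta\}$.
\end{itemize}
Since $\bar P$ is an orthogonal projection, its range is closed, and therefore $\operatorname{ran}\bar P=S_\eta^\perp$, i.e.\ $\bar P=P_e$.

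The main obstacle is the distributional ODE step: one must check that an $L^2$ function annihilated by $D_\eta^*$ in the weak sense is a multiple of $S_\eta$. This follows from elliptic regularity for first-order ODEs: $f'=-\mu T_\eta f\in L^2$ gives $f\in H^1$, and then the classical kernel computation $\ker D_\eta^*=\operatorname{span}\{S_\eta\}$ stated before the lemma applies.
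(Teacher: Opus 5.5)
Your proof is correct, but it is organized differently from the paper's. The paper sets $A_\eta=D_\eta H_\eta^{-1/2}$ and uses $\|D_\eta u\|_{L^2}^2=\|u_x\|_{L^2}^2+\mu^2\|u\|_{L^2}^2$ to see that $A_\eta$ is an isometry of $L^2$. It identifies the range of $A_\eta$, which is automatically closed, with $\operatorname{Ran}D_\eta$, and then with $S_\eta^\perp$ via $\overline{\operatorname{Ran}D_\eta}=(\ker D_\eta^*)^\perp$. It concludes using the abstract fact that $A_\eta A_\eta^*$ is the orthogonal projection onto $\operatorname{Ran}A_\eta$, together with $A_\eta^*=H_\eta^{-1/2}D_\eta^*$ on $H^1$.

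You instead work with $P=D_\eta H_\eta^{-1}D_\eta^*$ directly. Your identity $\|Pf\|_{L^2}^2=\langle f,Pf\rangle$ is the same isometry identity in disguise. You then check symmetry and $P^2=P$ by hand from $D_\eta^*D_\eta=H_\eta$, so you need no fractional powers and no adjoint of $A_\eta$. Closedness of the range comes from the projection property rather than from isometry. You also spell out a step the paper takes for granted: an $L^2$ function with $D_\eta^*f=0$ in the distributional sense is a multiple of $S_\eta$.

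The paper's route produces the operator $A_\eta$ itself, which is reused right afterwards to write $\widetilde{\mathcal F}=\mathcal F A_\eta^*$ and $\widetilde{\mathcal F}^{-1}=A_\eta\mathcal F^{-1}$, giving unitarity of the distorted Fourier transform. Your route is more elementary and self-contained for the lemma alone. The aborted bullet about the weak formulation for $f\in\ker\bar P$ plays no role and can be deleted.
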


\begin{proof}
Since $\widehat{H_\eta^{-1/2}f}(\xi)=\frac{\widehat f(\xi)}{\langle\xi\rangle_\mu}$, the operator $H_\eta^{-1/2}$ is an isomorphism from $L^2(\mathbb R)$ onto $H^1(\mathbb R)$. Set $A_\eta=D_\eta H_\eta^{-1/2}.$ For every $u\in H^1(\mathbb R)$, a direct integration by parts gives $$\|D_\eta u\|_{L^2}^2=\|u_x\|_{L^2}^2+\mu^2\|u\|_{L^2}^2.$$
Plancherel's theorem gives $\|A_\eta f\|_{L^2}=\|f\|_{L^2}, \ f\in L^2(\mathbb R).$ Thus $A_\eta$ is an isometry and $\operatorname{Ran}A_\eta$ is closed. Since $H_\eta^{-1/2}$ maps $L^2(\mathbb R)$ onto $H^1(\mathbb R)$,
$\operatorname{Ran}A_\eta=\operatorname{Ran}D_\eta.$ Together with $\overline{\operatorname{Ran}D_\eta}=(\ker D_\eta^*)^\perp=S_\eta^\perp,$ this yields $\operatorname{Ran}A_\eta=S_\eta^\perp.$ Therefore $A_\eta A_\eta^*=P_e.$
Moreover, on $H^1(\mathbb R)$, $A_\eta^*=H_\eta^{-1/2}D_\eta^*,$ and hence $A_\eta A_\eta^*=D_\eta H_\eta^{-1}D_\eta^*.$ Thus $D_\eta H_\eta^{-1}D_\eta^*$ extends uniquely to $P_e$ on $L^2(\mathbb R)$.
\end{proof}

\begin{rem}
Lemma~\ref{Detapro} and $\ker D_\eta=\{0\}$ yield $D_\eta:H^1(\mathbb R)\to S_\eta^\perp$ is bijective. We write $D_\eta^{-1}:S_\eta^\perp\to H^1(\mathbb R)$ for its inverse. 
Therefore $$D_\eta^{-1}P_e=H_\eta^{-1}D_\eta^*
\quad\text{on }H^1(\mathbb R),$$
and on $L^2(\mathbb R)$ this identity is understood by bounded continuous extension.
\end{rem}

Combining Lemma~\ref{Detapro} with \eqref{dftdbx}, we immediately obtain the following
representation of the distorted Fourier transform in terms of $D_\eta^{-1}$.

\begin{cor}
\label{cdfi}
For every $f\in H^1(\mathbb R)$, $(\widetilde{\mathcal F}f)(\xi) = \langle\xi\rangle_\mu\mathcal{F}(D_\eta^{-1}P_ef)(\xi).$
\end{cor}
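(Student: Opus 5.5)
The plan is to combine \eqref{dftdbx} of Proposition~\ref{disfour} with the identity $D_\eta^{-1}P_e=H_\eta^{-1}D_\eta^*$ from the remark following Lemma~\ref{Detapro}. Throughout, $f\in H^1(\mathbb R)$.

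\emph{Step 1.} Since $f\in H^1$, we have $D_\eta^*f\in L^2$ and $H_\eta^{-1}D_\eta^*f\in H^1$. By the remark, $D_\eta^{-1}P_ef=H_\eta^{-1}D_\eta^*f$.

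\emph{Step 2.} Compute the flat Fourier transform. Because $H_\eta=-\partial_x^2+\mu^2$ is a Fourier multiplier with symbol $\langle\xi\rangle_\mu^2$, we get
\[
\mathcal F\bigl(H_\eta^{-1}D_\eta^*f\bigr)(\xi)=\frac{\widehat{D_\eta^*f}(\xi)}{\langle\xi\rangle_\mu^2}.
\]

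\emph{Step 3.} Multiply by $\langle\xi\rangle_\mu$. This gives $\widehat{D_\eta^*f}(\xi)/\langle\xi\rangle_\mu$, which equals $\widetilde f(\xi)$ by \eqref{dftdbx}.

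The only point needing care is the meaning of $D_\eta^{-1}P_ef$, namely that the remark's identity holds on $H^1$ without relying on the extension. This follows from Lemma~\ref{Detapro}: $D_\eta H_\eta^{-1}D_\eta^*f=P_ef$ together with injectivity of $D_\eta$. The identity \eqref{dftdbx} itself is taken as given; it follows by integrating by parts against $\overline{\psi_\eta}$, using $\overline{\psi_\eta(x,\xi)}=-\frac{1}{\sqrt{2\pi}}\langle\xi\rangle_\mu^{-1}D_\eta\bigl(e^{-ix\xi}\bigr)$. I expect no real obstacle here.
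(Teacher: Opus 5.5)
Your argument is correct and is exactly the paper's route: the identity $D_\eta^{-1}P_ef=H_\eta^{-1}D_\eta^*f$ on $H^1$ (from Lemma~\ref{Detapro} and injectivity of $D_\eta$), combined with the multiplier $\langle\xi\rangle_\mu^{-2}$ of $H_\eta^{-1}$ and \eqref{dftdbx}. One small sign slip in your aside on \eqref{dftdbx}: since $D_\eta(e^{-ix\xi})=-(i\xi+\mu T_\eta)e^{-ix\xi}$, one has $\overline{\psi_\eta(x,\xi)}=+\frac{1}{\sqrt{2\pi}}\langle\xi\rangle_\mu^{-1}D_\eta(e^{-ix\xi})$, and with your minus sign the integration by parts would give $-\widehat{D_\eta^*f}/\langle\xi\rangle_\mu$ instead of \eqref{dftdbx}.
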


\begin{prop}
The distorted Fourier transform satisfies $\widetilde{\mathcal F}\widetilde{\mathcal F}^{-1} = I \ \text{on }L^2(\mathbb R), \ \widetilde{\mathcal F}^{-1}\widetilde{\mathcal F} = P_e \ \text{on }L^2(\mathbb R).$ Moreover, $\widetilde{\mathcal F}$ is unitary from $S_\eta^\perp$ onto $L^2(\mathbb R)$.
\end{prop}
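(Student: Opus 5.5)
The plan is to reduce everything to the flat Fourier transform through the factorization already established. By \eqref{dftdbx}, for $f\in H^1$ we have $\widetilde{\mathcal F}f=\langle\xi\rangle_\mu^{-1}\widehat{D_\eta^*f}$. Writing this as $\widetilde{\mathcal F}=\mathcal F\circ H_\eta^{-1/2}D_\eta^*=\mathcal F A_\eta^*$, with $A_\eta=D_\eta H_\eta^{-1/2}$ the isometry from the proof of Lemma~\ref{Detapro}, reduces the statement to operator identities for $A_\eta$.

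Dually, for $h\in\mathcal S(\mathbb R)$, \eqref{gee} gives
\begin{equation*}
\psi_\eta(x,\xi)=\frac{1}{\sqrt{2\pi}}\langle\xi\rangle_\mu^{-1}D_\eta\bigl(e^{ix\xi}\bigr),
\end{equation*}
since $D_\eta e^{ix\xi}=(i\xi-\mu T_\eta)e^{ix\xi}$. Hence
\begin{equation*}
\widetilde{\mathcal F}^{-1}h=D_\eta\,\mathcal F^{-1}\bigl(\langle\xi\rangle_\mu^{-1}h\bigr)=D_\eta H_\eta^{-1/2}\mathcal F^{-1}h=A_\eta\mathcal F^{-1}h.
\end{equation*}
Interchanging $D_\eta$ with the $\xi$-integral is harmless for Schwartz $h$. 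Because $A_\eta$ is an isometry of $L^2$, $\widetilde{\mathcal F}^{-1}$ extends to an isometry $L^2_\xi\to L^2_x$ with range $\operatorname{Ran}A_\eta=S_\eta^\perp$, both facts proved in Lemma~\ref{Detapro}.

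For $\widetilde{\mathcal F}$ itself, on $H^1$ we have $\widetilde{\mathcal F}=\mathcal F A_\eta^*$. Since $A_\eta^*$ is bounded on $L^2$, this extends by density to $L^2$, and the extension agrees with \eqref{dF} on $L^1\cap L^2$. The identities then follow directly:
\begin{equation*}
\widetilde{\mathcal F}\widetilde{\mathcal F}^{-1}=\mathcal F A_\eta^*A_\eta\mathcal F^{-1}=I,
\end{equation*}
because $A_\eta$ is an isometry, so $A_\eta^*A_\eta=I$. Likewise
\begin{equation*}
\widetilde{\mathcal F}^{-1}\widetilde{\mathcal F}=A_\eta\mathcal F^{-1}\mathcal F A_\eta^*=A_\eta A_\eta^*=P_e
\end{equation*}
by Lemma~\ref{Detapro}. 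Unitarity from $S_\eta^\perp$ onto $L^2$ follows. For $f\in S_\eta^\perp$ we have $\|\widetilde{\mathcal F}f\|=\|A_\eta^*f\|=\|A_\eta A_\eta^*f\|=\|P_ef\|=\|f\|$, so $\widetilde{\mathcal F}$ is isometric there. Surjectivity comes from $\widetilde{\mathcal F}\widetilde{\mathcal F}^{-1}=I$, since $\widetilde{\mathcal F}^{-1}$ maps into $S_\eta^\perp$.

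The main care point is justifying the extensions. The formulas \eqref{dF} and \eqref{indF} are a priori defined only on Schwartz functions, or on $L^1$. We must check that the $L^2$ extensions are the ones built from $A_\eta$ and $A_\eta^*$, and that the adjoint relation $\langle\widetilde{\mathcal F}f,h\rangle=\langle f,\widetilde{\mathcal F}^{-1}h\rangle$ holds. This is Fubini for Schwartz $f,h$, using that $\psi_\eta$ is bounded, and both sides are continuous in $L^2$.
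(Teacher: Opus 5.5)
Your proposal is correct and follows the paper's proof essentially verbatim: both use the factorizations $\widetilde{\mathcal F}=\mathcal F A_\eta^*$ (from \eqref{dftdbx} plus density) and $\widetilde{\mathcal F}^{-1}=A_\eta\mathcal F^{-1}$ (checked on Schwartz functions), and then deduce both identities from $A_\eta^*A_\eta=I$ and $A_\eta A_\eta^*=P_e$ (Lemma~\ref{Detapro}). Your explicit isometry-plus-surjectivity argument for unitarity on $S_\eta^\perp$, and your remark on the adjoint relation, only spell out details that the paper states in one line.
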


\begin{proof}
For $f\in H^1(\mathbb R)$, \eqref{dftdbx} gives $\widetilde{f} = \mathcal{F}(H_\eta^{-1/2}D_\eta^*f) = \mathcal F A_\eta^*f.$ A density argument shows that this identity extends to all $f\in L^2(\mathbb R)$, and hence $\widetilde{\mathcal F} = \mathcal F A_\eta^*.$ On the other hand, for $h\in\mathcal S(\mathbb R)$, using
\eqref{indF} and $A_\eta=D_\eta H_\eta^{-1/2}$, we obtain
\begin{align*}
(A_\eta\mathcal F^{-1}h)(x)
&=D_\eta H_\eta^{-1/2}\mathcal F^{-1}h(x)\\
&=\frac{1}{\sqrt{2\pi}}\int_{\mathbb R}\frac{i\xi-\mu T_\eta(x)}{\langle\xi\rangle_\mu}e^{ix\xi}h(\xi)\,\mathrm d\xi\\
&= \int_{\mathbb R}\psi_\eta(x,\xi)h(\xi)\,\mathrm d\xi\\
&= (\widetilde{\mathcal F}^{-1}h)(x).
\end{align*}
Therefore, by density, $\widetilde{\mathcal F}^{-1}=A_\eta\mathcal F^{-1}.$ Since $A_\eta^*A_\eta=I,\quad A_\eta A_\eta^*=P_e,$ and $\mathcal F$ is unitary on $L^2(\mathbb R)$, we obtain
$$\widetilde{\mathcal F}\widetilde{\mathcal F}^{-1}=\mathcal F A_\eta^*A_\eta\mathcal F^{-1}=I,\quad
\widetilde{\mathcal F}^{-1}\widetilde{\mathcal F}=A_\eta\mathcal F^{-1}\mathcal F A_\eta^*=A_\eta A_\eta^*=P_e.$$
Finally, $A_\eta^*$ is unitary from $S_\eta^\perp$ onto $L^2(\mathbb R)$, and $\mathcal F$ is unitary on $L^2(\mathbb R)$. Therefore $\widetilde{\mathcal F}=\mathcal F A_\eta^*$ is unitary from $S_\eta^\perp$ onto $L^2(\mathbb R)$.
\end{proof}

\subsection{Linear estimates}
In this subsection, we collect the linear estimates that will be used repeatedly in the nonlinear analysis below. 
The following stationary phase estimate follows from the argument in \cite[Lemma~2.2]{HN1998}. We include a proof for readers' convenience. 
\begin{lem}[\cite{HN1998}]
\label{Stapha}
 Let $f\in H^{0,1}(\mathbb R)$.
 \begin{itemize}
\item For $t\geq1$, there holds 
$$e^{it\partial_x^2}f(x)=\frac{e^{ix^2/(4t)-i\pi/4}}{\sqrt{2t}}\widehat f\left(\frac{x}{2t}\right)+R(t,x),\qquad
\|R(t)\|_{L^\infty_x}\lesssim t^{-3/4}\|xf\|_{L^2_x}.$$
\item For $t\geq1$, we have
\begin{equation}
\label{staphase}
\|e^{it\partial_x^2}f\|_{L^\infty_x}\lesssim t^{-1/2}\|\widehat f\|_{L^\infty_\xi}+t^{-\frac{3}{4}}\|xf\|_{L^2_x}.
\end{equation}
\end{itemize}
\end{lem}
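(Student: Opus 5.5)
We will use the factorization of the free Schrödinger group that is implicit in \eqref{freesch}. With the convention $e^{it\partial_x^2}f(x)=\frac{1}{\sqrt{2\pi}}\int e^{ix\xi-it\xi^2}\widehat f(\xi)\,{\rm d}\xi$, completing the square gives
\begin{equation*}
e^{it\partial_x^2}f(x)=\frac{e^{ix^2/(4t)-i\pi/4}}{\sqrt{2t}}\,\mathcal F\Bigl[e^{iy^2/(4t)}f(y)\Bigr]\Bigl(\frac{x}{2t}\Bigr).
\end{equation*}
This is \eqref{freesch} with the factor $e^{-i\mu^2t}$ removed, since $H_\eta=-\partial_x^2+\mu^2$. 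Subtracting the claimed leading term, the remainder is exactly
\begin{equation*}
R(t,x)=\frac{e^{ix^2/(4t)-i\pi/4}}{\sqrt{2t}}\,\mathcal F\Bigl[\bigl(e^{iy^2/(4t)}-1\bigr)f(y)\Bigr]\Bigl(\frac{x}{2t}\Bigr).
\end{equation*}
The whole problem therefore reduces to proving
\begin{equation*}
\Bigl\|\mathcal F\bigl[(e^{iy^2/(4t)}-1)f\bigr]\Bigr\|_{L^\infty_\xi}\lesssim t^{-1/4}\|xf\|_{L^2}.
\end{equation*}

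To prove this, bound the flat Fourier transform in $L^\infty$ by the $L^1$ norm of the function, as in \eqref{001} with the flat kernel. Then use the elementary inequality
\begin{equation*}
|e^{i\theta}-1|\le \min(2,|\theta|)\le 2|\theta|^{1/2}\quad\text{with }\theta=\frac{y^2}{4t}.
\end{equation*}
This gives $|e^{iy^2/(4t)}-1|\lesssim t^{-1/2}|y|$, which is too lossy if used alone, because $\|yf\|_{L^1}$ is not controlled. So we interpolate once more: use $|e^{i\theta}-1|\lesssim |\theta|^{\beta}$ with $\beta=1/4$, which gives the pointwise bound $t^{-1/4}|y|^{1/2}|f(y)|$. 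By Cauchy--Schwarz,
\begin{equation*}
\int |y|^{1/2}|f(y)|\,{\rm d}y=\int |y|^{-1/2}\cdot|y||f(y)|\,{\rm d}y .
\end{equation*}
This fails because $|y|^{-1/2}\notin L^2$ both near the origin and at infinity, so the weight has to be split.

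We split at $|y|=1$ and use the full cutoff. For $|y|\le 1$ we use $|e^{i\theta}-1|\le|\theta|$ together with $\|f\|_{L^1(|y|\le1)}\lesssim\|f\|_{L^2}$. This still needs $\|f\|_{L^2}$, and we want only $\|xf\|_{L^2}$. To avoid it, we instead split at $|y|=\sqrt t$. On $|y|\le\sqrt t$ we use $|\theta|\le y^2/(4t)$ and Cauchy--Schwarz with the weight $y\mathbf 1_{|y|\le\sqrt t}$:
\begin{equation*}
t^{-1}\int_{|y|\le\sqrt t}|y|\cdot|y f|\,{\rm d}y\le t^{-1}\bigl(t^{3/2}\bigr)^{1/2}\|yf\|_{L^2}=t^{-1/4}\|yf\|_{L^2}.
\end{equation*}
On $|y|\ge\sqrt t$ we use the bound $2$ and Cauchy--Schwarz:
\begin{equation*}
\int_{|y|\ge\sqrt t}|y|^{-1}|yf|\,{\rm d}y\lesssim\Bigl(\int_{|y|\ge\sqrt t}y^{-2}\,{\rm d}y\Bigr)^{1/2}\|yf\|_{L^2}=t^{-1/4}\|yf\|_{L^2}.
\end{equation*}
Both pieces give $t^{-1/4}\|xf\|_{L^2}$. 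Multiplying by the prefactor $(2t)^{-1/2}$ yields $\|R(t)\|_{L^\infty}\lesssim t^{-3/4}\|xf\|_{L^2}$.

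The second bullet, \eqref{staphase}, follows immediately from the triangle inequality: the leading term is bounded by $(2t)^{-1/2}\|\widehat f\|_{L^\infty}$. The only real point is choosing the split at $|y|=\sqrt t$; the rest is routine. We should also check the sign conventions, namely that $e^{it\partial_x^2}=e^{-it(H_\eta-\mu^2)}$ matches \eqref{freesch}. A density argument from $\mathcal S$ to $H^{0,1}$ then finishes the proof, since all the bounds are in terms of $\|xf\|_{L^2}$ and $\|\widehat f\|_{L^\infty}\lesssim\|f\|_{L^1}\lesssim\|f\|_{H^{0,1}}$.
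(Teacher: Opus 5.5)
Your proof is correct and is essentially the paper's argument. Both proofs write the remainder via \eqref{freesch} as $(2t)^{-1/2}e^{ix^2/(4t)-i\pi/4}\mathcal F[(e^{iy^2/(4t)}-1)f](x/(2t))$, bound the Fourier transform by the $L^1$ norm, and gain $t^{-1/4}\|xf\|_{L^2}$ from Cauchy--Schwarz against $|yf|$. The only cosmetic difference is the order of operations: the paper applies Cauchy--Schwarz once with weight $|e^{iy^2/(4t)}-1|/|y|$ and then rescales $y=\sqrt t\,z$ (splitting at $|z|=1$), whereas you split at $|y|=\sqrt t$ first, which is the same computation.
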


\begin{proof}
A density argument reduces the proof to $f\in\mathcal S(\mathbb R)$. (\ref{freesch}) gives the following representation for the remainder:
$$R(t,x)=\frac{e^{ix^2/(4t)-i\pi/4}}{\sqrt{2t}}\mathcal F\left[\bigl(e^{iy^2/(4t)}-1\bigr)f(y)\right]\left(\frac{x}{2t}\right).$$
To estimate the remainder, we first note that
\begin{align*}
\int_{\mathbb R}\frac{\left|e^{iy^2/(4t)}-1\right|^2}{y^2}{\rm d}y
&=t^{-1/2}\int_{\mathbb R}\frac{\left|e^{iz^2/4}-1\right|^2}{z^2}{\rm d}z\\
&\lesssim t^{-1/2}\left(\int_{|z|\leq1}z^2\,{\rm d}z+\int_{|z|\geq1}\frac{1}{z^2}{\rm d}z\right)\\
&\lesssim t^{-1/2},
\end{align*}
where we used $|e^{i\theta}-1|\lesssim\min\{|\theta|,1\}.$ Therefore, by the $L^1_x\to L^\infty_\xi$ bound for the Fourier transform and the Cauchy--Schwarz inequality, we have 
\begin{align*}
\|R(t)\|_{L^\infty_x}
&\lesssim t^{-1/2}\left\|\mathcal F\left[\bigl(e^{iy^2/(4t)}-1\bigr)f(y)\right]\right\|_{L^\infty_\xi}\\
&\lesssim t^{-1/2}\int_{\mathbb R}\left|e^{iy^2/(4t)}-1\right||f(y)|{\rm d}y\\
&\leq t^{-1/2}\left(\int_{\mathbb R}\frac{\left|e^{iy^2/(4t)}-1\right|^2}{y^2}{\rm d}y\right)^{1/2}\|yf\|_{L^2_y}\\
&\lesssim t^{-3/4}\|xf\|_{L^2_x},
\end{align*}
from which the rest (\ref{staphase})   follows directly. 
\end{proof}

\begin{cor}
\label{restaph}
For $f\in H^{0,1}(\mathbb R)$ and $t\geq1$, we have
\begin{equation}
\label{reapp}
\|\widehat f\|_{L^\infty_\xi}\lesssim t^{1/2}\|e^{it\partial_x^2}f\|_{L^\infty_x}+t^{-1/4}\|xf\|_{L^2_x}.
\end{equation}
\end{cor}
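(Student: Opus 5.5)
The bound \eqref{reapp} follows by inverting the stationary phase expansion of Lemma~\ref{Stapha}. Apply the first bullet of Lemma~\ref{Stapha} to $f$ at time $t\ge1$:
\begin{equation*}
\frac{e^{ix^2/(4t)-i\pi/4}}{\sqrt{2t}}\widehat f\Bigl(\frac{x}{2t}\Bigr)=e^{it\partial_x^2}f(x)-R(t,x),
\qquad \|R(t)\|_{L^\infty_x}\lesssim t^{-3/4}\|xf\|_{L^2_x}.
\end{equation*}
The modulus of the prefactor on the left is exactly $(2t)^{-1/2}$. Taking absolute values and the supremum over $x\in\mathbb R$ therefore gives
\begin{equation*}
(2t)^{-1/2}\sup_{x\in\mathbb R}\Bigl|\widehat f\Bigl(\frac{x}{2t}\Bigr)\Bigr|\le \|e^{it\partial_x^2}f\|_{L^\infty_x}+C t^{-3/4}\|xf\|_{L^2_x}.
\end{equation*}

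For fixed $t>0$ the map $x\mapsto x/(2t)$ is a bijection of $\mathbb R$. Hence the supremum on the left equals $\|\widehat f\|_{L^\infty_\xi}$. Multiplying through by $\sqrt{2t}$ yields
\begin{equation*}
\|\widehat f\|_{L^\infty_\xi}\lesssim t^{1/2}\|e^{it\partial_x^2}f\|_{L^\infty_x}+t^{1/2-3/4}\|xf\|_{L^2_x},
\end{equation*}
which is \eqref{reapp}, since $1/2-3/4=-1/4$.

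It remains to make sense of the $L^\infty$ norm of $\widehat f$. For $f\in H^{0,1}$, Cauchy--Schwarz gives $f\in L^1$, because
\begin{equation*}
\|f\|_{L^1}\le \|\langle x\rangle^{-1}\|_{L^2}\,\|\langle x\rangle f\|_{L^2}.
\end{equation*}
Thus $\widehat f$ is continuous and bounded, and the supremum is a genuine $L^\infty$ norm. Lemma~\ref{Stapha} was already established on all of $H^{0,1}$ by density, so no further approximation argument is needed. The argument has no real obstacle: the only point to check is that the remainder bound in Lemma~\ref{Stapha} is uniform in $x$, which it is.
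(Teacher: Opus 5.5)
Your proposal is correct and is essentially the paper's own proof: evaluate the expansion of Lemma~\ref{Stapha} at $x=2t\xi$, use that the prefactor has modulus $(2t)^{-1/2}$, then multiply by $t^{1/2}$ and take the supremum over $\xi$. Your added remark that $f\in H^{0,1}\subset L^1$ makes $\widehat f$ bounded and continuous is a harmless extra justification that the paper leaves implicit.
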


\begin{proof}
Evaluating the expansion in Lemma~\ref{Stapha} at $x=2t\xi$ gives
$$\frac{1}{\sqrt{2t}}|\widehat f(\xi)|\lesssim \|e^{it\partial_x^2}f\|_{L^\infty_x}+t^{-3/4}\|xf\|_{L^2_x}.$$
Multiplying by $t^{1/2}$ and taking the supremum over $\xi$ gives \eqref{reapp}.
\end{proof}

\begin{lem}\label{Deliest}
Let $\eta\in (-1,1)$ and $m\geq0$. If $f\in H^m(\mathbb R)$, then 
\begin{equation}
\label{HetaHm}
\|D_\eta H_\eta^{-1}f\|_{H^{m+1}}\lesssim \|f\|_{H^m}.
\end{equation}
The corresponding estimate also holds in $W^{m,\infty}:$ if $f\in W^{m,\infty}$, then
\begin{equation}
\label{DeHe}
\|D_\eta H_\eta^{-1}f\|_{W^{m+1,\infty}}\lesssim\|f\|_{W^{m,\infty}}.
\end{equation}
\end{lem}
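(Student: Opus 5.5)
Since $H_\eta=-\partial_x^2+\mu^2$ is a constant coefficient operator, $H_\eta^{-1}$ is the Fourier multiplier $\langle\xi\rangle_\mu^{-2}$. Write
$$D_\eta H_\eta^{-1}f=\partial_xH_\eta^{-1}f-\mu T_\eta H_\eta^{-1}f .$$
The plan is to estimate the two pieces separately. The first is a constant coefficient operator of order $-1$. The second is the product of the smooth bounded function $T_\eta$ with an operator of order $-2$.

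\emph{The $H^m$ estimate \eqref{HetaHm}.} By Plancherel,
$$\|\partial_xH_\eta^{-1}f\|_{H^{m+1}}\lesssim \big\|\langle\xi\rangle^{m+1}|\xi|\langle\xi\rangle_\mu^{-2}\widehat f\big\|_{L^2}\lesssim\|f\|_{H^m},$$
because $\langle\xi\rangle_\mu\simeq_\mu\langle\xi\rangle$. For the second piece, $H_\eta^{-1}f\in H^{m+2}$. Every derivative $\partial_x^kT_\eta$, $k\ge0$, is bounded: $T_\eta$ is $\tanh(\mu x)$, and its derivatives are polynomials in $S_\eta,T_\eta$. The Leibniz rule then gives
$$\|T_\eta g\|_{H^{m+1}}\lesssim\|g\|_{H^{m+1}}\le\|g\|_{H^{m+2}}\lesssim\|f\|_{H^m},\qquad g=H_\eta^{-1}f.$$
For non-integer $m$ one interpolates, or uses that multiplication by a $C_b^\infty$ function is bounded on every $H^s$.

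\emph{The $W^{m,\infty}$ estimate \eqref{DeHe}.} Here I would work in physical space with the explicit kernel
$$H_\eta^{-1}f=K*f,\qquad K(x)=\frac{1}{2\mu}e^{-\mu|x|}\in L^1 .$$
Then $\partial_x(K*f)=K'*f$ with $K'(x)=-\tfrac12\operatorname{sgn}(x)e^{-\mu|x|}\in L^1$, and $\partial_x$ commutes with the convolution. Hence for $k\le m$,
$$\|\partial_x^k(K*f)\|_{L^\infty}+\|\partial_x^{k+1}(K*f)\|_{L^\infty}\lesssim\|\partial_x^kf\|_{L^\infty}.$$
For the top order derivatives I would use the equation itself:
$$\partial_x^2(K*f)=\mu^2K*f-f,$$
so
$$\partial_x^{m+2}(K*f)=\partial_x^m\bigl(\mu^2K*f-f\bigr)\in L^\infty .$$
Therefore $\|H_\eta^{-1}f\|_{W^{m+2,\infty}}\lesssim\|f\|_{W^{m,\infty}}$. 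Applying $D_\eta$, which costs one derivative and has coefficients in $C_b^\infty$, gives \eqref{DeHe} by the Leibniz rule.

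The only mild point is the top derivative in $L^\infty$. There the Fourier-multiplier argument fails, because $\partial_x^2H_\eta^{-1}$ is not given by an $L^1$ kernel. The identity $\partial_x^2H_\eta^{-1}=\mu^2H_\eta^{-1}-I$ handles this directly. I expect the rest to be routine, with constants depending only on $\mu$ and $m$.
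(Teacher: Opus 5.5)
Your proposal is correct and follows essentially the paper's route. For the $H^m$ bound both use Plancherel on $\partial_xH_\eta^{-1}$ and the Leibniz rule on $T_\eta H_\eta^{-1}f$. For the $W^{m,\infty}$ bound both use the Green's function $\frac{1}{2\mu}e^{-\mu|x|}$ together with the identity $\partial_x^2H_\eta^{-1}=\mu^2H_\eta^{-1}-I$. The only cosmetic difference is in the $W^{m,\infty}$ part: you commute derivatives through the convolution to reach order $m+1$ before invoking the equation, whereas the paper bounds only $u$ and $u_x$ from the kernel and then inducts with the equation for all orders $r\ge2$.
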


\begin{proof}
The Fourier multiplier formula gives $\mathcal {F}(\partial_xH_\eta^{-1}f)(\xi)=\frac{i\xi}{\xi^2+\mu^2}\widehat{f}(\xi).$ Therefore,
$$\|\partial_xH_\eta^{-1}f\|_{H^{m+1}}^2=\int_{\mathbb R}\langle\xi\rangle^{2m+2}\frac{\xi^2}{(\xi^2+\mu^2)^2}|\widehat f(\xi)|^2\,d\xi=\int_{\mathbb R}\left(\frac{\langle\xi\rangle|\xi|}{\xi^2+\mu^2}\right)^2\langle\xi\rangle^{2m}|\widehat f(\xi)|^2\,d\xi.$$
Thus we obtain
$$\|\partial_xH_\eta^{-1}f\|_{H^{m+1}}\le \sup_{\xi\in\mathbb R}\frac{\langle\xi\rangle|\xi|}{\xi^2+\mu^2}\,\|f\|_{H^m}\lesssim \|f\|_{H^m}.$$
Similarly, the Leibniz rule gives
$$\| T_\eta H_\eta^{-1}f\|_{H^{m+1}}\lesssim \|H_\eta^{-1}f\|_{H^{m+1}}\\
\lesssim \|f\|_{H^m}
$$
and \eqref{HetaHm} follows.

For the $W^{m,\infty}$ estimate, set $u=H_\eta^{-1}f$. Since $(-\partial_x^2+\mu^2)\bigl(e^{-\mu|\cdot|}/(2\mu)\bigr)=\delta_0$ in the distributional sense, we have
\begin{equation}\label{Green}
u=\frac{1}{2\mu}e^{-\mu|\cdot|}*f .
\end{equation}
It follows that $\|u\|_{L^\infty}\leq\mu^{-2}\|f\|_{L^\infty},$ $\|u_x\|_{L^\infty}\leq\mu^{-1}\|f\|_{L^\infty}.$

For $r\geq2$, the equation $-u_{xx}+\mu^2u=f$ gives $\partial_x^ru=\mu^2\partial_x^{r-2}u-\partial_x^{r-2}f$. Induction on $r$ therefore yields $\|\partial_x^ru\|_{L^\infty}\lesssim\|f\|_{W^{m,\infty}}.$ For $0\leq j\leq m+1$, the Leibniz rule gives
$$\partial_x^j(D_\eta u)=\partial_x^{j+1}u-\mu\sum_{a=0}^jC_j^a(\partial_x^aT_\eta)\partial_x^{j-a}u.$$
Combining these estimates yields \eqref{DeHe}.
\end{proof}

\begin{lem}\label{weiDeta}
Let $\eta\in (-1, 1)$. Assume that $S_\eta^{1/2}f\in L^\infty(\mathbb R)$. Then
$$\|S_\eta D_\eta H_\eta^{-1}f\|_{L^2}+\|S_\eta\partial_xD_\eta H_\eta^{-1}f\|_{L^2}\lesssim\|S_\eta^{1/2}f\|_{L^\infty}.$$
\end{lem}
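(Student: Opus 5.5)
We use the explicit Green's function \eqref{Green}. Set $u=H_\eta^{-1}f=\frac{1}{2\mu}e^{-\mu|\cdot|}*f$. Then $D_\eta u=u_x-\mu T_\eta u$ and
\[
\partial_xD_\eta u=u_{xx}-\mu T_\eta u_x-\mu^2S_\eta^2u=\mu^2u-f-\mu T_\eta u_x-\mu^2S_\eta^2u,
\]
where we used $-u_{xx}+\mu^2u=f$. Since $|T_\eta|\le1$ and $S_\eta\le1$, it suffices to prove the four weighted bounds
\[
\|S_\eta u\|_{L^2}+\|S_\eta u_x\|_{L^2}+\|S_\eta f\|_{L^2}\lesssim\|S_\eta^{1/2}f\|_{L^\infty}.
\]

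The term $S_\eta f$ is immediate. Writing $S_\eta f=S_\eta^{1/2}\cdot S_\eta^{1/2}f$ gives
\[
\|S_\eta f\|_{L^2}\le\|S_\eta^{1/2}\|_{L^2}\,\|S_\eta^{1/2}f\|_{L^\infty},
\]
and $S_\eta^{1/2}=\operatorname{sech}^{1/2}(\mu x)\lesssim e^{-\mu|x|/2}$ is square integrable.

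For $u$ and $u_x$, the kernels are $\frac{1}{2\mu}e^{-\mu|x-y|}$ and $\mp\frac12e^{-\mu|x-y|}$ respectively, so both are pointwise bounded by
\[
|u(x)|+|u_x(x)|\lesssim\int_{\mathbb R}e^{-\mu|x-y|}|f(y)|\,dy\le \|S_\eta^{1/2}f\|_{L^\infty}\int_{\mathbb R}e^{-\mu|x-y|}S_\eta^{-1/2}(y)\,dy.
\]
The main (and essentially only) point is to bound this last convolution. Using $S_\eta^{-1/2}(y)\lesssim e^{\mu|y|/2}$ and $|y|\le|x|+|x-y|$,
\[
e^{-\mu|x-y|}e^{\mu|y|/2}\le e^{\mu|x|/2}e^{-\mu|x-y|/2},
\]
so the integral is $\lesssim e^{\mu|x|/2}\int e^{-\mu|z|/2}dz\lesssim e^{\mu|x|/2}$. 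The growth of the weight is thus exactly compensated by the decay of the Green's function, at the cost of half the exponent. Hence
\[
S_\eta(x)\bigl(|u(x)|+|u_x(x)|\bigr)\lesssim e^{-\mu|x|}e^{\mu|x|/2}\|S_\eta^{1/2}f\|_{L^\infty}=e^{-\mu|x|/2}\|S_\eta^{1/2}f\|_{L^\infty},
\]
which lies in $L^2$.

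Combining these pieces with the decomposition of $D_\eta u$ and $\partial_xD_\eta u$ above gives the claim. One should also justify that $u$ is well defined under the mere hypothesis $S_\eta^{1/2}f\in L^\infty$, since $f$ may grow like $e^{\mu|x|/2}$. The convolution with $e^{-\mu|\cdot|}$ converges absolutely by the same computation, and $D_\eta H_\eta^{-1}f$ is interpreted via this formula.
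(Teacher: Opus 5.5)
Your proof is correct and follows the paper's argument essentially step for step: you use the Green's function representation, you obtain the pointwise bound $S_\eta(|u|+|u_x|)\lesssim e^{-\mu|x|/2}\|S_\eta^{1/2}f\|_{L^\infty}$ by giving up half of the kernel's exponential decay to absorb the growth of $S_\eta^{-1/2}$, and you combine the identity $\partial_xD_\eta u=\mu^2u-f-\mu^2S_\eta^2u-\mu T_\eta u_x$ with $\|S_\eta^{1/2}\|_{L^2}<\infty$. Your remark that $H_\eta^{-1}f$ has to be understood through the convolution formula, since $f$ is only controlled by $S_\eta^{-1/2}$, makes explicit a point the paper leaves implicit.
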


\begin{proof}
The Green function representation \eqref{Green} gives
\begin{equation*}
H_\eta^{-1}f(x)=\frac1{2\mu}\int_{\mathbb R}e^{-\mu|x-s|}f(s)\,\mathrm ds .
\end{equation*}
Hence, 
\begin{equation*}
\partial_xH_\eta^{-1}f(x)=-\frac12\int_{\mathbb R}\operatorname{sgn}(x-s)e^{-\mu|x-s|}f(s)\,\mathrm ds .
\end{equation*}
and therefore
\begin{equation}\label{ewrp}
\begin{aligned}
S_\eta(x)|H_\eta^{-1}f(x)|
&\lesssim \|S_\eta^{1/2}f\|_{L^\infty}S_\eta^{1/2}(x)\int_{\mathbb R}e^{-\frac\mu2|x-s|}\,\mathrm ds
\lesssim S_\eta^{1/2}(x)\|S_\eta^{1/2}f\|_{L^\infty},\\
S_\eta(x)|\partial_xH_\eta^{-1}f(x)|
&\lesssim\|S_\eta^{1/2}f\|_{L^\infty}S_\eta^{1/2}(x)\int_{\mathbb R}e^{-\frac\mu2|x-s|}\,\mathrm ds
\lesssim S_\eta^{1/2}(x)\|S_\eta^{1/2}f\|_{L^\infty}.
\end{aligned}
\end{equation}
Hence
\begin{align*}
\|S_\eta D_\eta H_\eta^{-1}f\|_{L^2}\lesssim\|S_\eta^{1/2}f\|_{L^\infty}.
\end{align*}
Differentiating $D_\eta H_\eta^{-1}f$ and using \eqref{Tederi}, we obtain
\begin{align*}
\partial_xD_\eta H_\eta^{-1}f
&=\partial_x^2H_\eta^{-1}f-\mu(\partial_xT_\eta)H_\eta^{-1}f-\mu T_\eta\partial_xH_\eta^{-1}f\\
&=-f+\mu^2H_\eta^{-1}f-\mu^2S_\eta^2H_\eta^{-1}f-\mu T_\eta\partial_xH_\eta^{-1}f.
\end{align*}
Thus, by \eqref{ewrp}, \eqref{STide},
\begin{align*}
S_\eta|\partial_xD_\eta H_\eta^{-1}f|
&\leq S_\eta|f|+\mu^2S_\eta|H_\eta^{-1}f|+\mu^2S_\eta^3|H_\eta^{-1}f|+\mu S_\eta|\partial_xH_\eta^{-1}f|\\
&\lesssim S_\eta^{1/2}\|S_\eta^{1/2}f\|_{L^\infty}.
\end{align*}
Therefore,
\begin{align*}
\|S_\eta\partial_xD_\eta H_\eta^{-1}f\|_{L^2}
&\lesssim \|S_\eta^{1/2}\|_{L^2}\|S_\eta^{1/2}f\|_{L^\infty}
\lesssim \|S_\eta^{1/2}f\|_{L^\infty}.
\end{align*}
Then the lemma follows by combining the two estimates.
\end{proof}

\section{Writing the equations}\label{3}
\subsection{Perturbation equations}

To eliminate the Dzyaloshinskii--Moriya term, we set $\mathbf M(t,x)=R_{\eta x}m(t,x)$, Under this transformation, \eqref{mLLM} becomes
\begin{equation}\label{LLM}
\partial_t\mathbf M=\mathbf M\wedge\left(\partial_x^2\mathbf M+(1-\eta^2)\mathbf M_1e_1\right).
\end{equation}
  The corresponding reference domain wall is
\begin{align}\label{ab}
  \Omega_*(x)=\left(T_\eta(x),S_\eta(x),0\right).
\end{align}  
To describe its dynamics near of $\Omega_*$ modulo translations and rotations, we write
\begin{equation}\label{Mwallde}
\mathbf M(t,x) = R_{\phi(t)+\eta y(t)} \Bigl(\Omega_*(x-y(t))+\varepsilon(t,x-y(t))\Bigr).
\end{equation}
Here the perturbation $\varepsilon$ is decomposed as
\begin{equation}\label{adapdecom}
\varepsilon=\alpha\Omega_*+\beta N_*+\gamma P_*.
\end{equation}
The triple $(\Omega_*,N_*,P_*)$ is the orthonormal frame adapted to the
reference domain wall, where
\begin{equation}\label{fram}
N_*=(-S_\eta,T_\eta,0),\qquad P_*=e_3.
\end{equation}
This frame satisfies the following identities:
\begin{equation}\label{fraorth}
|\Omega_*|=|N_*|=|P_*|=1,\quad\Omega_*\cdot N_*=\Omega_*\cdot P_*=N_*\cdot P_*=0.
\end{equation}
\begin{equation}\label{crosspro}
\Omega_*\wedge N_*=P_*,\quad N_*\wedge P_*=\Omega_*,\quad P_*\wedge\Omega_*=N_*,
\end{equation}
and
\begin{equation}
\label{rotafra}
\partial_x\Omega_*=-\mu S_\eta N_*,\quad
\partial_xN_*=\mu S_\eta\Omega_*,\quad
\partial_xP_*=0,\quad
e_1=T_\eta\Omega_*-S_\eta N_*.
\end{equation}
In order to remove the motion of $\mathbf M$ generated by translations and rotations,
we define the modulated magnetization $M$ by
\begin{equation}
\label{phidef}
M(t,x)=R_{-\phi(t)-\eta y(t)}\mathbf M(t,x+y(t))=\Omega_*(x)+\varepsilon(t,x).
\end{equation}
Since $\mathbf M$ satisfies \eqref{LLM}, differentiating
\eqref{phidef} gives
\begin{equation}
\label{phileft}
-\dot y\,\partial_xM+(\dot\phi+\eta\dot y)e_1\wedge M+\partial_tM
=M\wedge H(M),
\end{equation}
where $H(M)=\partial_x^2M+\mu^2(M)_1e_1$.

We now derive the system for the perturbation components $\alpha$, $\beta$, and $\gamma$, which satisfy 
\begin{equation}
\label{constr}
(1+\alpha)^2+\beta^2+\gamma^2=1.
\end{equation}
For convenience in the following computations, we define
\begin{equation}
\label{v12}
V_1=\mu^2(T_\eta^2-S_\eta^2)=\mu^2(1-2S_\eta^2),\qquad
V_2=2\mu^2S_\eta T_\eta,\qquad
V_3=2\mu S_\eta.
\end{equation}
By \eqref{rotafra}, a direct computation gives
\begin{equation*}
H(M)=\Bigl[\partial_x^2\alpha+V_1(1+\alpha)-V_2\beta+V_3\partial_x\beta\Bigr]\Omega_*
+\Bigl[\partial_x^2\beta-V_3\partial_x\alpha\Bigr]N_*
+(\partial_x^2\gamma)P_*.
\end{equation*}
The cross product identities \eqref{crosspro} then yield
\begin{equation}
\label{llphi}
\begin{aligned}
M\wedge H(M)
&=\Bigl[\beta\partial_x^2\gamma-\gamma\partial_x^2\beta+V_3\gamma\partial_x\alpha\Bigr]\Omega_*\\
&\quad+\Bigl[\mathcal L_\eta\gamma+\gamma\partial_x^2\alpha-\alpha\partial_x^2\gamma
+V_1\alpha\gamma-V_2\beta\gamma+V_3\gamma\partial_x\beta\Bigr]N_*\\
&\quad+\Bigl[-\mathcal L_\eta\beta+\alpha\partial_x^2\beta-\beta\partial_x^2\alpha
-V_1\alpha\beta+V_2\beta^2
-V_3\bigl((1+\alpha)\partial_x\alpha+\beta\partial_x\beta\bigr)\Bigr]P_*.
\end{aligned}
\end{equation}
To compute the modulation terms in \eqref{phileft},
\eqref{rotafra} gives
\begin{equation*}
\partial_xM
=\left(\partial_x\alpha+\mu S_\eta\beta\right)\Omega_*
+\left(\partial_x\beta-\mu S_\eta(1+\alpha)\right)N_*
+(\partial_x\gamma)P_*
\end{equation*}
and
\begin{equation*}
e_1\wedge M
=-S_\eta\gamma\Omega_*
-T_\eta\gamma N_*
+\bigl[S_\eta(1+\alpha)+T_\eta\beta\bigr]P_*.
\end{equation*}
Hence the left hand side of \eqref{phileft} is
\begin{equation}
\label{phicomp}
\begin{aligned}
&\Bigl\{-\dot y\bigl(\partial_x\alpha+\mu S_\eta\beta\bigr)
-(\dot\phi+\eta\dot y)S_\eta\gamma+\partial_t\alpha\Bigr\}\Omega_*\\
&+\Bigl\{-\dot y\bigl[\partial_x\beta-\mu S_\eta(1+\alpha)\bigr]
-(\dot\phi+\eta\dot y)T_\eta\gamma+\partial_t\beta\Bigr\}N_*\\
&+\Bigl\{-\dot y\,\partial_x\gamma
+(\dot\phi+\eta\dot y)\bigl[S_\eta(1+\alpha)+T_\eta\beta\bigr]
+\partial_t\gamma\Bigr\}P_*.
\end{aligned}
\end{equation}
Comparing \eqref{llphi} and \eqref{phicomp}, we obtain the following system for $\alpha$, $\beta$, and $\gamma$:
\begin{equation}
\label{systemcoe}
\left\{
\begin{aligned}
\partial_t\alpha
&=\beta\partial_x^2\gamma-\gamma\partial_x^2\beta
+V_3\gamma\partial_x\alpha+\operatorname{mod}_1,\\
\partial_t\beta
&=\mathcal L_\eta\gamma+\gamma\partial_x^2\alpha-\alpha\partial_x^2\gamma
+V_1\alpha\gamma-V_2\beta\gamma
+V_3(\partial_x\beta)\gamma+\operatorname{mod}_2,\\
\partial_t\gamma
&=-\mathcal L_\eta\beta+\alpha\partial_x^2\beta-\beta\partial_x^2\alpha
-V_1\alpha\beta+V_2\beta^2
-V_3\bigl[\beta\partial_x\beta+(1+\alpha)\partial_x\alpha\bigr]
+\operatorname{mod}_3.
\end{aligned}
\right.
\end{equation}
where
\begin{equation*}
\begin{aligned}
\operatorname{mod}_1&=\dot y\bigl(\partial_x\alpha+\mu S_\eta\beta\bigr)+(\dot\phi+\eta\dot y)S_\eta\gamma,\\
\operatorname{mod}_2&=\dot y\bigl[\partial_x\beta-\mu S_\eta(1+\alpha)\bigr]+(\dot\phi+\eta\dot y)T_\eta\gamma,\\
\operatorname{mod}_3&=\dot y\,\partial_x\gamma-(\dot\phi+\eta\dot y)\bigl[S_\eta(1+\alpha)+T_\eta\beta\bigr].
\end{aligned}
\end{equation*}
Let $u=\beta+i\gamma$. Combining the second and third equations in
\eqref{systemcoe}, we obtain
\begin{equation}
\label{NLSfor}
i\partial_tu-\mathcal L_\eta u=N+Mod,
\end{equation}
where
\begin{align}
\label{Nfina}
N&=u\partial_x^2\alpha-\alpha\partial_x^2u+V_1\alpha u-\frac12V_2u(u+\overline u)+\frac12V_3(u-\overline u)\partial_xu,\\
Mod&=i\dot y\Bigl[\partial_xu-\mu S_\eta(1+\alpha)\Bigr]+(\dot\phi+\eta\dot y)\Bigl[T_\eta u+S_\eta(1+\alpha)\Bigr].\notag
\end{align}

The perturbation equation\eqref{NLSfor} presents two main difficulties. First, the linearized operator $\mathcal L_\eta$ has a threshold resonance at the bottom of its continuous spectrum. Indeed, 
\begin{equation*}
\mathcal L_\eta T_\eta=\mu^2T_\eta,\quad T_\eta\in L^\infty(\mathbb R)\setminus L^2(\mathbb R).
\end{equation*}
The slow local decay caused by this threshold resonance poses a significant challenge to the analysis of nonlinear interactions. Second, the nonlinearity in \eqref{Nfina} contains $u\partial_x^2\alpha-\alpha\partial_x^2u$. Direct estimates of these terms lead to a loss of derivatives, making it difficult to close the bootstrap argument for the $u$ equation.
To reduce the loss of derivatives while preserving the decay and scattering properties of $u$, we introduce a nonlinear change of variables that agrees with the identity up to higher order terms. Specifically, let
\begin{equation}
\label{u-z}
u=\frac{z}{1+\frac14|z|^2}.
\end{equation}
Using \eqref{constr}, one has 
$$(1+\alpha)^2
=1-|u|^2
=1-\frac{|z|^2}{\left(1+\frac14|z|^2\right)^2}
=\left(\frac{1-\frac14|z|^2}{1+\frac14|z|^2}\right)^2.$$
Hence
\begin{equation}
\label{z-def}
z=\frac{2u}{2+\alpha}.
\end{equation}
From \eqref{Mwallde},
\eqref{adapdecom} and \eqref{phidef}, $z$ can also be rewritten as
\begin{equation}
\label{zmphi}
z=\frac{2\bigl(M\cdot N_*+iM\cdot P_*\bigr)}{1+M\cdot\Omega_*}.
\end{equation}
We next use the constraint \eqref{constr} together with
the equation \eqref{NLSfor} to derive the equation for $z$. Differentiating \eqref{z-def} and using the definition of $\mathcal L_\eta$, we obtain
\begin{equation}
\label{yphidiff}
i\partial_tz-\mathcal L_\eta z
=\frac{2}{2+\alpha}\bigl(i\partial_tu-\mathcal L_\eta u\bigr)
-\frac{2iu}{(2+\alpha)^2}\partial_t\alpha
-\frac{4\partial_x\alpha}{(2+\alpha)^2}\partial_xu
-\frac{2u}{(2+\alpha)^2}\partial_x^2\alpha
+\frac{4u}{(2+\alpha)^3}(\partial_x\alpha)^2.
\end{equation}
Substituting \eqref{systemcoe},
\eqref{NLSfor}, \eqref{u-z} and
\eqref{z-def} into \eqref{yphidiff}, we obtain
\begin{equation}
\label{zevol}
i\partial_t z-\mathcal L_\eta z
=\mathcal N_2+\mathcal N_3+\mathcal N_{\ge 4}+\mathcal Mod.
\end{equation}
where
\begin{align}
\mathcal Mod&=i\dot y\left[\partial_xz-\mu S_\eta\left(1+\frac14z^2\right)\right]
+(\dot\phi+\eta\dot y)\left[T_\eta z+S_\eta\left(1-\frac14z^2\right)\right],\nonumber\\
\mathcal N_2&=-\frac12V_2z(z+\overline{z})+\frac12V_3(z-\overline{z})\partial_xz,\nonumber\\
\mathcal N_3&=\frac12\overline{z}(\partial_xz)^2-\frac12V_1|z|^2z,\nonumber\\
\mathcal N_{\ge 4}&=-\frac{|z|^2}{4+|z|^2}
\left[\mathcal N_2(z,\overline{z})+\mathcal N_3(z,z,\overline{z})\right].\label{nonlinear}
\end{align}

\begin{rem}\label{nondeg}
We assume that
\begin{equation}\label{stereo}
1+M\cdot\Omega_*=2+\alpha\in\left[\frac32,2\right].
\end{equation}
This nondegeneracy condition will be guaranteed later by \eqref{dymodsma}, see
Remark~\ref{conAnon}.
\end{rem}

\subsection{Modulation parameters}

We shall determine the modulation parameters $y$ and $\phi$ by imposing the orthogonality condition
\begin{equation}\label{zorthog}
\int_{\mathbb R} z(t,x)S_\eta(x)\,\mathrm dx=0.
\end{equation}

Let $\mathbf V:\mathbb R\to\mathbb S^2$ be a vector-valued function sufficiently close to $\Omega_*$ in $H^2$. For $(y,\phi)\in\mathbb{R}^2$ close to $(0,0)$, set 
\begin{align}\label{hhm}
V(x)=R_{-\phi-\eta y}\mathbf V(x+y).
\end{align}
We define 
\begin{equation}
\label{phiV}
z_{y,\phi}[\mathbf V](x)=\frac{2\bigl(V(x)\cdot N_*(x)+iV(x)\cdot P_*(x)\bigr)}{1+V(x)\cdot\Omega_*(x)}.
\end{equation}

\begin{lem}\label{modula}
There exist $\varepsilon_*>0$ and $r_*>0$ such that, for every $\mathbf V$ satisfying $\|\mathbf V-\Omega_*\|_{H^2}<\varepsilon_*$, there exists a unique pair parameters $(y,\phi)\in(-r_*,r_*)^2$ with 
\begin{equation}\label{parasize}
|y|+|\phi|\lesssim\|\mathbf V-\Omega_*\|_{H^2}.
\end{equation}
such that 
\begin{equation*}
\int_{\mathbb R}z_{y,\phi}[\mathbf V](x)S_\eta(x){\rm d}x=0.
\end{equation*}
Moreover, the parameters $y$ and $\phi$ are $C^1$ functions of $\mathbf V$ in an $H^2$ neighborhood of $\Omega_*$.
\end{lem}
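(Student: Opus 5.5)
The plan is to apply the implicit function theorem to the map
\[
\mathcal G:(\mathbf V,y,\phi)\longmapsto \int_{\mathbb R} z_{y,\phi}[\mathbf V](x)S_\eta(x)\,{\rm d}x\in\mathbb C\cong\mathbb R^2,
\]
defined on $\{\mathbf V:\|\mathbf V-\Omega_*\|_{H^2}<\varepsilon\}\times(-r,r)^2$. At the base point, $\mathcal G(\Omega_*,0,0)=0$, since $z_{0,0}[\Omega_*]=0$ by \eqref{fraorth}. The argument has three steps: regularity of $\mathcal G$, invertibility of its partial differential in $(y,\phi)$ at the base point, and then reading off the conclusions.

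\emph{Regularity.} Write $V=R_{-\phi-\eta y}\mathbf V(\cdot+y)=R_{-\phi-\eta y}\bigl(\Omega_*(\cdot+y)+w(\cdot+y)\bigr)$ with $w=\mathbf V-\Omega_*\in H^2$. The map $(y,w)\mapsto w(\cdot+y)$ is not $C^1$ into $H^2$, so I will not use that route. Instead I move the translation onto the test side:
\[
\int V\cdot N_*\,S_\eta\,{\rm d}x=\int R_{-\phi-\eta y}\mathbf V(x)\cdot N_*(x-y)S_\eta(x-y)\,{\rm d}x,
\]
and similarly for the other pairings. For the quotient by $1+V\cdot\Omega_*$ I use the same change of variables, giving the integrand
\[
\frac{2\bigl(R\mathbf V\cdot N_*(\cdot-y)+iR\mathbf V\cdot P_*\bigr)S_\eta(\cdot-y)}{1+R\mathbf V\cdot\Omega_*(\cdot-y)},\qquad R=R_{-\phi-\eta y}.
\]
Here all $y$-dependence sits in the smooth, exponentially decaying profiles $S_\eta,T_\eta$, and $\mathbf V$ enters pointwise. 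The denominator stays in $[3/2,2]$ as long as $\|\mathbf V-\Omega_*\|_{L^\infty}\lesssim\|\mathbf V-\Omega_*\|_{H^1}$ and $|y|+|\phi|$ are small. It follows that $\mathcal G$ is $C^1$ jointly in $(\mathbf V,y,\phi)\in H^2\times\mathbb R^2$, with all derivatives bounded by dominated convergence. The integrability comes from the factor $S_\eta(\cdot-y)$, together with the fact that $\mathbf V-\Omega_*\in L^2$ and $\Omega_*-\Omega_*(\cdot-y)$ is smooth and decaying.

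\emph{Nondegeneracy.} At $(\Omega_*,0,0)$ the denominator equals $2$, so the linearization of $z$ is $V\cdot N_*+iV\cdot P_*$. Then:
\begin{itemize}
\item $\partial_y V|_{0}=\partial_x\Omega_*-\eta e_1\wedge\Omega_*=-\mu S_\eta N_*-\eta S_\eta P_*$, using \eqref{rotafra} and $e_1\wedge\Omega_*=S_\eta P_*$ (computed from \eqref{ab}).
\item $\partial_\phi V|_0=-e_1\wedge\Omega_*=-S_\eta P_*$.
\end{itemize}
Hence
\[
\partial_y\mathcal G=-\int(\mu+i\eta)S_\eta^2\,{\rm d}x=-\frac{2(\mu+i\eta)}{\mu},\qquad \partial_\phi\mathcal G=-i\int S_\eta^2\,{\rm d}x=-\frac{2i}{\mu}.
\]
As real vectors these are $-\tfrac{2}{\mu}(\mu,\eta)$ and $-\tfrac{2}{\mu}(0,1)$, so the Jacobian determinant is $4/\mu\neq0$ because $|\eta|<1$. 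The implicit function theorem in Banach spaces then gives $\varepsilon_*,r_*>0$ and a unique $C^1$ map $\mathbf V\mapsto(y,\phi)\in(-r_*,r_*)^2$ solving $\mathcal G=0$. It also gives the Lipschitz bound $|y|+|\phi|\lesssim\|\mathbf V-\Omega_*\|_{H^2}$, which is \eqref{parasize}, since the solution map is $C^1$ with value $(0,0)$ at $\Omega_*$.

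\emph{Main obstacle.} The only delicate point is the $C^1$ regularity in $y$, which fails if one differentiates $\mathbf V(\cdot+y)$ directly in $H^2$. The change of variables above is the fix, so that $y$-derivatives fall only on the explicit wall profiles. Equivalently, one could use the derivative formula $\partial_y\mathcal G=\int(\ldots)\,\partial_x(\ldots)$ and integrate by parts onto $S_\eta$. I also need that the quotient in \eqref{phiV} is $C^1$ in $\mathbf V\in H^2\hookrightarrow L^\infty$, which is routine given the lower bound on the denominator.
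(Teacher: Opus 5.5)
Your proposal is correct and follows the paper's proof: you apply the implicit function theorem to the real and imaginary parts of $\int_{\mathbb R} z_{y,\phi}[\mathbf V]S_\eta\,{\rm d}x$ and obtain the same Jacobian $\begin{pmatrix}-2&0\\ -2\eta/\mu&-2/\mu\end{pmatrix}$ at $(\Omega_*,0,0)$. Your change of variables, which moves the translation onto the explicit profiles $S_\eta$, $N_*$, $\Omega_*$, is a sound justification of the $C^1$ regularity that the paper only asserts.
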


\begin{proof}
Consider the map
$$\mathcal A:(\mathbf V,y,\phi)\longmapsto\begin{pmatrix}
\displaystyle \operatorname{Re}\int_{\mathbb R}z_{y,\phi}[\mathbf V](x)S_\eta(x){\rm d}x\\[1mm]
\displaystyle \operatorname{Im}\int_{\mathbb R}z_{y,\phi}[\mathbf V](x)S_\eta(x){\rm d}x
\end{pmatrix}.$$
For $\mathbf V$ in an $H^2$-neighborhood of $\Omega_*$ and $(y,\phi)$ sufficiently close to $(0,0)$, this map is $C^1$ from $(\Omega_*+H^2)\times\mathbb R^2$ into $\mathbb R^2$. Moreover, $\mathcal A(\Omega_*,0,0)=0$. By direct computation,
$$\left.\frac{\partial\mathcal A}{\partial(y,\phi)}\right|_{(\Omega_*,0,0)}=
\begin{pmatrix}
-2 & 0\\[1mm]
-\dfrac{2\eta}{\mu} & -\dfrac{2}{\mu}
\end{pmatrix}.$$
Hence, by the implicit function theorem, for $\mathbf V$ sufficiently close to $\Omega_*$ in $H^2$, there exists a unique pair $(y,\phi)$ close to $(0,0)$ such that $\mathcal A(\mathbf V,y,\phi)=0$. Moreover, $y$ and $\phi$ are $C^1$ functions of $\mathbf V$ with
$$|y|+|\phi|\lesssim\|\mathbf V-\Omega_*\|_{H^2}.$$
\end{proof}
Let $I$ be a time interval such that
\begin{equation}\label{dymodsma}
\|\mathbf M(t)-\Omega_*\|_{H^2}<\varepsilon_*, \quad \forall t\in I.
\end{equation}
For each $t\in I$, Lemma~\ref{modula} applied to $\mathbf V=\mathbf M(t)$ determines a unique pair $(y(t),\phi(t))\in(-r_*,r_*)^2$ such that the corresponding modulated perturbation satisfies \eqref{zorthog}. We now derive the modulation equations on the interval $I$. To ensure that the modulation equations are well defined, we require
\begin{equation}\label{moAsmall}
|A(t)|<\frac{1}{\mu},\quad t\in I,
\end{equation}
where
\begin{equation}\label{Adef}
A(t) = \int_{\mathbb R}S_\eta T_\eta z\,\mathrm dx-\frac14\int_{\mathbb R}S_\eta^2z^2\,\mathrm dx.
\end{equation}
In fact, the following lemma shows that, for $\varepsilon_*>0$ sufficiently small, \eqref{dymodsma} implies \eqref{moAsmall}.
\begin{lem}\label{dysmallA}
Fix $\eta\in (-1,1)$. If $\varepsilon_*>0$ sufficiently small, then \eqref{dymodsma} implies \eqref{moAsmall}.
\end{lem}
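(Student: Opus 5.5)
We need to show that $|A(t)|<1/\mu$ whenever $\|\mathbf M(t)-\Omega_*\|_{H^2}<\varepsilon_*$. Since $A$ is at least linear in $z$, the plan is to bound it by a constant times $\|z\|_{L^\infty}+\|z\|_{L^\infty}^2$. We then show that $z(t)$ is small in $L^\infty$ (indeed in $H^1$), controlled by $\|\mathbf M(t)-\Omega_*\|_{H^2}$.

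\emph{Step 1: bound $A$ by $z$.} By Cauchy--Schwarz and $S_\eta\in L^1\cap L^2$,
$$|A(t)|\le \|S_\eta T_\eta\|_{L^1}\|z\|_{L^\infty}+\tfrac14\|S_\eta^2\|_{L^1}\|z\|_{L^\infty}^2\lesssim \|z\|_{L^\infty}+\|z\|_{L^\infty}^2.$$
The $L^2$ norm of $z$ would serve equally well.

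\emph{Step 2: bound $z$ by $M-\Omega_*$.} By \eqref{phidef}, $M(t,x)=R_{-\phi-\eta y}\mathbf M(t,x+y)$, where $(y,\phi)=(y(t),\phi(t))$ is given by Lemma~\ref{modula} and satisfies $|y|+|\phi|\lesssim\|\mathbf M(t)-\Omega_*\|_{H^2}$. We split
$$M-\Omega_*=R_{-\phi-\eta y}\bigl[(\mathbf M-\Omega_*)(\cdot+y)\bigr]+\bigl[R_{-\phi-\eta y}\Omega_*(\cdot+y)-\Omega_*\bigr].$$
The first term has $H^2$ norm equal to $\|\mathbf M-\Omega_*\|_{H^2}$, since rotations and translations are isometries. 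For the second term, $\Omega_*'$ and $\partial_\theta R_\theta\Omega_*=e_1\wedge\Omega_*$ lie in $H^2$, because $\Omega_*-(\pm e_1)$ decays exponentially and $e_1\wedge\Omega_*=(0,0,S_\eta)$. The mean value theorem therefore bounds its $H^2$ norm by a constant times $|y|+|\phi|$. Hence
$$\|M-\Omega_*\|_{H^2}\lesssim\|\mathbf M-\Omega_*\|_{H^2}<C\varepsilon_*.$$
Sobolev embedding then gives $\|\varepsilon\|_{L^\infty}\lesssim\varepsilon_*$.

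\emph{Step 3: from $M-\Omega_*$ to $z$.} By \eqref{zmphi}, and since $N_*,P_*\perp\Omega_*$,
$$z=\frac{2(\varepsilon\cdot N_*+i\varepsilon\cdot P_*)}{2+\varepsilon\cdot\Omega_*}.$$
For $\varepsilon_*$ small, $|\varepsilon\cdot\Omega_*|\le\|\varepsilon\|_{L^\infty}\le 1/2$, so the denominator lies in $[3/2,5/2]$; this is also where the nondegeneracy \eqref{stereo} comes from. Thus $\|z\|_{L^\infty}\lesssim\|\varepsilon\|_{L^\infty}\lesssim\varepsilon_*$.

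\emph{Conclusion.} Steps 1--3 give $|A(t)|\le C(\varepsilon_*+\varepsilon_*^2)$, which is $<1/\mu$ once $\varepsilon_*$ is small depending on $\eta$. The only point needing care is Step 2, the uniform control of the modulated perturbation in terms of the unmodulated one, which relies on \eqref{parasize}. Everything else is routine.
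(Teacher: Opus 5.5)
Your proof is correct and follows essentially the same route as the paper: bound $\|M-\Omega_*\|_{H^2}$ by $\|\mathbf M-\Omega_*\|_{H^2}$ using \eqref{parasize} and the translation/rotation estimate, then use Sobolev embedding to get $1+M\cdot\Omega_*\ge 3/2$ and the pointwise bound $|z|\lesssim|M-\Omega_*|$, and finally bound $A$. The only difference is cosmetic: you pair $\|z\|_{L^\infty}$ with $\|S_\eta T_\eta\|_{L^1}$ and $\|S_\eta^2\|_{L^1}$ (an $L^1$--$L^\infty$ H\"older bound, not Cauchy--Schwarz), whereas the paper bounds $\|z\|_{L^2}$ and applies Cauchy--Schwarz; either works.
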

\begin{proof}
Fix $t\in I$ and set $\mathbf V=\mathbf M(t)$. Let $V$ be the corresponding modulated map defined in (\ref{hhm}). By the definition of $V$, the fundamental theorem of calculus, and
\begin{equation}
\label{wallrota}
\bigl\|R_{-\phi-\eta y}\Omega_*(\cdot+y)-\Omega_*(\cdot+y)\bigr\|_{H^2}{\le } 2\left|\sin\frac{\phi+\eta y}{2}\right|\|S_\eta\|_{H^2},
\end{equation}
together with \eqref{parasize} and \eqref{dymodsma}, we have
\begin{equation}
\label{H2small}
\begin{aligned}
\|V-\Omega_*\|_{H^2}\leq&\|\mathbf V-\Omega_*\|_{H^2}+\|R_{-\phi-\eta y}\Omega_*(\cdot+y)-\Omega_*\|_{H^2}\\
\leq&\|\mathbf V-\Omega_*\|_{H^2}+|y|\|\partial_x\Omega_*\|_{H^2}+2\left|\sin\frac{\phi+\eta y}{2}\right|\|S_\eta\|_{H^2}\\
\lesssim&\|\mathbf V-\Omega_*\|_{H^2}\lesssim\varepsilon_*.
\end{aligned}
\end{equation}
The bound\eqref{H2small} and the one dimensional Sobolev embedding give, for $\varepsilon_*$ sufficiently small,
\begin{equation}\label{mapLinf}
\|V-\Omega_*\|_{L^\infty}\leq\frac12.
\end{equation}
Combining \eqref{fraorth} and \eqref{mapLinf}, we obtain
\begin{equation}\label{moddelo}
1+V(x)\cdot\Omega_*(x)=2+(V(x)-\Omega_*(x))\cdot\Omega_*(x)\geq\frac32.
\end{equation}
Moreover, by \eqref{fraorth},
\begin{equation}\label{modnume}
\bigl|V\cdot N_*+iV\cdot P_*\bigr|\leq|V-\Omega_*|.
\end{equation}
Therefore, by \eqref{phiV}, \eqref{moddelo}, and \eqref{modnume}, one has 
\begin{equation}\label{poimod}
|z(x)|\lesssim |V(x)-\Omega_*(x)|.
\end{equation}
Combining \eqref{poimod} with \eqref{H2small}, one obtains
\begin{equation}\label{L2dysmall}
\|z\|_{L^2}\lesssim \|V-\Omega_*\|_{L^2}\lesssim\varepsilon_*.
\end{equation}
Applying \eqref{Adef}, \eqref{L2dysmall} and using the Cauchy--Schwarz inequality, we have
\begin{equation}\label{Acontr}
\begin{aligned}
|A|\leq&\left|\int_{\mathbb R}S_\eta T_\eta z{\rm d}x\right|+\frac14\left|\int_{\mathbb R}S_\eta^2z^2{\rm d}x\right|\\
\lesssim&\|z\|_{L^2}+\|z\|_{L^2}^2<\frac1\mu.
\end{aligned}
\end{equation}
Thus \eqref{moAsmall} holds on $I$.
\end{proof}

\begin{rem}
\label{conAnon}
The proof of Lemma~\ref{dysmallA} also shows
that \eqref{dymodsma} guarantees \eqref{stereo}. Thus the assumption in Remark~\ref{nondeg} is satisfied on $I$.
\end{rem}
Therefore, differentiating \eqref{zorthog} with respect to $t$ on $I$, we obtain
\begin{equation}\label{modueq}
\begin{cases}
\displaystyle \dot y=\frac{\frac2\mu\operatorname{Im}R+\operatorname{Im}(\overline A R)}{\frac4\mu-\mu|A|^2},\\[6pt]
\displaystyle \dot\phi+\eta\dot y=\frac{-2\operatorname{Re}R+\mu\operatorname{Re}(A\overline R)}{\frac4\mu-\mu|A|^2},
\end{cases}
\end{equation}
where
\begin{equation*}
R(t)=\int_{\mathbb R}\frac{S_\eta\left[\mathcal N_2(z,\overline z)+\mathcal N_3(z,z,\overline z)\right]}{1+\frac14|z|^2}{\rm d}x.
\end{equation*}

\section{Geometric reduction}
\label{georedu}
In this section, we construct a parallel frame and reduce \eqref{LLM} to a complex scalar equation, thereby overcoming the loss of derivatives in \eqref{zevol}. The corresponding modulated equation is then derived, and its unknown is expressed explicitly in terms of $z$.

\subsection{Geometric reduction to a complex scalar equation}\label{subsec:three-basic-principles}
The variables $u$ and $z$ introduced above provide natural coordinates for describing the perturbation of $m$ relative to the reference domain wall $\omega_*$. In order to fully exploit the information of the static domain wall, we observe that the domain wall satisfies
\begin{equation}\label{firstorder}
\partial_x\Omega_*=\mu\bigl(e_1-(\Omega_*)_1\Omega_*\bigr).
\end{equation}
Let $\mathbf M$ be the unique global solution to \eqref{LLM} given by Appendices~\ref{applwp} and~\ref{appgwp}. Moreover, define
\begin{equation}\label{Bdefini}
\mathbf{B}:=\mathbf{B}(\mathbf M)=\partial_x\mathbf M-\mu(e_1-\mathbf M_1\mathbf M).
\end{equation}
Since $|\mathbf M|=1$, we have
\begin{equation}\label{Btangen}
\mathbf{B}\cdot\mathbf M=0.
\end{equation}
Then, for any tangent vector field $V$ along $\mathbf M$, we define $\nabla_x,\nabla_t$ by
\begin{equation}\label{cova}
\nabla_xV=\partial_xV+(\partial_x\mathbf M\cdot V)\mathbf M.
\end{equation}
\begin{equation}\label{t-cova}
\nabla_tV=\partial_tV+(\partial_t\mathbf M\cdot V)\mathbf M,
\end{equation}
which are in fact the induced covariant derivatives on the pullback bundle ${\mathbf M}^*T\mathbb S^2$. 
The following proposition gives the evolution equation for $\mathbf{B}$.

\begin{prop}
Let $\mathbf M$ solve \eqref{LLM}, and let $\mathbf{B}$ be defined by \eqref{Bdefini}. Then
\begin{equation}\label{MevolB}
\partial_t\mathbf M=\mathbf M\wedge\left(\nabla_x\mathbf{B}-\mu\mathbf M_1\mathbf{B}\right),
\end{equation}
and
\begin{equation}\label{BcovEvol}
\nabla_t\mathbf{B}=\mathbf M\wedge\left(\nabla_x^2\mathbf{B}-(\mu^2+\mu \mathbf{B}_1)\mathbf{B}\right).
\end{equation}
\end{prop}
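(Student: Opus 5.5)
The plan is a direct computation in the pullback bundle $\mathbf M^*T\mathbb S^2$. Write $J=\mathbf M\wedge(\cdot)$ for the complex structure on tangent vectors along $\mathbf M$. Two standard facts will be used throughout.

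\begin{itemize}
\item $J$ is parallel: for any tangent field $V$ along $\mathbf M$, $\nabla_x(JV)=J\nabla_xV$, and likewise for $\nabla_t$. This follows from \eqref{cova}, \eqref{t-cova} and the identity $\partial_x\mathbf M\wedge V\parallel\mathbf M$ when both factors are tangent.
\item The connection is torsion free: $\nabla_t\partial_x\mathbf M=\nabla_x\partial_t\mathbf M$, which is just $\partial_t\partial_x\mathbf M=\partial_x\partial_t\mathbf M$ projected onto $T_{\mathbf M}\mathbb S^2$.
\end{itemize}

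No curvature term will be needed, since $\nabla_t$ is only applied to first-order quantities.

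\emph{Proof of \eqref{MevolB}.} Differentiate \eqref{Bdefini}:
\[
\partial_x\mathbf B=\partial_x^2\mathbf M+\mu(\partial_x\mathbf M_1)\mathbf M+\mu\mathbf M_1\partial_x\mathbf M .
\]
Since $\nabla_x\mathbf B$ differs from $\partial_x\mathbf B$ only by a multiple of $\mathbf M$, we get
\[
\mathbf M\wedge\nabla_x\mathbf B=\mathbf M\wedge\partial_x^2\mathbf M+\mu\mathbf M_1\,\mathbf M\wedge\partial_x\mathbf M .
\]
Using $\mathbf M\wedge\mathbf M=0$ again,
\[
\mathbf M\wedge(\mu\mathbf M_1\mathbf B)=\mu\mathbf M_1\,\mathbf M\wedge\partial_x\mathbf M-\mu^2\mathbf M_1\,\mathbf M\wedge e_1 .
\]
Subtracting the second identity from the first gives $\mathbf M\wedge(\partial_x^2\mathbf M+\mu^2\mathbf M_1e_1)$. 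By \eqref{LLM} with $\mu^2=1-\eta^2$, this is $\partial_t\mathbf M$.

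\emph{Proof of \eqref{BcovEvol}.} First note that $e_1-\mathbf M_1\mathbf M$ is the tangential projection of the constant vector $e_1$. Its time derivative is $-(\partial_t\mathbf M_1)\mathbf M-\mathbf M_1\partial_t\mathbf M$, whose tangential part is $-\mathbf M_1\partial_t\mathbf M$. Hence
\[
\nabla_t(e_1-\mathbf M_1\mathbf M)=-\mathbf M_1\partial_t\mathbf M .
\]
Combining this with torsion freeness,
\[
\nabla_t\mathbf B=\nabla_x\partial_t\mathbf M+\mu\mathbf M_1\partial_t\mathbf M .
\]
Now insert \eqref{MevolB}, $\partial_t\mathbf M=J(\nabla_x\mathbf B-\mu\mathbf M_1\mathbf B)$, and commute $\nabla_x$ through $J$. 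Using $\nabla_x(\mathbf M_1\mathbf B)=(\partial_x\mathbf M_1)\mathbf B+\mathbf M_1\nabla_x\mathbf B$, the two $\mu\mathbf M_1\nabla_x\mathbf B$ terms cancel. This leaves
\[
\nabla_t\mathbf B=J\Bigl(\nabla_x^2\mathbf B-\mu(\partial_x\mathbf M_1)\mathbf B-\mu^2\mathbf M_1^2\mathbf B\Bigr).
\]
Finally, take the first component of \eqref{Bdefini}: $\partial_x\mathbf M_1=\mathbf B_1+\mu(1-\mathbf M_1^2)$. Substituting, the coefficient of $\mathbf B$ becomes $-\mu\mathbf B_1-\mu^2$, which is \eqref{BcovEvol}.

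The main point needing care is the bookkeeping of the normal components. One must check that every discarded term is parallel to $\mathbf M$, so that it is annihilated either by $\mathbf M\wedge$ or by the tangential projection, and that $\nabla_x$ commutes with $J$. For the commutation, I would verify $\nabla_x(\mathbf M\wedge V)=\mathbf M\wedge\nabla_xV$ directly from \eqref{cova}. The computation there is $\partial_x(\mathbf M\wedge V)=\partial_x\mathbf M\wedge V+\mathbf M\wedge\partial_xV$, where the first term is normal and $\mathbf M\wedge\partial_xV=\mathbf M\wedge\nabla_xV$. The same check applies to $\nabla_t$. Regularity is not an issue: the global solution from the appendices is smooth enough for $N\ge6$, and the identities extend by density if needed.
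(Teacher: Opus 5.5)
Your proof is correct and follows the paper's argument essentially step by step. Like the paper, you reduce to $\nabla_t\mathbf B=\nabla_x\partial_t\mathbf M+\mu\mathbf M_1\partial_t\mathbf M$, substitute \eqref{MevolB}, commute $\nabla_x$ past $\mathbf M\wedge$ using \eqref{cccom}, and finish with $\mathbf B_1=\partial_x\mathbf M_1-\mu(1-\mathbf M_1^2)$. The differences are cosmetic. You discard normal components via $\mathbf M\wedge\mathbf M=0$ and get the first reduction from torsion-freeness and the tangential projection of $e_1$, whereas the paper expands those normal terms explicitly (e.g.\ computing $\nabla_x\mathbf B=\partial_x^2\mathbf M+\mu\mathbf M_1\partial_x\mathbf M+|\partial_x\mathbf M|^2\mathbf M$).
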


\begin{proof}
By direct computation,
\begin{align*}
\nabla_x\mathbf{B}
=&\partial_x\mathbf{B}+(\partial_x\mathbf M\cdot \mathbf{B})\mathbf M\\
=&\partial_x^2\mathbf M+\mu(\partial_x\mathbf M_1)\mathbf M+\mu\mathbf M_1\partial_x\mathbf M+\left(|\partial_x\mathbf M|^2-\mu\partial_x\mathbf M_1\right)\mathbf M\\
=&\partial_x^2\mathbf M+\mu\mathbf M_1\partial_x\mathbf M+|\partial_x\mathbf M|^2\mathbf M.
\end{align*}
Therefore, one finds 
$$
\begin{aligned}
\nabla_x\mathbf{B}-\mu\mathbf M_1\mathbf{B}
=&\partial_x^2\mathbf M+\mu\mathbf M_1\partial_x\mathbf M+|\partial_x\mathbf M|^2\mathbf M-\mu\mathbf M_1\left[\partial_x\mathbf M-\mu(e_1-\mathbf M_1\mathbf M)\right]\\
=&\partial_x^2\mathbf M+\mu^2\mathbf M_1e_1+\left(|\partial_x\mathbf M|^2-\mu^2\mathbf M_1^2\right)\mathbf M.
\end{aligned}
$$
Taking the cross product with $\mathbf M$ and using \eqref{LLM}, we obtain
$$
\mathbf M\wedge\left(\nabla_x\mathbf{B}-\mu\mathbf M_1\mathbf{B}\right)=\mathbf M\wedge\left(\partial_x^2\mathbf M+\mu^2\mathbf M_1e_1\right)=\partial_t\mathbf M,
$$
which proves \eqref{MevolB}.

Next, differentiating \eqref{Bdefini} in time and using $\mathbf{B}\cdot\mathbf M=0$, we obtain
\begin{align*}
\nabla_t\mathbf{B}
=&\partial_t\mathbf{B}+(\partial_t\mathbf M\cdot \mathbf{B})\mathbf M\\
=&\partial_x\partial_t\mathbf M+\mu(\partial_t\mathbf M_1)\mathbf M+\mu\mathbf M_1\partial_t\mathbf M+(\partial_t\mathbf M\cdot \mathbf{B})\mathbf M\\
=&\partial_x\partial_t\mathbf M+(\partial_x\mathbf M\cdot\partial_t\mathbf M)\mathbf M+\mu\mathbf M_1\partial_t\mathbf M\\
=&\nabla_x\partial_t\mathbf M+\mu\mathbf M_1\partial_t\mathbf M.
\end{align*}
Applying \eqref{MevolB} and the the compatibility of the covariant derivative and the complex structure, namely,
\begin{equation}
\label{cccom}
\nabla_x(\mathbf M\wedge V)=\mathbf M\wedge\nabla_xV,
\end{equation}
we obtain
\begin{align*}
\nabla_t\mathbf{B}
=&\nabla_x\left(\mathbf M\wedge\left(\nabla_x\mathbf{B}-\mu\mathbf M_1\mathbf{B}\right)\right)+\mu\mathbf M_1\mathbf M\wedge\left(\nabla_x\mathbf{B}-\mu\mathbf M_1\mathbf{B}\right)\\
=&\mathbf M\wedge\left(\nabla_x\left(\nabla_x\mathbf{B}-\mu\mathbf M_1\mathbf{B}\right)+\mu\mathbf M_1\left(\nabla_x\mathbf{B}-\mu\mathbf M_1\mathbf{B}\right)\right)\\
=&\mathbf M\wedge\left(\nabla_x^2\mathbf{B}-\mu(\partial_x\mathbf M_1)\mathbf{B}-\mu^2\mathbf M_1^2\mathbf{B}\right).
\end{align*}
Taking the $e_1$ component in \eqref{Bdefini} gives
$$\mathbf{B}_1=\partial_x\mathbf M_1-\mu(1-\mathbf M_1^2),$$
and hence
$$\mu\partial_x\mathbf M_1+\mu^2\mathbf M_1^2=\mu \mathbf{B}_1+\mu^2.$$
Substituting this identity into the preceding equation yields
$$\nabla_t\mathbf{B}=\mathbf M\wedge\left(\nabla_x^2\mathbf{B}-(\mu^2+\mu \mathbf{B}_1)\mathbf{B}\right),$$
which proves \eqref{BcovEvol}.
\end{proof}

To reduce the equation to a complex scalar equation, we construct an orthonormal frame $(\mathbf{E}_1,\mathbf{E}_2)$ along $\mathbf M$ as follows.\\
At $t=0$, choose $\mathbf{E}_1(0,x)\in T_{\mathbf M(0,x)}\mathbb S^2$ satisfying
\begin{equation}
\label{E1nor}
|\mathbf{E}_1(0,x)|=1,\quad \nabla_x\mathbf{E}_1(0,x)=0,\quad \lim_{x\to-\infty}\mathbf{E}_1(0,x)=-e_2.
\end{equation}
At $x=0$, we prescribe the time evolution of $\mathbf{E}_1$ by
\begin{equation}\label{frameEvo}
\nabla_t\mathbf{E}_1(t,0)=-\left(\mu \mathbf{B}_1(t,0)+\frac12|\mathbf{B}(t,0)|^2\right)\bigl(\mathbf M(t,0)\wedge \mathbf{E}_1(t,0)\bigr),\qquad t\geq0.
\end{equation}
For each $t\geq0$, we extend $\mathbf{E}_1(t,0)$ along the $x$-direction by
\begin{equation}\label{Xpara}
\nabla_x\mathbf{E}_1(t,x)=0.
\end{equation}
We then define
\begin{equation}\label{E2def}
\mathbf{E}_2(t,x)=\mathbf M(t,x)\wedge \mathbf{E}_1(t,x).
\end{equation}
The following two propositions, together with\eqref{cccom}, shows that this construction yields a unique orthonormal frame $(\mathbf{E}_1,\mathbf{E}_2)$ of $T_{\mathbf M}\mathbb S^2$ that is parallel in $x$:
\begin{equation}\label{frameX}
\nabla_x\mathbf{E}_1=\nabla_x\mathbf{E}_2=0.
\end{equation}
\begin{prop}
There exists a unique vector field $\mathbf{E}_1(0,\cdot)$ satisfying \eqref{E1nor}.

\begin{proof}
Choose a unit vector $F_1(0)\in T_{\mathbf M(0,0)}\mathbb S^2$ and solve $\nabla_xF_1=0$ using the existence and uniqueness theory for linear ordinary differential equations. Since $\partial_x(F_1\cdot\mathbf M)=0$ and $\partial_x|F_1|^2=0$, we have $F_1(x)\in T_{\mathbf M(0,x)}\mathbb S^2$ and $|F_1(x)|=1$ for all $x\in\mathbb R$. Moreover, for $j=2,3$ and $x<y$, we obtain
$$
\|(F_1(y)-F_1(x))\cdot e_j\|\le\int_x^y|\partial_x\mathbf M||\mathbf M_j|{\rm d}s\le\|\partial_x\mathbf M\|_{L^2(x,y)}\|\mathbf M_j\|_{L^2(x,y)}<\infty.
$$
Hence there exist $a,b\in\mathbb R$ such that
$$\lim_{x\to-\infty}F_1(x)\cdot e_2=a,\quad \lim_{x\to-\infty}F_1(x)\cdot e_3=b.$$
Since $\mathbf M(0,x)\to-e_1$ as $x\to-\infty$, we have
$$\lim_{x\to-\infty}F_1(x)=ae_2+be_3.$$
Therefore
$$\lim_{x\to-\infty}F_1(x)=ae_2+be_3,\quad a^2+b^2=1.$$
Let $F_2=\mathbf M\wedge F_1$ and set $\mathbf{E}_1=-aF_1-bF_2$. Then $\nabla_x\mathbf{E}_1=0$ by \eqref{cccom}, $|\mathbf{E}_1|=1$, $\mathbf{E}_1\cdot\mathbf M=0$, and $\mathbf{E}_1(x)\to-e_2$ as $x\to-\infty$. Finally, if $\widetilde{\mathbf{E}}_1$ is another vector field satisfying these properties, then $\partial_x(\mathbf{E}_1\cdot\widetilde{\mathbf{E}}_1)=0$, while $\mathbf{E}_1\cdot\widetilde{\mathbf{E}}_1\to1$ as $x\to-\infty$. Thus $\mathbf{E}_1\cdot\widetilde{\mathbf{E}}_1\equiv1$, and therefore $\widetilde{\mathbf{E}}_1=\mathbf{E}_1$.
\end{proof}
\end{prop}

\begin{prop}
The time evolution \eqref{frameEvo} and the spatial extension \eqref{Xpara} preserve the tangency and unit length of $\mathbf{E}_1$.
\end{prop}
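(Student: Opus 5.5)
The plan is to check, separately for each of the two ODE systems defining $\mathbf E_1$, that the scalar quantities $\mathbf E_1\cdot\mathbf M$ and $|\mathbf E_1|^2-1$ obey a closed homogeneous linear system. Since both vanish initially, uniqueness for linear ODEs then forces them to vanish identically. The first system is the time evolution \eqref{frameEvo} at $x=0$; the second is the spatial extension \eqref{Xpara} for fixed $t$. To make this work, I would write both covariant equations as linear ODEs on all of $\mathbb R^3$, without assuming tangency in advance. That is, \eqref{frameEvo} reads
\begin{equation*}
\partial_t\mathbf E_1=-(\partial_t\mathbf M\cdot\mathbf E_1)\mathbf M-c(t)\,\mathbf M\wedge\mathbf E_1,\qquad c(t)=\mu\mathbf B_1(t,0)+\tfrac12|\mathbf B(t,0)|^2,
\end{equation*}
with $\mathbf M=\mathbf M(t,0)$, and \eqref{Xpara} reads $\partial_x\mathbf E_1=-(\partial_x\mathbf M\cdot\mathbf E_1)\mathbf M$. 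The coefficients are continuous, since they come from the regularity of the global solution in Appendices~\ref{applwp} and~\ref{appgwp}; in particular $\partial_t\mathbf M(t,0)$ and $\mathbf B(t,0)$ are continuous in $t$ for $N\ge6$. Therefore each system has a unique global solution.

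\textbf{Time evolution at $x=0$.} Set $a=\mathbf E_1\cdot\mathbf M$ and $b=|\mathbf E_1|^2$. Using $\mathbf M\cdot\partial_t\mathbf M=0$, $|\mathbf M|=1$ and $\mathbf M\cdot(\mathbf M\wedge\mathbf E_1)=0$, I compute
\begin{equation*}
\partial_ta=-(\partial_t\mathbf M\cdot\mathbf E_1)+\mathbf E_1\cdot\partial_t\mathbf M=0 .
\end{equation*}
Hence $a(t)=a(0)=0$ by \eqref{E1nor}. Next, since $\mathbf E_1\cdot(\mathbf M\wedge\mathbf E_1)=0$,
\begin{equation*}
\partial_tb=-2(\partial_t\mathbf M\cdot\mathbf E_1)\,a=0 ,
\end{equation*}
so $b\equiv1$. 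This shows that tangency and unit length at $x=0$ are propagated in time.

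\textbf{Spatial extension.} For each fixed $t$ the same computation applies in $x$. I get $\partial_x(\mathbf E_1\cdot\mathbf M)=0$ and $\partial_x|\mathbf E_1|^2=-2(\partial_x\mathbf M\cdot\mathbf E_1)(\mathbf E_1\cdot\mathbf M)=0$. Both quantities are therefore constant in $x$ and equal to their values at $x=0$, which are $0$ and $1$ by the previous step. The same argument also covers $t=0$, consistent with the preceding proposition.

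\textbf{Main obstacle.} The algebra is routine; the only delicate point is the regularity of the coefficients. In particular, one needs $\partial_t\mathbf M(\cdot,0)\in C([0,\infty))$ and $\partial_x\mathbf M(t,\cdot)\in L^1_{loc}$, so that Carath\'eodory uniqueness applies. I would take both from the well-posedness theory in the appendices, via the Sobolev embedding $H^{N}\subset C^{N-1}$.
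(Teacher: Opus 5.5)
Your proof is correct and is essentially the paper's argument: the paper simply records that $\partial_t(\mathbf E_1\cdot\mathbf M)=0$ and $\partial_t|\mathbf E_1|^2=0$ along \eqref{frameEvo}, with the analogous identities in $x$ along \eqref{Xpara}. Your computation supplies exactly those details: first $\partial_t(\mathbf E_1\cdot\mathbf M)=0$ identically, then $\partial_t|\mathbf E_1|^2=-2(\partial_t\mathbf M\cdot\mathbf E_1)(\mathbf E_1\cdot\mathbf M)=0$, together with the coefficient regularity the paper leaves implicit. Since these derivatives vanish outright, you do not need the ``closed linear system plus uniqueness'' framing announced at the start; absolute continuity and the fundamental theorem of calculus suffice.
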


\begin{proof}
The evolution \eqref{frameEvo} preserves the properties $\mathbf{E}_1(t,0)\in T_{\mathbf M(t,0)}\mathbb S^2$ and $|\mathbf{E}_1(t,0)|=1$, as can be verified from $\partial_t\bigl(\mathbf{E}_1(t,0)\cdot\mathbf M(t,0)\bigr)=0$ and $\partial_t|\mathbf{E}_1(t,0)|^2=0$. Similarly, the extension \eqref{Xpara} preserves $\mathbf{E}_1(t,x)\in T_{\mathbf M(t,x)}\mathbb S^2$ and $|\mathbf{E}_1(t,x)|=1$.
\end{proof}

Let $V=a\mathbf{E}_1+b\mathbf{E}_2$ be a tangent vector field along $\mathbf M$. The definition \eqref{E2def} gives
\begin{equation}\label{coordina}
\mathbf M\wedge V=-b\mathbf{E}_1+a\mathbf{E}_2,
\end{equation}
which together with \eqref{frameX} yields 
\begin{equation}\label{X-cord}
\nabla_xV=(\partial_xa)\mathbf{E}_1+(\partial_xb)\mathbf{E}_2.
\end{equation}
For the time derivative, using
\begin{equation*}
\nabla_t\mathbf{E}_1=(\nabla_t\mathbf{E}_1\cdot \mathbf{E}_2)\mathbf{E}_2,\qquad \nabla_t\mathbf{E}_2=-(\nabla_t\mathbf{E}_1\cdot \mathbf{E}_2)\mathbf{E}_1,
\end{equation*}
we obtain
\begin{equation}\label{corvtfra}
\begin{aligned}
\nabla_tV
=&(\partial_ta)\mathbf{E}_1+a\nabla_t\mathbf{E}_1+(\partial_tb)\mathbf{E}_2+b\nabla_t\mathbf{E}_2\\
=&\left[\partial_ta-b(\nabla_t\mathbf{E}_1\cdot \mathbf{E}_2)\right]\mathbf{E}_1+\left[\partial_tb+a(\nabla_t\mathbf{E}_1\cdot \mathbf{E}_2)\right]\mathbf{E}_2.
\end{aligned}
\end{equation}

\begin{rem}
Identify the tangent vector field $V=a\mathbf{E}_1+b\mathbf{E}_2$ with its complex coordinate $a+ib$. Then \eqref{coordina}, \eqref{X-cord}, and \eqref{corvtfra} show that, in these coordinates, $\mathbf M\wedge(\,\cdot\,)$ is represented by multiplication by $i$, $\nabla_x$ is represented by $\partial_x$, while $\nabla_t$ is represented by $\partial_t+i(\nabla_t\mathbf{E}_1\cdot \mathbf{E}_2)$.
\end{rem}

Since $\mathbf{B}$ is tangent to $\mathbb S^2$ along $\mathbf M$, we define
\begin{equation}
\label{Q-defi}
\mathbf Q=\frac12\left(\mathbf{B}\cdot \mathbf{E}_1+i\mathbf{B}\cdot \mathbf{E}_2\right).
\end{equation}
Applying \eqref{coordina} and \eqref{corvtfra} to \eqref{BcovEvol}, we obtain
\begin{equation*}
\left(\partial_t+i\nabla_t\mathbf{E}_1\cdot \mathbf{E}_2\right)\mathbf Q=i\left(\partial_x^2\mathbf Q-(\mu^2+\mu \mathbf{B}_1)\mathbf Q\right).
\end{equation*}
Equivalently, one has 
\begin{equation}
\label{Qequ}
i\partial_t\mathbf Q-H_\eta\mathbf Q=\left(\mu \mathbf{B}_1+\nabla_t\mathbf{E}_1\cdot \mathbf{E}_2\right)\mathbf Q.
\end{equation}
By  \eqref{frameX} and the compatibility of the Levi-Civita connection with the metric, i.e. $\partial_x(V\cdot W)=(\nabla_xV)\cdot W+V\cdot\nabla_xW$, we get
\begin{equation*}
\partial_x(\nabla_t\mathbf{E}_1\cdot \mathbf{E}_2)=(\nabla_x\nabla_t\mathbf{E}_1)\cdot \mathbf{E}_2.
\end{equation*}
Combining the curvature identity $\nabla_x\nabla_tV-\nabla_t\nabla_xV=(\partial_t\mathbf M\cdot V)\partial_x\mathbf M-(\partial_x\mathbf M\cdot V)\partial_t\mathbf M$ on $\mathbb S^2$ with \eqref{frameX}, we obtain
\begin{align*}
\nabla_x\nabla_t\mathbf{E}_1
=&(\partial_t\mathbf M\cdot \mathbf{E}_1)\partial_x\mathbf M-(\partial_x\mathbf M\cdot \mathbf{E}_1)\partial_t\mathbf M.
\end{align*}
Substituting \eqref{MevolB} yields
\begin{align*}
\partial_t\mathbf M\cdot \mathbf{E}_1
=&\left[\mathbf M\wedge\left(\nabla_x\mathbf{B}-\mu\mathbf M_1\mathbf{B}\right)\right]\cdot \mathbf{E}_1
=-\left(\nabla_x\mathbf{B}-\mu\mathbf M_1\mathbf{B}\right)\cdot \mathbf{E}_2,\\
\partial_t\mathbf M\cdot \mathbf{E}_2
=&\left[\mathbf M\wedge\left(\nabla_x\mathbf{B}-\mu\mathbf M_1\mathbf{B}\right)\right]\cdot \mathbf{E}_2
=\left(\nabla_x\mathbf{B}-\mu\mathbf M_1\mathbf{B}\right)\cdot \mathbf{E}_1.
\end{align*}
Thus
\begin{align}
\partial_x(\nabla_t\mathbf{E}_1\cdot \mathbf{E}_2)
={}&-\left(\nabla_x\mathbf{B}-\mu\mathbf M_1\mathbf{B}\right)\cdot \mathbf{E}_2\left(\partial_x\mathbf M\cdot \mathbf{E}_2\right) \notag\\
-&\left(\partial_x\mathbf M\cdot \mathbf{E}_1\right)\left(\nabla_x\mathbf{B}-\mu\mathbf M_1\mathbf{B}\right)\cdot \mathbf{E}_1 \notag\\
={}&-\partial_x\mathbf M\cdot\left(\nabla_x\mathbf{B}-\mu\mathbf M_1\mathbf{B}\right).
\label{Xtderi}
\end{align}
By \eqref{cova}, we have
\begin{align*}
\mu\partial_x \mathbf{B}_1
=&\mu e_1\cdot\partial_x\mathbf{B}
=\mu e_1\cdot\nabla_x\mathbf{B}-\mu\mathbf M_1(\partial_x\mathbf M\cdot \mathbf{B}).
\end{align*}
Together with \eqref{Xtderi} and \eqref{Bdefini}, this yields
\begin{align}
\partial_x\left(\nabla_t\mathbf{E}_1\cdot \mathbf{E}_2+\mu \mathbf{B}_1\right)
=&-(\partial_x\mathbf M-\mu e_1)\cdot\nabla_x\mathbf{B} \notag\\
=&-(\mathbf{B}-\mu\mathbf M_1\mathbf M)\cdot\nabla_x\mathbf{B} \notag\\
=&-\mathbf{B}\cdot\nabla_x\mathbf{B} \notag\\
=&-\frac12\partial_x|\mathbf{B}|^2.
\label{tiBnor}
\end{align}
The definition\eqref{Q-defi} and tangency \eqref{Btangen} imply
\begin{equation}
\label{BQre}
|\mathbf{B}|^2=(\mathbf{B}\cdot \mathbf{E}_1)^2+(\mathbf{B}\cdot \mathbf{E}_2)^2=4|\mathbf Q|^2.
\end{equation}
Hence \eqref{tiBnor} becomes
\begin{equation*}
\partial_x\left(\nabla_t\mathbf{E}_1\cdot \mathbf{E}_2+\mu \mathbf{B}_1+2|\mathbf Q|^2\right)=0.
\end{equation*}
By \eqref{frameEvo}, we get
\begin{equation*}
\nabla_t\mathbf{E}_1(t,0)\cdot \mathbf{E}_2(t,0)+\mu \mathbf{B}_1(t,0)+\frac12|\mathbf{B}(t,0)|^2=0.
\end{equation*}
Integrating \eqref{tiBnor} in $x$ therefore gives
\begin{equation}
\label{conne}
\nabla_t\mathbf{E}_1\cdot \mathbf{E}_2+\mu \mathbf{B}_1+\frac12|\mathbf{B}|^2=0.
\end{equation}
The relation \eqref{BQre} then yields
\begin{equation}
\label{timeQcon}
\nabla_t\mathbf{E}_1\cdot \mathbf{E}_2+\mu \mathbf{B}_1=-2|\mathbf Q|^2.
\end{equation}
Substituting \eqref{timeQcon} into \eqref{Qequ}, we obtain
\begin{equation*}
i\partial_t\mathbf Q-H_\eta\mathbf Q+2|\mathbf Q|^2\mathbf Q=0.
\end{equation*}
The preceding argument and the well-posedness results in Appendix~\ref{applwp} and Appendix~\ref{appgwp} prove the following proposition.

\begin{prop}
Let $N\ge6$ be an integer, and let $\mathbf Q$ be defined by \eqref{Q-defi}. Then $\mathbf Q \in L^\infty_{\rm loc}\bigl(\mathbb R_{\ge0} ; H^{N-1}(\mathbb R)\bigr)$ and $\partial_t\mathbf Q\in L^\infty_{\rm loc}\bigl(\mathbb R_{\ge0};H^{N-3}(\mathbb R)\bigr)$. Moreover,
\begin{equation}\label{Qcub1}
i\partial_t\mathbf Q-H_\eta\mathbf Q+2|\mathbf Q|^2\mathbf Q=0
\end{equation}
\end{prop}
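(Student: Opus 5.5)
The plan has two parts. First, the regularity claims follow from the well-posedness theory in the appendices. Second, the equation \eqref{Qcub1} is obtained by rerunning the formal computation preceding the statement in the function classes that regularity provides.

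\textbf{Regularity.} By Appendices~\ref{applwp} and~\ref{appgwp}, for data as in \eqref{inismall} the solution satisfies $\mathbf M-\Omega_*\in L^\infty_{\rm loc}(\mathbb R_{\ge0};H^N)$. Using \eqref{LLM}, we also get $\partial_t\mathbf M\in L^\infty_{\rm loc}(\mathbb R_{\ge0};H^{N-2})$. The reference wall satisfies \eqref{firstorder}, so
\[\mathbf B=\partial_x(\mathbf M-\Omega_*)+\mu\bigl(\mathbf M_1\mathbf M-(\Omega_*)_1\Omega_*\bigr).\]
This lies in $H^{N-1}$, since $H^{N-1}$ is an algebra for $N\ge 2$ and the product difference is a bilinear expression in $\mathbf M-\Omega_*\in H^N$ with bounded smooth coefficients.

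For the frame, \eqref{Xpara} is the linear ODE
\[\partial_x\mathbf E_1=-(\partial_x\mathbf M\cdot\mathbf E_1)\mathbf M .\]
Its coefficients are of the form $W^{N-1,\infty}$ plus $H^{N-1}$, so $\partial_x^k\mathbf E_1\in L^\infty$ for $1\le k\le N$. Consequently $\mathbf Q=\tfrac12(\mathbf B\cdot\mathbf E_1+i\mathbf B\cdot\mathbf E_2)\in H^{N-1}$, uniformly on compact time intervals. Here the boundary value at $t=0$ comes from \eqref{E1nor}, and the value at $x=0$ is propagated by the linear ODE \eqref{frameEvo}.

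The time derivative is handled similarly. Write $\partial_t\mathbf E_1=\nabla_t\mathbf E_1-(\partial_t\mathbf M\cdot\mathbf E_1)\mathbf M$, where by \eqref{timeQcon}
\[\nabla_t\mathbf E_1=(-\mu\mathbf B_1-2|\mathbf Q|^2)\mathbf E_2 .\]
With $\partial_t\mathbf B\in H^{N-3}$, this gives $\partial_t\mathbf Q\in L^\infty_{\rm loc}H^{N-3}$.

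\textbf{The equation.} I then justify the identities \eqref{MevolB} and \eqref{BcovEvol}, the curvature identity, and \eqref{tiBnor}. All of these are pointwise algebraic or differential identities involving at most three $x$-derivatives of $\mathbf M$ and mixed derivatives $\partial_t\partial_x\mathbf M$. For $N\ge6$ these are continuous functions, so the computations hold classically.

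Integration of \eqref{tiBnor} from $x=0$ uses \eqref{frameEvo}. This gives \eqref{conne}, hence \eqref{timeQcon}, and substituting into \eqref{Qequ} yields \eqref{Qcub1} pointwise.

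\textbf{Main obstacle.} The delicate step is the interchange $\partial_x(\nabla_t\mathbf E_1\cdot\mathbf E_2)=(\nabla_x\nabla_t\mathbf E_1)\cdot\mathbf E_2$. This requires $\mathbf E_1$ to be $C^1$ jointly in $(t,x)$, that is, differentiability in $t$ of the solution of the $x$-ODE with $t$-dependent initial value at $x=0$.

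I would handle this in two steps. First, invoke differentiable dependence of linear ODE solutions on parameters; this applies because the coefficient $\partial_x\mathbf M$ is $C^1$ in $t$ with values in $C^0_x$, by the regularity above. Second, treat the general case by smoothing the data, applying the identity to smooth solutions, and passing to the limit using continuous dependence from Appendix~\ref{applwp}.
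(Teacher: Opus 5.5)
Your proposal is correct and follows the paper's route. The paper's entire proof is the formal computation preceding the statement (\eqref{BcovEvol}, \eqref{Qequ}, \eqref{tiBnor}, \eqref{conne}) combined with the regularity from Theorems~\ref{thmMlw} and~\ref{gwpM}. Your joint-$C^1$ argument for $\mathbf E_1$ via parameter dependence of the $x$-ODE supplies justification the paper leaves implicit. It implicitly uses the time continuity $\mathbf M\in C([0,T];\mathcal E^{N-1}_{2,3})$ from the appendix, not just the $L^\infty_{\rm loc}$ bounds, and for $N\ge6$ the smoothing step is unnecessary. Finally, $\partial_t\mathbf Q\in L^\infty_{\rm loc}H^{N-3}$ can be read off directly from \eqref{Qcub1} once the equation is established, which avoids invoking \eqref{timeQcon} before it has been justified.
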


\begin{rem}
Kundu and Pashaev \cite{KP1983} found the gauge equivalence of the cubic NLS with \eqref{LLM} using a Lax pair. Here we get the cubic NLS from the evolution of $B$ which measures the distance to domain walls and gauge choices .
\end{rem}

\begin{prop}
Let $(\mathbf{E}_1,\mathbf{E}_2)$ be the frame constructed above. Then, for all $t\in\mathbb R_{\ge0}$,
\begin{equation}\label{franormli}
\lim_{x\to-\infty}\mathbf{E}_1(t,x)=-e_2,\qquad \lim_{x\to-\infty}\mathbf{E}_2(t,x)=e_3.
\end{equation}
\end{prop}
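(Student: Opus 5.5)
The plan is to show that the limit at $-\infty$ is independent of time. Once this holds, \eqref{E1nor} gives $\lim_{x\to-\infty}\mathbf E_1(t,x)=-e_2$ for every $t$. The limit of $\mathbf E_2$ then follows from \eqref{E2def}: since $\mathbf M(t,x)\to -e_1$ as $x\to-\infty$ (finite energy, $\mathbf M(t)-\Omega_*\in H^{N}$), we get $\mathbf E_2\to(-e_1)\wedge(-e_2)=e_1\wedge e_2=e_3$.

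\emph{Existence of the limit at fixed $t$.} I would repeat the argument of the first proposition above. From $\nabla_x\mathbf E_1=0$ we have $\partial_x\mathbf E_1=-(\partial_x\mathbf M\cdot\mathbf E_1)\mathbf M$. The components $j=2,3$ satisfy $|\partial_x(\mathbf E_1\cdot e_j)|\le|\partial_x\mathbf M||\mathbf M_j|$, which is integrable on $(-\infty,0)$ because $\mathbf M_2,\mathbf M_3\in L^2$ and $\partial_x\mathbf M\in L^2$. The first component tends to $0$, since $\mathbf E_1\cdot\mathbf M=0$ and $\mathbf M\to-e_1$. So $L(t):=\lim_{x\to-\infty}\mathbf E_1(t,x)$ exists, and it is a unit vector in $\operatorname{span}\{e_2,e_3\}$.

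\emph{Time independence.} The key point is that $\nabla_t\mathbf E_1=\kappa\,\mathbf E_2$ with $\kappa=\nabla_t\mathbf E_1\cdot\mathbf E_2$, and \eqref{conne} identifies $\kappa=-\mu\mathbf B_1-\frac12|\mathbf B|^2$ on all of $\mathbb R$. Hence
$$\partial_t\mathbf E_1=\kappa\mathbf E_2-(\partial_t\mathbf M\cdot\mathbf E_1)\mathbf M.$$
As $x\to-\infty$ we have $\mathbf B\to0$, because $\partial_x\mathbf M\to0$ and $e_1-\mathbf M_1\mathbf M\to e_1-e_1=0$; this holds uniformly on compact time intervals, by the $H^{N}$ bounds and Sobolev embedding. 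Likewise $\partial_t\mathbf M\to0$ by \eqref{LLM}. Thus $\partial_t\mathbf E_1(t,x)\to0$ as $x\to-\infty$, locally uniformly in $t$. Writing
$$\mathbf E_1(t,x)-\mathbf E_1(0,x)=\int_0^t\partial_s\mathbf E_1(s,x)\,ds$$
and letting $x\to-\infty$ by dominated convergence gives $L(t)=L(0)=-e_2$.

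\emph{Main obstacle.} The delicate step is justifying uniform decay as $x\to-\infty$ and the interchange of limits, because $\mathbf E_1$ is built by integrating in $x$ from $x=0$. To avoid an exchange-of-limits issue, I would instead estimate $\partial_x(\partial_t\mathbf E_1)$ directly, or equivalently use the quantities $\mathbf E_1\cdot e_j$. Concretely, $\sup_{s\le t}\|\partial_t\mathbf E_1(s)\|_{L^\infty(-\infty,-R)}\to0$ as $R\to\infty$ follows from $\mathbf B,\partial_t\mathbf M\in L^\infty_{\rm loc}H^1$. The $H^1$ membership comes from the regularity of $\mathbf M-\Omega_*$ in Appendices~\ref{applwp} and~\ref{appgwp}, which also supplies the continuity in time needed for the fundamental theorem of calculus.
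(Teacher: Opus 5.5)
Your argument is essentially the paper's proof: \eqref{conne} gives $\partial_t\mathbf E_1=-(\mu\mathbf B_1+\frac12|\mathbf B|^2)\mathbf E_2-(\partial_t\mathbf M\cdot\mathbf E_1)\mathbf M$, which is bounded and tends to $0$ as $x\to-\infty$. The fundamental theorem of calculus in time and dominated convergence then give $\lim_{x\to-\infty}(\mathbf E_1(t,x)-\mathbf E_1(0,x))=0$, so your separate fixed-$t$ existence step is not needed. Two small remarks. First, dominated convergence only needs decay for a.e.\ $s$ plus the uniform bound, and that is what $\mathbf B,\partial_t\mathbf M\in L^\infty_tH^1_x$ provides; uniform-in-$s$ tail decay does not follow from $L^\infty_tH^1_x$ alone, but you do not need it. Second, your shortcut $\mathbf E_2=\mathbf M\wedge\mathbf E_1$ requires $\mathbf M(t,x)\to-e_1$ for $t>0$, which the appendix bounds do not give directly but which follows from the same time-integration argument applied to $\mathbf M$; the paper instead reruns that argument for $\mathbf E_2$ itself.
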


\begin{proof}
The assertion at $t=0$ follows from \eqref{E1nor} and \eqref{E2def}. Fix $t>0$. Theorem~\ref{gwpM} and Sobolev estimates give $\mathbf M_2,\mathbf M_3\in L^\infty([0,t];H^N(\mathbb R))$, $\partial_x\mathbf M\in L^\infty([0,t];H^{N-1}(\mathbb R))$, $\partial_t\mathbf M\in L^\infty([0,t];H^{N-2}(\mathbb R))$, and $e_1-\mathbf M_1\mathbf M\in L^\infty([0,t];H^1(\mathbb R))$. Since $N\geq6$, the definition \eqref{Bdefini} gives $\mathbf B,\partial_t\mathbf M\in L^\infty([0,t];H^1(\mathbb R))$, so that, for almost every $s\in[0,t]$, $\mathbf B(s,x)\to0$ and $\partial_s\mathbf M(s,x)\to0$ as $x\to-\infty$ by $H^1(\mathbb R)\hookrightarrow C_0(\mathbb R)$.

The  identity \eqref{conne} implies
\begin{equation*}
\nabla_t\mathbf E_1=-\left(\mu\mathbf B_1+\frac12|\mathbf B|^2\right)\mathbf E_2.
\end{equation*}
Expanding the covariant derivative using \eqref{t-cova}, we have
\begin{equation}\label{E1identity}
\partial_t\mathbf E_1=-\left(\mu\mathbf B_1+\frac12|\mathbf B|^2\right)\mathbf E_2-(\partial_t\mathbf M\cdot\mathbf E_1)\mathbf M,
\end{equation}
and hence
\begin{equation}\label{dE1eq}
|\partial_t\mathbf E_1|\leq\mu|\mathbf B|+\frac12|\mathbf B|^2+|\partial_t\mathbf M|.
\end{equation}
Moreover, by \eqref{E1identity}, for almost every $s\in[0,t]$,
\begin{equation}\label{sE1}
\lim_{x\to-\infty}\partial_s\mathbf E_1(s,x)=0.
\end{equation}
By \eqref{dE1eq}, for every fixed $x$,
\begin{align}
\|\partial_s\mathbf E_1(\cdot,x)\|_{L_s^\infty(0,t)}
&\leq\mu\|\mathbf B\|_{L_s^\infty L_x^\infty}+\frac12\|\mathbf B\|_{L_s^\infty L_x^\infty}^2+\|\partial_s\mathbf M\|_{L_s^\infty L_x^\infty}\notag\\
&\lesssim\|\mathbf B\|_{L_s^\infty H_x^1}+\|\mathbf B\|_{L_s^\infty H_x^1}^2+\|\partial_s\mathbf M\|_{L_s^\infty H_x^1}<\infty.
\label{E1tdL}
\end{align}
Since $|\mathbf E_1|=1$, we also have
\begin{equation*}
\|\mathbf E_1(\cdot,x)\|_{L_s^\infty(0,t)}\leq1.
\end{equation*}
Therefore,
\begin{equation*}
\mathbf E_1(\cdot,x)\in W^{1,\infty}(0,t),
\end{equation*}
and hence $\mathbf E_1(\cdot,x)$ is absolutely continuous on $[0,t]$. Thus,
\begin{equation}\label{E1integ}
\mathbf E_1(t,x)-\mathbf E_1(0,x)=\int_0^t\partial_s\mathbf E_1(s,x){\rm d}s.
\end{equation}
Applying the dominated convergence theorem in \eqref{E1integ}, with \eqref{sE1} and \eqref{E1tdL}, gives
\begin{equation}\label{E1minus}
\lim_{x\to-\infty}\bigl(\mathbf E_1(t,x)-\mathbf E_1(0,x)\bigr)=0.
\end{equation}
Using \eqref{E1nor} and \eqref{E1minus}, we obtain
\begin{equation*}
\lim_{x\to-\infty}\mathbf E_1(t,x)=-e_2.
\end{equation*}
Similarly, by \eqref{E2def}, $\partial_t\mathbf E_2=\partial_t\mathbf M\wedge\mathbf E_1+\mathbf M\wedge\partial_t\mathbf E_1$. Thus, the preceding estimates imply that, for every fixed $x$, $\mathbf E_2(\cdot,x)\in W^{1,\infty}(0,t)$, while, for almost every $s\in[0,t]$, $\partial_s\mathbf E_2(s,x)\to0$ as $x\to-\infty$ and $|\partial_s\mathbf E_2(s,x)|\lesssim1$. Therefore, by the dominated convergence theorem, \eqref{E2def}, and \eqref{E1nor}, together with $\mathbf M(0,x)\to-e_1$ as $x\to-\infty$, we obtain $\lim_{x\to-\infty}\mathbf E_2(t,x)=e_3$.
\end{proof}

\subsection{The modulated cubic NLS equation and reconstruction of the perturbation}

We now carry out the geometric reduction for the modulated magnetization $M$, which satisfies \eqref{phileft}. We begin by defining the associated tangent vector field: 
\begin{equation}
\label{BMdef}
B:=B(M) = \partial_xM-\mu(e_1-M_1M).
\end{equation}
The relation between $M$ and $\mathbf M$, together with \eqref{BMdef} and \eqref{Bdefini}, gives
\begin{equation}\label{covaBM}
B(t,x) = R_{-\phi(t)-\eta y(t)}\mathbf B\bigl(t,x+y(t)\bigr).
\end{equation}
For all $t\in\mathbb{R}_{\ge 0}$, let $E_1(t,\cdot)$ be the unique unit tangent vector field along $M(t,\cdot)$ that is parallel in the $x$-direction and satisfies $\lim_{x\to-\infty}E_1(t,x)=-e_2$. We then define $E_2(t,x)=M(t,x)\wedge E_1(t,x)$. Since $M(t,x)\to-e_1$ as $x\to-\infty$, the pair $(E_1,E_2)$ is a spatially parallel oriented orthonormal frame along $M$, with
\begin{equation}\label{eMframe}
\lim_{x\to-\infty}E_1(t,x)=-e_2,\quad\lim_{x\to-\infty}E_2(t,x)=e_3.
\end{equation}
We define
\begin{equation}
\label{QMee}
Q(t,x)=\frac12\left(B(t,x)\cdot E_1(t,x)+iB(t,x)\cdot E_2(t,x)\right).
\end{equation}The following proposition establishes the relation between $Q$ and $\mathbf Q$.
\begin{prop}\label{Qcopro}
For all $t\in I$ and $x\in\mathbb R$, we have
\begin{equation}\label{Qmath}
Q(t,x) = e^{i(\phi(t)+\eta y(t))} \mathbf Q\bigl(t,x+y(t)\bigr).
\end{equation}
\end{prop}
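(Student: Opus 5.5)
Set $\theta(t)=\phi(t)+\eta y(t)$ and write $R=R_{-\theta(t)}$. By \eqref{phidef}, $M(t,x)=R\,\mathbf M(t,x+y(t))$, and by \eqref{covaBM}, $B(t,x)=R\,\mathbf B(t,x+y(t))$. The idea is to transport the frame $(\mathbf E_1,\mathbf E_2)$ by the same rigid motion, compare it with $(E_1,E_2)$, and then read off the relation between the coordinates $Q$ and $\mathbf Q$.

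First define $\widetilde E_j(t,x)=R\,\mathbf E_j(t,x+y(t))$ for $j=1,2$. Since $R$ is a fixed orthogonal matrix for each $t$, we have $\widetilde E_j\cdot M=\mathbf E_j\cdot\mathbf M=0$ and $|\widetilde E_j|=1$. The covariant derivative \eqref{cova} commutes with rotations and translations, because $\partial_x M\cdot R V=\partial_x\mathbf M\cdot V$. Hence $\nabla_x\widetilde E_1=R(\nabla_x\mathbf E_1)(\cdot+y)=0$. Moreover, $R\in SO(3)$ preserves cross products, so $\widetilde E_2=M\wedge\widetilde E_1$. The limits at $-\infty$ follow from \eqref{franormli}, using that $R_{-\theta}$ fixes $e_1$ and acts on $\mathrm{span}\{e_2,e_3\}$ by rotation through $-\theta$:
$$\lim_{x\to-\infty}\widetilde E_1=R_{-\theta}(-e_2)=-\cos\theta\, e_2+\sin\theta\, e_3,\qquad \lim_{x\to-\infty}\widetilde E_2=R_{-\theta}e_3=\sin\theta\, e_2+\cos\theta\, e_3.$$

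Next compare $\widetilde E_j$ with $E_j$. Both are $x$-parallel orthonormal frames along $M$ with the same orientation. The inner products $\widetilde E_1\cdot E_1$ and $\widetilde E_1\cdot E_2$ are therefore constant in $x$, by metric compatibility together with \eqref{frameX}. So there is an angle $\vartheta(t)$ with
$$E_1=\cos\vartheta\,\widetilde E_1-\sin\vartheta\,\widetilde E_2,\qquad E_2=\sin\vartheta\,\widetilde E_1+\cos\vartheta\,\widetilde E_2.$$
Letting $x\to-\infty$ and using \eqref{eMframe}, the limit of $E_1$ is $-e_2$. The rotation taking the limit of $\widetilde E_1$, namely $R_{-\theta}(-e_2)$, back to $-e_2$ is the rotation by $\theta$ in the oriented $(e_2,e_3)$ plane. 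This identifies $\vartheta=\pm\theta$ modulo $2\pi$, and I would fix the sign by checking the $e_2$ and $e_3$ components directly. The expected outcome is $E_1+iE_2=e^{-i\theta}(\widetilde E_1+i\widetilde E_2)$, or its conjugate, depending on how the complexification is read.

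Finally substitute into \eqref{QMee}:
$$2Q=B\cdot(E_1+iE_2),\qquad B\cdot\widetilde E_j=\mathbf B\cdot\mathbf E_j\ \text{evaluated at } (t,x+y(t)).$$
From this, $Q(t,x)=e^{\pm i\theta}\,\mathbf Q(t,x+y(t))$.

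The main obstacle is this sign and orientation bookkeeping. I expect it to yield $e^{i\theta}$ exactly as in \eqref{Qmath}, since $E_1=\cos\theta\,\widetilde E_1+\sin\theta\,\widetilde E_2$ gives $B\cdot E_1+iB\cdot E_2=e^{i\theta}(B\cdot\widetilde E_1+iB\cdot\widetilde E_2)$. Concretely, one checks that $\cos\theta\,R_{-\theta}(-e_2)+\sin\theta\,R_{-\theta}e_3=-e_2$, which holds because $R_{-\theta}e_2=\cos\theta\, e_2-\sin\theta\, e_3$ and $R_{-\theta}e_3=\sin\theta\, e_2+\cos\theta\, e_3$. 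A second point is that the argument uses only the $x$-parallel normalization at $-\infty$ for each fixed $t$, and not the time evolution \eqref{frameEvo}. This is legitimate because \eqref{franormli} was already established for $\mathbf E_j$.
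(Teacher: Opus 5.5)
Your route is the paper's: transport $(\mathbf E_1,\mathbf E_2)$ by $R_{-\theta}$ and the shift, show the relative angle is $x$-independent by parallelism, pin it down at $x=-\infty$ using \eqref{franormli} and \eqref{eMframe}, then substitute into \eqref{QMee}. The setup is correct: tangency, $\nabla_x\widetilde E_1=0$, $\widetilde E_2=M\wedge\widetilde E_1$, the limits of $\widetilde E_j$, and constancy of $\vartheta$ in $x$. However, the step that fixes the phase is wrong as written, in two places whose errors happen to cancel.

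\textbf{First error.} The identity you propose to check is false. Using your own formulas for $R_{-\theta}e_2$ and $R_{-\theta}e_3$, you get $\cos\theta\,R_{-\theta}(-e_2)+\sin\theta\,R_{-\theta}e_3=-\cos 2\theta\,e_2+\sin 2\theta\,e_3$, which is not $-e_2$.

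\textbf{Second error.} The relation $E_1=\cos\theta\,\widetilde E_1+\sin\theta\,\widetilde E_2$, together with $E_2=M\wedge E_1=-\sin\theta\,\widetilde E_1+\cos\theta\,\widetilde E_2$, gives $E_1+iE_2=e^{-i\theta}(\widetilde E_1+i\widetilde E_2)$. That yields the phase $e^{-i\theta}$, not $e^{i\theta}$.

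\textbf{Correct computation.} In your parametrization, impose $\lim_{x\to-\infty}(\cos\vartheta\,\widetilde E_1-\sin\vartheta\,\widetilde E_2)=-e_2$. Matching the $e_2$ and $e_3$ components gives $\cos(\vartheta-\theta)=1$ and $\sin(\theta-\vartheta)=0$. So $\vartheta=\theta$ modulo $2\pi$, with no $\pm$ ambiguity. This means $E_1=\cos\theta\,\widetilde E_1-\sin\theta\,\widetilde E_2$ and $E_2=\sin\theta\,\widetilde E_1+\cos\theta\,\widetilde E_2$, which is exactly the paper's $a=\cos\theta$, $b=\sin\theta$. Then $E_1+iE_2=e^{i\theta}(\widetilde E_1+i\widetilde E_2)$. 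Since $B\cdot\widetilde E_j=(\mathbf B\cdot\mathbf E_j)(t,x+y(t))$, you get $2Q=B\cdot(E_1+iE_2)=e^{i\theta}\,2\mathbf Q(t,x+y(t))$, which is \eqref{Qmath}.

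With this correction your argument is complete and is the paper's proof. As submitted, though, the sign is not established, and the sign is the only nontrivial content of the statement.
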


\begin{proof}
Substituting \eqref{covaBM} into the definition \eqref{QMee} gives
\begin{align}
2Q(t,x)
=&B(t,x)\cdot E_1(t,x)+iB(t,x)\cdot E_2(t,x) \notag\\
=&R_{-\phi(t)-\eta y(t)}\mathbf B\bigl(t,x+y(t)\bigr)\cdot E_1(t,x) \notag\\
&+iR_{-\phi(t)-\eta y(t)}\mathbf B\bigl(t,x+y(t)\bigr)\cdot E_2(t,x).
\label{Qfirp}
\end{align}
To rewrite \eqref{Qfirp} in terms of $\mathbf Q$, we express
$E_1$ and $E_2$ in terms of the corresponding frame along $\mathbf M$.
Since
$$M(t,x)=R_{-\phi(t)-\eta y(t)}\mathbf M\bigl(t,x+y(t)\bigr),$$
the vectors
$$R_{-\phi(t)-\eta y(t)}\mathbf E_j\bigl(t,x+y(t)\bigr),\quad j=1,2,$$
form an orthonormal basis of $T_{M(t,x)}\mathbb S^2$. Hence there exist real valued
functions $a(t,x)$ and $b(t,x)$ satisfying $a(t,x)^2+b(t,x)^2=1$ such that
\begin{equation}\label{undecoe}
\begin{aligned}
E_1(t,x)& = a(t,x)R_{-\phi(t)-\eta y(t)}\mathbf E_1\bigl(t,x+y(t)\bigr)\\
&\quad-b(t,x)R_{-\phi(t)-\eta y(t)}\mathbf E_2\bigl(t,x+y(t)\bigr),\\
E_2(t,x)&=b(t,x)R_{-\phi(t)-\eta y(t)}\mathbf E_1\bigl(t,x+y(t)\bigr)\\
&\quad+a(t,x)R_{-\phi(t)-\eta y(t)}\mathbf E_2\bigl(t,x+y(t)\bigr).
\end{aligned}
\end{equation}
Applying $\nabla_x$ to \eqref{undecoe} and using \eqref{frameX}, we get
\begin{align*}
\partial_x a(t,x)R_{-\phi(t)-\eta y(t)}\mathbf E_1\bigl(t,x+y(t)\bigr)-\partial_x b(t,x) R_{-\phi(t)-\eta y(t)}\mathbf E_2\bigl(t,x+y(t)\bigr)=0.
\end{align*}
Hence
\begin{equation*}
\partial_x a(t,x)=\partial_x b(t,x)=0.
\end{equation*}
Therefore, we arrive at
\begin{align*}
E_1(t,x)&=a(t)R_{-\phi(t)-\eta y(t)}\mathbf E_1\bigl(t,x+y(t)\bigr) -b(t)R_{-\phi(t)-\eta y(t)}
\mathbf E_2\bigl(t,x+y(t)\bigr).
\end{align*}
For each $t\in I$, taking $x\to-\infty$ and using \eqref{franormli} and \eqref{eMframe} yields
\begin{equation*}
-e_2=-a(t)R_{-\phi(t)-\eta y(t)}e_2-b(t)R_{-\phi(t)-\eta y(t)}e_3.
\end{equation*}
By \eqref{eRphi}, we have
\begin{equation*}
a(t)=\cos\bigl(\phi(t)+\eta y(t)\bigr),
\quad
b(t)=\sin\bigl(\phi(t)+\eta y(t)\bigr).
\end{equation*}
Thus
\begin{equation}
\label{transf}
\begin{aligned}
E_1(t,x)&=\cos\bigl(\phi(t)+\eta y(t)\bigr)R_{-\phi(t)-\eta y(t)}\mathbf E_1\bigl(t,x+y(t)\bigr)\\
&\quad-\sin\bigl(\phi(t)+\eta y(t)\bigr)R_{-\phi(t)-\eta y(t)}\mathbf E_2\bigl(t,x+y(t)\bigr),\\
E_2(t,x)&=\sin\bigl(\phi(t)+\eta y(t)\bigr)R_{-\phi(t)-\eta y(t)}\mathbf E_1\bigl(t,x+y(t)\bigr)\\
&\quad+\cos\bigl(\phi(t)+\eta y(t)\bigr)R_{-\phi(t)-\eta y(t)}\mathbf E_2\bigl(t,x+y(t)\bigr).
\end{aligned}
\end{equation}
Substituting \eqref{transf} into \eqref{Qfirp}, we obtain
\begin{align*}
2Q(t,x)
=&\cos\bigl(\phi(t)+\eta y(t)\bigr)\mathbf B\bigl(t,x+y(t)\bigr)\cdot\mathbf E_1\bigl(t,x+y(t)\bigr)\\
&-\sin\bigl(\phi(t)+\eta y(t)\bigr)\mathbf B\bigl(t,x+y(t)\bigr)\cdot\mathbf E_2\bigl(t,x+y(t)\bigr)\\
&+i\sin\bigl(\phi(t)+\eta y(t)\bigr)\mathbf B\bigl(t,x+y(t)\bigr)\cdot\mathbf E_1\bigl(t,x+y(t)\bigr)\\
&+i\cos\bigl(\phi(t)+\eta y(t)\bigr)\mathbf B\bigl(t,x+y(t)\bigr)\cdot\mathbf E_2\bigl(t,x+y(t)\bigr)\\
=&2e^{i(\phi(t)+\eta y(t))}\mathbf Q\bigl(t,x+y(t)\bigr).
\end{align*}
which proves \eqref{Qmath}.
\end{proof}
Proposition~\ref{Qcopro}
relates $Q$ to $\mathbf Q$ and gives
\begin{equation*}
i\partial_tQ+(\partial_x^2-\mu^2)Q+2|Q|^2Q=i\dot y\,\partial_xQ-(\dot\phi+\eta\dot y)Q.
\end{equation*}
We now derive its relation to the perturbation variable $z$.
Starting from \eqref{zmphi}, we have
\begin{equation}
\label{comNp}
M\cdot N_*=\frac12\bigl(1+M\cdot\Omega_*\bigr)\operatorname{Re}z,
\quad M\cdot P_*=\frac12\bigl(1+M\cdot\Omega_*\bigr)\operatorname{Im}z.
\end{equation}
The component formulas \eqref{comNp} and the orthogonality relations \eqref{fraorth} give
\begin{align}
&\bigl(1+M\cdot\Omega_*\bigr)\left[M\cdot\Omega_*-1+\frac14\bigl(1+M\cdot\Omega_*\bigr)|z|^2\right]\notag\\
=&\bigl(M\cdot\Omega_*\bigr)^2-1+\frac14\bigl(1+M\cdot\Omega_*\bigr)^2|z|^2\notag\\
=&\bigl(M\cdot\Omega_*\bigr)^2+\bigl(M\cdot N_*\bigr)^2+\bigl(M\cdot P_*\bigr)^2-1\notag\\
=&0.
\label{omegaid}
\end{align}
By Remark~\ref{conAnon}, \eqref{omegaid} yields, for $t\in I$ and $x\in\mathbb R$,
\begin{equation}\label{Momeg}
M(t,x)\cdot\Omega_*(x)=\frac{1-\frac14|z(t,x)|^2}{1+\frac14|z(t,x)|^2}.
\end{equation}
Substituting \eqref{Momeg} into\eqref{comNp}, we obtain
\begin{equation*}
M\cdot N_*=\frac{\operatorname{Re}z}{1+\frac14|z|^2},\quad M\cdot P_*=\frac{\operatorname{Im}z}{1+\frac14|z|^2}.
\end{equation*}
Therefore
\begin{equation}
\label{msre}
M=\frac{1-\frac14|z|^2}{1+\frac14|z|^2}\Omega_*+\frac{\operatorname{Re}z}{1+\frac14|z|^2}N_*+\frac{\operatorname{Im}z}{1+\frac14|z|^2}e_3.
\end{equation}
To express $E_1$ and $E_2$ in terms of $z$, we introduce an orthonormal frame $\{M,\mathcal E_1,\mathcal E_2\}$ along $M$ by setting
\begin{equation}\label{nafraE}
\begin{aligned}
\mathcal E_1&=\frac{1}{1+\frac14|z|^2}\left[-\operatorname{Re}z\Omega_*+\left(1+\frac{(\operatorname{Im}z)^2-(\operatorname{Re}z)^2}{4}\right)N_*-\frac{\operatorname{Re}z\operatorname{Im}z}{2}e_3\right],\\
\mathcal E_2&=\frac{1}{1+\frac14|z|^2}\left[-\operatorname{Im}z\Omega_*-\frac{\operatorname{Re}z\operatorname{Im}z}{2}N_*+\left(1+\frac{(\operatorname{Re}z)^2-(\operatorname{Im}z)^2}{4}\right)e_3\right].
\end{aligned}
\end{equation}
A direct computation gives
\begin{equation*}
|\mathcal E_1|=|\mathcal E_2|=1,\quad \mathcal E_1\cdot\mathcal E_2=0,\quad M\wedge\mathcal E_1=\mathcal E_2.
\end{equation*}
Thus $\{M,\mathcal E_1,\mathcal E_2\}$ is an orthonormal frame along $M$. Since $z(t,\cdot)\in H^1(\mathbb R)$ for $t\in I$, we have $z(t,x)\to0$ as $x\to-\infty$. Hence, by \eqref{nafraE} and $N_*(x)\to-e_2$,
\begin{equation}\label{nanor}
\mathcal E_1(t,x)\to-e_2,\quad \mathcal E_2(t,x)\to e_3,\quad \text{as }x\to-\infty.
\end{equation}
Note that
\begin{equation}\label{frcon}
\partial_x\mathcal E_1\cdot\mathcal E_2=-\frac{\frac12\operatorname{Im}(\overline z\partial_xz)+\mu S_\eta\operatorname{Im}z}{1+\frac14|z|^2}.
\end{equation}
Moreover,
$$\int_{\mathbb R}\left|\frac{\frac12\operatorname{Im}(\overline z\,\partial_xz)+\mu S_\eta\operatorname{Im}z}{1+\frac14|z|^2}\right|{\rm d}x \lesssim \|z\|_{L^2}\|\partial_xz\|_{L^2}+\|S_\eta\|_{L^2}\|z\|_{L^2}<\infty.$$
We therefore define
\begin{equation}\label{xchi}
\chi(t,x)=-\int_{-\infty}^x\frac{\frac12\operatorname{Im}(\overline z\,\partial_sz)+\mu S_\eta\operatorname{Im}z}{1+\frac14|z|^2}(t,s){\rm d}s.
\end{equation}
By \eqref{frcon}, we have
\begin{equation}\label{xchie}
\partial_x\chi=\partial_x\mathcal E_1\cdot\mathcal E_2,
\end{equation} 
while
\eqref{xchi} gives 
\begin{equation}
\label{chidecay}
\chi(t,x)\to 0 \quad \text{as } x\to -\infty.
\end{equation}
Now define
$$\widehat E_1 = \cos\chi\,\mathcal E_1-\sin\chi\,\mathcal E_2,\quad
\widehat E_2 = \sin\chi\,\mathcal E_1+\cos\chi\,\mathcal E_2.$$

Since $\{M,\mathcal E_1,\mathcal E_2\}$ is an oriented orthonormal frame, so is $\{M,\widehat E_1,\widehat E_2\}$. Equation \eqref{xchie} gives $\nabla_x\widehat E_1=\nabla_x\widehat E_2=0$. Moreover, \eqref{nanor} and \eqref{chidecay} imply $\widehat E_1(t,x)\to-e_2$ and $\widehat E_2(t,x)\to e_3$. Uniqueness of the spatially parallel frame satisfying \eqref{eMframe} therefore gives
\begin{equation}
\label{rotae}
E_1 = \cos\chi\,\mathcal E_1-\sin\chi\,\mathcal E_2, \quad E_2 = \sin\chi\,\mathcal E_1+\cos\chi\,\mathcal E_2.
\end{equation}
The preceding analysis yields the following useful representation of $Q$.

\begin{rem}
The choice of $\mathcal E_1$ and $\mathcal E_2$ in \eqref{nafraE} is naturally induced by the representation \eqref{msre}. Indeed,
$$\mathcal E_1=\left(1+\frac14|z|^2\right)\partial_{\operatorname{Re}z}M,\qquad
\mathcal E_2=\left(1+\frac14|z|^2\right)\partial_{\operatorname{Im}z}M.$$
\end{rem}

\begin{prop}
For $t\in I$, $Q$ admits the representation
\begin{equation}\label{Q-z}
Q = -\frac12e^{i\chi}\frac{D_\eta^*z+\frac{\mu S_\eta}{2}z^2}{1+\frac14|z|^2},
\end{equation}
where $\chi$ is defined in (\ref{xchi}). 
\end{prop}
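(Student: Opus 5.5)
By \eqref{QMee} and \eqref{rotae}, the frame rotation factors out, which reduces everything to the explicit frame. Concretely,
\[
B\cdot E_1+iB\cdot E_2=(\cos\chi+i\sin\chi)\,B\cdot\mathcal E_1+(-\sin\chi+i\cos\chi)\,B\cdot\mathcal E_2
=e^{i\chi}\bigl(B\cdot\mathcal E_1+iB\cdot\mathcal E_2\bigr).
\]
It therefore suffices to prove
\[
B\cdot\mathcal E_1+iB\cdot\mathcal E_2=-\frac{D_\eta^*z+\frac{\mu S_\eta}{2}z^2}{1+\frac14|z|^2}.
\]

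To compute the left side, I use that $\mathcal E_j\cdot M=0$. Writing $B=\partial_xM-\mu e_1+\mu M_1M$ from \eqref{BMdef}, this gives
\[
B\cdot\mathcal E_j=\partial_xM\cdot\mathcal E_j-\mu\, e_1\cdot\mathcal E_j.
\]
For the derivative term, I differentiate the representation \eqref{msre}, viewing $M$ as a function of $(\operatorname{Re}z,\operatorname{Im}z,x)$. The chain rule gives
\[
\partial_xM=\partial_{\operatorname{Re}z}M\,\partial_x\operatorname{Re}z+\partial_{\operatorname{Im}z}M\,\partial_x\operatorname{Im}z+(\partial_xM)_{\text{frame}},
\]
where the last term comes from differentiating $\Omega_*,N_*$ via \eqref{rotafra}. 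By the remark after \eqref{rotae}, $\partial_{\operatorname{Re}z}M=\mathcal E_1/(1+\frac14|z|^2)$ and $\partial_{\operatorname{Im}z}M=\mathcal E_2/(1+\frac14|z|^2)$. The first two terms therefore contribute exactly $\partial_xz/(1+\frac14|z|^2)$ to $B\cdot\mathcal E_1+iB\cdot\mathcal E_2$, which is the $-\partial_x$ part of $-D_\eta^*z$ up to sign.

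The remaining contributions are $(\partial_xM)_{\text{frame}}$, namely
\[
\frac{1-\frac14|z|^2}{1+\frac14|z|^2}\,(-\mu S_\eta N_*)+\frac{\operatorname{Re}z}{1+\frac14|z|^2}\,\mu S_\eta\Omega_*,
\]
together with $-\mu e_1$, where $e_1=T_\eta\Omega_*-S_\eta N_*$. I will pair each of these with $\mathcal E_1$ and $\mathcal E_2$ using the explicit components in \eqref{nafraE}, and expand in $x_1=\operatorname{Re}z$, $x_2=\operatorname{Im}z$. The $T_\eta\Omega_*$ part gives
\[
-\mu T_\eta\,(e_1\text{-part})\cdot\mathcal E_j=\mu T_\eta x_j/(1+\tfrac14|z|^2),
\]
which produces the $\mu T_\eta z$ term of $-D_\eta^*z$ with the correct sign. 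The $S_\eta$-terms must combine into $-\frac{\mu S_\eta}{2}z^2/(1+\frac14|z|^2)$. A quick check is needed here: the zeroth-order terms $\mu S_\eta N_*\cdot\mathcal E_1$ from $-\mu e_1$ and $-\mu S_\eta N_*\cdot\mathcal E_1$ from the frame derivative cancel at leading order. What survives, after multiplying by $(1+\frac14|z|^2)^2$, should be the polynomial
\[
-\frac{\mu S_\eta}{2}\bigl(x_1^2-x_2^2+2ix_1x_2\bigr)\Bigl(1+\frac14|z|^2\Bigr).
\]

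The main obstacle is this algebraic bookkeeping of the $S_\eta$-terms. The key is to collect the coefficient of $\mu S_\eta$ in the real and imaginary parts separately and factor out $(1+\frac14|z|^2)$, which verifies the claimed identity. As a cross-check I would linearize: at first order, $B\approx\partial_x\varepsilon+\mu S_\eta\alpha N_*+\dots$ should give $Q\approx-\frac12 D_\eta^*z$, consistent with the formula.
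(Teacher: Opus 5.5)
Your proposal is correct and is essentially the paper's proof: factor out $e^{i\chi}$ using \eqref{rotae}, then compute $B\cdot\mathcal E_1+iB\cdot\mathcal E_2$ directly from \eqref{msre}, \eqref{BMdef}, \eqref{rotafra} and \eqref{nafraE}. Your use of $\mathcal E_j=(1+\frac14|z|^2)\partial_{\operatorname{Re}z/\operatorname{Im}z}M$ is a clean way to organize that ``direct computation'', and the $S_\eta$-bookkeeping you flag does close as you predict: the non-derivative part pairs with $\mathcal E_1$ to give $\bigl(\mu T_\eta\operatorname{Re}z-\frac{\mu S_\eta}{2}((\operatorname{Re}z)^2-(\operatorname{Im}z)^2)\bigr)/(1+\frac14|z|^2)$, and with $\mathcal E_2$ to give $\bigl(\mu T_\eta\operatorname{Im}z-\mu S_\eta\operatorname{Re}z\operatorname{Im}z\bigr)/(1+\frac14|z|^2)$.
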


\begin{proof}
Using \eqref{msre}, \eqref{BMdef}, \eqref{rotafra}, and \eqref{nafraE}, a direct computation gives
\begin{equation*}
B\cdot\mathcal E_1+iB\cdot\mathcal E_2 = \frac{\partial_xz+\mu T_\eta z-\frac{\mu S_\eta}{2}z^2}{1+\frac14|z|^2}.
\end{equation*}
Thus  by \eqref{rotae} one has 
\begin{align*}
2Q&= B\cdot E_1+iB\cdot E_2 =e^{i\chi}\left(B\cdot\mathcal E_1+iB\cdot\mathcal E_2\right)\\
&=e^{i\chi}\frac{\partial_xz+\mu T_\eta z-\frac{\mu S_\eta}{2}z^2}{1+\frac14|z|^2}\\
&=-e^{i\chi}\frac{D_\eta^*z+\frac{\mu S_\eta}{2}z^2}{1+\frac14|z|^2},
\end{align*}
which proves \eqref{Q-z}.
\end{proof}

\subsection{Weighted profile estimates for the cubic NLS}\label{4}
 We begin with a trilinear estimate for the free Schr\"odinger flow that isolates the leading cubic interaction and bounds the remainder. We will use it below and again in Section~\ref{ssca}.

The following lemma and proposition are somewhat standard, see   Kato and Pusateri~\cite{KP2011}. We include a proof for readers' convenience. 

\begin{lem}
\label{tril}
Let $t\geq1$,  and let $f_1,f_2,f_3$ satisfy $f_j, \ xf_j\in L^2(\mathbb R), \ j=1,2,3.$ Then
\begin{equation}\label{trili}
\begin{aligned}
&e^{it\xi^2}\mathcal F\left(e^{it\partial_x^2}f_1\,e^{it\partial_x^2}f_2\,\overline{e^{it\partial_x^2}f_3}\right)(\xi)\\
=&\frac{1}{2t}
\mathcal F\Bigg[e^{-ix^2/(4t)}\mathcal F^{-1}\Bigg(\mathcal F\left[e^{iz^2/(4t)}f_1(z)\right]\mathcal F\left[e^{iz^2/(4t)}f_2(z)\right]\overline{\mathcal F\left[e^{iz^2/(4t)}f_3(z)\right]}\Bigg)\Bigg](\xi).
\end{aligned}
\end{equation}
Moreover,
\begin{equation}\label{freere}
\begin{aligned}
&\Bigg\|e^{it\xi^2}\mathcal F\left[e^{it\partial_x^2}f_1e^{it\partial_x^2}f_2\overline{e^{it\partial_x^2}f_3}\right]-\frac{1}{2t}\widehat{f_1}\widehat{f_2}\overline{\widehat{f_3}}\Bigg\|_{L^\infty_\xi}\\
\lesssim&t^{-1-\alpha}\sum_{j=1}^3\left(\|f_j\|_{L^2_x}+\|xf_j\|_{L^2_x}\right)\prod_{\substack{1\leq k\leq3\\ k\neq j}}\left[\|\mathcal Ff_k\|_{L^\infty_\xi}+t^{-\alpha}\left(\|f_k\|_{L^2_x}+\|xf_k\|_{L^2_x}\right)\right].
\end{aligned}
\end{equation}
\end{lem}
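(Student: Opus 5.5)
The identity \eqref{trili} comes from the factorization of the free flow in \eqref{freesch}, applied with the sign convention of $e^{it\partial_x^2}$ (that is, $H_\eta$ without the $\mu^2$ phase). For each $j$ we write
\[
e^{it\partial_x^2}f_j(x)=\frac{e^{-i\pi/4}}{\sqrt{2t}}e^{ix^2/(4t)}\,\mathcal F\bigl[e^{iz^2/(4t)}f_j\bigr]\Bigl(\frac{x}{2t}\Bigr).
\]
In the product of two such factors with one conjugate, the unimodular phases combine to $e^{-i\pi/4}(2t)^{-3/2}e^{ix^2/(4t)}$. Set $F(\zeta)=\prod$ of the three Fourier factors, evaluated at $\zeta=x/(2t)$. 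We then take the Fourier transform in $x$ and multiply by $e^{it\xi^2}$. Using \eqref{freesch} in reverse, $e^{it\xi^2}\mathcal F[e^{ix^2/(4t)}F(x/2t)]$ is, up to the constant $\sqrt{2t}\,e^{i\pi/4}$, the transform $\mathcal F[e^{-ix^2/(4t)}\mathcal F^{-1}F]$. Explicitly, $\mathcal F[e^{-it\partial_x^2}g]=e^{it\xi^2}\widehat g$, and $e^{-it\partial_x^2}$ has a factorization with the opposite sign of quadratic phase. Collecting constants gives exactly $\frac{1}{2t}$, which proves \eqref{trili}. This part is bookkeeping and I would simply verify the constants.

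For \eqref{freere}, write $g_j=\mathcal F[e^{iz^2/(4t)}f_j]$. Then the left side equals
\[
\frac1{2t}\mathcal F\bigl[(e^{-ix^2/(4t)}-1)\mathcal F^{-1}(g_1g_2\overline{g_3})\bigr]+\frac1{2t}\bigl(g_1g_2\overline{g_3}-\widehat f_1\widehat f_2\overline{\widehat f_3}\bigr).
\]
For the second term we telescope into three differences. Each difference $g_j-\widehat f_j$ is bounded in $L^\infty$ by $\int|e^{iz^2/4t}-1||f_j|$, and the Cauchy--Schwarz argument from the proof of Lemma~\ref{Stapha} bounds this by $t^{-1/4}\|xf_j\|_{L^2}$. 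Since $\alpha<1/4$, this is at most $t^{-\alpha}$ times the norm. The undifferenced factors satisfy $\|g_k\|_{L^\infty}\le\|\widehat f_k\|_{L^\infty}+t^{-1/4}\|xf_k\|_{L^2}$, which fits the bracket in \eqref{freere}.

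For the first term we use $|e^{-ix^2/4t}-1|\lesssim (x^2/t)^{\theta}$ for $\theta\in[0,1]$ together with $\|\mathcal F h\|_{L^\infty}\le\|h\|_{L^1}$. This reduces the problem to bounding $\||x|^{2\theta}\mathcal F^{-1}(g_1g_2\overline{g_3})\|_{L^1}$, which we control by $\|\langle x\rangle\,\cdot\,\|_{L^2}$ norms, i.e. by $H^1_\xi$ norms of the triple product. Choosing $\theta=\alpha$ (so $2\theta<1/2$), interpolation shows this is bounded by $t^{-\alpha}$ times $\|g_1g_2g_3\|_{H^{s}}$ with $s$ slightly above $2\alpha+1/2<1$. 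The Leibniz rule then puts one derivative (or fractional derivative) on one factor, $\|\partial_\xi g_j\|_{L^2}\lesssim\|xf_j\|_{L^2}+t^{-1}\|x^2 f_j\|$. Here we must be careful, since the phase $e^{iz^2/4t}$ produces a term $\frac{z}{2t}e^{iz^2/4t}f_j$, but $\partial_\xi g_j=\mathcal F[-iz e^{iz^2/4t}f_j]$ only, so $\|\partial_\xi g_j\|_{L^2}=\|xf_j\|_{L^2}$ exactly. The other two factors are placed in $L^\infty$ and bounded by the bracket quantities. This yields the right side of \eqref{freere}.

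The main obstacle is handling the weight $|x|^{2\alpha}$ in $L^1$ with only one power of $x$ available in $L^2$ per function. I would split $\{|x|\le t^{1/2}\}$ from its complement. On the inner region, Cauchy--Schwarz with the weight $\langle x\rangle^{-1}$ is used: $\int_{|x|\le\sqrt t}(x^2/t)^{\theta}\langle x\rangle^{-1}dx$ is bounded by $t^{-\theta}\cdot t^{\theta-1/2+1/2}$, and $\theta$ is tuned so that a $t^{-\alpha}$ gain remains. On the outer region we use $|e^{\cdot}-1|\le 2$ and $\langle x\rangle^{-1}$ integrability, which gives $t^{-1/4}$. In both regions we multiply by $\|\langle x\rangle\mathcal F^{-1}(\cdot)\|_{L^2}$, which is bounded as above.
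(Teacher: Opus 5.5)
Your proposal is correct and is essentially the paper's proof. You use the same decomposition into the $(e^{-ix^2/(4t)}-1)$ term and the telescoped difference $g_1g_2\overline{g_3}-\widehat{f_1}\widehat{f_2}\overline{\widehat{f_3}}$, the same bound $|e^{\pm ix^2/(4t)}-1|\lesssim t^{-\alpha}|x|^{2\alpha}$ closed by Cauchy--Schwarz because $\langle x\rangle^{2\alpha-1}\in L^2$ for $\alpha<1/4$, and the same Plancherel--Leibniz step with $\|\partial_\xi g_j\|_{L^2}=\|xf_j\|_{L^2}$. Deriving \eqref{trili} from the factorization of $e^{it\partial_x^2}$, rather than evaluating an auxiliary integral in two ways, is only cosmetic.

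The region splitting in your last paragraph is unnecessary once $\theta=\alpha<1/4$, and as displayed its arithmetic gives no gain. Cauchy--Schwarz needs the squared weight $\int(x^2/t)^{2\theta}\langle x\rangle^{-2}\,dx$, not $\int(x^2/t)^{\theta}\langle x\rangle^{-1}\,dx$.
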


\begin{proof}
Set
\begin{align*}
I(t,\xi)=&\int_{\mathbb R}e^{it(y-\xi)^2}\mathcal F\left[e^{iz^2/(4t)}f_1(z)\right](y)\mathcal F\left[e^{iz^2/(4t)}f_2(z)\right](y)\overline{\mathcal F\left[e^{iz^2/(4t)}f_3(z)\right](y)}
{\rm d}y.
\end{align*}
On the one hand, by the change of variables $x=2ty$ and (\ref{freesch}) give
\begin{align*}
I(t,\xi)=&\frac{e^{it\xi^2}}{2t}
\int_{\mathbb R}e^{-ix\xi}e^{ix^2/(4t)}\mathcal F\left[e^{iz^2/(4t)}f_1(z)\right]\left(\frac{x}{2t}\right)\mathcal F\left[e^{iz^2/(4t)}f_2(z)\right]\left(\frac{x}{2t}\right)\overline{\mathcal F\left[e^{iz^2/(4t)}f_3(z)\right]\left(\frac{x}{2t}\right)}{\rm d}x\\
=&\sqrt{2t}\,e^{i\pi/4}e^{it\xi^2}\int_{\mathbb R}e^{-ix\xi}e^{it\partial_x^2}f_1(x)e^{it\partial_x^2}f_2(x)\overline{e^{it\partial_x^2}f_3(x)}{\rm d}x\\
=&2\sqrt{\pi t}\,e^{i\pi/4}e^{it\xi^2}\mathcal F\left[e^{it\partial_x^2}f_1 e^{it\partial_x^2}f_2\overline{e^{it\partial_x^2}f_3}\right](\xi).
\end{align*}
On the other hand, the identity
\begin{equation*}
\mathcal F\left[e^{-ix^2/(4t)}\right](\xi)=\sqrt{2t}e^{-i\pi/4}e^{it\xi^2}.
\end{equation*}
and the convolution formula give
\begin{align*}
I(t,\xi)=&\frac{e^{i\pi/4}}{\sqrt{2t}}\int_{\mathbb R}\mathcal F\left[e^{-ix^2/(4t)}\right](\xi-y)\mathcal F\left[e^{iz^2/(4t)}f_1(z)\right](y)\mathcal F\left[e^{iz^2/(4t)}f_2(z)\right](y)\overline{\mathcal F\left[e^{iz^2/(4t)}f_3(z)\right](y)}{\rm d}y\\
=&\sqrt{\frac{\pi}{t}}\,e^{i\pi/4}\mathcal F\Bigg[e^{-ix^2/(4t)}\mathcal F^{-1}\Bigg(\mathcal F\left[e^{iz^2/(4t)}f_1(z)\right]\mathcal F\left[e^{iz^2/(4t)}f_2(z)\right]\overline{\mathcal F\left[e^{iz^2/(4t)}f_3(z)\right]}\Bigg)\Bigg](\xi).
\end{align*}
Combining the two identities above, we obtain~\eqref{trili}.

We next prove~\eqref{freere}.
For any $f$ such that $f,xf\in L^2(\mathbb R)$, since
\begin{equation*}
\left|e^{\pm ix^2/(4t)}-1\right|
\lesssim
t^{-\alpha}|x|^{2\alpha}.
\end{equation*}
we have
\begin{equation}
\label{errorS}
\left\|
\mathcal F\left[
\left(e^{\pm ix^2/(4t)}-1\right)f
\right]
\right\|_{L^\infty_\xi}
\lesssim
t^{-\alpha}
\left(
\|f\|_{L^2_x}
+
\|xf\|_{L^2_x}
\right).
\end{equation}
Therefore, for $j=1,2,3$,
\begin{equation}
\label{eSchLin}
\left\|
\mathcal F\left[e^{ix^2/(4t)}f_j\right]
\right\|_{L^\infty_\xi}
\lesssim
\|\widehat{f_j}\|_{L^\infty_\xi}
+
t^{-\alpha}
\left(
\|f_j\|_{L^2_x}
+
\|xf_j\|_{L^2_x}
\right).
\end{equation}
Moreover, by Plancherel's theorem,
\begin{equation}
\label{triL2}
\begin{aligned}
\left\|
\mathcal F\left[e^{ix^2/(4t)}f_j\right]
\right\|_{L^2_\xi}
&=
\|f_j\|_{L^2_x},\\
\left\|
\partial_\xi
\mathcal F\left[e^{ix^2/(4t)}f_j\right]
\right\|_{L^2_\xi}
&=
\left\|
xe^{ix^2/(4t)}f_j
\right\|_{L^2_x}
=
\|xf_j\|_{L^2_x}.
\end{aligned}
\end{equation}
By~\eqref{trili}, subtracting the leading term gives
\begin{equation}
\label{dectril}
\begin{aligned}
&e^{it\xi^2}
\mathcal F\left[
e^{it\partial_x^2}f_1\,
e^{it\partial_x^2}f_2\,
\overline{e^{it\partial_x^2}f_3}
\right](\xi)
-\frac{1}{2t}
\widehat{f_1}(\xi)
\widehat{f_2}(\xi)
\overline{\widehat{f_3}}(\xi)
\\
={}&
\frac{1}{2t}
\mathcal F\Bigg[
\left(e^{-ix^2/(4t)}-1\right)
\mathcal F^{-1}\Bigg(
\mathcal F\left[e^{iz^2/(4t)}f_1(z)\right]
\mathcal F\left[e^{iz^2/(4t)}f_2(z)\right]
\overline{
\mathcal F\left[e^{iz^2/(4t)}f_3(z)\right]
}
\Bigg)
\Bigg](\xi)
\\
&+
\frac{1}{2t}
\Bigg[
\mathcal F\left[e^{iz^2/(4t)}f_1(z)\right](\xi)
\mathcal F\left[e^{iz^2/(4t)}f_2(z)\right](\xi)
\overline{
\mathcal F\left[e^{iz^2/(4t)}f_3(z)\right](\xi)
}
-
\widehat{f_1}(\xi)
\widehat{f_2}(\xi)
\overline{\widehat{f_3}}(\xi)
\Bigg].
\end{aligned}
\end{equation}
For the first term on the right-hand side of
\eqref{dectril}, applying
\eqref{errorS} and then using Plancherel's theorem,
the Leibniz rule, \eqref{eSchLin}, and
\eqref{triL2}, we obtain
\begin{align*}
&\left\|
\mathcal F\Bigg[
\left(e^{-ix^2/(4t)}-1\right)
\mathcal F^{-1}\Bigg(
\mathcal F\left[e^{iz^2/(4t)}f_1(z)\right]
\mathcal F\left[e^{iz^2/(4t)}f_2(z)\right]
\overline{
\mathcal F\left[e^{iz^2/(4t)}f_3(z)\right]
}
\Bigg)
\Bigg]
\right\|_{L^\infty_\xi}
\\
\lesssim&
t^{-\alpha}
\sum_{j=1}^3
\left(
\|f_j\|_{L^2_x}
+
\|xf_j\|_{L^2_x}
\right)
\prod_{\substack{1\leq k\leq3\\ k\neq j}}
\left[
\|\widehat{f_k}\|_{L^\infty_\xi}
+
t^{-\alpha}
\left(
\|f_k\|_{L^2_x}
+
\|xf_k\|_{L^2_x}
\right)
\right].
\end{align*}
For the second term on the right-hand side of
\eqref{dectril}, the triangle inequality,
\eqref{errorS}, and
\eqref{eSchLin} yield
\begin{align*}
&\Bigg\|
\mathcal F\left[e^{iz^2/(4t)}f_1(z)\right]
\mathcal F\left[e^{iz^2/(4t)}f_2(z)\right]
\overline{
\mathcal F\left[e^{iz^2/(4t)}f_3(z)\right]
}
-
\widehat{f_1}
\widehat{f_2}
\overline{\widehat{f_3}}
\Bigg\|_{L^\infty_\xi}
\\
\le&
\left\|
\mathcal F\left[e^{iz^2/(4t)}f_1(z)\right]
-\widehat{f_1}
\right\|_{L^\infty_\xi}
\left\|
\mathcal F\left[e^{iz^2/(4t)}f_2(z)\right]
\right\|_{L^\infty_\xi}
\left\|
\mathcal F\left[e^{iz^2/(4t)}f_3(z)\right]
\right\|_{L^\infty_\xi}
\\
&+
\|\widehat{f_1}\|_{L^\infty_\xi}
\left\|
\mathcal F\left[e^{iz^2/(4t)}f_2(z)\right]
-\widehat{f_2}
\right\|_{L^\infty_\xi}
\left\|
\mathcal F\left[e^{iz^2/(4t)}f_3(z)\right]
\right\|_{L^\infty_\xi}
\\
&+
\|\widehat{f_1}\|_{L^\infty_\xi}
\|\widehat{f_2}\|_{L^\infty_\xi}
\left\|
\mathcal F\left[e^{iz^2/(4t)}f_3(z)\right]
-\widehat{f_3}
\right\|_{L^\infty_\xi}
\\
\lesssim&
t^{-\alpha}
\sum_{j=1}^3
\left(
\|f_j\|_{L^2_x}
+
\|xf_j\|_{L^2_x}
\right)
\prod_{\substack{1\leq k\leq3\\ k\neq j}}
\left[
\|\widehat{f_k}\|_{L^\infty_\xi}
+
t^{-\alpha}
\left(
\|f_k\|_{L^2_x}
+
\|xf_k\|_{L^2_x}
\right)
\right].
\end{align*}
Combining the last two estimates with
\eqref{dectril} proves
\eqref{freere}.
\end{proof}

In Subsection~\ref{subsec:three-basic-principles}, we reduced \eqref{LLM} to the cubic nonlinear Schr\"odinger equation \eqref{Qcub1} for $\mathbf Q$. We now establish weighted estimates for its profile, defined by
$$
\mathbf q(t)=e^{itH_\eta}\mathbf Q(t).
$$

\begin{prop}
\label{Qas}
Fix an integer $N\ge 6$ and let  $\delta$ and   $\varepsilon_1$ be the constants fixed in (\ref{Nota})--(\ref{Nota2}).
If 
$$
\|\mathbf Q_0\|_{H^{N-1}}
+
\|x\mathbf Q_0\|_{H^{N-2}}
\ll \varepsilon_1,
$$
where $\mathbf Q_0=\mathbf Q(0)$, then for every $0\leq k\leq N-2$ and
$t\geq0$, 
\eqref{Qcub1} has a global solution $\mathbf Q$ satisfying 
$$
\|\xi^k\widehat{\mathbf q}(t)\|_{L^\infty_\xi\cap L^2_\xi}
\leq
\varepsilon_1,
\qquad
\|\xi^k\partial_\xi\widehat{\mathbf q}(t)\|_{L^2_\xi}
\leq
\varepsilon_1\langle t\rangle^\delta.
$$
\end{prop}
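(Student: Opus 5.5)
We follow the Kato--Pusateri approach \cite{KP2011} for the one dimensional cubic NLS, adapted to $H_\eta=-\partial_x^2+\mu^2$. The mass term only produces the harmless factor $e^{-i\mu^2 t}$, which cancels in the cubic phase: the three factors contribute $e^{-i\mu^2t}e^{-i\mu^2t}e^{+i\mu^2t}$, and $e^{itH_\eta}$ contributes $e^{i\mu^2t}$. Hence, with $\mathbf q(t)=e^{itH_\eta}\mathbf Q(t)$, Duhamel's formula in Fourier variables reads
\begin{equation*}
\partial_t\widehat{\mathbf q}(t,\xi)=2i\,e^{it\xi^2}\mathcal F\Bigl(e^{it\partial_x^2}\mathbf q\,e^{it\partial_x^2}\mathbf q\,\overline{e^{it\partial_x^2}\mathbf q}\Bigr)(\xi).
\end{equation*}
Global existence of $\mathbf Q$ is inherited from the global solution $\mathbf M$ (Appendices~\ref{applwp}, \ref{appgwp}) through the preceding proposition. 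It therefore suffices to run a bootstrap on $[0,T]$ under the assumptions
\begin{equation*}
\|\xi^k\widehat{\mathbf q}(t)\|_{L^\infty_\xi\cap L^2_\xi}\le 2\varepsilon_1,\qquad \|\xi^k\partial_\xi\widehat{\mathbf q}(t)\|_{L^2_\xi}\le 2\varepsilon_1\langle t\rangle^\delta ,
\end{equation*}
and improve the constant $2$ to $1$, using continuity in time. Since $\|\mathbf Q_0\|_{H^{N-1}}+\|x\mathbf Q_0\|_{H^{N-2}}\ll\varepsilon_1$, the initial norms are $\ll\varepsilon_1$. For $t\le 1$ all bounds follow from local theory.

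\textbf{Step 1 ($L^2$ and decay).} The $L^2$ based bounds $\|\xi^k\widehat{\mathbf q}\|_{L^2}$ follow from the conservation of mass and of the $H^{N-2}$-type energies. Alternatively, one can use energy estimates, since $\|\partial_x^k(|\mathbf Q|^2\mathbf Q)\|_{L^2}\lesssim\|\mathbf Q\|_{L^\infty}^2\|\mathbf Q\|_{H^k}$. By Lemma~\ref{Stapha}, for $t\ge1$,
\begin{equation*}
\|\mathbf Q(t)\|_{L^\infty}\lesssim t^{-1/2}\|\widehat{\mathbf q}\|_{L^\infty}+t^{-3/4}\|x\mathbf q\|_{L^2}\lesssim\varepsilon_1 t^{-1/2}.
\end{equation*}
This is the decay that is fed into every subsequent estimate.

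\textbf{Step 2 (weighted bound).} We commute the vector field $J=x+2it\partial_x$, which satisfies $e^{-itH_\eta}x\,e^{itH_\eta}=J$, so that $\|x\mathbf q\|_{L^2}=\|J\mathbf Q\|_{L^2}$. Because of the gauge invariance, $J(|\mathbf Q|^2\mathbf Q)=2|\mathbf Q|^2J\mathbf Q-\mathbf Q^2\overline{J\mathbf Q}$. The energy estimate then gives
\begin{equation*}
\frac{d}{dt}\|J\mathbf Q\|_{L^2}\lesssim\|\mathbf Q\|_{L^\infty}^2\|J\mathbf Q\|_{L^2}\lesssim\varepsilon_1^2 t^{-1}\|J\mathbf Q\|_{L^2},
\end{equation*}
and Gronwall yields growth of at most $t^{C\varepsilon_1^2}\le t^{\delta}$. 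The same argument applies to $\xi^k\partial_\xi\widehat{\mathbf q}$: we use $\partial_x^kJ$, or equivalently the commutator $[J,\partial_x]=-1$, and the Leibniz rule. Terms in which derivatives fall on the $L^\infty$ factors are controlled by $\|\mathbf Q\|_{W^{k,\infty}}\lesssim\varepsilon_1t^{-1/2}$, which is obtained by applying Lemma~\ref{Stapha} to $\partial_x^k\mathbf Q$. This uses $\xi^k\widehat{\mathbf q}\in L^\infty$ and $k\le N-2$.

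\textbf{Step 3 ($L^\infty_\xi$ bound, the main obstacle).} Here the logarithmic divergence must be avoided. We apply Lemma~\ref{tril} with $f_j=\mathbf q$ and obtain
\begin{equation*}
\partial_t\widehat{\mathbf q}=\frac{i}{t}|\widehat{\mathbf q}|^2\widehat{\mathbf q}+\mathcal R,\qquad \|\mathcal R(t)\|_{L^\infty_\xi}\lesssim\varepsilon_1^3t^{-1-\alpha+\delta}.
\end{equation*}
The resonant term is a pure phase, so $\partial_t|\widehat{\mathbf q}(t,\xi)|^2=2\operatorname{Re}(\overline{\widehat{\mathbf q}}\mathcal R)$, which is integrable in time because $\alpha>\delta$. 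Hence $\|\widehat{\mathbf q}(t)\|_{L^\infty}\le\|\widehat{\mathbf q}(1)\|_{L^\infty}+C\varepsilon_1^3\ll\varepsilon_1$.

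For the weights $\xi^k$, we apply the same lemma to $f_1=\partial_x^k\mathbf q$ and place the remaining factors in the distributed form produced by Leibniz. The leading term is again $\frac{i}{t}\bigl(2|\widehat{\mathbf q}|^2\xi^k\widehat{\mathbf q}-\widehat{\mathbf q}^2\overline{\xi^k\widehat{\mathbf q}}\bigr)$ together with mixed terms. The precise point needing care is that $\frac{d}{dt}|\xi^k\widehat{\mathbf q}|^2$ gains no $t^{-1}$ cancellation from the $\widehat{\mathbf q}^2\overline{(\cdot)}$ piece. The fix is to note that the exact leading symbol equals $\frac{i}{t}|\widehat{\mathbf q}|^2\widehat{\mathbf q}$ multiplied by $\xi^k$, since it is evaluated at a single frequency. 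We therefore multiply the scalar ODE for $\widehat{\mathbf q}(t,\xi)$ by $\xi^k$ rather than differentiating, which again gives a pure phase and closes the bootstrap. Finally, the requirement $\alpha>\delta$ in \eqref{Nota} is exactly the condition that makes the remainder integrable.
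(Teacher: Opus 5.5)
Your proposal is correct and follows essentially the paper's argument. Both use uniform $L^2$ bounds from the conserved energies of the integrable cubic NLS, and both obtain $\langle t\rangle^{\delta}$ growth of $\|\xi^k\partial_\xi\widehat{\mathbf q}\|_{L^2_\xi}=\|\partial_x^kJ\mathbf Q\|_{L^2_x}$ from $\|\partial_x^k J(|\mathbf Q|^2\mathbf Q)\|_{L^2}\lesssim \varepsilon_1^2 t^{-1}\sum_{j\le k}\|\partial_x^jJ\mathbf Q\|_{L^2}$. The $L^\infty_\xi$ bound is the same too: Lemma~\ref{tril} is applied with $f_j=\mathcal F^{-1}(\xi^{k_j}\widehat{\mathbf q})$, and the leading parts recombine exactly into $\xi^k\frac{i}{t}|\widehat{\mathbf q}|^2\widehat{\mathbf q}$, so the phase can be removed pointwise in $\xi$.

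The only real difference is that you close the weighted bound by a physical-space $J$-energy estimate and Gronwall, whereas the paper integrates the differentiated Fourier-side Duhamel formula. One caveat: your ``alternative'' energy-estimate route to the unweighted bounds $\|\xi^k\widehat{\mathbf q}\|_{L^2}$ yields only $\langle t\rangle^{C\varepsilon_1^2}$ growth, not the uniform-in-time bound the statement asserts, so the conserved higher energies are genuinely needed there.
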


\begin{rem}
The initial assumption \eqref{inismall} implies the hypothesis of Proposition~\ref{Qas}. Indeed, by \eqref{Q-defi}, \eqref{frameX}, \eqref{Bdefini}, and \eqref{firstorder}, together with the Sobolev product estimates,
$$
\|\mathbf Q_0\|_{H^{N-1}}+\|x\mathbf Q_0\|_{H^{N-2}}
\lesssim
\|\mathbf M_0-\Omega_*\|_{H^N}+\|x(\mathbf M_0-\Omega_*)\|_{H^{N-1}}
 \ll \varepsilon_1
$$
where we also used $\mathbf M_0-\Omega_*=R_{\eta x}(m_0-\omega_*)$.
\end{rem}

 \begin{rem}
In the notation above, the results of Hayashi and Naumkin \cite{HN1998} and Kato and Pusateri \cite{KP2011} correspond to $N=2$.
Proposition~\ref{Qas} establishes the corresponding weighted estimates for arbitrary $N$.
\end{rem}

\begin{proof}

By the standard well-posedness theory for nonlinear Schr\"odinger equations in weighted Sobolev spaces \cite[Theorem~4]{HNT1988}, there exists $T_0>0$ such that
$$\mathbf Q\in C\bigl([0,T_0];H^{N-1}(\mathbb R)\bigr),\quad x\mathbf Q\in C\bigl([0,T_0];L^2(\mathbb R)\bigr).$$
Using the conserved energies for the one dimensional cubic nonlinear Schr\"odinger equation \cite[Theorem~1.1 and Corollary~1.2]{KochTataru2018}, the local solution extends globally and satisfies $\|\mathbf Q(t)\|_{H^k}\lesssim \|\mathbf Q_0\|_{H^{N-1}}$ for every $0\leq k\leq N-1$ and $t\geq0$. Hence, by Plancherel's theorem,
$$\|\xi^k\widehat{\mathbf q}(t)\|_{L^2_\xi}=\|\partial_x^k\mathbf Q(t)\|_{L^2_x}\ll \varepsilon_1, \quad 0\leq k\leq N-2,\quad t\ge0.
$$
Therefore,  we get
$$\|\xi^k\widehat{\mathbf q}(t)\|_{L^2_\xi}\le\frac12\varepsilon_1,\quad 0\leq k\leq N-2,\quad t\geq0.
$$
The continuation criterion in \cite[Theorem~4]{HNT1988} and Sobolev embedding show that the global $H^{N-1}$ bound also precludes finite time blow up in the weighted space. Thus
$$\mathbf Q\in C\bigl(\mathbb R_{\ge0};H^{N-1}(\mathbb R)\bigr),\quad x\mathbf Q\in C\bigl(\mathbb R_{\ge0};L^2(\mathbb R)\bigr).$$
Note that 
$$
i\partial_t(x\mathbf Q)-H_\eta(x\mathbf Q)=2\partial_x\mathbf Q-2|\mathbf Q|^2(x\mathbf Q),\qquad (x\mathbf Q)(0)=x\mathbf Q_0.
$$
Consider the linear Cauchy problem
$$
i\partial_t v-H_\eta v=2\partial_x\mathbf Q-2|\mathbf Q|^2v,\qquad v(0)=x\mathbf Q_0.
$$
Its Duhamel formulation is
$$
v(t)=e^{-itH_\eta}x\mathbf Q_0-i\int_0^t e^{-i(t-s)H_\eta}\left(2\partial_x\mathbf Q(s)-2|\mathbf Q(s)|^2v(s)\right){\rm d}s.
$$
Since $N\geq6$, we have
$$
\|\partial_x\mathbf Q(t)\|_{H^{N-2}}\ll \varepsilon_1,
\qquad \||\mathbf Q(t)|^2v(t)\|_{H^{N-2}}\lesssim\varepsilon_1^2\|v(t)\|_{H^{N-2}}.
$$
Gronwall's inequality yields
$$
\|v(t)\|_{H^{N-2}}\ll \varepsilon_1(1+t)e^{C\varepsilon_1^2t}.
$$
Thus, for every finite $T>0$, we have $\sup_{0\leq t\leq T}\|v(t)\|_{H^{N-2}}<\infty.$ Moreover, since $e^{-itH_\eta}$ is a strongly continuous unitary group on $H^{N-2}$, the Duhamel formula implies $v\in C\bigl([0,T];H^{N-2}(\mathbb R)\bigr)$. Applying Gronwall's inequality once again to the difference between $v$ and $x\mathbf Q$,  we conclude that $v=x\mathbf Q$, and hence
$x\mathbf Q\in C\bigl([0,T];H^{N-2}(\mathbb R)\bigr).$

Hence for every $0\leq k\leq N-2$, the maps $t\longmapsto \xi^k\widehat{\mathbf q}(t)$ and $t\longmapsto \xi^k\partial_\xi\widehat{\mathbf q}(t)$ are continuous from $\mathbb R_{\geq0}$ into $L^2_\xi$. Moreover, the one-dimensional Sobolev inequality yields $\xi^k\widehat{\mathbf q}\in C\bigl(\mathbb R_{\geq0};L^\infty_\xi\bigr).$ Note that

$$\|\xi^k\widehat{\mathbf q}(0)\|_{L^\infty_\xi}+\|\xi^k\partial_\xi\widehat{\mathbf q}(0)\|_{L^2_\xi}\lesssim\|\mathbf Q_0\|_{H^{N-1}}+\|x\mathbf Q_0\|_{H^{N-2}}\ll\varepsilon_1.$$
We can therefore define
$$
T_1
=
\sup\Bigl\{
T>0:
\|\xi^k\widehat{\mathbf q}(t)\|_{L^\infty_\xi}
\leq\varepsilon_1,\ 
\|\xi^k\partial_\xi\widehat{\mathbf q}(t)\|_{L^2_\xi}
\leq
\varepsilon_1\langle t\rangle^\delta,
\ \forall\,t\in[0,T),\
\forall\,0\leq k\leq N-2
\Bigr\}>0.
$$
We first consider the weighted
$L^2_\xi$ bound. By Duhamel's formula,
\begin{equation}
\label{qDuha}
\widehat{\mathbf q}(t,\xi)
=
\widehat{\mathbf q}(0,\xi)
+
\frac{i}{\pi}
\int_{[0,t]\times\mathbb R^3}
e^{is\Phi(\xi,a,b,c)}
\widehat{\mathbf q}(s,a)
\widehat{\mathbf q}(s,b)
\widehat{\overline{\mathbf q}}(s,c)
\delta(\xi-a-b-c)
\,{\rm d}(s,a,b,c),
\end{equation}
where
$\Phi(\xi,a,b,c)=\xi^2-a^2-b^2+c^2$.\\
Applying the commutation lemma from Section~12.2 of
Collot--Germain \cite{CG2025} to
\eqref{qDuha}, we obtain
\begin{align}
\label{weiDuh}
\xi^k\partial_\xi\widehat{\mathbf q}(t,\xi)
={}&
\xi^k\partial_\xi\widehat{\mathbf q}(0,\xi)
\nonumber\\
&+
\frac{i}{\pi}
\sum_{\substack{
k_1+k_2+k_3=k\\
k_j\geq0}}
\frac{k!}{k_1!k_2!k_3!}
\int_{[0,t]\times\mathbb R^3}
e^{is\Phi(\xi,a,b,c)}
a^{k_1}\partial_a\widehat{\mathbf q}(s,a)\,
b^{k_2}\widehat{\mathbf q}(s,b)\,
c^{k_3}\widehat{\overline{\mathbf q}}(s,c)
\nonumber\\
&\qquad\cdot
\delta(\xi-a-b-c)
{\rm d}(s,a,b,c)
+\{\text{similar and easier terms}\}.
\end{align}
For $0\leq t\leq1$, the desired estimate follows directly from
\eqref{weiDuh}, the conserved $H^{N-1}$ bound,
Sobolev embedding, and the bootstrap assumptions. Particularly, for any $0\le k\le N-2$  we have 
\begin{align}\label{wuqiong}
\|\xi^k\widehat{\mathbf q}(1)\|_{L^\infty_\xi}\le \frac{1}{2}\varepsilon_1.
\end{align}
We therefore assume $t\geq1$. Applying Plancherel's theorem, H\"older's
inequality, (\ref{staphase}), and the bootstrap
assumptions, for $1\leq s<T_1$, we obtain
\begin{align*}
&\left\|\int_{\mathbb R^3}
e^{is\Phi(\xi,a,b,c)}
a^{k_1}\partial_a\widehat{\mathbf q}(s,a)
b^{k_2}\widehat{\mathbf q}(s,b)
c^{k_3}\widehat{\overline{\mathbf q}}(s,c)
\delta(\xi-a-b-c){\rm d}(a,b,c)
\right\|_{L^2_\xi}\\
=&\left\|\int_{\mathbb{R}^3}
e^{-is\langle a\rangle_{\mu}^2}
a^{k_1}\partial_a\widehat{\mathbf q}(s,a)
e^{-is\langle b\rangle_{\mu}^2}
b^{k_2}\widehat{\mathbf q}(s,b)
e^{is\langle c\rangle_{\mu}^2}
c^{k_3}\widehat{\overline{\mathbf q}}(s,c)
\delta(\xi-a-b-c){\rm d}(a,b,c)
\right\|_{L^2_\xi}\\
=&\Bigg\|\int_{\mathbb{R}^3}
\mathcal F\left[e^{-isH_\eta}\mathcal F^{-1}\left(\xi^{k_1}\partial_\xi\widehat{\mathbf q}(s,\xi)\right)\right](a)
\mathcal F\left[e^{-isH_\eta}\mathcal F^{-1}\left(\xi^{k_2}\widehat{\mathbf q}(s,\xi)\right)\right](b)\\
&\qquad\qquad\qquad\qquad\qquad\qquad\qquad\cdot
\mathcal F\left[e^{isH_\eta}\mathcal F^{-1}\left(\xi^{k_3}\widehat{\overline{\mathbf q}}(s,\xi)\right)\right](c)
\delta(\xi-a-b-c){\rm d}(a,b,c)\Bigg\|_{L^2_\xi}\\
\lesssim&\left\|
e^{-isH_\eta}\mathcal F^{-1}\left(\xi^{k_1}\partial_\xi\widehat{\mathbf q}(s,\xi)\right)
e^{-isH_\eta}\mathcal F^{-1}\left(\xi^{k_2}\widehat{\mathbf q}(s,\xi)\right)
e^{isH_\eta}\mathcal F^{-1}\left(\xi^{k_3}\widehat{\overline{\mathbf q}}(s,\xi)\right)
\right\|_{L^2_x}\\
\lesssim&\varepsilon_1^3\langle s\rangle^{-1+\delta}.
\end{align*}
Thus, choosing $\varepsilon_1$ sufficiently small, we obtain
$$
\left\|
\xi^k\partial_\xi\widehat{\mathbf q}(t)
\right\|_{L^2_\xi}
\leq
\frac12\varepsilon_1\langle t\rangle^\delta,
\qquad
0\leq k\leq N-2,\quad 0\leq t<T_1.
$$

We next estimate
$\|\xi^k\widehat{\mathbf q}(t)\|_{L^\infty_\xi}$.
For $0\leq t\leq1$, \eqref{qDuha}, Young's inequality, and the bootstrap assumptions imply
$$
\|\xi^k\widehat{\mathbf q}(t)\|_{L^\infty_\xi}
\ll
\varepsilon_1,
\qquad
0\leq k\leq N-2.
$$
We henceforth consider $t\geq1$. For each decomposition
$k_1+k_2+k_3=k$ and $j=1,2,3$, we define
$f_j=\mathcal F^{-1}\left[\xi^{k_j}\widehat{\mathbf q}(t,\xi)\right].$
Differentiating \eqref{qDuha}, we obtain
\begin{align*}
&\xi^k\partial_t\widehat{\mathbf q}(t,\xi)\\
=&
\frac{i}{\pi}e^{it\xi^2}
\sum_{k_1+k_2+k_3=k}
\frac{k!}{k_1!k_2!k_3!}
\int_{\mathbb R^3}
e^{-ita^2}a^{k_1}\widehat{\mathbf q}(t,a)
e^{-itb^2}b^{k_2}\widehat{\mathbf q}(t,b)
e^{itc^2}c^{k_3}\widehat{\overline{\mathbf q}}(t,c)
\delta(\xi-a-b-c){\rm d}(a,b,c)\\
=&2i
\sum_{k_1+k_2+k_3=k}
\frac{k!}{k_1!k_2!k_3!}
(-1)^{k_3}
e^{it\xi^2}
\mathcal F\Big(
e^{it\partial_x^2}f_1\,
e^{it\partial_x^2}f_2\,
\overline{e^{it\partial_x^2}f_3}
\Big)(\xi).
\end{align*}
By the bootstrap assumptions and Plancherel's identity, for
$j=1,2,3$, we have
\begin{equation}
\label{boot}
\|\widehat{f_j}(t)\|_{L^\infty_\xi}
+
\|f_j(t)\|_{L^2_x}
\lesssim
\varepsilon_1,
\qquad
\|xf_j(t)\|_{L^2_x}
\lesssim
\varepsilon_1 t^\delta.
\end{equation}
Applying Lemma~\ref{tril} with the bounds \eqref{boot}, we obtain
\begin{align*}
&
\Bigg\|
\xi^k\partial_t\widehat{\mathbf q}(t,\xi)
-
\frac{i}{t}
\sum_{k_1+k_2+k_3=k}
\frac{k!}{k_1!k_2!k_3!}
(-1)^{k_3}
\widehat{f_1}(t,\xi)
\widehat{f_2}(t,\xi)
\overline{\widehat{f_3}(t,\xi)}
\Bigg\|_{L^\infty_\xi}
\\
\lesssim&
t^{-1-\alpha}
\sum_{k_1+k_2+k_3=k}
\sum_{j=1}^3
\left(
\|f_j(t)\|_{L^2_x}
+
\|xf_j(t)\|_{L^2_x}
\right)
\prod_{\substack{1\leq \ell\leq3\\ \ell\neq j}}
\left[
\|\widehat{f_\ell}(t)\|_{L^\infty_\xi}
+
t^{-\alpha}
\left(
\|f_\ell(t)\|_{L^2_x}
+
\|xf_\ell(t)\|_{L^2_x}
\right)
\right]
\\
\lesssim&
\varepsilon_1^3t^{-1-\alpha+\delta}.
\end{align*}
Note that
\begin{align*}
&
\sum_{k_1+k_2+k_3=k}
\frac{k!}{k_1!k_2!k_3!}
(-1)^{k_3}
\widehat{f_1}(t,\xi)
\widehat{f_2}(t,\xi)
\overline{\widehat{f_3}(t,\xi)}
=
\xi^k
|\widehat{\mathbf q}(t,\xi)|^2
\widehat{\mathbf q}(t,\xi).
\end{align*}
It follows that
\begin{align*}
\left\|
\xi^k
\left[
\partial_t\widehat{\mathbf q}(t,\xi)
-
\frac{i}{t}
|\widehat{\mathbf q}(t,\xi)|^2
\widehat{\mathbf q}(t,\xi)
\right]
\right\|_{L^\infty_\xi}
\lesssim
\varepsilon_1^3t^{-1-\alpha+\delta},
\quad
0\leq k\leq N-2,
\end{align*}
which combined with (\ref{wuqiong}) implies
\begin{align*}
\|\xi^k\widehat{\mathbf q}(t)\|_{L^\infty_\xi}
&\leq \|\xi^k\widehat{\mathbf q}(1)\|_{L^\infty_\xi}
+\int_1^t\left\|\xi^k\left(\partial_s\widehat{\mathbf q}(s)-\frac{i}{s}|\widehat{\mathbf q}(s)|^2\widehat{\mathbf q}(s)\right)\right\|_{L^\infty_\xi}{\rm d}s\\
&\le  \frac{1}{2}\varepsilon_1  +C\varepsilon_1^3.
\end{align*}
After choosing $\varepsilon_1$ sufficiently small, we have
$$
\|\xi^k\widehat{\mathbf q}(t)\|_{L^\infty_\xi}
\leq
\frac34\varepsilon_1,
\qquad
0\leq k\leq N-2,
\qquad
1\leq t<T_1.
$$
Suppose that $T_1<\infty$, it is possible to extend $T_1$ up to some time $T_1+\delta_1$, with $\delta_1>0$. This contradicts the definition of $T_1$, and thus proves the proposition.
\end{proof}

\section{Closing the bootstrap}
\label{sectransf}
In this section, we close the bootstrap for the perturbation and the modulation parameters. We reverse transfer the cubic NLS estimates obtained in Section~\ref{georedu}  to $z$ and its distorted Fourier profile, yielding pointwise decay, Sobolev bounds, and weighted profile estimates.  We then exploit the cancellation in the leading quadratic modulation terms to obtain integrable bounds for $\dot y$ and $\dot\phi$. These estimates establish the asymptotic stability asserted in the main theorem.

 By Theorem~\ref{gwpM},  \eqref{LLM}  has a unique global solution $\mathbf M$  in time.
Define 
\begin{equation}\label{Tstar}
T_2=
\sup\left\{T>0:
\|\mathbf M(t)-\Omega_*\|_{H^2}
\le \varepsilon_*,
|\dot y(t)|+|\dot\phi(t)|
\leq
\varepsilon_1\langle t\rangle^{-1-\delta_*}
\text{for all }0\leq t\leq T
\right\}.
\end{equation}
We first claim that $T_2>0$. 
In fact, by \eqref{inismall}, we have
\begin{equation}\label{eH2ets}
\|\mathbf M(0)-\Omega_*\|_{H^2}
\lesssim
\|m_0-\omega_*\|_{H^2}
\ll \varepsilon_1.
\end{equation}
In order to show 
\begin{equation*}
|\dot{y}(0)|+|\dot{\phi}(0)|\le\frac{1}{2}\varepsilon_1,
\end{equation*}
it suffices to dominate $\|z_0\|_{H^1(\mathbb{R})}$. Indeed, \eqref{modueq} and \eqref{Acontr} give
\begin{align*}
|\dot y(0)|+|\dot\phi(0)|\lesssim
|\dot y(0)|+|\dot\phi(0)+\eta\dot y(0)|
\lesssim|R(0)|\lesssim \|z_0\|_{H^1}^2 + \|z_0\|_{H^1}^3.
\end{align*}
The representation \eqref{zmphi}, together with \eqref{H2small} and \eqref{Tstar}, gives for $t\in[0,T_2)$ that 
 \begin{equation}\label{z0es}
\begin{aligned}
\|z_0\|_{L^2}
&\lesssim \|M(0)-\Omega_*\|_{L^2},\\
\|\partial_x z_0\|_{L^2}
&\lesssim \|M(0)-\Omega_*\|_{H^1}
+\|M(0)-\Omega_*\|_{H^1}^2.
\end{aligned}
\end{equation}
The initial estimate \eqref{z0es} then yields
\begin{equation}\label{zoH1}
\|z_0\|_{H^1}\lesssim \|M(0)-\Omega_*\|_{H^2}+\|M(0)-\Omega_*\|_{H^2}^2.
\end{equation}
Applying \eqref{H2small} once more, we obtain
\begin{equation*}
\|z_0\|_{H^1}\lesssim \|\mathbf{M}(0)-\Omega_*\|_{H^2}+ \|\mathbf{M}(0)-\Omega_*\|^2_{H^2}\ll \varepsilon_1,
\end{equation*}
provided that $\varepsilon_1$ is sufficiently small. Then the claim follows from the time regularity of $\mathbf M$ and $y,\phi$.

\begin{rem}
\label{zH^2}
The argument used for \eqref{zoH1}, together with \eqref{zmphi}, \eqref{H2small}, and \eqref{Tstar}, gives, for $t\in[0,T_2)$,
\begin{align*}
\|z(t)\|_{H^2}
\lesssim&\ \|M(t)-\Omega_*\|_{H^2}+\|M(t)-\Omega_*\|_{H^2}^2+\|M(t)-\Omega_*\|_{H^2}^3\\
\lesssim&\ \|M(t)-\Omega_*\|_{H^2}\\
\lesssim&\ \varepsilon_*.
\end{align*}
\end{rem}

Combining \eqref{parasize}, \eqref{eH2ets}, and \eqref{Tstar}, we obtain
\begin{equation}\label{parboot}
\begin{aligned}
|y(t)|+|\phi(t)|
&\leq
|y(0)|+|\phi(0)|
+\int_0^t\bigl(|\dot y(s)|+|\dot\phi(s)|\bigr)\,{\rm d}s\\
&\lesssim
 \varepsilon_1
+\varepsilon_1\int_0^\infty\langle s\rangle^{-1-\delta_*}\,{\rm d}s\\
&\lesssim\varepsilon_1.
\end{aligned}
\end{equation}
Since
\begin{equation}
\label{eqphi}
\widehat q(t,\xi)
=
e^{i(\phi(t)+\eta y(t))}e^{iy(t)\xi}
\widehat{\mathbf q}(t,\xi),
\end{equation}
Proposition~\ref{Qcopro} allows us to transfer the weighted estimates for $\mathbf{q}$ to $q$, where $q(t)=e^{itH_\eta}Q(t)$.

\begin{prop}
\label{prop:Q-yphi-profile}
For all $0\leq k\le N-2$ and $t\in [0,T_2)$, we have
\begin{equation}
\label{L2infy}
\|\xi^k\widehat q(t)\|_{L^2_\xi\cap L^\infty_\xi}
\leq
\varepsilon_1,\quad
\|\xi^k\partial_\xi\widehat q(t)\|_{L^2_\xi}
\lesssim
\varepsilon_1\langle t\rangle^\delta.
\end{equation}
\end{prop}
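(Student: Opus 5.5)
The plan is to transfer the bounds of Proposition~\ref{Qas} for $\widehat{\mathbf q}$ directly through the explicit relation \eqref{eqphi}. First I would justify \eqref{eqphi} itself. By Proposition~\ref{Qcopro}, $Q(t,x)=e^{i(\phi+\eta y)}\mathbf Q(t,x+y)$. Since $H_\eta=-\partial_x^2+\mu^2$ is a Fourier multiplier, it commutes with translations and with multiplication by the constant phase. Hence $q(t)=e^{itH_\eta}Q(t)=e^{i(\phi+\eta y)}\mathbf q(t,\cdot+y(t))$. Taking the flat Fourier transform gives $\widehat q(t,\xi)=e^{i(\phi(t)+\eta y(t))}e^{iy(t)\xi}\widehat{\mathbf q}(t,\xi)$.

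The unweighted bounds then follow at once. Since the prefactor in \eqref{eqphi} has modulus one, $|\xi^k\widehat q(t,\xi)|=|\xi^k\widehat{\mathbf q}(t,\xi)|$ pointwise. Proposition~\ref{Qas} therefore gives $\|\xi^k\widehat q(t)\|_{L^2_\xi\cap L^\infty_\xi}\le\varepsilon_1$ for all $t\ge0$ and $0\le k\le N-2$, in particular on $[0,T_2)$. Its hypothesis is supplied by the remark following it, which deduces it from \eqref{inismall}.

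For the derivative bound I would differentiate \eqref{eqphi} in $\xi$:
\begin{equation*}
\xi^k\partial_\xi\widehat q(t,\xi)=e^{i(\phi+\eta y)}e^{iy\xi}\Bigl(iy(t)\,\xi^k\widehat{\mathbf q}(t,\xi)+\xi^k\partial_\xi\widehat{\mathbf q}(t,\xi)\Bigr).
\end{equation*}
Taking $L^2_\xi$ norms gives
\begin{equation*}
\|\xi^k\partial_\xi\widehat q(t)\|_{L^2_\xi}\le|y(t)|\,\|\xi^k\widehat{\mathbf q}(t)\|_{L^2_\xi}+\|\xi^k\partial_\xi\widehat{\mathbf q}(t)\|_{L^2_\xi}.
\end{equation*}
The second term is at most $\varepsilon_1\langle t\rangle^\delta$ by Proposition~\ref{Qas}. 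For the first term, on $[0,T_2)$ the bootstrap bound \eqref{parboot} gives $|y(t)|\lesssim\varepsilon_1$, so it is $\lesssim\varepsilon_1^2$. Together these give the claimed $\lesssim\varepsilon_1\langle t\rangle^\delta$.

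The only delicate point is that the modulation parameters exist and obey \eqref{parboot} precisely on $[0,T_2)$; this is why the statement is restricted to that interval. Otherwise the argument is an algebraic transfer and has no real obstacle.
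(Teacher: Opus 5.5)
Your proposal is correct and matches the paper's proof: the unimodular prefactor in \eqref{eqphi} transfers the $L^2_\xi\cap L^\infty_\xi$ bounds from Proposition~\ref{Qas} unchanged. Differentiating \eqref{eqphi} in $\xi$ produces the extra term $iy\,\xi^k\widehat{\mathbf q}$, which is controlled by \eqref{parboot} on $[0,T_2)$. Your derivation of \eqref{eqphi} from Proposition~\ref{Qcopro}, using that $H_\eta$ commutes with translations, correctly justifies a step the paper states without comment.
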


\begin{proof}
By \eqref{eqphi} and Proposition~\ref{Qas}, we obtain
\begin{equation*}
\|\xi^k\widehat q(t)\|_{L^2_\xi\cap L^\infty_\xi}
=
\|\xi^k\widehat{\mathbf q}(t)\|_{L^2_\xi\cap L^\infty_\xi}
\leq
\varepsilon_1.
\end{equation*}
Differentiating \eqref{eqphi} in $\xi$, we have
\begin{equation*}
\partial_\xi\widehat q
=
e^{i(\phi+\eta y)}e^{iy\xi}
\left(
\partial_\xi\widehat{\mathbf q}
+
iy\widehat{\mathbf q}
\right).
\end{equation*}
Hence, by \eqref{parboot} and
Proposition~\ref{Qas}, we obtain
\begin{align*}
\|\xi^k\partial_\xi\widehat q(t)\|_{L^2_\xi}
&\leq
\|\xi^k\partial_\xi\widehat{\mathbf q}(t)\|_{L^2_\xi}
+|y(t)|\|\xi^k\widehat{\mathbf q}(t)\|_{L^2_\xi} 
\lesssim
\varepsilon_1\langle t\rangle^\delta.
\end{align*}
The proof is now completed.\end{proof}

\begin{rem}
\label{HkQ}
Using Proposition~\ref{prop:Q-yphi-profile} and (\ref{staphase}), for all $0\le k\le N-2$, $t\in[0,T_2)$, we have
\begin{equation}\label{QHk}
\|\partial_x^kQ(t)\|_{L^\infty}\lesssim \varepsilon_1\langle t\rangle^{-1/2},
\qquad
\|Q\|_{H^k}\lesssim\varepsilon_1.
\end{equation}
\end{rem}

\subsection{Reverse transfer of weighted estimates}
This subsection is devoted to weighted estimates for $z$ and its profile. The relation between $Q$ and $z$ given by \eqref{xchi}--\eqref{Q-z} is nonlinear and cannot be inverted globally. Although \eqref{Q-z} yields $$D_\eta^*z=-2e^{-i\chi}Q+o_{L^2}(1),$$
the scattering asymptotics of $Q$ do not directly imply those of $z$, since the current estimates do not determine the asymptotic behavior of the spatially dependent gauge factor $e^{-i\chi(t,x)}$ on the dispersive scale. Lemma~\ref{Detapro},   the orthogonality condition \eqref{zorthog} and (\ref{Q-z}) give the identity
\begin{equation}\label{loczinv}
z=-2D_\eta H_\eta^{-1}\left[e^{-i\chi}\left(1+\frac14|z|^2\right)Q\right]-\frac\mu2D_\eta H_\eta^{-1}\left(S_\eta z^2\right).
\end{equation}
\begin{prop}\label{z-decay}
For all $0\le j\leq N-1$, we have
\begin{equation}\label{zinfdecay}
\|\partial_x^j z(t)\|_{L^\infty}\lesssim\varepsilon_1\langle t\rangle^{-1/2},\qquad \forall \mbox{ } 0\leq t<T_2.
\end{equation}
\end{prop}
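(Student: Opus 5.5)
The plan is to derive the decay \eqref{zinfdecay} from the identity \eqref{loczinv}, using the bounds on $Q$ in Remark~\ref{HkQ} and the mapping properties of $D_\eta H_\eta^{-1}$ from Lemma~\ref{Deliest}. Since $z$ enters the right-hand side of \eqref{loczinv} only through $|z|^2$, $e^{-i\chi}$ and $S_\eta z^2$, the argument is a perturbative bootstrap in $L^\infty$. The a priori smallness comes from Remark~\ref{zH^2}: $\|z(t)\|_{H^2}\lesssim\varepsilon_*$, which by Sobolev embedding gives $\|z\|_{W^{1,\infty}}\lesssim\varepsilon_*$. Also, $\chi$ is real, so $|e^{-i\chi}|=1$. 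Derivatives of $e^{-i\chi}$ are controlled through \eqref{xchi}, since
\[
\partial_x\chi=-\frac{\frac12\operatorname{Im}(\overline z\,\partial_xz)+\mu S_\eta\operatorname{Im}z}{1+\frac14|z|^2}.
\]
This is a local expression in $(z,\partial_xz)$ and needs no integration.

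\textbf{Step 1 ($j=0,1$).} Apply \eqref{DeHe} with $m=0$ to \eqref{loczinv}. This gives
\[
\|z\|_{W^{1,\infty}}\lesssim\|(1+\tfrac14|z|^2)Q\|_{L^\infty}+\|S_\eta z^2\|_{L^\infty}\lesssim(1+\varepsilon_*^2)\|Q\|_{L^\infty}+\varepsilon_*\|z\|_{L^\infty}.
\]
Since $\varepsilon_*$ is small, the last term is absorbed into the left-hand side. By \eqref{QHk} this yields $\|z\|_{W^{1,\infty}}\lesssim\varepsilon_1\langle t\rangle^{-1/2}$.

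\textbf{Step 2 (induction on $j$).} Suppose $\|z\|_{W^{m,\infty}}\lesssim\varepsilon_1\langle t\rangle^{-1/2}$ for some $m\ge1$ with $m+1\le N-1$. Apply \eqref{DeHe} with this $m$, so that
\[
\|z\|_{W^{m+1,\infty}}\lesssim\|e^{-i\chi}(1+\tfrac14|z|^2)Q\|_{W^{m,\infty}}+\|S_\eta z^2\|_{W^{m,\infty}}.
\]
Expand the right-hand side by the Leibniz rule.
\begin{itemize}
\item The term $S_\eta z^2$ has at most $m$ derivatives on $z$, so it is bounded by $\|z\|_{L^\infty}\|z\|_{W^{m,\infty}}$.
\item In the first term, derivatives of $e^{-i\chi}$ of order $\ell\le m$ are polynomials in $\partial_x^r\chi$ with $1\le r\le\ell$. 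Each $\partial_x^r\chi$ involves derivatives of $z$ of order at most $r\le m$, together with $S_\eta$ and its derivatives, so these are bounded by constants depending on $\|z\|_{W^{m,\infty}}\lesssim1$.
\item Derivatives falling on $Q$ are bounded by $\|\partial_x^kQ\|_{L^\infty}\lesssim\varepsilon_1\langle t\rangle^{-1/2}$ for $k\le m\le N-2$, by Remark~\ref{HkQ}.
\end{itemize}
Every term therefore carries either a factor of $Q$ or a factor $S_\eta z\cdot z$ with one factor decaying. This gives $\|z\|_{W^{m+1,\infty}}\lesssim\varepsilon_1\langle t\rangle^{-1/2}$. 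The restriction $k\le N-2$ on $Q$ is exactly why the induction stops at $j=N-1$.

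\textbf{Main obstacle.} The delicate point is making the derivative bookkeeping consistent. At order $m+1$ the right-hand side must contain no more than $m$ derivatives of $z$. This holds here because $\chi$ enters only through $\partial_x\chi$, which is first order in $z$, and $\partial_x^\ell e^{-i\chi}$ with $\ell\le m$ involves $\partial_x^{\ell}z$ at most.

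A second concern is that Step 1 needs uniform smallness of $\|z\|_{L^\infty}$ to absorb the quadratic term. This is provided by Remark~\ref{zH^2} on $[0,T_2)$.

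Finally, one should check that \eqref{loczinv} is valid, i.e.\ that the $D_\eta^{-1}P_e$ inversion applies. This follows because $z\perp S_\eta$ by \eqref{zorthog}, so $P_ez=z$, combined with the remark following Lemma~\ref{Detapro}. With that in place, the proof is a straightforward fixed-point-type estimate.
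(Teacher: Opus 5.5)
Your argument is correct and is in substance the paper's proof. The base case applies \eqref{DeHe} to \eqref{loczinv} and absorbs the $S_\eta z^2$ term using the smallness from Remark~\ref{zH^2} and \eqref{QHk}, and the induction relies on the local formula for $\partial_x\chi$; the only difference is that the paper handles the $j\ge1$ steps through the pointwise identity \eqref{Qzz} for $\partial_x z$ rather than reapplying the $W^{m,\infty}$ version of \eqref{DeHe}, which is the same derivative bookkeeping.
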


\begin{proof}
We prove the proposition by induction on $j$, starting with the cases $j=0,1$.

By  \eqref{DeHe}, Remark~\ref{zH^2} and \eqref{QHk}, we obtain from the continuity argument that 
\begin{equation}\label{11}
\|z(t)\|_{L^\infty}\lesssim\varepsilon_1\langle t\rangle^{-1/2},\qquad 0\leq t<T_2.
\end{equation}
The representation \eqref{Q-z} gives
\begin{equation}\label{Qzz}
\partial_xz=2e^{-i\chi}\left(1+\frac14|z|^2\right)Q-\mu T_\eta z+\frac{\mu S_\eta}{2}z^2.
\end{equation}
Using Remark~\ref{zH^2}, \eqref{QHk} and \eqref{11}, we obtain
\begin{equation*}
\|\partial_xz(t)\|_{L^\infty}\lesssim \varepsilon_1\langle t\rangle^{-1/2}.
\end{equation*}

Now let $1\leq m\leq N-2$ and assume that
\begin{equation*}
\|\partial_x^rz(t)\|_{L^\infty}\lesssim_N\mu^{-1}\varepsilon_1\langle t\rangle^{-1/2},\quad 0\leq r\leq m,\quad 0\leq t<T_2.
\end{equation*}
It remains to prove the case $r=m+1$. Differentiating \eqref{xchi} in $x$, we have
\begin{equation}
\label{xchi2}
\partial_x\chi=-\frac{\frac12\operatorname{Im}(\overline{z}\partial_xz)+\mu S_\eta\operatorname{Im}z}{1+\frac14|z|^2}.
\end{equation}
It follows that
\begin{equation}
\label{chide3}
\|\partial_x^r\chi(t)\|_{L^\infty}\lesssim_N\mu^{-1}\varepsilon_1\langle t\rangle^{-1/2},\qquad 1\leq r\leq m.
\end{equation}
Thus
\begin{equation}\label{Hasi}
\left\|e^{-i\chi}\left(1+\frac14|z|^2\right)\right\|_{W^{m,\infty}}\lesssim1.
\end{equation}
Then the estimate \eqref{Hasi}, Remark~\ref{HkQ}, (\ref{Qzz}) and the induction hypothesis complete the induction.
\end{proof}
In particular, \eqref{u-z} and \eqref{zinfdecay} yield \eqref{Lindeca}.
The same induction argument also gives the following Sobolev estimates.

\begin{prop}
\label{Sobes}
Let $1\leq m\leq N$ be an integer. Then
\begin{equation}\label{Hmze}
\|z(t)\|_{H^m}\lesssim\varepsilon_1,\quad 0\leq t<T_2.
\end{equation}
\end{prop}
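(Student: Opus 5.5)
We argue as in Proposition~\ref{z-decay}, with $L^\infty_x$ replaced by $L^2_x$ throughout. The base of the induction is $m=1$, and we then recover one derivative at a time from the identity \eqref{Qzz}. The inputs are the uniform bound $\|Q(t)\|_{H^k}\lesssim\varepsilon_1$ for $0\le k\le N-2$ from Remark~\ref{HkQ}, the $L^\infty$ bounds of Proposition~\ref{z-decay} on $\partial_x^jz$, and the bound $\|z\|_{H^2}\lesssim\varepsilon_*$ from Remark~\ref{zH^2}.

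\emph{Base case.} For $m=0,1$ we use \eqref{loczinv} together with \eqref{HetaHm} with $m=0$:
\[
\|z\|_{H^1}\lesssim \Bigl\|e^{-i\chi}\bigl(1+\tfrac14|z|^2\bigr)Q\Bigr\|_{L^2}+\|S_\eta z^2\|_{L^2}\lesssim \|Q\|_{L^2}+\|z\|_{L^\infty}\|z\|_{L^2}.
\]
The second term is at most $\varepsilon_1\varepsilon_*$. The factor $\|z\|_{L^2}$ is bounded by $\varepsilon_*$ via Remark~\ref{zH^2}, and $\|z\|_{L^\infty}\lesssim\varepsilon_1\langle t\rangle^{-1/2}$ by Proposition~\ref{z-decay}. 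This gives $\|z\|_{H^1}\lesssim\varepsilon_1$.

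\emph{Inductive step.} Assume $\|z\|_{H^r}\lesssim\varepsilon_1$ for $r\le m$. Differentiate \eqref{Qzz} $m$ times and estimate in $L^2$. The terms $\partial_x^m(T_\eta z)$ and $\partial_x^m(S_\eta z^2)$ are controlled by $\|z\|_{H^m}$ and $\|z\|_{W^{m,\infty}}\|z\|_{H^m}$. For the main term $\partial_x^m\bigl[e^{-i\chi}(1+\frac14|z|^2)Q\bigr]$, distribute the derivatives by Leibniz. Put every factor in $L^\infty$ except the one carrying the most derivatives, which goes in $L^2$:
\begin{itemize}
\item when derivatives fall on $Q$, use $\|Q\|_{H^m}$ with the other factors bounded by \eqref{Hasi};
\item when derivatives fall on $z$, use $\|z\|_{H^m}$;
\item when derivatives fall on $\chi$, use $\partial_x^{r}\chi$ with $r\ge1$.
\end{itemize}
For the last case, \eqref{xchi2} shows $\partial_x^r\chi$ is a combination of $\partial^{\le r}z$ with bounded coefficients. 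Its $L^2$ norm is therefore at most $\|z\|_{H^{r}}(1+\|z\|_{W^{r,\infty}})$, where the $\mu S_\eta\operatorname{Im}z$ part is handled by Leibniz since the derivatives of $S_\eta$ are bounded. This yields $\|\partial_x^{m+1}z\|_{L^2}\lesssim\varepsilon_1$.

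\emph{Range of $m$.} This scheme covers $m+1\le N-1$, since $Q\in H^{N-2}$ uniformly and the $W^{k,\infty}$ bounds on $z$ hold up to order $N-1$.

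\emph{Main obstacle: the top derivative $m=N$.} Here \eqref{Qzz} would require $Q\in H^{N-1}$, while Remark~\ref{HkQ} only supplies $H^{N-2}$. The route I would try first is to use $\mathbf Q\in H^{N-1}$, which is conserved by the integrable cubic NLS as in the proof of Proposition~\ref{Qas}: $\|\mathbf Q(t)\|_{H^{N-1}}\lesssim\|\mathbf Q_0\|_{H^{N-1}}\ll\varepsilon_1$. Then \eqref{Qmath} gives $\|Q(t)\|_{H^{N-1}}\ll\varepsilon_1$, and the same Leibniz argument closes the case $m=N$. In that final step the $L^\infty$ norms of $\partial_x^{N-1}z$ and $\partial_x^{N-1}\chi$ are only needed in the lower-order factors, so Proposition~\ref{z-decay} suffices.

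If one prefers to avoid the $\mathbf Q$-level conservation, the fallback is to bound $\|M-\Omega_*\|_{H^N}$ directly. This uses the conservation laws of Appendix~\ref{appcsv} and the global well-posedness theory, via the explicit relation \eqref{zmphi} and the Sobolev product estimates. This is the step I expect to require the most care.
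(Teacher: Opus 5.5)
Your primary argument is correct and is essentially the paper's: the paper gains each derivative by applying Lemma~\ref{Deliest} to \eqref{loczinv} rather than differentiating \eqref{Qzz}, which is equivalent. Like you, it uses \eqref{Hasi} and Remark~\ref{HkQ} through $H^{N-1}$, and for the top step imports $\|Q(t)\|_{H^{N-1}}\lesssim\varepsilon_1$ from the conserved $H^{N-1}$ norm of $\mathbf Q$ via \eqref{Qmath}. You can drop the fallback; it would not work as stated, since Appendix~\ref{appcsv} only controls $H^2$-level quantities and the well-posedness theory gives no uniform-in-time smallness at the $H^N$ level.
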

\begin{proof}
We argue by induction on $m$. The case $m=1$ follows from Lemma~\ref{Deliest} and \eqref{loczinv}. Now let $1\leq m\leq N-1$ and assume that
$$
\|z(t)\|_{H^m}\lesssim\varepsilon_1,\quad 0\leq t<T_2.
$$
For the last step $m=N-1$, \eqref{Qmath} and the uniform $H^{N-1}$ bound for $\mathbf Q$ in the proof of Proposition~\ref{Qas} give $\|Q(t)\|_{H^{N-1}}\lesssim\varepsilon_1$.
Then Lemma~\ref{Deliest}, \eqref{loczinv}, and \eqref{Hasi} complete the induction.
\end{proof}
 We next estimate $\|\xi^k\widetilde{g}\|_{L^2\cap L^\infty}$ and $\|\xi^k\partial_\xi\widetilde{g}\|_{L^2}$.
\begin{prop} 
We have uniformly for $t\in [0,T_2)$ and $0\le k\le N-2$  that 
\begin{align}
\|\xi^k \widetilde{g}\|_{L^2}&\lesssim \varepsilon_1\label{a1}\\
\|\xi^k \partial_{\xi}\widetilde g\|_{L^2}&\lesssim \varepsilon_1 \langle t\rangle^{\delta}\label{a2}\\
\|\xi^k \widetilde{g}\|_{L^{\infty}}&\lesssim \varepsilon_1.\label{a3}
\end{align}
 \end{prop}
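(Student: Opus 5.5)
The plan is to express $\widetilde g$ directly in terms of the NLS profile $q$. Everything then follows from Proposition~\ref{prop:Q-yphi-profile}, together with error terms that are quadratic in $z$ and gain decay.

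\textbf{Step 1: exact formula for $\widetilde g$.} By \eqref{dftdbx}, $\widetilde z(\xi)=\widehat{D_\eta^*z}(\xi)/\langle\xi\rangle_\mu$. The representation \eqref{Q-z} gives
$$D_\eta^*z=-2e^{-i\chi}\Bigl(1+\tfrac14|z|^2\Bigr)Q-\tfrac{\mu}{2}S_\eta z^2 .$$
Since $\widetilde{\mathcal F}$ diagonalizes $\mathcal L_\eta$, we have $\widetilde g(t,\xi)=e^{it\langle\xi\rangle_\mu^2}\widetilde z(t,\xi)$. We also have $e^{it\langle\xi\rangle_\mu^2}\widehat Q=\widehat q$. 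Hence
$$\widetilde g=-\frac{2}{\langle\xi\rangle_\mu}\widehat q+\frac{e^{it\langle\xi\rangle_\mu^2}}{\langle\xi\rangle_\mu}\mathcal F\Bigl[-2\bigl(e^{-i\chi}(1+\tfrac14|z|^2)-1\bigr)Q-\tfrac\mu2 S_\eta z^2\Bigr]=:G_1+G_2 .$$

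\textbf{Step 2: the main term $G_1$.} The multiplier $\langle\xi\rangle_\mu^{-1}$ is smooth and bounded, and so is its derivative. Proposition~\ref{prop:Q-yphi-profile} therefore gives \eqref{a1}--\eqref{a3} for $G_1$ immediately.

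\textbf{Step 3: the remainder $G_2$.} Write $G_2=e^{it\langle\xi\rangle_\mu^2}\langle\xi\rangle_\mu^{-1}\widehat F$.

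For \eqref{a1}, the bound $\|\xi^kG_2\|_{L^2}\lesssim\|F\|_{H^k}$ follows from Proposition~\ref{Sobes}, Remark~\ref{HkQ} and \eqref{chide3}. These show that $F$ is bounded in $H^{N-2}$ by $\varepsilon_1^2$; here $e^{-i\chi}-1$ is merely bounded, but it multiplies $Q$.

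For \eqref{a3}, use $\|\xi^k\widehat F\|_{L^\infty}\lesssim\|\partial_x^kF\|_{L^1}$. The term $S_\eta z^2$ is harmless. For the term $(e^{-i\chi}-1)Q$, the factor $e^{-i\chi}-1$ is $O(\varepsilon_1)$ but not localized, so the $L^1$ norm of this product is only bounded via $\|\chi\|_{L^2}\|Q\|_{L^2}$. To handle $\|\chi\|_{L^2}$, I would decompose $\chi$ into its localized part $\int_{-\infty}^x S_\eta\operatorname{Im}z$ and the quadratic part $\int_{-\infty}^x\operatorname{Im}(\bar z\partial_s z)$. I expect this to be the main obstacle: $\chi$ tends to a nonzero constant as $x\to+\infty$, so the product is not in $L^1$. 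I would therefore instead absorb the constant phase by subtracting $\chi(+\infty)$ for $x>0$, i.e.\ write $e^{-i\chi}=e^{-i\chi(\pm\infty)}(1+O(\text{tail}))$. The phase jump at $x=0$ can be handled with a cutoff, since $e^{-i c}\widehat{q}$ terms keep the bounds of Step 2. The genuine tails $\chi(x)-\chi(\pm\infty)$ are controlled by $\int_x^{\pm\infty}(|z||\partial_xz|+S_\eta|z|)$. Each resulting piece is then either a localized factor times $Q$ or an $O(\varepsilon_1^2)$ factor in $L^2$ times $Q$.

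For \eqref{a2}, differentiate in $\xi$. The derivative of $e^{it\langle\xi\rangle_\mu^2}$ produces $2it\xi G_2$, and this $t$-growth is the other delicate point. I would use that $\|F\|_{H^{k+1}}\lesssim\varepsilon_1^2\langle t\rangle^{-1/2+}$, exploiting that every term of $F$ is at least quadratic and that \eqref{zinfdecay} and \eqref{QHk} give $t^{-1/2}$ decay in $L^\infty$. Since this alone yields only $t^{1/2}$, I would rewrite $tG_2$ via the equations: use $x$-weights $\|xF\|_{L^2}$ together with the vector field $J=x+2it\partial_x$, which commutes with the flow, applied to $Q$ (controlled by $\|\partial_\xi\widehat q\|_{L^2}\lesssim\varepsilon_1\langle t\rangle^\delta$). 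Then $e^{it\langle\xi\rangle_\mu^2}\partial_\xi$ corresponds to $J$ acting on $F$. Leibniz gives $J$ falling on $Q$, which is good, or on the gauge factor or $z$, where $J z$ is recovered from \eqref{loczinv} with a bootstrap in $\|\xi^k\partial_\xi\widetilde g\|_{L^2}$ and closes since those terms carry an extra $\varepsilon_1$.
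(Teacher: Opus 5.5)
Your Steps 1--2 and the bound \eqref{a1} are fine. Your $J$-based plan for \eqref{a2} is essentially the paper's argument; the paper works directly with $w=D_\eta^*z$, and since $F=w+2Q$, splitting off $G_1$ gains nothing. Two corrections there. First, the claim $\|F\|_{H^{k+1}}\lesssim\varepsilon_1^2\langle t\rangle^{-1/2+}$ fails in general, because $(e^{-i\chi}-1)Q$ does not decay in $L^2$. Second, you need no bound on $Jz$ and no bootstrap through \eqref{loczinv}. With $f=e^{-i\chi}(1+\frac14|z|^2)$ one has $J(fQ)=f\,JQ+2it(\partial_xf)Q$. The exact identity $\partial_xf=e^{-i\chi}\bigl(\frac12\overline z\partial_xz+i\mu S_\eta\operatorname{Im}z\bigr)$ shows $\partial_x f=O_{H^k}(\varepsilon_1\langle t\rangle^{-1/2})$, so $t(\partial_xf)Q=O_{H^k}(\varepsilon_1^2)$ by \eqref{QHk}.

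The genuine gap is in \eqref{a3}. Bounding $\|\xi^kG_2\|_{L^\infty_\xi}$ by $\|\partial_x^kF\|_{L^1_x}$ cannot give a time-uniform bound, and subtracting $\chi(\pm\infty)$ with a cutoff at $x=0$ does not repair it.

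The obstruction is the quadratic part of the gauge, $\chi_{\rm q}=-\frac12\int_{-\infty}^x\frac{\operatorname{Im}(\overline z\partial_sz)}{1+\frac14|z|^2}\,{\rm d}s$. For a dispersing $z\approx(2t)^{-1/2}e^{ix^2/(4t)}A(\frac{x}{2t})$ one has $\operatorname{Im}(\overline z\partial_xz)\approx\frac{x}{4t^2}|A(\frac{x}{2t})|^2$. So $\chi_{\rm q}$ is an $O(\varepsilon_1^2)$ function of $x/2t$ that varies over the whole region $|x|\lesssim t$, not near $x=0$, and $\chi(+\infty)$ contains a nondecaying $O(\varepsilon_1^2)$ constant. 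Generically this gives $\|\chi-\chi(\pm\infty)\|_{L^2(\mathbb R_\pm)}\sim\varepsilon_1^2t^{1/2}$, not $O(\varepsilon_1^2)$, and also $\|Q\|_{L^1}\sim\varepsilon_1t^{1/2}$. Hence every $L^1$ bound grows like $t^{1/2}$: both for $(\chi-\chi(\pm\infty))Q$ and for the cutoff piece $(1-e^{-i\chi(+\infty)})\theta_-Q$. The latter is the transform of a truncated $Q$, which Step 2 does not cover anyway. The true bound holds only because of oscillation (stationary phase localizes at $x=2t\xi$), which an $L^1\to L^\infty$ estimate cannot see.

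The missing tool is the reverse stationary phase estimate, Corollary~\ref{restaph}, applied after \eqref{a2} is proved. The paper bounds $\|\xi^k\widetilde g\|_{L^\infty_\xi}\lesssim t^{1/2}\|\partial_x^kw\|_{L^\infty_x}+t^{-1/4}\|\partial_\xi(\xi^ke^{it\langle\xi\rangle_\mu^2}\widehat w)\|_{L^2_\xi}\lesssim\varepsilon_1+\varepsilon_1t^{-1/4+\delta}$, using Proposition~\ref{z-decay} and the $Jw$ bound. Within your splitting the same works for $G_2$. Indeed $\|F\|_{W^{k,\infty}}\lesssim\varepsilon_1\langle t\rangle^{-1/2}$ gives $t^{1/2}\|\partial_x^kF\|_{L^\infty}\lesssim\varepsilon_1$, and your $J$ argument gives $t^{-1/4}\|JF\|_{H^k}\lesssim\varepsilon_1t^{-1/4+\delta}$.
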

\begin{proof}
For the $L^2$ estimate, set $w=D_\eta^*z$. The definition \eqref{gpro2} and Proposition~\ref{disfour} give
\begin{equation}\label{tig}
\widetilde g(t,\xi)=\frac{e^{it(\xi^2+\mu^2)}\widehat{w}(t,\xi)}{\sqrt{\xi^2+\mu^2}}.
\end{equation}
Using \eqref{zinfdecay} and \eqref{Hmze}, for all $0\leq j\leq N-2$, one has 
\begin{equation}\label{Linw}
\|\partial_x^jw(t)\|_{L^\infty}\lesssim\varepsilon_1\langle t\rangle^{-1/2},\qquad \|w(t)\|_{H^{N-1}}\lesssim \|z(t)\|_{H^N}\lesssim\varepsilon_1.
\end{equation}
Therefore, \eqref{tig}, \eqref{Linw}, and Plancherel's identity yield, for $0\leq k\leq N-2$ and $0\leq t<T_2$,
\begin{equation}
\begin{aligned}
\|\xi^k\widetilde g(t)\|_{L^2_\xi}=&\left\|\frac{\xi^k}{\sqrt{\xi^2+\mu^2}}e^{it(\xi^2+\mu^2)}\widehat{w}(t,\xi)\right\|_{L^2_\xi}\\
\lesssim&\|\xi^k\widehat{w}(t)\|_{L^2_\xi}=\|\partial_x^kw(t)\|_{L^2_x}\\
\lesssim&\varepsilon_1,
\end{aligned}
\label{33}
\end{equation}
from which (\ref{a1}) follows. 

We next control $\|\xi^k\partial_\xi\widetilde{g}\|_{L^2}$. Differentiating \eqref{tig} with respect to $\xi$, we obtain
\begin{equation*}
\partial_\xi\widetilde g(t,\xi)=-i\frac{e^{it(\xi^2+\mu^2)}\widehat{Jw}(t,\xi)}{\sqrt{\xi^2+\mu^2}}-\frac{\xi e^{it(\xi^2+\mu^2)}\widehat{w}(t,\xi)}{(\xi^2+\mu^2)^{3/2}},
\end{equation*}
where $J$ is the Galilean vector field defined by $J=x+2it\partial_x$.\\
By  \eqref{Linw} and
\begin{align}
\|\xi^k\partial_\xi\widetilde g(t)\|_{L^2_\xi}\lesssim&\|\xi^k\widehat{Jw}(t)\|_{L^2_\xi}+\|\xi^k\widehat{w}(t)\|_{L^2_\xi},
\label{L2xiw}
\end{align}
it suffices to control $\|\xi^k\widehat{Jw}(t)\|_{L^2_\xi}$. Since \eqref{Q-z} is equivalent to
\begin{equation*}
w=-2e^{-i\chi}\left(1+\frac14|z|^2\right)Q-\frac\mu2S_\eta z^2.
\end{equation*}
Note that
\begin{align*}
J\left[e^{-i\chi}\left(1+\frac14|z|^2\right)Q\right]=&e^{-i\chi}\left(1+\frac14|z|^2\right)JQ+2it\partial_x\left[e^{-i\chi}\left(1+\frac14|z|^2\right)\right]Q,\\
J(S_\eta z^2)=&xS_\eta z^2+2it(\partial_xS_\eta)z^2+4itS_\eta z\partial_xz.
\end{align*}
Therefore
\begin{align}
Jw=&-2e^{-i\chi}\left(1+\frac14|z|^2\right)JQ-4it\partial_x\left[e^{-i\chi}\left(1+\frac14|z|^2\right)\right]Q\notag\\
-&\frac\mu2xS_\eta z^2-i\mu t(\partial_xS_\eta)z^2-2i\mu tS_\eta z\partial_xz.
\label{iw2}
\end{align}
By definition of the profile of $Q$ and the Galilean vector field $J$, we get the identity
\begin{align}
\widehat{JQ}(t,\xi)=&ie^{-it(\xi^2+\mu^2)}\partial_\xi\widehat q(t,\xi).
\label{jq12}
\end{align}
With the help of Plancherel's identity and \eqref{jq12}, for every $0\leq j\leq N-2$, we have
\begin{align*}
\|\partial_x^jJQ(t)\|_{L^2_x}=&\|\xi^j\widehat{JQ}(t)\|_{L^2_\xi}\\
=&\|\xi^j\partial_\xi\widehat q(t)\|_{L^2_\xi}\lesssim \varepsilon_1\langle t\rangle^\delta.
\end{align*}
and hence
\begin{equation}
\label{JQ2a}
\|JQ(t)\|_{H^k}\lesssim_N \varepsilon_1\langle t\rangle^\delta,
\quad 0\leq k\leq N-2,\quad 0\leq t<T_2.
\end{equation}
Therefore, the first term of \eqref{iw2} is controlled by \eqref{Hasi} and \eqref{JQ2a}:
\begin{equation}
\left\|e^{-i\chi}\left(1+\frac14|z|^2\right)JQ(t)\right\|_{H^k}\lesssim_N \|JQ(t)\|_{H^k}\lesssim_N \varepsilon_1\langle t\rangle^\delta.
\label{JWa1}
\end{equation}
Using \eqref{xchi2}, we obtain 
\begin{align}
&\partial_x\left[e^{-i\chi}\left(1+\frac14|z|^2\right)\right]\notag\\
=&e^{-i\chi}\left[-i(\partial_x\chi)\left(1+\frac14|z|^2\right)+\frac14\left(\partial_xz\,\overline{z}+z\,\partial_x\overline{z}\right)\right]\notag\\
=&e^{-i\chi}\left[\frac12\overline{z}\,\partial_xz+i\mu S_\eta\operatorname{Im}z\right],
\label{Hasi1q}
\end{align}
By \eqref{chide3} and chain rule, for every $0\leq k\leq N-2$, we get
\begin{equation*}
\|e^{-i\chi(t)}\|_{W^{k,\infty}}\lesssim 1,\quad 0\leq t<T_2.
\end{equation*}
By  (\ref{zinfdecay}) , \eqref{Hmze} and the H\"older inequality imply  that
\begin{align*}
\|S_\eta\operatorname{Im}z(t)\|_{H^k}
&\lesssim \sum_{\ell=0}^k\sum_{r=0}^{\ell}\|\partial_x^rS_\eta\|_{L^2}\|\partial_x^{\ell-r}z(t)\|_{L^\infty}
\lesssim \varepsilon_1\langle t\rangle^{-1/2},\\
\|\overline{z}\,\partial_xz(t)\|_{H^k}
&\lesssim \sum_{\ell=0}^k\sum_{r=0}^{\ell}\|\partial_x^rz(t)\|_{L^\infty}\|\partial_x^{\ell-r+1}z(t)\|_{L^2}
\lesssim \varepsilon_1^2\langle t\rangle^{-1/2},
\end{align*}
which together with \eqref{Hasi1q} therefore yields
\begin{align*}
\left\|\partial_x\left[e^{-i\chi}\left(1+\frac14|z|^2\right)\right]\right\|_{H^k}
\lesssim\varepsilon_1\langle t\rangle^{-1/2}.
\end{align*}
Using Remark~\ref{HkQ},   the second term in \eqref{iw2} is dominated as 
\begin{align}
t\left\|\partial_x\left[e^{-i\chi}\left(1+\frac14|z|^2\right)\right]Q(t)\right\|_{H^k}\
\lesssim\varepsilon_1^2.
\label{jw3z}
\end{align}
By (\ref{zinfdecay})  and \eqref{Hmze}, for all $0\le k\le N-2$, we have
\begin{align*}
\|xS_\eta z^2(t)\|_{H^k}
&\lesssim\varepsilon_1^2\langle t\rangle^{-1},\qquad
t\|(\partial_xS_\eta)z^2(t)\|_{H^k}
\lesssim\varepsilon_1^2,\qquad
t\|S_\eta z\partial_xz(t)\|_{H^k}
\lesssim\varepsilon_1^2.
\end{align*}
Combining \eqref{JWa1} and \eqref{jw3z}, we obtain from (\ref{iw2}) that 
\begin{equation}
\label{jws1}
\|Jw(t)\|_{H^k}\lesssim\varepsilon_1\langle t\rangle^\delta,
\quad 0\leq k\leq N-2,\quad 0\leq t<T_2.
\end{equation}
Therefore, (\ref{a2}) follows by (\ref{jws1}),  (\ref{Linw})  and (\ref{L2xiw}).

Finally, we use Corollary~\ref{restaph} and \eqref{tig} to
control $\|\xi^k\widetilde{g}\|_{L^\infty}$ for all $0\leq k\leq N-2$.\\
For $0\leq t\leq1$, by \eqref{33}, \eqref{a2} and the one-dimensional Sobolev
inequality, we get
\begin{align*}
\|\xi^k\widetilde g(t)\|_{L^\infty_\xi}\lesssim\|\xi^k\widetilde g(t)\|_{L^2_\xi}+\|\partial_\xi(\xi^k\widetilde g)(t)\|_{L^2_\xi}
\lesssim\varepsilon_1.
\end{align*}
For $1\leq t<T_2$,  Corollary~\ref{restaph} shows 
\begin{align*}
\|\xi^k\widetilde{g}\|_{L^\infty}\lesssim&\|\xi^k e^{it(\xi^2+\mu^2)}\widehat w\|_{L^\infty_\xi}=\|\mathcal{F}(e^{itH_\eta}i^{-k}\partial_x^kw)\|_{L^\infty}\\
\lesssim&t^{1/2}\|\partial_x^kw\|_{L^\infty_x}+t^{-1/4}\left\|\partial_\xi\left(\xi^k e^{it(\xi^2+\mu^2)}\widehat w(t,\xi)\right)\right\|_{L^2_\xi}.
\end{align*}
By the definition of $J$, we get
$$
i\partial_\xi\left[e^{it(\xi^2+\mu^2)}\widehat w(t,\xi)\right]=e^{it(\xi^2+\mu^2)}\widehat{Jw}(t,\xi).
$$
Using \eqref{Linw} and \eqref{jws1}, one obtains 
\begin{align*}
\left\|\partial_\xi\left[\xi^k e^{it(\xi^2+\mu^2)}\widehat w(t,\xi)\right]\right\|_{L^2_\xi}\lesssim&\|w(t)\|_{H^k}+\|\xi^k\widehat{Jw}(t)\|_{L^2_\xi}\notag\\
\lesssim&\varepsilon_1\langle t\rangle^\delta,
\end{align*}
which, together with \eqref{Linw},  implies
\begin{equation}
\label{gLlt}
\|\xi^k\widetilde g(t)\|_{L^\infty_\xi}
\lesssim\varepsilon_1,
\quad
1\leq t<T_2.
\end{equation}
Thus (\ref{a3}) is done. 
\end{proof}

\subsection{Control of the modulation parameters}
The aim of this subsection is to prove \eqref{asphiy}.
In order to control the modulation parameters $y$ and $\phi$ through
\eqref{modueq}, we first isolate the leading quadratic part
of $R$. Define
\begin{align*}
R_2(w)=\int_{\mathbb R}S_\eta\mathcal N_2(w,\overline w){\rm d}x.
\end{align*}
The next two propositions identify the structure of $R_2$ and justify replacing $z$ by $z^*$.
\begin{lem}\label{zstarcha}
Let $z^{*}=-2D_\eta H_\eta^{-1}Q$. Then
\begin{equation}\label{r2b}
R_2(z^{*})=-2\mu\int_{\mathbb R}S_\eta^2\overline{z^{*}}Q{\rm d}x.
\end{equation}
Moreover, for every $\theta\in\mathbb R$ and $w\in H^1(\mathbb R)$, $R_2(e^{-i\theta}w)=R_2(w).$
\end{lem}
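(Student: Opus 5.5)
The plan is to expand $R_2(w)$ explicitly, show that its $w^2$-type (non-gauge-invariant) part vanishes identically after one integration by parts, and then insert $w=z^*$ using the factorization $H_\eta=D_\eta^*D_\eta$. Both assertions of the lemma then follow from a single formula for $R_2$.

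\emph{Expansion.} By \eqref{v12} we have $V_2=2\mu^2S_\eta T_\eta$ and $V_3=2\mu S_\eta$. For $w\in H^1(\mathbb R)$ this gives
\begin{equation*}
R_2(w)=\int_{\mathbb R}\Bigl[-\mu^2S_\eta^2T_\eta\bigl(w^2+|w|^2\bigr)+\mu S_\eta^2\,w\,\partial_xw-\mu S_\eta^2\,\overline w\,\partial_xw\Bigr]{\rm d}x .
\end{equation*}
We split this into the part quadratic in $w$ (the terms $w^2$ and $w\partial_xw$) and the part involving $\overline w$.

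\emph{Vanishing of the $w^2$-part.} We write $w\,\partial_xw=\frac12\partial_x(w^2)$ and integrate by parts. No boundary terms appear, since $S_\eta$ decays and $w\in H^1$. Using $\partial_x(S_\eta^2)=-2\mu S_\eta^2T_\eta$, which is \eqref{Tederi}, we get
\begin{equation*}
\int_{\mathbb R}\mu S_\eta^2\,w\,\partial_xw\,{\rm d}x=\mu^2\int_{\mathbb R}S_\eta^2T_\eta w^2\,{\rm d}x .
\end{equation*}
This exactly cancels the term $-\mu^2\int S_\eta^2T_\eta w^2$. What remains is
\begin{equation*}
R_2(w)=-\mu\int_{\mathbb R}S_\eta^2\,\overline w\,\bigl(\partial_xw+\mu T_\eta w\bigr){\rm d}x=\mu\int_{\mathbb R}S_\eta^2\,\overline w\,D_\eta^*w\,{\rm d}x .
\end{equation*}
This formula is invariant under $w\mapsto e^{-i\theta}w$, because $\overline w$ acquires $e^{i\theta}$ while $D_\eta^*w$ acquires $e^{-i\theta}$. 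This proves the second claim.

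\emph{Inserting $w=z^*$.} Take $w=z^*=-2D_\eta H_\eta^{-1}Q$. By Remark~\ref{HkQ} we have $Q\in H^{N-1}$, so Lemma~\ref{Deliest} gives $z^*\in H^{N}\subset H^1$ and the formula applies. Since $D_\eta^*D_\eta=H_\eta$,
\begin{equation*}
D_\eta^*z^*=-2D_\eta^*D_\eta H_\eta^{-1}Q=-2Q .
\end{equation*}
Substituting this yields \eqref{r2b}.

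\emph{Main point.} The only nontrivial step is the cancellation of the $w^2$-part. It relies on the specific relation between $V_2$ and $V_3$ and on the identity $\partial_x(S_\eta^2)=-2\mu S_\eta^2T_\eta$; the rest is bookkeeping.
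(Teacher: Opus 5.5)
Your proof is correct and is essentially the paper's argument. The paper also removes the $w^2$-part by showing that $\mathcal I(w)=-\mu^2\int_{\mathbb R}S_\eta^2T_\eta w^2\,{\rm d}x+\mu\int_{\mathbb R}S_\eta^2w\,\partial_xw\,{\rm d}x$ vanishes after integration by parts, leaving $R_2(w)=-\mu\int_{\mathbb R}S_\eta^2\overline w(\partial_xw+\mu T_\eta w)\,{\rm d}x$, from which gauge invariance and \eqref{r2b} (via $D_\eta^*z^*=-2Q$) follow exactly as you describe. One small citation slip: $\partial_x(S_\eta^2)=-2\mu S_\eta^2T_\eta$ comes from \eqref{bSederi}, or from \eqref{Tederi} combined with \eqref{STide}, not from \eqref{Tederi} alone.
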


\begin{proof}
Define
$$\mathcal I(w)=-\mu^2\int_{\mathbb R}S_\eta^2T_\eta w^2\,\mathrm dx+\mu\int_{\mathbb R}S_\eta^2w\partial_xw\,\mathrm dx.$$
Integrating by parts implies $\mathcal{I}(w)=0.$ 
Using $ Q=\frac12\left(\partial_x z^{*}+\mu T_\eta z^{*}\right) $ and
$$
R_2(z^{*})=R_2(z^{*})-\mathcal I(z^{*})=-\mu\int_{\mathbb R}S_\eta^2\overline{z^{*}}\left(\partial_xz^{*}+\mu T_\eta z^{*}\right)\,\mathrm dx.
$$
we get \eqref{r2b}.
Note that
\begin{align*}
R_2(e^{-i\theta}w)-\mathcal I(e^{-i\theta}w)=&-\mu^2\int_{\mathbb R}S_\eta^2T_\eta|w|^2{\rm d}x-\mu\int_{\mathbb R}S_\eta^2\overline w\partial_xw{\rm d}x.\\
R_2(w)-\mathcal I(w)=&-\mu^2\int_{\mathbb R}S_\eta^2T_\eta|w|^2{\rm d}x-\mu\int_{\mathbb R}S_\eta^2\overline w\partial_xw{\rm d}x.
\end{align*}
Therefore
$$
R_2(e^{-i\theta}w)-\mathcal I(e^{-i\theta}w)=R_2(w)-\mathcal I(w),
$$
and the conclusion follows from $\mathcal I(e^{-i\theta}w)=\mathcal I(w)=0$.
\end{proof}

\begin{prop}
\label{Rz2p}
For $0\leq t<T_2$, we have
\begin{equation}\label{R2q1}
|R_2(z)-R_2(z^{*})|\lesssim\varepsilon_1^3\langle t\rangle^{-3/2}.
\end{equation}
\end{prop}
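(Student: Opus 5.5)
The plan is to write $z$ as $e^{-i\chi}z^*$ plus a quadratic remainder, and then use the phase invariance of $R_2$ together with the smallness of $\chi$ on the support of $S_\eta$. Concretely, I would compare $z$ with $e^{-i\theta(t)}z^*$ for a suitable constant phase; the natural choice is $\theta(t)=0$, since $\chi\to0$ at $-\infty$. The starting point is the identity \eqref{loczinv}, which gives
\begin{equation*}
z-z^*=-2D_\eta H_\eta^{-1}\Big[\big(e^{-i\chi}(1+\tfrac14|z|^2)-1\big)Q\Big]-\frac\mu2D_\eta H_\eta^{-1}(S_\eta z^2)=:r.
\end{equation*}
Since $\mathcal N_2$ is quadratic with coefficients $V_2,V_3$ that are localized like $S_\eta$, the difference $R_2(z)-R_2(z^*)$ is a sum of terms of the form $\int S_\eta^2\,(\cdot)\,r\,(\cdot)$ and $\int S_\eta^2\,(\cdot)\,\partial_x r$. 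Here one factor is $z$ or $z^*$ (or its derivative), each bounded in $L^\infty$ by $\varepsilon_1\langle t\rangle^{-1/2}$; for $z$ this is Proposition~\ref{z-decay}, and for $z^*$ it follows from \eqref{DeHe} and \eqref{QHk}. It therefore suffices to show
\begin{equation*}
\|S_\eta r\|_{L^2}+\|S_\eta\partial_x r\|_{L^2}\lesssim\varepsilon_1^2\langle t\rangle^{-1}.
\end{equation*}

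To get this, I would apply Lemma~\ref{weiDeta}, which reduces the problem to bounding $\|S_\eta^{1/2}f\|_{L^\infty}$ for the two source terms $f$. The second source, $f=S_\eta z^2$, is immediate: by \eqref{zinfdecay}, $\|S_\eta^{1/2}S_\eta z^2\|_{L^\infty}\lesssim\varepsilon_1^2\langle t\rangle^{-1}$. The first source is $f=(e^{-i\chi}(1+\frac14|z|^2)-1)Q$. The factor $|z|^2$ contributes $\varepsilon_1^2\langle t\rangle^{-1}\cdot\varepsilon_1\langle t\rangle^{-1/2}$, which is more than enough. The remaining piece is $(e^{-i\chi}-1)Q$, which is bounded by $|\chi||Q|$, so I need a weighted bound on $\chi$.

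This weighted bound on $\chi$ is the main obstacle. By \eqref{xchi}, $\chi(x)$ is the integral from $-\infty$ to $x$ of $\tfrac12\operatorname{Im}(\bar z\partial_s z)+\mu S_\eta\operatorname{Im}z$. The first part is bounded by $\|z\|_{L^2}\|\partial_x z\|_{L^2}\lesssim\varepsilon_1^2$, but with no decay in $t$. Using instead $L^\infty\times L^1$ is not available, because $z$ is not in $L^1$ with decay. The linear part $\mu\int_{-\infty}^x S_\eta\operatorname{Im}z$ is bounded by $\varepsilon_1\langle t\rangle^{-1/2}$ times $\int_{-\infty}^x S_\eta$, and this factor is uniformly bounded, so that part is fine. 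Thus on the support of $S_\eta^{1/2}$ we get $|\chi|\lesssim\varepsilon_1^2+\varepsilon_1\langle t\rangle^{-1/2}$. Multiplied by $|Q|\lesssim\varepsilon_1\langle t\rangle^{-1/2}$, this gives only $\varepsilon_1^3\langle t\rangle^{-1/2}$ plus a $\langle t\rangle^{-1}$ term, which is not good enough.

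To fix the quadratic part, I would exploit the phase invariance $R_2(e^{-i\theta}w)=R_2(w)$ from Lemma~\ref{zstarcha}. I would choose $\theta=\chi(t,0)$ and compare $z$ with $e^{-i\theta}z^*$, so that only $\chi(t,x)-\chi(t,0)=\int_0^x\partial_s\chi$ appears. This is bounded by $|x|\,\|\partial_x\chi\|_{L^\infty}\lesssim|x|\,\varepsilon_1\langle t\rangle^{-1/2}$, using \eqref{chide3}, and the factor $|x|$ is absorbed by $S_\eta^{1/2}$. Working with
\begin{equation*}
r_\theta=z-e^{-i\theta}z^*,
\end{equation*}
the source becomes $(e^{-i(\chi-\theta)}(1+\frac14|z|^2)-1)e^{-i\theta}Q$. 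This yields
\begin{equation*}
\|S_\eta^{1/2}f\|_{L^\infty}\lesssim\varepsilon_1\langle t\rangle^{-1/2}\cdot\varepsilon_1\langle t\rangle^{-1/2},
\end{equation*}
that is, the desired $\varepsilon_1^2\langle t\rangle^{-1}$ bound for $S_\eta r_\theta$ and $S_\eta\partial_x r_\theta$.

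Finally, I would expand $R_2(z)-R_2(e^{-i\theta}z^*)$ bilinearly: each term contains one factor $r_\theta$ or $\partial_x r_\theta$ weighted by $S_\eta$, and one factor $z$ or $z^*$. I would put the $r_\theta$ factors in $L^2$ via the weighted bounds above, and handle the $\partial_x$ landing on the other factor by $\|S_\eta\partial_x z\|_{L^2}\lesssim\varepsilon_1\langle t\rangle^{-1/2}$, which follows from \eqref{zinfdecay}. Together with the invariance $R_2(e^{-i\theta}z^*)=R_2(z^*)$, this gives $\varepsilon_1^3\langle t\rangle^{-3/2}$.
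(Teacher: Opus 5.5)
Your proposal is correct and matches the paper's proof. The paper also compares $z$ with $e^{-i\chi(t,0)}z^{*}$ via \eqref{loczinv}. It bounds $|\chi(t,x)-\chi(t,0)|\lesssim|x|\,\varepsilon_1\langle t\rangle^{-1/2}$, absorbing $|x|$ into $S_\eta^{1/2}$, so that Lemma~\ref{weiDeta} gives the $\varepsilon_1^2\langle t\rangle^{-1}$ weighted bounds. It then concludes with the phase invariance of $R_2$ from Lemma~\ref{zstarcha}, a bilinear expansion, and $\|z\|_{W^{1,\infty}}+\|z^{*}\|_{W^{1,\infty}}\lesssim\varepsilon_1\langle t\rangle^{-1/2}$.

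Your explanation of why $\theta=0$ fails is also right: the quadratic part of $\chi$ is only $O(\varepsilon_1^2)$, with no time decay, and that is exactly why the paper subtracts the constant phase $\chi(t,0)$.
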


\begin{proof}
Note that by \eqref{loczinv} and the definition of $z^{*}$ in Lemma~\ref{zstarcha}, 
\begin{equation}
\begin{aligned}
z-e^{-i\chi(t,0)}z^{*}=D_\eta H_\eta^{-1}\Bigg\{2\left[e^{-i\chi(t,0)}-e^{-i\chi(t,x)}\left(1+\frac14|z|^2\right)\right]Q-\frac{\mu S_\eta}{2}z^2\Bigg\},
\end{aligned}
\label{0inver}
\end{equation}
Using  (\ref{chide3}), \eqref{QHk}, Proposition~\ref{z-decay}, and the fundamental theorem of calculus, we obtain
\begin{align*}
&S_\eta^{1/2}\left|2\left[e^{-i\chi(t,0)}-e^{-i\chi}\left(1+\frac14|z|^2\right)\right]Q-\frac{\mu S_\eta}{2}z^2\right|\\
\lesssim&S_\eta^{1/2}|\chi(t,x)-\chi(t,0)||Q|+S_\eta^{1/2}|z|^2|Q|+S_\eta^{3/2}|z|^2\\
\lesssim&\varepsilon_1^2\langle t\rangle^{-1}.
\end{align*}
Therefore, applying Lemma~\ref{weiDeta} to \eqref{0inver} gives
\begin{equation}\label{Setaz}
\left\|S_\eta\left(z-e^{-i\chi(t,0)}z^{*}\right)\right\|_{L^2}+\left\|S_\eta\partial_x\left(z-e^{- i\chi(t,0)}z^{*}\right)\right\|_{L^2}\lesssim\varepsilon_1^2\langle t\rangle^{-1}.
\end{equation}
By \eqref{zinfdecay}, \eqref{DeHe}, and \eqref{QHk}, we have
\begin{equation}
\|z(t)\|_{W^{1,\infty}}+\|z^{*}(t)\|_{W^{1,\infty}}\lesssim\varepsilon_1\langle t\rangle^{-1/2}.
\label{zs0}
\end{equation}
By Lemma~\ref{zstarcha}, for all $0\leq t<T_2$,
\begin{align*}
R_2(z)-R_2(z^{*})=&R_2(z)-R_2\left(e^{-i\chi(t,0)}z^{*}\right)\\
=&-2\mu^2\int_{\mathbb R}S_\eta^2T_\eta\Bigg\{\left(z-e^{-i\chi(t,0)}z^{*}\right)\operatorname{Re}z+e^{-i\chi(t,0)}z^{*}\operatorname{Re}\left(z-e^{-i\chi(t,0)}z^{*}\right)\Bigg\}{\rm d}x\\
+&2i\mu\int_{\mathbb R}S_\eta^2\Bigg\{\operatorname{Im}\left(z-e^{-i\chi(t,0)}z^{*}\right)\partial_xz+\operatorname{Im}\left(e^{-i\chi(t,0)}z^{*}\right)\partial_x\left(z-e^{-i\chi(t,0)}z^{*}\right)\Bigg\}{\rm d}x.
\end{align*}
Hence, \eqref{Setaz}, \eqref{zs0} and the H\"older inequality implies that
\begin{align*}
|R_2(z)-R_2(z^{*})|\lesssim&\left(\|z\|_{W^{1,\infty}}+\|z^{*}\|_{W^{1,\infty}}\right)\left(\left\|S_\eta\left(z-e^{-i\chi(t,0)}z^{*}\right)\right\|_{L^2}+\left\|S_\eta\partial_x\left(z-e^{-i\chi(t,0)}z^{*}\right)\right\|_{L^2}\right)\\
\lesssim&\varepsilon_1^3\langle t\rangle^{-3/2}.
\end{align*}
This proves \eqref{R2q1}.
\end{proof}

\begin{prop}
\label{pmodre}
We have uniformly for  $0\leq t<T_2$ that 
\begin{align*}
\left|\dot y-\frac12\operatorname{Im}R_2(z)\right|+\left|\dot\phi+\frac\mu2\operatorname{Re}R_2(z)+\frac\eta2\operatorname{Im}R_2(z)\right|\lesssim\varepsilon_1^3\langle t\rangle^{-3/2}.
\end{align*}
\end{prop}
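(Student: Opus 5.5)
The estimate follows by expanding the modulation equations \eqref{modueq} around their leading terms. Every error is at least cubic in $z$ and carries one spatially localized weight, so the pointwise decay of Proposition~\ref{z-decay} makes it $O(\varepsilon_1^3\langle t\rangle^{-3/2})$. Throughout I use the bound from \eqref{zinfdecay} with $j=0,1$:
$$\|z(t)\|_{W^{1,\infty}}\lesssim\varepsilon_1\langle t\rangle^{-1/2},\qquad 0\le t<T_2 .$$
I also use that $S_\eta$, $S_\eta T_\eta$ and $S_\eta V_j$ ($j=1,2,3$) are integrable on $\mathbb R$.

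\emph{Step 1 (size bounds for $A$ and $R$).} From \eqref{Adef} I get
$$|A|\le\|S_\eta T_\eta\|_{L^1}\|z\|_{L^\infty}+\tfrac14\|S_\eta^2\|_{L^1}\|z\|_{L^\infty}^2\lesssim\varepsilon_1\langle t\rangle^{-1/2}.$$
Each term of $\mathcal N_2$ is quadratic in $(z,\partial_xz)$ with a bounded coefficient $V_2$ or $V_3$. Hence
$$|R_2(z)|\lesssim\|S_\eta\|_{L^1}\|z\|_{W^{1,\infty}}^2\lesssim\varepsilon_1^2\langle t\rangle^{-1}.$$

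\emph{Step 2 (comparing $R$ with $R_2(z)$).} Using $\frac{1}{1+\frac14|z|^2}=1-\frac{|z|^2}{4+|z|^2}$, I write
$$R-R_2(z)=\int_{\mathbb R}S_\eta\mathcal N_3\,{\rm d}x-\int_{\mathbb R}S_\eta\frac{|z|^2}{4+|z|^2}\bigl(\mathcal N_2+\mathcal N_3\bigr)\,{\rm d}x .$$
The integrand of the first integral is pointwise bounded by $S_\eta\|z\|_{W^{1,\infty}}^3$, since $V_1$ is bounded. This gives a contribution $\lesssim\varepsilon_1^3\langle t\rangle^{-3/2}$. The second integrand is at least quartic, so its contribution is $\lesssim\varepsilon_1^4\langle t\rangle^{-2}$. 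Therefore
$$|R-R_2(z)|\lesssim\varepsilon_1^3\langle t\rangle^{-3/2},\qquad\text{and in particular}\qquad |R|\lesssim\varepsilon_1^2\langle t\rangle^{-1}.$$

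\emph{Step 3 (expanding \eqref{modueq}).} By Lemma~\ref{dysmallA}, \eqref{moAsmall} holds, and Step 1 gives $\mu|A|^2\lesssim\varepsilon_1^2\langle t\rangle^{-1}$. Hence
$$\Bigl|\frac{1}{\frac4\mu-\mu|A|^2}-\frac\mu4\Bigr|\lesssim|A|^2 .$$
For $\dot y$ I therefore have
$$\Bigl|\dot y-\tfrac12\operatorname{Im}R\Bigr|\lesssim|A|^2|R|+|A||R|\lesssim\varepsilon_1^4\langle t\rangle^{-2}+\varepsilon_1^3\langle t\rangle^{-3/2}.$$
In the same way,
$$\Bigl|\dot\phi+\eta\dot y+\tfrac\mu2\operatorname{Re}R\Bigr|\lesssim\varepsilon_1^3\langle t\rangle^{-3/2}.$$
Replacing $R$ by $R_2(z)$ via Step 2 gives $|\dot y-\frac12\operatorname{Im}R_2(z)|\lesssim\varepsilon_1^3\langle t\rangle^{-3/2}$. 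I then substitute $\eta\dot y=\frac\eta2\operatorname{Im}R_2(z)+O(\varepsilon_1^3\langle t\rangle^{-3/2})$ into the second bound, using $|\eta|<1$. This yields
$$\Bigl|\dot\phi+\tfrac\mu2\operatorname{Re}R_2(z)+\tfrac\eta2\operatorname{Im}R_2(z)\Bigr|\lesssim\varepsilon_1^3\langle t\rangle^{-3/2},$$
which completes the estimate.

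The only point needing care is that every error term must contain at least three factors of $z$ or $\partial_xz$, each measured in $L^\infty$, rather than two such factors times an $L^2$ norm. Otherwise one only obtains $t^{-1}$. This holds because every cross term $\bar AR$, $|A|^2R$ and $\mathcal N_3$ involves a localized weight ($S_\eta$, $S_\eta T_\eta$ or $S_\eta^2$) integrable on $\mathbb R$ against at least three $L^\infty$ factors. No further structure, in particular not the cancellation behind $R_2(z^*)$, is needed at this stage.
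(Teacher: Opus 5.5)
Your proof is correct and follows essentially the same route as the paper. Both expand \eqref{modueq} around the denominator $\frac4\mu$, bound the error by $|R-R_2(z)|+|A||R|+|A|^2|R_2(z)|$, and use \eqref{zinfdecay} with the integrable weights $S_\eta$, $S_\eta T_\eta$, $S_\eta^2$ to get $|A|\lesssim\varepsilon_1\langle t\rangle^{-1/2}$, $|R_2(z)|\lesssim\varepsilon_1^2\langle t\rangle^{-1}$ and $|R-R_2(z)|\lesssim\varepsilon_1^3\langle t\rangle^{-3/2}$. The only cosmetic difference is that you first compare with $R$ and then substitute $R_2(z)$, whereas the paper writes $\dot y-\frac12\operatorname{Im}R_2(z)$ directly as a single fraction.
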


\begin{proof}
A direct computation gives
\begin{align*}
\dot y-\frac12\operatorname{Im}R_2(z)=&\frac{\frac2\mu\operatorname{Im}R+\operatorname{Im}(\overline A R)-\left(\frac2\mu-\frac\mu2|A|^2\right)\operatorname{Im}R_2(z)}{\frac4\mu-\mu|A|^2}\\
=&\frac{\frac2\mu\operatorname{Im}\bigl(R-R_2(z)\bigr)+\operatorname{Im}(\overline A R)+\frac\mu2|A|^2\operatorname{Im}R_2(z)}{\frac4\mu-\mu|A|^2}.
\end{align*}
Then, using Lemma~\ref{dysmallA}, we arrive at
$$
\left|\dot y-\frac12\operatorname{Im}R_2(z)\right|\lesssim\left|R-R_2\right|+\left|A\right|\left|R\right|+|A|^2|R_2|.
$$
Similarly,
$$
\left|\dot\phi+\frac\mu2\operatorname{Re}R_2(z)+\frac\eta2\operatorname{Im}R_2(z)\right|\lesssim\left|R-R_2\right|+\left|A\right|\left|R\right|+|A|^2|R_2|.
$$
Since
\begin{equation}
\label{Rga}
R(z)-R_2(z)=\int_{\mathbb R}\frac{S_\eta}{1+\frac14|z|^2}\left[\mathcal N_3(z,z,\overline{z})-\frac14|z|^2\mathcal N_2(z,\overline{z})\right]{\rm d}x,
\end{equation}
\eqref{zinfdecay} and \eqref{Rga} then give 
\begin{equation*}
|A|\lesssim\mu^{-2}\varepsilon_1\langle t\rangle^{-1/2},\quad |R_2|\lesssim\varepsilon_1^2\langle t\rangle^{-1},\quad |R-R_2|\lesssim\varepsilon_1^3\langle t\rangle^{-3/2}+\varepsilon_1^4\langle t\rangle^{-2}.
\end{equation*}
The proof is done.
\end{proof}

\begin{prop}\label{R2main}
For $1\leq t<T_2$, let $\zeta=\frac{x}{2t}$. Then
\begin{align}\label{R2w9}
\left|\int_{\mathbb R}S_\eta^2(x)\frac{\zeta}{\zeta^2+\mu^2}|\widehat q(t,\zeta)|^2\,\mathrm dx\right|
&\lesssim\varepsilon_1^2t^{-1},\\
\label{R2o8}
\left|\int_{\mathbb R}S_\eta^2(x)T_\eta(x)\frac{1}{\zeta^2+\mu^2}|\widehat q(t,\zeta)|^2\,\mathrm dx\right|
&\lesssim\varepsilon_1^2t^{-1/2+\delta}.
\end{align}
\end{prop}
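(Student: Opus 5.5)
Both estimates come from the fact that $S_\eta^2$ (and $S_\eta^2T_\eta$) is exponentially localized near $x=0$, while $\zeta=x/(2t)$ is then of size $O(t^{-1})$. So the integrand is essentially the zero-frequency value $|\widehat q(t,0)|^2$, and we control the deviation from it using the Sobolev and weighted bounds of Proposition~\ref{prop:Q-yphi-profile}. Throughout we use $\|\widehat q(t)\|_{L^\infty_\xi}\le\varepsilon_1$ and $\|\partial_\xi\widehat q(t)\|_{L^2_\xi}\lesssim\varepsilon_1\langle t\rangle^{\delta}$ from \eqref{L2infy}.

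For \eqref{R2w9} no cancellation is needed. Bound the integrand pointwise by
$$S_\eta^2(x)\frac{|\zeta|}{\zeta^2+\mu^2}|\widehat q(t,\zeta)|^2\le \mu^{-2}\frac{|x|}{2t}S_\eta^2(x)\,\|\widehat q(t)\|_{L^\infty_\xi}^2 .$$
Since $\int_{\mathbb R}|x|S_\eta^2\,{\rm d}x<\infty$, integrating gives $\lesssim\varepsilon_1^2t^{-1}$. This is the ``factor $\frac{x}{2t}$'' gain mentioned in the introduction.

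For \eqref{R2o8} we use the odd-parity cancellation $\int_{\mathbb R}S_\eta^2T_\eta\,{\rm d}x=0$. Set $F(\zeta)=\frac{|\widehat q(t,\zeta)|^2}{\zeta^2+\mu^2}$ and subtract the constant $F(0)$:
$$\int_{\mathbb R}S_\eta^2T_\eta F(\zeta)\,{\rm d}x=\int_{\mathbb R}S_\eta^2(x)T_\eta(x)\bigl(F(x/2t)-F(0)\bigr)\,{\rm d}x .$$
It then suffices to show that $F$ is H\"older-$1/2$ continuous with a suitable constant. We have $\partial_\zeta F=\frac{2\operatorname{Re}(\overline{\widehat q}\,\partial_\zeta\widehat q)}{\zeta^2+\mu^2}-\frac{2\zeta|\widehat q|^2}{(\zeta^2+\mu^2)^2}$, so by Cauchy--Schwarz
$$\|\partial_\zeta F\|_{L^2_\zeta}\lesssim\|\widehat q\|_{L^\infty}\|\partial_\xi\widehat q\|_{L^2}+\|\widehat q\|_{L^\infty}\|\widehat q\|_{L^2}\lesssim\varepsilon_1^2\langle t\rangle^{\delta}.$$
This gives $|F(\zeta)-F(0)|\le|\zeta|^{1/2}\|\partial_\zeta F\|_{L^2}\lesssim\varepsilon_1^2t^{\delta}|x|^{1/2}t^{-1/2}$. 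Integrating against $S_\eta^2$, using $\int|x|^{1/2}S_\eta^2\,{\rm d}x<\infty$, yields $\lesssim\varepsilon_1^2t^{-1/2+\delta}$, which is \eqref{R2o8}.

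The main (though mild) point is the second estimate. Without the subtraction of $F(0)$, which is justified by $S_\eta^2T_\eta$ being odd with integral zero, one only gets $O(\varepsilon_1^2)$. The rest is bookkeeping: the product $\overline{\widehat q}\,\partial_\zeta\widehat q$ lies in $L^2$ because $\widehat q\in L^\infty$, and the weight $(\zeta^2+\mu^2)^{-1}$ and its derivative are bounded because $\mu>0$. No use of the stationary-phase expansions is needed at this stage; those enter later, when $R_2(z^*)$ is reduced to these integrals.
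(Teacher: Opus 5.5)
Your proposal is correct and follows the paper's argument: \eqref{R2w9} comes from the pointwise bound $|\zeta|/(\zeta^2+\mu^2)\le\mu^{-2}|x|/(2t)$, and \eqref{R2o8} comes from subtracting the zero-frequency value using the oddness of $S_\eta^2T_\eta$, then applying the H\"older-$1/2$ bound supplied by $\|\partial_\xi\widehat q(t)\|_{L^2_\xi}\lesssim\varepsilon_1\langle t\rangle^\delta$. The only cosmetic difference is that you apply the H\"older estimate to the whole quotient $F$, whereas the paper splits $F(\zeta)-F(0)$ into the weight-difference term $\frac{\zeta^2}{\zeta^2+\mu^2}|\widehat q(t,\zeta)|^2$ and the term $\bigl||\widehat q(t,\zeta)|^2-|\widehat q(t,0)|^2\bigr|$.
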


\begin{proof}
Estimate \eqref{R2w9} follows directly from \eqref{L2infy}. It remains to prove \eqref{R2o8}. Note that
$$
|\widehat q(t,\zeta)-\widehat q(t,0)| \le |\zeta|^{1/2}\|\partial_\xi\widehat q(t)\|_{L^2_\xi}.
$$
The oddness of $S_\eta^2 T_\eta$ and \eqref{L2infy} give
\begin{align*}
&\left|\int_{\mathbb R}S_\eta^2(x)T_\eta(x)\frac{1}{\zeta^2+\mu^2}|\widehat q(t,\zeta)|^2{\rm d}x\right|\\
=&\left|\int_{\mathbb R}S_\eta^2(x)T_\eta(x)\left[\frac{|\widehat q(t,\zeta)|^2}{\zeta^2+\mu^2}-\frac{|\widehat q(t,0)|^2}{\mu^2}\right] {\rm d}x\right|\\
\lesssim&\int_{\mathbb R}S_\eta^2\left(\frac{\zeta^2}{\zeta^2+\mu^2}|\widehat q(t,\zeta)|^2+\big||\widehat q(t,\zeta)|^2-|\widehat q(t,0)|^2\big|\right){\rm d}x\\
\lesssim&\ \varepsilon_1^2t^{-1/2+\delta}.
\end{align*}
\end{proof}

\begin{prop}
We have
\begin{equation}\label{paraimp}
|\dot y(t)|+|\dot\phi(t)|\leq\frac12\varepsilon_1\langle t\rangle^{-1-\delta_*},\quad \forall \mbox{ } 0\le t\le T_2.
\end{equation}
\end{prop}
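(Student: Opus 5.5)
By Proposition~\ref{pmodre}, up to an error $O(\varepsilon_1^3\langle t\rangle^{-3/2})$ both $\dot y$ and $\dot\phi$ are linear combinations of $\operatorname{Re}R_2(z)$ and $\operatorname{Im}R_2(z)$. By Proposition~\ref{Rz2p} we may replace $R_2(z)$ by $R_2(z^*)$ at the cost of another $O(\varepsilon_1^3\langle t\rangle^{-3/2})$. Since $\varepsilon_1^3\langle t\rangle^{-3/2}\ll\varepsilon_1\langle t\rangle^{-1-\delta_*}$, the whole task is to show
\[
|R_2(z^*)|\lesssim\varepsilon_1^2\langle t\rangle^{-1-\delta_*}
\]
on $[0,T_2)$. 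Then for $\varepsilon_1$ small the implicit constant is absorbed and we get the factor $\frac12$. For $0\le t\le1$ this is trivial: Lemma~\ref{zstarcha} and the $L^\infty$ bounds \eqref{QHk}, \eqref{zs0} give $|R_2(z^*)|\lesssim\varepsilon_1^2$.

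For $t\ge1$ I use the representation \eqref{r2b}, $R_2(z^*)=-2\mu\int S_\eta^2\overline{z^*}Q\,dx$, and insert stationary phase asymptotics for $Q$ and $z^*$.

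\textbf{Asymptotics of $Q$.} Since $Q=e^{-itH_\eta}q$, Lemma~\ref{Stapha} together with Proposition~\ref{prop:Q-yphi-profile} gives
\[
Q(t,x)=\frac{e^{-i\mu^2 t+ix^2/(4t)-i\pi/4}}{\sqrt{2t}}\,\widehat q(t,\zeta)+O_{L^\infty}\bigl(\varepsilon_1t^{-3/4+\delta}\bigr).
\]

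\textbf{Asymptotics of $z^*$.} Write $z^*=-2D_\eta H_\eta^{-1}e^{-itH_\eta}q$. Here $H_\eta^{-1}$ and $\partial_x$ are Fourier multipliers commuting with the flow, so
\[
z^*=-2\,e^{-itH_\eta}\mathcal F^{-1}\!\left[\frac{i\xi\,\widehat q}{\xi^2+\mu^2}\right]+2\mu T_\eta\, e^{-itH_\eta}\mathcal F^{-1}\!\left[\frac{\widehat q}{\xi^2+\mu^2}\right].
\]
The multipliers are smooth and bounded, so the weighted bounds pass through. Stationary phase then yields, with the same phase factor and an $O(\varepsilon_1t^{-3/4+\delta})$ error,
\[
z^*\approx -2\,\frac{i\zeta-\mu T_\eta(x)}{\zeta^2+\mu^2}\,\frac{e^{\cdots}}{\sqrt{2t}}\,\widehat q(t,\zeta).
\]

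\textbf{Leading term.} Multiplying, the phases cancel in $\overline{z^*}Q$. Using $\int S_\eta^2<\infty$, the leading part of $R_2(z^*)$ is
\[
\frac{2\mu}{t}\int S_\eta^2\,\frac{-i\zeta-\mu T_\eta}{\zeta^2+\mu^2}\,|\widehat q(t,\zeta)|^2\,dx .
\]
The cross errors are $t^{-1/2}\cdot\varepsilon_1 t^{-3/4+\delta}\cdot\varepsilon_1=\varepsilon_1^2t^{-5/4+\delta}$, which is acceptable since $\delta+\delta_*<1/4$. The leading term splits into two pieces.
\begin{itemize}
\item The imaginary piece carries the factor $\zeta$. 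By \eqref{R2w9} it contributes $t^{-1}\cdot\varepsilon_1^2t^{-1}$.
\item The real piece carries $T_\eta$. By \eqref{R2o8} it contributes $t^{-1}\cdot\varepsilon_1^2t^{-1/2+\delta}$.
\end{itemize}
Both are $\lesssim\varepsilon_1^2t^{-1-\delta_*}$, because $\delta_*<1/2-\delta$.

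\textbf{Main obstacle.} The delicate step is the stationary phase expansion for $z^*$. The first-order operator $D_\eta$ includes the variable coefficient $T_\eta(x)$, which does not commute with the flow. It must be kept outside, acting on $e^{-itH_\eta}$ of the multiplier-modified profile, as done above. I also need $\|x\,\mathcal F^{-1}[m(\xi)\widehat q]\|_{L^2}\lesssim\|\widehat q\|_{L^2}+\|\partial_\xi\widehat q\|_{L^2}\lesssim\varepsilon_1t^{\delta}$ for $m(\xi)=\xi/(\xi^2+\mu^2)$ and $m(\xi)=1/(\xi^2+\mu^2)$. This follows from smoothness of $m$ and $m'$ and Proposition~\ref{prop:Q-yphi-profile}.

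Finally, by continuity of $\dot y$ and $\dot\phi$, the bound extends to $t=T_2$.
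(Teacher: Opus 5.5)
Your proposal is correct and follows the paper's proof essentially step for step. You reduce via Propositions~\ref{pmodre} and~\ref{Rz2p} to bounding $R_2(z^*)$, insert the stationary phase expansions of $Q$ and $z^*$ into \eqref{r2b}, and bound the two pieces of the leading term by Proposition~\ref{R2main}, with remainders of size $\varepsilon_1^2(t^{-5/4+\delta}+t^{-3/2+2\delta})$; your splitting of $D_\eta H_\eta^{-1}$ into a Fourier multiplier part and a $T_\eta(x)$-weighted part, and your treatment of $0\le t\le1$ and of the endpoint $t=T_2$, just make explicit details the paper leaves implicit.
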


\begin{proof}
In order to prove \eqref{paraimp}, by Proposition~\ref{pmodre}, Proposition~\ref{Rz2p}, and the definition of $R_2(z)$, it suffices  to control $R_2(z^*)$ on $t\geq1$. Letting $\zeta=\frac{x}{2t}$, by Lemma~\ref{Stapha} and \eqref{L2infy}, we have
\begin{equation}\label{Qsta1}
Q(t,x)=\frac{e^{-i(\mu^2t+\pi/4)}e^{ix^2/(4t)}}{\sqrt{2t}}\widehat q(t,\zeta)+r_1(t,x),
\end{equation}
where
$$
\|r_1(t)\|_{L^\infty_x}\lesssim t^{-3/4}\|\partial_\xi\widehat q(t)\|_{L^2_\xi}\lesssim\varepsilon_1t^{-3/4+\delta}.
$$
Similarly, since
$$
z^{*}=-2D_\eta H_\eta^{-1}Q=\frac{e^{-i\mu^2t}}{\sqrt{2\pi}}\int_{\mathbb R}e^{ix\xi-it\xi^2}\frac{2(\mu T_\eta(x)-i\xi)}{\xi^2+\mu^2}\widehat q(t,\xi){\rm d}\xi,
$$
we get
\begin{equation}\label{zsta7}
z^{*}(t,x)=\frac{e^{-i(\mu^2t+\pi/4)}e^{ix^2/(4t)}}{\sqrt{2t}}\frac{2(\mu T_\eta(x)-i\zeta)}{\zeta^2+\mu^2}\widehat q(t,\zeta)+r_2(t,x),
\end{equation}
where
\begin{align*}
\|r_2(t)\|_{L^\infty_x}\lesssim&t^{-3/4}\sup_{x\in\mathbb R}\left\|\partial_\xi\left(\frac{\mu T_\eta(x)-i\xi}{\xi^2+\mu^2}\widehat q(t,\xi)\right)\right\|_{L^2_\xi}\\
\lesssim&t^{-3/4}\left(\|\widehat q(t)\|_{L^2_\xi}+\|\partial_\xi\widehat q(t)\|_{L^2_\xi}\right)\\
\lesssim&\varepsilon_1t^{-3/4+\delta}.
\end{align*}
Substituting \eqref{Qsta1} and \eqref{zsta7} into \eqref{r2b} implies
\begin{equation*}
R_2(z^{*})=\frac{2\mu}{t}\int_{\mathbb R}S_\eta^2(x)\frac{-i\zeta-\mu T_\eta(x)}{\zeta^2+\mu^2}|\widehat q(t,\zeta)|^2{\rm d}x+O\left(\varepsilon_1^2\left(t^{-5/4+\delta}+t^{-3/2+2\delta}\right)\right).
\end{equation*}
Proposition~\ref{R2main} therefore gives
$$
|R_2(z^{*})(t)|\lesssim\varepsilon_1^2\left(t^{-5/4+\delta}+t^{-3/2+2\delta}\right)\lesssim\varepsilon_1^2t^{-1-\delta_*}.
$$
Choosing  $\varepsilon_1$ sufficiently small, \eqref{paraimp} is proved.
\end{proof}

{\bf Proof of (\ref{asphiy}).}
Proposition~\ref{Sobes} and \eqref{msre} imply that 
$$\|M(t)-\Omega_*\|_{H^2}\lesssim\varepsilon_1$$
uniformly in $t\in [0,T_2)$.
The definition \eqref{phidef}, the estimate \eqref{wallrota}, and \eqref{parboot} then give
\begin{align}
\|\mathbf M(t)-\Omega_*\|_{H^2}
&\leq \|M(t)-\Omega_*\|_{H^2}+|y(t)|\|\partial_x\Omega_*\|_{H^2}+2\left|\sin\frac{\phi(t)+\eta y(t)}{2}\right|\|S_\eta\|_{H^2}\notag\\
&\lesssim \varepsilon_1,\quad 0\leq t\leq T_2.
\label{M-O1}
\end{align}
Suppose that $T_2<\infty$. The bounds \eqref{M-O1} and \eqref{paraimp}, the time continuity of $\mathbf M-\Omega_*$ in $H^2$, and Lemma~\ref{modula} allow the modulation parameters and their derivatives to extend continuously to $T_2+\delta_2$ for some $\delta_2>0$. This contradicts the definition of $T_2$, so $T_2=+\infty$. Define
$$
y_\infty=y(0)+\int_0^\infty\dot y(s){\rm d}s,\quad \phi_\infty=\phi(0)+\int_0^\infty\dot\phi(s){\rm d}s,
$$
then
\begin{align*}
|y(t)-y_\infty|+|\phi(t)-\phi_\infty|\leq&\int_t^\infty\left(|\dot y(s)|+|\dot\phi(s)|\right){\rm d}s\lesssim\varepsilon_1^2\langle t\rangle^{-\delta_*}.
\end{align*}
Hence (\ref{asphiy}) is done. 

{\bf Proof of (\ref{Lindeca}).}
Proposition~\ref{z-decay}, together with \eqref{u-z} and sphere constraint, gives
\begin{align*}
\bigl\|m(t)-R_{\phi(t)}\omega_*(\cdot-y(t))\bigr\|_{L^\infty_x}
&=\bigl\|\alpha(t)\omega_*+\operatorname{Re}u(t)n_*+\operatorname{Im}u(t)p_*\bigr\|_{L^\infty_x}\\
&\lesssim \|\alpha(t)\|_{L^\infty_x}+\|u(t)\|_{L^\infty_x}\\
&\lesssim \|z(t)\|_{L^\infty_x}
\lesssim\varepsilon_1\langle t\rangle^{-1/2}.
\end{align*}

\section{Modified scattering}\label{ssca}
\subsection{Evolution equation for the profile}
The modulation argument above yields $P_dz=0$. 
Define
\begin{equation}
\label{gpro2}
g(t,x)=e^{it\mathcal L_\eta}z(t,x).
\end{equation}
By \eqref{zevol} and Proposition~\ref{disfour},
\begin{equation}
\label{gproev}
\partial_t\widetilde g(t,\xi)
=-i e^{it\langle\xi\rangle_\mu^2}
\widetilde{\mathcal F}
\left(
\mathcal N_2+\mathcal N_3+\mathcal N_{\ge4}
+\mathcal Mod
\right)(t,\xi).
\end{equation}
We denote by $\mathcal{G}_2$, $\mathcal{G}_3$, $\mathcal{G}_{\ge4}$, and
$\mathcal{G}_{\rm mod}$ the quadratic, cubic, quartic and higher, and modulation
contributions to $\partial_t\widetilde g$ generated by $\mathcal N_2$, $\mathcal N_3$, $\mathcal N_{\ge4}$, and $\mathcal Mod$, respectively. Then $\mathcal{G}_2$, $\mathcal{G}_3$, $\mathcal{G}_{\ge4}$, and $\mathcal{G}_{\rm mod}$ can be computed explicitly.

\begin{prop}
\label{pqfns}
Define
\begin{align}
\label{mpj}
\Phi_{++}(\xi,a,b)
=\langle\xi\rangle_\mu^2-\langle a\rangle_\mu^2-\langle b\rangle_\mu^2, 
\quad
\Phi_{+-}(\xi,a,b)
=\langle\xi\rangle_\mu^2-\langle a\rangle_\mu^2+\langle b\rangle_\mu^2.
\end{align}
Then the quadratic contribution is given by
\begin{equation}\label{qc}
\begin{aligned}
\mathcal{G}_2(t,\xi)=&\frac{i}{2}\int_{\mathbb R^2}e^{it\Phi_{++}(\xi,a,b)}q_{++}(\xi,a,b)\widetilde g(t,a)\widetilde g(t,b){\rm d}a{\rm d}b\\
&+\frac{i}{2}\int_{\mathbb R^2}e^{it\Phi_{+-}(\xi,a,b)}q_{+-}(\xi,a,b)\widetilde g(t,a)\overline{\widetilde g(t,b)}{\rm d}a{\rm d}b,
\end{aligned}
\end{equation}
where
\begin{align}
q_{++}(\xi,a,b)
&=\int_{\mathbb R}\overline{\psi_\eta(x,\xi)}
\left[V_2(x)\psi_\eta(x,a)\psi_\eta(x,b)-\frac12V_3(x)\partial_x\left(\psi_\eta(x,a)\psi_\eta(x,b)\right)\right]{\rm d}x,\nonumber\\
q_{+-}(\xi,a,b)
&=\int_{\mathbb R}\overline{\psi_\eta(x,\xi)}\left[V_2(x)\psi_\eta(x,a)\overline{\psi_\eta(x,b)}+V_3(x)\partial_x\psi_\eta(x,a)\overline{\psi_\eta(x,b)}\right]{\rm d}x. 
\label{qc1}
\end{align}
Moreover, $q_{++}$ and $q_{+-}$ admit the following null structure:
\begin{equation}
\label{quker}
q_{++}(\xi,a,b)=\Phi_{++}(\xi,a,b)m_{++}(\xi,a,b),
\quad
q_{+-}(\xi,a,b)=\Phi_{+-}(\xi,a,b)m_{+-}(\xi,a,b),
\end{equation}
where
\begin{equation}
\label{qm}
\begin{aligned}
m_{++}(\xi,a,b)&=\frac{1}{4\sqrt{2\pi}}
\frac{\langle\xi\rangle_\mu}{\langle a\rangle_\mu\langle b\rangle_\mu}
\operatorname{sech}\left(\frac{\pi(\xi-a-b)}{2\mu}\right),\\
m_{+-}(\xi,a,b)&=-\frac{1}{2\sqrt{2\pi}}
\frac{\langle a\rangle_\mu}{\langle\xi\rangle_\mu\langle b\rangle_\mu}
\operatorname{sech}\left(\frac{\pi(\xi-a+b)}{2\mu}\right).
\end{aligned}
\end{equation}
\end{prop}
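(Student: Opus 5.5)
The plan has two parts: first derive the representation \eqref{qc}--\eqref{qc1} formally from the Duhamel identity \eqref{gproev}, then compute the oscillatory integrals in \eqref{qc1} explicitly to obtain the factorization \eqref{quker}.

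\textbf{Derivation of \eqref{qc}.} Since $P_dz=0$, the inversion formula gives $z(t,x)=\int_{\mathbb R}\psi_\eta(x,a)e^{-it\langle a\rangle_\mu^2}\widetilde g(t,a)\,{\rm d}a$. Consequently
\[
\overline z=\int\overline{\psi_\eta(x,b)}\,e^{it\langle b\rangle_\mu^2}\,\overline{\widetilde g(t,b)}\,{\rm d}b .
\]
Substituting these into $\mathcal N_2=-\frac12V_2z^2-\frac12V_2|z|^2+\frac12V_3z\partial_xz-\frac12V_3\overline z\partial_xz$, applying $-ie^{it\langle\xi\rangle_\mu^2}\widetilde{\mathcal F}$, and using \eqref{dF}, produces exactly the phases $\Phi_{++}$ and $\Phi_{+-}$. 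Two bookkeeping points are needed. In the $z^2$-type terms we write $z\partial_xz=\frac12\partial_x(z^2)$ and symmetrize in $(a,b)$, which yields the kernel $V_2\psi\psi-\frac12V_3\partial_x(\psi\psi)$ with the overall factor $\frac i2$. In the $|z|^2$-type terms the kernel is $V_2\psi(a)\overline{\psi(b)}+V_3\partial_x\psi(a)\overline{\psi(b)}$. All sign conventions are fixed by comparing with the definition of $\mathcal N_2$.

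\textbf{Computing the kernels.} The key step is to evaluate \eqref{qc1} in closed form. I would write $\psi_\eta(x,\xi)=\frac{1}{\sqrt{2\pi}\langle\xi\rangle_\mu}(i\xi-\mu T_\eta)e^{ix\xi}$ and expand each product into a polynomial in $T_\eta$ times $e^{ix(a+b-\xi)}$, with prefactor $S_\eta$ or $S_\eta T_\eta$ coming from $V_2=2\mu^2S_\eta T_\eta$ and $V_3=2\mu S_\eta$. Each term is then a Fourier transform of $S_\eta T_\eta^k$, reduced using the identities $T_\eta^2=1-S_\eta^2$, $\partial_xS_\eta=-\mu S_\eta T_\eta$, and $\partial_xT_\eta=\mu S_\eta^2$.

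Integrating by parts turns every term into a multiple of $\int S_\eta e^{ix\lambda}\,{\rm d}x=\frac{\pi}{\mu}\operatorname{sech}\!\big(\frac{\pi\lambda}{2\mu}\big)$, with $\lambda=a+b-\xi$ or $a-b-\xi$, multiplied by a polynomial in $\lambda,\xi,a,b$. Concretely, $\int S_\eta T_\eta e^{ix\lambda}=\frac{i\lambda}{\mu}\int S_\eta e^{ix\lambda}$, and $\int S_\eta^3e^{ix\lambda}=\frac{\lambda^2+\mu^2}{2\mu^2}\int S_\eta e^{ix\lambda}$. The second follows from $\partial_x^2S_\eta=\mu^2S_\eta-2\mu^2S_\eta^3$.

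\textbf{Algebraic verification and the main obstacle.} What remains is to check that the resulting polynomial prefactor equals $\Phi_{++}\cdot\frac{\langle\xi\rangle_\mu^2}{4}\cdot(\text{normalization})$, and similarly for $\Phi_{+-}$ with $\langle a\rangle_\mu^2$. Note that $\overline{\psi(\xi)}$ contributes $\frac{-i\xi-\mu T_\eta}{\langle\xi\rangle_\mu}$, so the target $\frac{\langle\xi\rangle_\mu}{\langle a\rangle_\mu\langle b\rangle_\mu}$ requires the numerator to absorb a factor $\langle\xi\rangle_\mu^2$.

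This algebra is the main obstacle. To organize it I would avoid brute force and instead use the intertwining structure. Write $\psi_\eta(\cdot,\xi)=\frac{1}{\sqrt{2\pi}\langle\xi\rangle_\mu}D_\eta e^{ix\xi}$, and use $\widetilde{\mathcal F}f=\widehat{D^*_\eta f}/\langle\xi\rangle_\mu$ from \eqref{dftdbx}. This moves $D_\eta^*$ onto the quadratic expression, so that
\[
q=\frac{1}{\langle\xi\rangle_\mu}\mathcal F\big[D_\eta^*(\cdots)\big](\xi).
\]
It then suffices to compute $D_\eta^*$ applied to $V_2\psi\psi-\frac12V_3\partial_x(\psi\psi)$, and to note that the $\mathcal L_\eta$-eigenfunction relations $\mathcal L_\eta\psi=\langle\cdot\rangle_\mu^2\psi$ let one write $\Phi\,\psi(a)\psi(b)$-type terms as $\mathcal L_\eta(\psi\psi)-\psi\mathcal L_\eta\psi-\mathcal L_\eta\psi\,\psi$ plus commutators. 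The null structure arises because the quadratic form in $\mathcal N_2$ is, up to the exact identity $\mathcal I=0$ of Lemma~\ref{zstarcha}, a commutator with $\mathcal L_\eta$. As a consistency check I would evaluate at the resonant point and confirm that the explicit sech expression vanishes there.
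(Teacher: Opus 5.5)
Your plan is correct and follows the paper's route: substitute the profile representation of $z$ into $\mathcal N_2$, symmetrize, and reduce the kernels to $\int_{\mathbb R}e^{-ipx}S_\eta\,{\rm d}x$ using $\partial_xS_\eta=-\mu S_\eta T_\eta$ and $\partial_x^2S_\eta=\mu^2S_\eta-2\mu^2S_\eta^3$.

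The algebra you flag as the obstacle collapses once you integrate the $V_3\partial_x$ term by parts, and the commutator heuristic is then unnecessary. After that integration by parts, the transpose of $V_2-\frac12V_3\partial_x$ is $-\mu S_\eta D_\eta^*$, which lands on $\overline{\psi_\eta(\cdot,\xi)}$ with $D_\eta^*\overline{\psi_\eta(\cdot,\xi)}=\frac{\langle\xi\rangle_\mu}{\sqrt{2\pi}}e^{-ix\xi}$. For $q_{+-}$ no integration by parts is needed, since directly $V_2\psi_\eta(\cdot,a)+V_3\partial_x\psi_\eta(\cdot,a)=-\frac{2\mu\langle a\rangle_\mu}{\sqrt{2\pi}}S_\eta e^{ixa}$. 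With $p=\xi-a-b$, the remaining integral is then
$\int_{\mathbb R}e^{-ipx}S_\eta(ia-\mu T_\eta)(ib-\mu T_\eta)\,{\rm d}x=-\frac12\Phi_{++}\int_{\mathbb R}e^{-ipx}S_\eta\,{\rm d}x$, and analogously one gets $+\frac12\Phi_{+-}$ in the $+-$ case.
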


\begin{proof}
We first prove \eqref{qc}. By $\mathcal G_2$ and the formula for $\mathcal N_2$, one has 
\begin{align*}
\mathcal G_2(t,\xi)
=\frac{i}{2}e^{it\langle\xi\rangle_\mu^2}
\widetilde{\mathcal F}
\left(
V_2z^2-V_3z\partial_xz
\right)(t,\xi)
+\frac{i}{2}e^{it\langle\xi\rangle_\mu^2}
\widetilde{\mathcal F}
\left(
V_2z\overline z+V_3\overline z\,\partial_xz
\right)(t,\xi).
\end{align*}
Using \eqref{indF} and the definition of
$\widetilde g$, we find 
\begin{equation}
\label{profile-inv}
\begin{aligned}
z(t,x)
&=
\int_{\mathbb R}
e^{-it\langle a\rangle_\mu^2}
\psi_\eta(x,a)\widetilde g(t,a)\,\mathrm da,\\
\overline{z(t,x)}
&=
\int_{\mathbb R}
e^{it\langle b\rangle_\mu^2}
\overline{\psi_\eta(x,b)}
\overline{\widetilde g(t,b)}\,\mathrm db.
\end{aligned}
\end{equation}
Symmetrizing in $a$ and $b$ yields
\begin{align*}
&e^{it\langle\xi\rangle_\mu^2}
\widetilde{\mathcal F}
\left(
V_2z^2-V_3z\partial_xz
\right)(t,\xi)\\
={}&
\int_{\mathbb R^2}
e^{it\Phi_{++}(\xi,a,b)}
\widetilde g(t,a)\widetilde g(t,b)
\int_{\mathbb R}
\overline{\psi_\eta(x,\xi)}
\Bigl(
V_2(x)\psi_\eta(x,a)\psi_\eta(x,b)
-
V_3(x)\psi_\eta(x,a)\partial_x\psi_\eta(x,b)
\Bigr)  
{\rm d}x{\rm d}a{\rm d}b\\
=&
\int_{\mathbb R^2}
e^{it\Phi_{++}(\xi,a,b)}
\widetilde g(t,a)\widetilde g(t,b)
\int_{\mathbb R}
\overline{\psi_\eta(x,\xi)}
\Bigl(
V_2(x)\psi_\eta(x,a)\psi_\eta(x,b)
-\frac12V_3(x)\partial_x
\left(
\psi_\eta(x,a)\psi_\eta(x,b)
\right)
\Bigr)  
{\rm d}x{\rm d}a{\rm d}b.
\end{align*}
Similarly,
\begin{align*}
&e^{it\langle\xi\rangle_\mu^2}
\widetilde{\mathcal F}
\left(
V_2z\overline z+V_3\overline z\,\partial_xz
\right)(t,\xi)
=
\int_{\mathbb R^2}
e^{it\Phi_{+-}(\xi,a,b)}
q_{+-}(\xi,a,b)
\widetilde g(t,a)\overline{\widetilde g(t,b)}
\,\mathrm da\,\mathrm db,
\end{align*}
which proves \eqref{qc}.
It remains to prove \eqref{quker}. The two identities follow from the same computation, so we prove only the first one.
Using \eqref{v12} and \eqref{bSederi}, integrating by parts, we calculate
\begin{align*}
q_{++}(\xi,a,b)
&=\int_{\mathbb R}\overline{\psi_\eta(x,\xi)}
\left[
2\mu^2S_\eta T_\eta\psi_\eta(x,a)\psi_\eta(x,b)
-\mu S_\eta\partial_x\left(\psi_\eta(x,a)\psi_\eta(x,b)\right)
\right]\,\mathrm dx\\
&=-\mu\int_{\mathbb R}S_\eta
D_\eta^*\overline{\psi_\eta(x,\xi)}
\psi_\eta(x,a)\psi_\eta(x,b)\,\mathrm dx\\
&=-\frac{\mu\langle\xi\rangle_\mu}{\sqrt{2\pi}}
\int_{\mathbb R}e^{-ix\xi}S_\eta
\psi_\eta(x,a)\psi_\eta(x,b)\,\mathrm dx\\
&=-\frac{\mu\langle\xi\rangle_\mu}
{(2\pi)^{3/2}\langle a\rangle_\mu\langle b\rangle_\mu}
\int_{\mathbb R}e^{-i(\xi-a-b)x}S_\eta
\left[
-ab-i\mu(a+b)T_\eta+\mu^2T_\eta^2
\right]\,\mathrm dx,
\end{align*}
By \eqref{STide}, \eqref{bSederi} and letting
$p=\xi-a-b$, we have
\begin{equation}
\label{3redu}
\begin{aligned}
&\int_{\mathbb R}e^{-ipx}S_\eta
\left[
-ab-i\mu(a+b)T_\eta+\mu^2T_\eta^2
\right]\,\mathrm dx\\
={}&
-ab\int_{\mathbb R}e^{-ipx}S_\eta\,\mathrm dx
+i(a+b)\int_{\mathbb R}e^{-ipx}\partial_xS_\eta\,\mathrm dx
+\mu^2\int_{\mathbb R}e^{-ipx}S_\eta(1-S_\eta^2)\,\mathrm dx\\
={}&
\left[-ab-(a+b)p+\mu^2\right]
\int_{\mathbb R}e^{-ipx}S_\eta\,\mathrm dx
-\mu^2\int_{\mathbb R}e^{-ipx}S_\eta^3\,\mathrm dx.
\end{aligned}
\end{equation}
Applying \eqref{STide} and \eqref{bSederi} again, one has 
\begin{equation*}
\frac{d^2}{dx^2}S_{\eta}
=
\mu^2S_\eta-2\mu^2S_\eta^3.
\end{equation*}
Hence
\begin{equation}
\label{Fou4}
\begin{aligned}
\mu^2\int_{\mathbb R}e^{-ipx}S_\eta^3\,\mathrm dx
&=
\frac{\mu^2}{2}
\int_{\mathbb R}e^{-ipx}S_\eta\,\mathrm dx
-\frac12
\int_{\mathbb R}e^{-ipx}S_{\eta}^{(2)}\,\mathrm dx\\
&=
\frac{\mu^2+p^2}{2}
\int_{\mathbb R}e^{-ipx}S_\eta\,\mathrm dx.
\end{aligned}
\end{equation}
Combining \eqref{3redu},
\eqref{Fou4}, and
\eqref{Fotrseta}, we obtain
\begin{align*}
q_{++}(\xi,a,b)
&=
-\frac{\mu\langle\xi\rangle_\mu}
{(2\pi)^{3/2}\langle a\rangle_\mu\langle b\rangle_\mu}
\left[
-ab-(a+b)p+\frac{\mu^2-p^2}{2}
\right]
\int_{\mathbb R}e^{-ipx}S_\eta(x)\,\mathrm dx\\
&=
\frac{\mu\langle\xi\rangle_\mu}
{2(2\pi)^{3/2}\langle a\rangle_\mu\langle b\rangle_\mu}
\Phi_{++}(\xi,a,b)
\int_{\mathbb R}e^{-ipx}S_\eta(x)\,\mathrm dx\\
&=
\frac{\mu\langle\xi\rangle_\mu}
{2(2\pi)^{3/2}\langle a\rangle_\mu\langle b\rangle_\mu}
\Phi_{++}(\xi,a,b)
\frac{\pi}{\mu}
\operatorname{sech}\left(\frac{\pi p}{2\mu}\right)\\
&=
\frac{\Phi_{++}(\xi,a,b)}{4\sqrt{2\pi}}
\frac{\langle\xi\rangle_\mu}
{\langle a\rangle_\mu\langle b\rangle_\mu}
\operatorname{sech}\left(
\frac{\pi(\xi-a-b)}{2\mu}
\right).
\end{align*}
\end{proof}

\begin{prop}\label{cfdpr}
Define $\Phi_3(\xi,a,b,c)=\langle\xi\rangle_\mu^2-\langle a\rangle_\mu^2-\langle b\rangle_\mu^2+\langle c\rangle_\mu^2
$, $p=\xi-a-b+c$ and let $\mathfrak m_\pm(\xi)=\frac{\xi\pm i\mu}{\langle\xi\rangle_\mu}$.
Then the cubic contribution in the right hand side of (\ref{gproev}) is given by
\begin{equation}
\label{cc}
\mathcal G_3(t,\xi)=-\frac{i}{2}\int_{\mathbb R^3}e^{it\Phi_3(\xi,a,b,c)}q_3(\xi,a,b,c)\widetilde g(t,a)\widetilde g(t,b)\overline{\widetilde g(t,c)}\,{\rm d}a\,{\rm d}b\,{\rm d}c,
\end{equation}
where
\begin{equation}
\label{3kernel}
q_3(\xi,a,b,c)=\int_{\mathbb R}\overline{\psi_\eta(x,\xi)}\overline{\psi_\eta(x,c)}
\left[\partial_x\psi_\eta(x,a)\partial_x\psi_\eta(x,b)-V_1(x)\psi_\eta(x,a)\psi_\eta(x,b)\right]\,{\rm d}x.
\end{equation}
Moreover, we have the decomposition
\begin{equation}
\label{cukede}
q_3=q_{3}^{\delta}+q_{3}^{p.v.}+q_{3}^{r},
\end{equation}
where
\begin{align}
\label{cc1}
q_3^\delta(\xi,a,b,c)
=&-\frac{ab+\mu^2}{4\pi}
\Bigl[\mathfrak m_-(\xi)\mathfrak m_+(a)\mathfrak m_+(b)\mathfrak m_-(c)
+\mathfrak m_+(\xi)\mathfrak m_-(a)\mathfrak m_-(b)\mathfrak m_+(c)\Bigr]\delta(p),\\
\label{cc2}
q_{3}^{p.v.}(\xi,a,b,c)
=&\frac{ab+\mu^2}{8\pi i\mu}
\mathfrak m_+(\xi)\mathfrak m_-(a)\mathfrak m_-(b)\mathfrak m_+(c)\left[1-\overline{t_\eta(\xi)}t_\eta(a)t_\eta(b)
\overline{t_\eta(c)}\right]
\operatorname{p.v.}\operatorname{csch}\left(\frac{\pi p}{2\mu}\right),\\
\label{cc3}
q_{3}^r(\xi,a,b,c)
&=\frac{1}{4\pi^2}\int_{\mathbb R}e^{-i(\xi-a-b+c)x}S_\eta^2(x)\sum_{j=0}^4p_3^j(\xi,a,b,c)T_\eta^j(x)\,{\rm d}x.
\end{align}
with
\begin{align*}
p_3^0(\xi,a,b,c)
&=\frac{\mu^2\left\{(ab+\mu^2)\left(\xi a+\xi b-\xi c-ab+ac+bc+\mu^2\right)-\xi c\left((a-b)^2+\mu^2\right)\right\}}
{\langle\xi\rangle_\mu\langle a\rangle_\mu\langle b\rangle_\mu\langle c\rangle_\mu},\\
p_3^{1}(\xi,a,b,c)
&=\frac{i\mu^3\left\{(ab+\mu^2)(\xi-a-b+c)+(\xi+c)\left((a-b)^2+\mu^2\right)+\xi c(a+b)\right\}}
{\langle\xi\rangle_\mu\langle a\rangle_\mu\langle b\rangle_\mu\langle c\rangle_\mu},\\
p_3^2(\xi,a,b,c)
&=\frac{\mu^4\left\{ab+(a-b)^2+(\xi+c)(a+b)-\xi c+2\mu^2\right\}}
{\langle\xi\rangle_\mu\langle a\rangle_\mu\langle b\rangle_\mu\langle c\rangle_\mu},\\
p_3^3(\xi,a,b,c)
&=\frac{i\mu^5(\xi-a-b+c)}
{\langle\xi\rangle_\mu\langle a\rangle_\mu\langle b\rangle_\mu\langle c\rangle_\mu},\\
p_3^4(\xi,a,b,c)
&=\frac{\mu^6}
{\langle\xi\rangle_\mu\langle a\rangle_\mu\langle b\rangle_\mu\langle c\rangle_\mu}.
\end{align*}
Correspondingly, we define $\mathcal G_3^{\delta}$, $\mathcal G_3^{p.v.}$, and $\mathcal G_3^r$ by replacing $q_3$ in \eqref{cc} with $q_3^{\delta}$, $q_3^{p.v.}$ and $q_3^r$, respectively.
\end{prop}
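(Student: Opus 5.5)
The plan has two parts: first derive the formula \eqref{cc}--\eqref{3kernel} for $\mathcal G_3$, then compute the $x$-integral in $q_3$ explicitly and split it into a $\delta$ part, a principal value part and a localized remainder.

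\textbf{Step 1: the representation \eqref{cc}.} This mirrors the proof of Proposition~\ref{pqfns}. Insert the inversion formulas \eqref{profile-inv} for $z$ and $\overline z$ into
\[
\mathcal N_3=\tfrac12\overline z(\partial_xz)^2-\tfrac12V_1|z|^2z .
\]
Then apply $-ie^{it\langle\xi\rangle_\mu^2}\widetilde{\mathcal F}$ and collect the phases, which produces $e^{it\Phi_3}$ with kernel \eqref{3kernel}. The only point needing care is justifying Fubini. I would do this for Schwartz profiles and extend by density, since the kernel is a tempered distribution in $(a,b,c)$.

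\textbf{Step 2: reducing $q_3$ to one-dimensional Fourier integrals.} Use \eqref{gee}:
\[
\psi_\eta(x,\xi)=(2\pi)^{-1/2}\langle\xi\rangle_\mu^{-1}(i\xi-\mu T_\eta)e^{ix\xi},\qquad
\partial_x\psi_\eta(x,a)=(2\pi)^{-1/2}\langle a\rangle_\mu^{-1}\bigl(i a(ia-\mu T_\eta)-\mu^2S_\eta^2\bigr)e^{ixa}.
\]
Expanding the integrand of \eqref{3kernel} gives $e^{-ipx}$, with $p=\xi-a-b+c$, times a polynomial in $T_\eta$ and $S_\eta^2$. Then substitute $S_\eta^2=1-T_\eta^2$ wherever the term is not already localized. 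This isolates a nonlocalized part of the form $e^{-ipx}P(T_\eta)$, where $P$ is a polynomial of degree at most $4$ in $T_\eta$. What remains carries a factor $S_\eta^2$ and can be written as $S_\eta^2\sum_j(\cdot)T_\eta^j$; after careful bookkeeping this is the remainder $q_3^r$ with the coefficients $p_3^j$.

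\textbf{Step 3: the nonlocalized part (main obstacle).} Here $P(T_\eta)$ is not integrable, so its Fourier transform must be computed in the distributional sense. Write $T_\eta^j=\tfrac{1+(-1)^j}{2}+\tfrac{1-(-1)^j}{2}T_\eta$ modulo $S_\eta^2\cdot$(polynomial), and move the $S_\eta^2$ pieces into $q_3^r$. This leaves only
\[
\int e^{-ipx}\,dx=2\pi\delta(p),\qquad \int e^{-ipx}T_\eta\,dx=-\tfrac{i\pi}{\mu}\operatorname{p.v.}\operatorname{csch}\Bigl(\tfrac{\pi p}{2\mu}\Bigr).
\]
Rather than expanding monomials, it is cleaner to use the limits $T_\eta\to\pm1$. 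The coefficient of $e^{-ipx}$ at $x=\pm\infty$ is
\[
(2\pi)^{-2}(ab+\mu^2)\,\mathfrak m_\mp(\xi)\mathfrak m_\pm(a)\mathfrak m_\pm(b)\mathfrak m_\mp(c),
\]
up to sign, because $(i\xi\mp\mu)/\langle\xi\rangle_\mu$ factors into the $\mathfrak m_\pm$. The part that is even in the sign of $T_\eta$, namely the average of the two limits, produces the $\delta(p)$ term \eqref{cc1}. The odd part, half the difference of the limits, multiplies $\widehat{T_\eta}$. Factoring out $\mathfrak m_+\mathfrak m_-\mathfrak m_-\mathfrak m_+$ and using $\mathfrak m_-/\mathfrak m_+=\overline{t_\eta}$ produces the factor $1-\overline{t_\eta(\xi)}t_\eta(a)t_\eta(b)\overline{t_\eta(c)}$ in \eqref{cc2}.

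The difference between $T_\eta^j$ and its two-valued step approximation is always a multiple of $S_\eta^2$, and these pieces are exactly what I assign to $q_3^r$. I expect the main difficulty to be this sign and constant bookkeeping, together with matching the resulting $p_3^j$ polynomials. I would verify them by checking the case $T_\eta=\pm1$ and the $S_\eta^2$ coefficient separately.
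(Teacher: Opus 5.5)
Your proposal is correct and follows essentially the same route as the paper: you insert \eqref{profile-inv} into $\mathcal N_3$ to obtain \eqref{cc}, then split the degree-four polynomial in $T_\eta$ into its linear interpolation through $T_\eta=\pm1$, which yields the $\delta(p)$ and p.v.\ parts via \eqref{Ftone} and \eqref{Ftt}, plus an $S_\eta^2$-multiple assigned to $q_3^r$. The paper organizes this bookkeeping through the identity $(a^2+i\mu aT_\eta+\mu^2S_\eta^2)(b^2+i\mu bT_\eta+\mu^2S_\eta^2)=-ab(ia-\mu T_\eta)(ib-\mu T_\eta)+\mu^2S_\eta^2(\cdots)$, which makes the common factor $-(ab+\mu^2)$ (the sign you left open) explicit, and carrying out your interpolation does reproduce the stated $p_3^j$.
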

\begin{proof}
We first prove \eqref{cc}. By the definition of $\mathcal G_3$ and the formula for $\mathcal N_3$,
$$
\mathcal G_3(t,\xi)=-\frac{i}{2}e^{it\langle\xi\rangle_\mu^2}\widetilde{\mathcal F}\left(\overline z(\partial_xz)^2-V_1|z|^2z\right)(t,\xi).
$$
Using \eqref{profile-inv} and proceeding as in the proof of Proposition~\ref{pqfns}, we obtain \eqref{cc}. It remains to prove \eqref{cukede}. The explicit formula \eqref{gee} gives
\begin{equation}
\label{geig}
\partial_x\psi_\eta(x,\xi)=-\frac{1}{\sqrt{2\pi}}\frac{\xi^2+i\mu T_\eta(x)\xi+\mu^2S_\eta^2(x)}{\langle\xi\rangle_\mu}e^{i\xi x}.
\end{equation}
Substituting \eqref{gee} and \eqref{geig} into \eqref{3kernel}, using \eqref{v12} and letting $p=\xi-a-b+c$, we get
\begin{align*}
q_3(\xi,a,b,c)
&=\frac{1}{4\pi^2\langle\xi\rangle_\mu\langle a\rangle_\mu\langle b\rangle_\mu\langle c\rangle_\mu}
\int_{\mathbb R}e^{-ipx}\left(i\xi+\mu T_\eta\right)\left(ic+\mu T_\eta\right)\\
&\cdot\Bigl[\left(a^2+i\mu aT_\eta+\mu^2S_\eta^2\right)\left(b^2+i\mu bT_\eta+\mu^2S_\eta^2\right)
-\mu^2(1-2S_\eta^2)\left(ia-\mu T_\eta\right)\left(ib-\mu T_\eta\right)\Bigr]\,{\rm d}x.
\end{align*}
Notice that
\begin{align*}
&\left(a^2+i\mu aT_\eta+\mu^2S_\eta^2\right)\left(b^2+i\mu bT_\eta+\mu^2S_\eta^2\right)\\
={}&-ab\left(ia-\mu T_\eta\right)\left(ib-\mu T_\eta\right)
+\mu^2S_\eta^2\left(a^2+b^2+i\mu(a+b)T_\eta+\mu^2S_\eta^2\right).
\end{align*}
Then  \eqref{STide} implies that 
\begin{align*}
&\left(a^2+i\mu aT_\eta+\mu^2S_\eta^2\right)\left(b^2+i\mu bT_\eta+\mu^2S_\eta^2\right)
-\mu^2(1-2S_\eta^2)\left(ia-\mu T_\eta\right)\left(ib-\mu T_\eta\right)\\
={}&-(ab+\mu^2)\left(ia-\mu T_\eta\right)\left(ib-\mu T_\eta\right)\\
&+\mu^2S_\eta^2\left[2\left(ia-\mu T_\eta\right)\left(ib-\mu T_\eta\right)
+a^2+b^2+i\mu(a+b)T_\eta+\mu^2S_\eta^2\right]\\
={}&-(ab+\mu^2)\left(ia-\mu T_\eta\right)\left(ib-\mu T_\eta\right)
+\mu^2S_\eta^2\left[\left(ia-\mu T_\eta\right)\left(ib-\mu T_\eta\right)+a^2-ab+b^2+\mu^2\right].
\end{align*}
Therefore, with $p=\xi-a-b+c$, one has 
\begin{equation}
\begin{aligned}
&q_3(\xi,a,b,c)\nonumber\\
={}&-\frac{ab+\mu^2}{4\pi^2}\int_{\mathbb R}e^{-ipx}
\frac{\left(i\xi+\mu T_\eta\right)\left(ia-\mu T_\eta\right)\left(ib-\mu T_\eta\right)\left(ic+\mu T_\eta\right)}
{\langle\xi\rangle_\mu\langle a\rangle_\mu\langle b\rangle_\mu\langle c\rangle_\mu}\,{\rm d}x\\
&+\frac{\mu^2}{4\pi^2}\int_{\mathbb R}e^{-ipx}S_\eta^2
\frac{\left(i\xi+\mu T_\eta\right)\left(ic+\mu T_\eta\right)}
{\langle\xi\rangle_\mu\langle a\rangle_\mu\langle b\rangle_\mu\langle c\rangle_\mu}
\left[\left(ia-\mu T_\eta\right)\left(ib-\mu T_\eta\right)+a^2-ab+b^2+\mu^2\right]\,{\rm d}x.
\end{aligned}
\label{q3d}
\end{equation}
Notice that by the definition $\mathfrak m_\pm(\xi)$ and  \eqref{STide}
\begin{equation}
\begin{aligned}
&\frac{\left(i\xi+\mu T_\eta\right)\left(ia-\mu T_\eta\right)\left(ib-\mu T_\eta\right)\left(ic+\mu T_\eta\right)}
{\langle\xi\rangle_\mu\langle a\rangle_\mu\langle b\rangle_\mu\langle c\rangle_\mu}\\
={}&\frac{1+T_\eta}{2}\mathfrak m_-(\xi)\mathfrak m_+(a)\mathfrak m_+(b)\mathfrak m_-(c)
+\frac{1-T_\eta}{2}\mathfrak m_+(\xi)\mathfrak m_-(a)\mathfrak m_-(b)\mathfrak m_+(c)
+S_\eta^2\sum_{j=0}^2r_j(\xi,a,b,c)T_\eta^j,
\end{aligned}
\label{eq6}
\end{equation}
where
\begin{align*}
r_0(\xi,a,b,c)
&=\frac{\mu^2\left(ab-ac-a\xi-bc-b\xi+c\xi-\mu^2\right)}
{\langle\xi\rangle_\mu\langle a\rangle_\mu\langle b\rangle_\mu\langle c\rangle_\mu},\\
r_1(\xi,a,b,c)
&=\frac{i\mu^3(a+b-c-\xi)}
{\langle\xi\rangle_\mu\langle a\rangle_\mu\langle b\rangle_\mu\langle c\rangle_\mu},\\
r_2(\xi,a,b,c)
&=-\frac{\mu^4}
{\langle\xi\rangle_\mu\langle a\rangle_\mu\langle b\rangle_\mu\langle c\rangle_\mu}.
\end{align*}
Substituting \eqref{eq6} into \eqref{q3d}, using
\eqref{Ftone} and \eqref{Ftt}, we calculate
\begin{align*}
&q_3(\xi,a,b,c)\\
=&-\frac{ab+\mu^2}{4\pi}
\Bigl[\mathfrak m_-(\xi)\mathfrak m_+(a)\mathfrak m_+(b)\mathfrak m_-(c)
+\mathfrak m_+(\xi)\mathfrak m_-(a)\mathfrak m_-(b)\mathfrak m_+(c)\Bigr]\delta(p)\\
&+\frac{ab+\mu^2}{8\pi i\mu}
\Bigl[\mathfrak m_+(\xi)\mathfrak m_-(a)\mathfrak m_-(b)\mathfrak m_+(c)
-\mathfrak m_-(\xi)\mathfrak m_+(a)\mathfrak m_+(b)\mathfrak m_-(c)\Bigr]
\operatorname{p.v.}\operatorname{csch}\left(\frac{\pi p}{2\mu}\right)\\
&+\frac{1}{4\pi^2}\int_{\mathbb R}e^{-ipx}S_\eta^2(x)
\Biggl\{
-(ab+\mu^2)\sum_{j=0}^2r_j(\xi,a,b,c)T_\eta^j(x)
+\frac{\mu^2\left(i\xi+\mu T_\eta(x)\right)\left(ic+\mu T_\eta(x)\right)}
{\langle\xi\rangle_\mu\langle a\rangle_\mu\langle b\rangle_\mu\langle c\rangle_\mu}\\
&\quad\cdot\left[\left(ia-\mu T_\eta(x)\right)\left(ib-\mu T_\eta(x)\right)
+a^2-ab+b^2+\mu^2\right]
\Biggr\}{\rm d}x\\
&=q_3^\delta(\xi,a,b,c)+q_3^{p.v.}(\xi,a,b,c)+q_3^{r}(\xi,a,b,c).
\end{align*}
Moreover, 
\begin{align*}
q_3^{p.v.}(\xi,a,b,c)
=&\frac{ab+\mu^2}{8\pi i\mu}
\Bigl[\mathfrak m_+(\xi)\mathfrak m_-(a)\mathfrak m_-(b)\mathfrak m_+(c)
-\mathfrak m_-(\xi)\mathfrak m_+(a)\mathfrak m_+(b)\mathfrak m_-(c)\Bigr]
\operatorname{p.v.}\operatorname{csch}\left(\frac{\pi p}{2\mu}\right)\\
=&\frac{ab+\mu^2}{8\pi i\mu}
\mathfrak m_+(\xi)\mathfrak m_-(a)\mathfrak m_-(b)\mathfrak m_+(c)\left[1-\overline{t_\eta(\xi)}t_\eta(a)t_\eta(b)
\overline{t_\eta(c)}\right]
\operatorname{p.v.}\operatorname{csch}\left(\frac{\pi p}{2\mu}\right).
\end{align*}
\end{proof}

Proposition~\ref{pqfns} and
Proposition~\ref{cfdpr} give precise descriptions of the structures of $\mathcal G_2$ and $\mathcal G_3$, respectively. The quartic and higher contribution is simply given by
\begin{equation}
\label{hc}
\mathcal G_{\ge4}(t,\xi)
=
-i e^{it\langle\xi\rangle_\mu^2}
\widetilde{\mathcal F}\mathcal N_{\ge4}(t,\xi).
\end{equation}
Finally, we turn to the modulation contribution 
\begin{align}
\label{mc}
\mathcal G_{\rm mod}(t,\xi)
={}&
\dot y\,e^{it\langle\xi\rangle_\mu^2}
\widetilde{\mathcal F}
\left(
\partial_xz-\frac{\mu}{4}S_\eta z^2
\right)(t,\xi)-i(\dot\phi+\eta\dot y)
e^{it\langle\xi\rangle_\mu^2}
\widetilde{\mathcal F}
\left(
T_\eta z-\frac14S_\eta z^2
\right)(t,\xi).
\end{align}
The term involving $T_\eta z$ is different from the other modulation terms,
since $T_\eta$ is nonlocalized and its Fourier transform contains a principal
value singularity. Therefore, we decompose
$$
\mathcal G_{\rm mod}=
\mathcal G_{\rm mod}^r
+
\mathcal G_{\rm mod}^s,
$$
where
\begin{equation}
\label{mr}
\mathcal G_{\rm mod}^r(t,\xi)=
\dot y e^{it\langle\xi\rangle_\mu^2}
\widetilde{\mathcal F}\left(\partial_xz-\frac{\mu}{4}S_\eta z^2\right)(t,\xi)+\frac{i}{4}(\dot\phi+\eta\dot y)e^{it\langle\xi\rangle_\mu^2}
\widetilde{\mathcal F}\left(S_\eta z^2\right)(t,\xi),
\end{equation}
and
\begin{equation}
\label{ms}
\mathcal{G}_{\rm mod}^s(t,\xi)=-i(\dot\phi+\eta\dot y)e^{it\langle\xi\rangle_\mu^2}
\widetilde{\mathcal F}\left(T_\eta z\right)(t,\xi).
\end{equation}

\begin{prop}
The singular part of modulation contribution satisfies
\begin{equation}
\label{sms}
\begin{aligned}
\mathcal G_{\rm mod}^s(t,\xi)
={}&
-\frac{\dot\phi+\eta\dot y}{2\mu}
\operatorname{p.v.}\int_{\mathbb R}
e^{it(\langle\xi\rangle_\mu^2-\langle a\rangle_\mu^2)}
\operatorname{csch}\left(
\frac{\pi(\xi-a)}{2\mu}
\right)
\widetilde g(t,a)\,\mathrm da\\
&-\frac{\dot\phi+\eta\dot y}{2\mu}
\int_{\mathbb R}
e^{it(\langle\xi\rangle_\mu^2-\langle a\rangle_\mu^2)}
\frac{(\langle\xi\rangle_\mu-\langle a\rangle_\mu)^2}
{2\langle\xi\rangle_\mu\langle a\rangle_\mu}
\operatorname{csch}\left(\frac{\pi(\xi-a)}{2\mu}\right)
\widetilde g(t,a){\rm d}a.
\end{aligned}
\end{equation}
\end{prop}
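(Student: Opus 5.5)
The plan is to compute the distorted Fourier kernel of multiplication by $T_\eta$ explicitly. We insert the representation \eqref{profile-inv} of $z$ into \eqref{ms}. Formally exchanging the order of integration, this gives
\begin{equation*}
e^{it\langle\xi\rangle_\mu^2}\widetilde{\mathcal F}(T_\eta z)(t,\xi)=\int_{\mathbb R}e^{it(\langle\xi\rangle_\mu^2-\langle a\rangle_\mu^2)}K(\xi,a)\widetilde g(t,a)\,{\rm d}a,\qquad K(\xi,a)=\int_{\mathbb R}\overline{\psi_\eta(x,\xi)}\,T_\eta(x)\,\psi_\eta(x,a)\,{\rm d}x,
\end{equation*}
where $K$ is understood as a tempered distribution in $p=\xi-a$.

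We first expand the kernel using \eqref{gee}. This gives $(-i\xi-\mu T_\eta)(ia-\mu T_\eta)=\xi a+i\mu(\xi-a)T_\eta+\mu^2(1-S_\eta^2)$, and multiplying by $T_\eta$,
\begin{equation*}
K(\xi,a)=\frac{1}{2\pi\langle\xi\rangle_\mu\langle a\rangle_\mu}\int_{\mathbb R}e^{-ipx}\Big[(\xi a+\mu^2)T_\eta+i\mu p\,T_\eta^2-\mu^2S_\eta^2T_\eta\Big]{\rm d}x .
\end{equation*}
We then evaluate the three Fourier transforms. First, $\int e^{-ipx}T_\eta\,{\rm d}x=-\frac{i\pi}{\mu}\operatorname{p.v.}\operatorname{csch}\big(\frac{\pi p}{2\mu}\big)$, which is the source of the principal value. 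Second, $T_\eta^2=1-S_\eta^2$ produces $2\pi\delta(p)$, but this term is killed by the prefactor $p$. Third, $S_\eta^2T_\eta=-\frac{1}{2\mu}\partial_x(S_\eta^2)$, so its transform equals $-\frac{ip}{2\mu}F_0(p)$, where $F_0(p)=\int e^{-ipx}S_\eta^2\,{\rm d}x=\frac{\pi p}{\mu^2}\operatorname{csch}\big(\frac{\pi p}{2\mu}\big)$.

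Collecting terms, the $S_\eta^2$ contributions combine to $-\frac{i\mu p}{2}F_0=-\frac{i\pi p^2}{2\mu}\operatorname{csch}$. The bracket therefore becomes
\begin{equation*}
-\frac{i\pi}{\mu}\Big[\xi a+\mu^2+\tfrac{(\xi-a)^2}{2}\Big]\operatorname{csch}\Big(\frac{\pi p}{2\mu}\Big)=-\frac{i\pi}{2\mu}\big(\langle\xi\rangle_\mu^2+\langle a\rangle_\mu^2\big)\operatorname{csch}\Big(\frac{\pi p}{2\mu}\Big).
\end{equation*}
Hence $K=-\frac{i}{4\mu}\frac{\langle\xi\rangle_\mu^2+\langle a\rangle_\mu^2}{\langle\xi\rangle_\mu\langle a\rangle_\mu}\operatorname{csch}\big(\frac{\pi(\xi-a)}{2\mu}\big)$. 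Using
\begin{equation*}
\frac{\langle\xi\rangle_\mu^2+\langle a\rangle_\mu^2}{2\langle\xi\rangle_\mu\langle a\rangle_\mu}=1+\frac{(\langle\xi\rangle_\mu-\langle a\rangle_\mu)^2}{2\langle\xi\rangle_\mu\langle a\rangle_\mu}
\end{equation*}
and multiplying by $-i(\dot\phi+\eta\dot y)$ yields exactly the two terms in \eqref{sms}. The second term needs no principal value, because $(\langle\xi\rangle_\mu-\langle a\rangle_\mu)^2=O((\xi-a)^2)$ cancels the simple pole of csch at $\xi=a$.

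The main obstacle is the rigorous justification, since $T_\eta$ is not localized and $K$ is only a distribution. I would first carry out the computation for $\widetilde g(t)\in\mathcal S$, pairing with Schwartz test functions in $\xi$ and using the classical distributional transform of $\tanh$, obtained as the limit of $e^{-\epsilon|x|}\tanh(\mu x)$. I would then extend to general $\widetilde g$ by density. For this I would use that $f\mapsto\operatorname{p.v.}\int\operatorname{csch}\big(\frac{\pi(\xi-a)}{2\mu}\big)f(a)\,{\rm d}a$ is bounded on $L^2$: it is a Hilbert-transform singularity plus an integrable exponentially decaying tail. I would also use that $T_\eta z\in L^2$, so both sides depend continuously on $z$ in $L^2$.
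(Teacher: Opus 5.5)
Your proposal is correct and follows the paper's proof. Like the paper, you insert \eqref{profile-inv} and reduce to the kernel $\int_{\mathbb R}\overline{\psi_\eta(x,\xi)}T_\eta\psi_\eta(x,a)\,{\rm d}x=-\frac{i}{4\mu}\bigl(\frac{\langle\xi\rangle_\mu}{\langle a\rangle_\mu}+\frac{\langle a\rangle_\mu}{\langle\xi\rangle_\mu}\bigr)\operatorname{p.v.}\operatorname{csch}\bigl(\frac{\pi(\xi-a)}{2\mu}\bigr)$, evaluated from the explicit transforms of $T_\eta$, $S_\eta^2$ and $S_\eta^2T_\eta$. Your derivation of the last one via $S_\eta^2T_\eta=-\frac{1}{2\mu}\partial_x(S_\eta^2)$ agrees with \eqref{Teseta}. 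You then split off the regular part with the same algebraic identity. Your regularization and density argument for exchanging the integrals is sound, and it supplies a justification the paper leaves implicit.
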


\begin{proof}
By \eqref{ms}, \eqref{dF},
and \eqref{profile-inv}, we get
\begin{equation}
\label{eGmods}
\begin{aligned}
\mathcal G_{\rm mod}^s(t,\xi)
={}&-i(\dot\phi+\eta\dot y)\int_{\mathbb R}e^{it(\langle\xi\rangle_\mu^2-\langle a\rangle_\mu^2)}
\left[\int_{\mathbb R}\overline{\psi_\eta(x,\xi)}T_\eta(x)
\psi_\eta(x,a){\rm d}x\right]\widetilde g(t,a){\rm d}a.
\end{aligned}
\end{equation}
Using
\eqref{gee},
\eqref{Ftt},
\eqref{Ftsq}, and
\eqref{Teseta}, we calculate
$$
\int_{\mathbb R}\overline{\psi_\eta(x,\xi)}T_\eta(x)\psi_\eta(x,a)\,{\rm d}x
=-\frac{i}{4\mu}\left(
\frac{\langle\xi\rangle_\mu}{\langle a\rangle_\mu}
+\frac{\langle a\rangle_\mu}{\langle\xi\rangle_\mu}\right)
\operatorname{p.v.}\operatorname{csch}
\left(\frac{\pi(\xi-a)}{2\mu}\right).
$$
Substituting this identity into \eqref{eGmods} and using \eqref{profile-inv} again yields
$$
\mathcal G_{\rm mod}^s(t,\xi)
=-\frac{\dot\phi+\eta\dot y}{4\mu}
\operatorname{p.v.}\int_{\mathbb R}
e^{it(\langle\xi\rangle_\mu^2-\langle a\rangle_\mu^2)}
\left(
\frac{\langle\xi\rangle_\mu}{\langle a\rangle_\mu}
+\frac{\langle a\rangle_\mu}{\langle\xi\rangle_\mu}\right)
\operatorname{csch}\left(\frac{\pi(\xi-a)}{2\mu}\right)
\widetilde g(t,a)\,{\rm d}a.
$$
Finally, \eqref{sms} follows from the identity
$$
\frac12\left(
\frac{\langle\xi\rangle_\mu}{\langle a\rangle_\mu}
+\frac{\langle a\rangle_\mu}{\langle\xi\rangle_\mu}\right)-1
=\frac{(\langle\xi\rangle_\mu-\langle a\rangle_\mu)^2}
{2\langle\xi\rangle_\mu\langle a\rangle_\mu}.
$$
\end{proof}
Hence
\begin{equation*}
\partial_t\widetilde g
=\mathcal G_2+\mathcal G_3^{\delta}+\mathcal G_3^{p.v.}+\mathcal G_3^r
+\mathcal G_{\ge4}+\mathcal G_{\rm mod}^r+\mathcal G_{\rm mod}^s.
\end{equation*}
To overcome the slow local decay caused by the threshold resonances of the linearized operator, we use the null structure of $q_{++}$ and $q_{+-}$ established in Proposition~\ref{pqfns} to remove the quadratic contribution by a normal form transformation. We therefore set
\begin{align}
\widetilde h(t,\xi)
={}&\widetilde g(t,\xi)
-\frac12\int_{\mathbb R^2}e^{it\Phi_{++}(\xi,a,b)}m_{++}(\xi,a,b)
\widetilde g(t,a)\widetilde g(t,b)\,{\rm d}a\,{\rm d}b\nonumber\\
&-\frac12\int_{\mathbb R^2}e^{it\Phi_{+-}(\xi,a,b)}m_{+-}(\xi,a,b)
\widetilde g(t,a)\overline{\widetilde g(t,b)}\,{\rm d}a\,{\rm d}b.\label{nfpr}
\end{align}
Then
\begin{equation}
\label{nfpe}
\begin{aligned}
\partial_t\widetilde h
={}&\mathcal G_3^{\delta}+\mathcal G_3^{p.v.}+\mathcal G_3^r+\mathcal G_{\ge4}
+\mathcal G_{\rm mod}^r+\mathcal G_{\rm mod}^s\\
&-\frac12\int_{\mathbb R^2}e^{it\Phi_{++}(\xi,a,b)}m_{++}(\xi,a,b)
\partial_t\!\left[\widetilde g(t,a)\widetilde g(t,b)\right]\,{\rm d}a\,{\rm d}b\\
&-\frac12\int_{\mathbb R^2}e^{it\Phi_{+-}(\xi,a,b)}m_{+-}(\xi,a,b)
\partial_t\!\left[\widetilde g(t,a)\overline{\widetilde g(t,b)}\right]\,{\rm d}a\,{\rm d}b.
\end{aligned}
\end{equation}

\subsection{The Dirac delta contribution}

We analyze the Dirac delta contribution $\mathcal G_3^\delta$ to the cubic interaction. Its leading term determines the logarithmic phase correction, while the remainder is integrable in time in $L^\infty_\xi$. 

Recall $\mathcal{G}^{\delta}_{3}(t,x)$ defined in (\ref{cc})--(\ref{cc1}). 
\begin{prop}
Let $\rho$ and $\delta_*$ be the constants fixed in (\ref{Nota}). Then, for every $t\geq1$,
$$
\mathcal{G}_3^{\delta}(t,\xi)
=
\frac{i}{4t}\langle\xi\rangle_\mu^2
|\widetilde g(t,\xi)|^2
\widetilde g(t,\xi)
+\mathcal R_{3,\delta}(t,\xi),
$$
where
$$
\|\mathcal R_{3,\delta}(t)\|_{L^\infty_\xi}
\lesssim\varepsilon_1^3t^{-1-\rho}.
$$
\end{prop}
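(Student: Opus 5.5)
The plan is to reduce $\mathcal G_3^{\delta}$ to trilinear expressions for the free Schr\"odinger flow and then apply Lemma~\ref{tril}. Substituting \eqref{cc1} into \eqref{cc}, the factor $\delta(p)$ imposes $c=a+b-\xi$. On this set the phase simplifies: since $\mu^2-\mu^2-\mu^2+\mu^2=0$, we have $\Phi_3=\xi^2-a^2-b^2+c^2$, which is exactly the free phase appearing in Lemma~\ref{tril}. The symbol $ab+\mu^2$ does not factor as a product of single-variable symbols. I will therefore split it as $ab+\mu^2 = a\cdot b+\mu^2\cdot 1$. This writes $\mathcal G_3^\delta$ as a finite sum of expressions of the form
$$
\int_{\mathbb R^2} e^{it(\xi^2-a^2-b^2+c^2)}\,\widehat{f_1}(a)\widehat{f_2}(b)\overline{\widehat{f_3}(c)}\Big|_{c=a+b-\xi}\,{\rm d}a\,{\rm d}b,
$$
each multiplied by $\mathfrak m_\mp(\xi)$. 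Here the inputs are $\widehat{f_1}(a)=a^{k_1}\mathfrak m_\pm(a)\widetilde g(t,a)$ and $\widehat{f_2}(b)=b^{k_2}\mathfrak m_\pm(b)\widetilde g(t,b)$ with $k_1=k_2\in\{0,1\}$, and $\widehat{f_3}(c)=\overline{\mathfrak m_\mp(c)}\,\widetilde g(t,c)$, where I use $\overline{\mathfrak m_\mp(c)}=\mathfrak m_\pm(c)$. As in the proof of Proposition~\ref{Qas}, such an integral equals a constant multiple, with the $2\pi$ coming from the flat Fourier normalization, of $e^{it\xi^2}\mathcal F\bigl(e^{it\partial_x^2}f_1\,e^{it\partial_x^2}f_2\,\overline{e^{it\partial_x^2}f_3}\bigr)(\xi)$.

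Next I will check the input bounds for Lemma~\ref{tril}. The multipliers $\mathfrak m_\pm$ are smooth and unimodular, and $\partial_\xi\mathfrak m_\pm$ is bounded. By \eqref{a1}--\eqref{a3} with $k\le 1$, each $f_j$ therefore satisfies $\|\widehat{f_j}\|_{L^\infty}+\|f_j\|_{L^2}\lesssim\varepsilon_1$. For the weighted norm, $\|xf_j\|_{L^2}=\|\partial_\xi\widehat{f_j}\|_{L^2}\lesssim \|\langle\xi\rangle\partial_\xi\widetilde g\|_{L^2}+\|\widetilde g\|_{L^2}\lesssim\varepsilon_1 t^{\delta}$. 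Estimate \eqref{freere} then shows that each piece equals $\frac{1}{2t}\widehat{f_1}(\xi)\widehat{f_2}(\xi)\overline{\widehat{f_3}(\xi)}$ up to an error bounded in $L^\infty_\xi$ by $\varepsilon_1^3t^{-1-\alpha+\delta}$. Since $\alpha>1/5$ and $\delta<1/2026$, this error is at most $\varepsilon_1^3 t^{-1-\rho}$ for the fixed $\rho$ in \eqref{Nota}.

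It remains to identify the leading term, where all frequencies equal $\xi$. Recombining the split gives the factor $(\xi^2+\mu^2)=\langle\xi\rangle_\mu^2$. Since $\mathfrak m_+(\xi)\mathfrak m_-(\xi)=1$, each of the two products in the bracket of \eqref{cc1} evaluates to $1$ at $a=b=c=\xi$, so the bracket contributes the factor $2$. Tracking the constants $-\tfrac i2\cdot(-\tfrac1{4\pi})\cdot 2\cdot 2\pi\cdot\tfrac1{2t}$ should then give exactly $\frac{i}{4t}\langle\xi\rangle_\mu^2|\widetilde g|^2\widetilde g$.

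I expect the main obstacle to be bookkeeping rather than analysis. First, one must verify precisely that the $\delta(p)$ integral matches the normalization in \eqref{trili}, including the sign conventions for the conjugated factor $\overline{\widetilde g(t,c)}$ and for $c$. Second, one must make sure the splitting of $ab+\mu^2$ introduces at most one power of frequency per input, so that only the $k\le 1$ cases of \eqref{a1}--\eqref{a3} are needed. If the constant fails to come out to $\frac i4$, I would recheck it against the formula for $q_3^\delta$ by testing on Gaussian profiles.
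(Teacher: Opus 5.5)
Your proposal is correct and follows the paper's proof essentially step for step. The paper also splits $ab+\mu^2$ and rewrites each piece as $2\pi e^{it\xi^2}\mathcal F(\cdots)$ with phase $2(a-\xi)(b-\xi)$. It then applies \eqref{freere} to inputs $\lambda^{k}\mathfrak m_\sigma(\lambda)\widetilde g$ controlled by \eqref{a1}--\eqref{a3}, and recombines via $\mathfrak m_+\mathfrak m_-=1$ to obtain $\frac{i}{4t}\langle\xi\rangle_\mu^2|\widetilde g|^2\widetilde g$; your constant tracking and the exponent check $-1-\alpha+\delta\le-1-\rho$ agree with this.
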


\begin{proof}
Recall $\mathfrak m_{\sigma}=\frac{\xi+i\sigma \mu}{\langle \xi\rangle_{\mu}}$ for $\sigma=\pm 1$. By \eqref{cc1}, we get
\begin{equation}
\label{dctie}
\begin{aligned}
&\mathcal{G}_{3}^\delta(t,\xi)\\
=&\frac{i}{8\pi}\sum_{\sigma=\pm1}\mathfrak m_{-\sigma}(\xi)
\int_{\mathbb R^2}e^{2it(a-\xi)(b-\xi)}
a\mathfrak m_\sigma(a)\widetilde g(t,a)
b\mathfrak m_\sigma(b)\widetilde g(t,b)
\overline{\mathfrak m_\sigma(a+b-\xi)\widetilde g(t,a+b-\xi)}
\,{\rm d}a\,{\rm d}b\\
&+\frac{i\mu^2}{8\pi}\sum_{\sigma=\pm1}\mathfrak m_{-\sigma}(\xi)
\int_{\mathbb R^2}e^{2it(a-\xi)(b-\xi)}
\mathfrak m_\sigma(a)\widetilde g(t,a)
\mathfrak m_\sigma(b)\widetilde g(t,b)
\overline{\mathfrak m_\sigma(a+b-\xi)\widetilde g(t,a+b-\xi)}
\,{\rm d}a\,{\rm d}b.
\end{aligned}
\end{equation}

For the first integral of \eqref{dctie}, we find  
\begin{align*}
&\int_{\mathbb R^2}e^{2it(a-\xi)(b-\xi)}
a\mathfrak m_\sigma(a)\widetilde g(t,a)
b\mathfrak m_\sigma(b)\widetilde g(t,b)
\overline{\mathfrak m_\sigma(a+b-\xi)\widetilde g(t,a+b-\xi)}
\,{\rm d}a\,{\rm d}b\\
=&2\pi e^{it\xi^2}
\mathcal F\Bigg[
e^{it\partial_x^2}
\mathcal F^{-1}\bigl(
\lambda\mathfrak m_\sigma(\lambda)\widetilde g(t,\lambda)
\bigr)
e^{it\partial_x^2}
\mathcal F^{-1}\bigl(
\lambda\mathfrak m_\sigma(\lambda)\widetilde g(t,\lambda)
\bigr)\overline{
e^{it\partial_x^2}
\mathcal F^{-1}\bigl(
\mathfrak m_\sigma(\lambda)\widetilde g(t,\lambda)
\bigr)}
\Bigg](\xi).
\end{align*}
Letting $f_1=\mathcal F^{-1}\bigl(\lambda\mathfrak m_\sigma(\lambda)\widetilde g(t,\lambda)\bigr)$, $f_2=\mathcal F^{-1}\bigl(\mathfrak m_\sigma(\lambda)\widetilde g(t,\lambda)\bigr)$, applying \eqref{freere}, \eqref{a1}--\eqref{a3}, and using the definition of $\mathfrak m_\sigma$, one obtains  
\begin{align*}
&\Bigg\|
\int_{\mathbb R^2}e^{2it(a-\xi)(b-\xi)}
a\mathfrak m_\sigma(a)\widetilde g(t,a)
b\mathfrak m_\sigma(b)\widetilde g(t,b)
\overline{\mathfrak m_\sigma(a+b-\xi)\widetilde g(t,a+b-\xi)}
\,{\rm d}a\,{\rm d}b-\frac{\pi}{t}\widehat f_1(\xi)^2\overline{\widehat f_2(\xi)}\Bigg\|_{L^\infty_\xi}\\
\lesssim&
t^{-1-\alpha}
\Bigg\{
2\left(\|f_1\|_{L^2_x}+\|xf_1\|_{L^2_x}\right)
\left[\|\widehat f_1\|_{L^\infty_\xi}
+t^{-\alpha}\left(\|f_1\|_{L^2_x}+\|xf_1\|_{L^2_x}\right)\right]\\
\times&
\left[\|\widehat f_2\|_{L^\infty_\xi}
+t^{-\alpha}\left(\|f_2\|_{L^2_x}+\|xf_2\|_{L^2_x}\right)\right]
+
\left(\|f_2\|_{L^2_x}+\|xf_2\|_{L^2_x}\right)
\left[\|\widehat f_1\|_{L^\infty_\xi}
+t^{-\alpha}\left(\|f_1\|_{L^2_x}+\|xf_1\|_{L^2_x}\right)\right]^2
\Bigg\}\\
\lesssim&\varepsilon_1^3 t^{-1-\rho}.
\end{align*}
Therefore, there holds 
\begin{equation}\label{firin}
\begin{aligned}
&\Bigg\|
\int_{\mathbb R^2}e^{2it(a-\xi)(b-\xi)}a\mathfrak m_\sigma(a)\widetilde g(t,a)b\mathfrak m_\sigma(b)\widetilde g(t,b)\overline{\mathfrak m_\sigma(a+b-\xi)\widetilde g(t,a+b-\xi)}{\rm d}a{\rm d}b\\
&\qquad\qquad\qquad\qquad\qquad\qquad\qquad\qquad-\frac{\pi}{t}\bigl[\xi\mathfrak m_\sigma(\xi)\widetilde g(t,\xi)\bigr]^2\overline{\mathfrak m_\sigma(\xi)\widetilde g(t,\xi)}\Bigg\|_{L^\infty_\xi}\lesssim \varepsilon_1^3t^{-1-\rho}.
\end{aligned}
\end{equation}
Similarly, for the second term of \eqref{dctie}, we find
\begin{equation}\label{sw4}
\begin{aligned}
&\Bigg\|
\int_{\mathbb R^2}e^{2it(a-\xi)(b-\xi)}\mathfrak m_\sigma(a)\widetilde g(t,a)\mathfrak m_\sigma(b)\widetilde g(t,b)\overline{\mathfrak m_\sigma(a+b-\xi)\widetilde g(t,a+b-\xi)}{\rm d}a{\rm d}b\\
&\qquad\qquad\qquad\qquad\qquad\qquad\qquad\qquad-\frac{\pi}{t}\bigl[\mathfrak m_\sigma(\xi)\widetilde g(t,\xi)\bigr]^2
\overline{\mathfrak m_\sigma(\xi)\widetilde g(t,\xi)}\Bigg\|_{L^\infty_\xi}\lesssim\varepsilon_1^3t^{-1-\rho}.
\end{aligned}
\end{equation}
Combining $\overline{\mathfrak m_\sigma(\xi)}=\mathfrak m_{-\sigma}(\xi)$, $\mathfrak m_\sigma(\xi)\mathfrak m_{-\sigma}(\xi)=1$ and \eqref{dctie}, \eqref{firin} and \eqref{sw4}, we conclude
\begin{align*}
\mathcal{G}_{3}^\delta(t,\xi)=&\frac{i}{8t}\sum_{\sigma=\pm1}\mathfrak m_{-\sigma}(\xi)\bigl[\xi\mathfrak m_\sigma(\xi)\widetilde g(t,\xi)\bigr]^2\overline{\mathfrak m_\sigma(\xi)\widetilde g(t,\xi)}\\
&\quad +\frac{i\mu^2}{8t}\sum_{\sigma=\pm1}\mathfrak m_{-\sigma}(\xi)\bigl[\mathfrak m_\sigma(\xi)\widetilde g(t,\xi)\bigr]^2\overline{\mathfrak m_\sigma(\xi)\widetilde g(t,\xi)}+\mathcal R_{3,\delta}(t,\xi)\\
=&\frac{i}{4t}\langle\xi\rangle_\mu^2|\widetilde g(t,\xi)|^2\widetilde g(t,\xi)+\mathcal R_{3,\delta}(t,\xi).
\end{align*}
where $\|\mathcal R_{3,\delta}(t)\|_{L^\infty_\xi}\lesssim\varepsilon_1^3t^{-1-\rho}$.
Thus the desired result follows.
\end{proof}

\subsection{The principal value contribution}

The aim of this subsection is to prove Proposition~\ref{epvc}, the key observation is that the factor coming from the transmission coefficients splits into two parts: a regular part, which cancels the singularity of the p.v. kernel, and a part that vanishes on the space resonant set. 

We begin with the following lemma.
\begin{lem}
\label{Deipd}
For every $0\leq j\leq N$, we have
$$
\left\|\partial_x^jD_\eta^{-1}z(t)\right\|_{L^\infty}
\lesssim\varepsilon_1\langle t\rangle^{-1/2},
\quad t\ge 0.
$$
\end{lem}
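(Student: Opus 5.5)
The plan is to avoid inverting $D_\eta$ directly on $z$. Instead I will use the representation \eqref{loczinv} together with the identity $D_\eta^{-1}P_e=H_\eta^{-1}D_\eta^*$ from the Remark after Lemma~\ref{Detapro}. The orthogonality condition \eqref{zorthog} gives $P_ez=z$. Applying $D_\eta^{-1}$ to \eqref{loczinv} then yields
\begin{equation*}
D_\eta^{-1}z=-2H_\eta^{-1}\Big[e^{-i\chi}\Big(1+\tfrac14|z|^2\Big)Q\Big]-\frac\mu2H_\eta^{-1}\big(S_\eta z^2\big).
\end{equation*}
Since $D_\eta D_\eta^{-1}=I$ on $S_\eta^\perp$ and $\ker D_\eta=\{0\}$, this is justified as an identity in $H^1$. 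Equivalently, one can check it by applying $D_\eta$ to both sides and using $D_\eta H_\eta^{-1}$ as in \eqref{loczinv}.

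The main step is to bound the right-hand side in $W^{N,\infty}$. By the Green function representation \eqref{Green}, $H_\eta^{-1}$ is convolution with $\frac1{2\mu}e^{-\mu|\cdot|}$. This kernel is in $L^1$, so $\|H_\eta^{-1}f\|_{L^\infty}\lesssim\|f\|_{L^\infty}$. Moreover, $H_\eta^{-1}$ gains two derivatives: as in the proof of Lemma~\ref{Deliest}, $\partial_x^rH_\eta^{-1}f=\mu^2\partial_x^{r-2}H_\eta^{-1}f-\partial_x^{r-2}f$. Hence
\begin{equation*}
\|H_\eta^{-1}f\|_{W^{m+2,\infty}}\lesssim\|f\|_{W^{m,\infty}}.
\end{equation*}
It therefore suffices to bound
$$F:=e^{-i\chi}(1+\tfrac14|z|^2)Q+\tfrac\mu4S_\eta z^2$$
in $W^{N-2,\infty}$ by $\varepsilon_1\langle t\rangle^{-1/2}$. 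For the second term, I use \eqref{zinfdecay} for $0\le j\le N-1$ together with the boundedness of all derivatives of $S_\eta$, which gives $\|S_\eta z^2\|_{W^{N-1,\infty}}\lesssim\varepsilon_1^2\langle t\rangle^{-1}$. For the first term, Remark~\ref{HkQ} gives $\|\partial_x^kQ\|_{L^\infty}\lesssim\varepsilon_1\langle t\rangle^{-1/2}$ for $k\le N-2$. The gauge factor $e^{-i\chi}(1+\frac14|z|^2)$ is bounded in $W^{N-2,\infty}$ by \eqref{Hasi}, extended to all orders $m\le N-2$ via \eqref{chide3}. That extension follows from \eqref{xchi2}, which expresses $\partial_x\chi$ through $z,\partial_xz$, combined with \eqref{zinfdecay} up to order $N-1$. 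The Leibniz rule then gives $\|F\|_{W^{N-2,\infty}}\lesssim\varepsilon_1\langle t\rangle^{-1/2}$, and hence $\|D_\eta^{-1}z\|_{W^{N,\infty}}\lesssim\varepsilon_1\langle t\rangle^{-1/2}$.

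The estimate is claimed for all $t\ge0$. This is fine because the bootstrap closed with $T_2=\infty$, so all the cited bounds hold globally.

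The only delicate point is the derivative count at top order $j=N$. If \eqref{chide3} or Remark~\ref{HkQ} turn out to supply only $N-3$ derivatives of $F$, I would recover the missing order differently. The idea is to use $\partial_x^2H_\eta^{-1}f=\mu^2H_\eta^{-1}f-f$ only once at top order, and to handle the remaining terms via the relation $D_\eta(D_\eta^{-1}z)=z$. This reads $\partial_x(D_\eta^{-1}z)=z+\mu T_\eta D_\eta^{-1}z$, so the $(j+1)$-st derivative of $D_\eta^{-1}z$ is controlled by $\partial_x^jz$ (available up to $j=N-1$ by \eqref{zinfdecay}) plus lower derivatives of $D_\eta^{-1}z$. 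This induction reduces everything to the case $j=0$, which only needs $\|F\|_{L^\infty}\lesssim\varepsilon_1\langle t\rangle^{-1/2}$. I expect this bootstrapping via $\partial_x=D_\eta+\mu T_\eta$ to be the cleanest route.
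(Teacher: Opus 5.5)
Your argument is correct, but it is organized differently from the paper's proof.

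\textbf{The paper's route.} The paper does not pass through $H_\eta^{-1}$ or $Q$ here. It solves $D_\eta w=z$ with the integrating factor $S_\eta$:
$D_\eta^{-1}z=S_\eta^{-1}\int_{-\infty}^xS_\eta z\,{\rm d}y=-S_\eta^{-1}\int_x^\infty S_\eta z\,{\rm d}y$.
The equality of the two forms is where the orthogonality \eqref{zorthog} enters. Using the first expression for $x\le0$ and the second for $x\ge0$ gives $\|D_\eta^{-1}z\|_{L^\infty}\lesssim\|z\|_{L^\infty}$. The paper then inducts on $j$ via $\partial_xD_\eta^{-1}z=z+\mu T_\eta D_\eta^{-1}z$, which is exactly your fallback.

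\textbf{Your route.} You instead use $D_\eta^{-1}P_e=H_\eta^{-1}D_\eta^*$ and the two-derivative gain of the Green's function \eqref{Green} on $W^{m,\infty}$. All orders then come out at once, and the orthogonality enters only through $P_ez=z$.

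\textbf{Two remarks.} First, the detour through \eqref{loczinv} and $Q$ is not needed. Since $D_\eta^*z=-\partial_xz-\mu T_\eta z$, it is bounded in $W^{N-2,\infty}$ by $\varepsilon_1\langle t\rangle^{-1/2}$ directly from \eqref{zinfdecay}. Second, your worry about the top order $j=N$ is unfounded. $Q$ and the gauge factor are controlled in $W^{N-2,\infty}$ by Remark~\ref{HkQ} and \eqref{chide3}, which is exactly what $H_\eta^{-1}:W^{N-2,\infty}\to W^{N,\infty}$ requires. So your main route closes without the fallback.

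\textbf{What each buys.} Both routes use the same top-order input, namely $\partial_x^{N-1}z\in L^\infty$. The paper's base case has the mild advantage of being a derivative-free $L^\infty$ bound for $D_\eta^{-1}$ on $S_\eta^\perp$. That is the form reused later, e.g.\ $\|D_\eta^{-1}P_eG_{\rm mod}\|_{W^{1,\infty}}\lesssim\|G_{\rm mod}\|_{L^\infty}$. Your Green's-function approach can deliver the same bound: write $H_\eta^{-1}D_\eta^*=-\partial_xH_\eta^{-1}-\mu H_\eta^{-1}(T_\eta\,\cdot\,)$ and put the derivative on the integrable kernel.
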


\begin{proof}
We prove the conclusion by induction. The case $j=0$ is straightforward. Indeed, by the orthogonality condition and standard ODE theory, we obtain
$$D_\eta^{-1}z(t,x)=\frac{1}{S_\eta(x)}\int_{-\infty}^x S_\eta(y)z(t,y)\,{\rm d}y=-\frac{1}{S_\eta(x)}\int_x^\infty S_\eta(y)z(t,y)\,{\rm d}y.$$
Using the first representation for $x\leq0$ and the second for $x\geq0$, we have
\begin{align*}
\left\|D_\eta^{-1}z(t)\right\|_{L^\infty}
&\le\|z(t)\|_{L^\infty}\max\left\{\sup_{x\leq0}\frac{1}{S_\eta(x)}\int_{-\infty}^xS_\eta(y){\rm d}y,\sup_{x\geq0}\frac{1}{S_\eta(x)}\int_x^\infty S_\eta(y){\rm d}y\right\}\\
&\lesssim\|z(t)\|_{L^\infty}.
\end{align*}
Now assume that the conclusion holds for every integer $k\in[0,j]$, where $0\leq j\leq N-1$. Differentiating $\partial_xD_\eta^{-1}z=z+\mu T_\eta D_\eta^{-1}z$ gives
$$\partial_x^{j+1}D_\eta^{-1}z=\partial_x^jz+\mu\sum_{k=0}^jC_j^k(\partial_x^{j-k}T_\eta)\partial_x^kD_\eta^{-1}z.$$
Proposition~\ref{z-decay} and the induction hypothesis then imply
\begin{align*}
\left\|\partial_x^{j+1}D_\eta^{-1}z(t)\right\|_{L^\infty}
\lesssim
\left\|\partial_x^jz(t)\right\|_{L^\infty}
+
\sum_{k=0}^j
\left\|\partial_x^kD_\eta^{-1}z(t)\right\|_{L^\infty}\lesssim
\varepsilon_1\langle t\rangle^{-1/2}.
\end{align*}

Thus the conclusion also holds for $j+1$. By induction, the proof is complete.
\end{proof}

Recall the definition of $\mathcal{G}_3^{p.v.}$ in (\ref{cc})--(\ref{cc2}). 
\begin{prop}
\label{epvc}

Let $\rho$ be the constants fixed in (\ref{Nota}). Then, for every $t\geq1$, we have
$$
\|\mathcal{G}_3^{p.v.}\|_{L^\infty_\xi}\lesssim\varepsilon_1^3t^{-1-\rho}.
$$
\end{prop}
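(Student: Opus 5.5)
Starting from \eqref{cc} and \eqref{cc2}, I will use the algebraic identity stated in the introduction,
\begin{equation*}
1-\overline{t_\eta(\xi)}t_\eta(a)t_\eta(b)\overline{t_\eta(c)}=\partial_b\Phi_3\left(\frac{i\mu}{(\xi+i\mu)(a-i\mu)}-\frac{i\mu\overline{t_\eta(\xi)}t_\eta(a)}{(b-i\mu)(c+i\mu)}\right)-\frac{2i\mu p}{(\xi+i\mu)(a-i\mu)},
\end{equation*}
where $c=a+b-\xi+p$ and $\partial_b$ is taken at fixed $(\xi,a,p)$, so that $\partial_b\Phi_3=-2b+2c=2(a-\xi+p)$. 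It can be verified directly from $t_\eta(\xi)=\frac{\xi+i\mu}{\xi-i\mu}$. I change variables from $(a,b,c)$ to $(a,b,p)$ and split $\mathcal G_3^{p.v.}=\mathcal G_{3,1}^{p.v.}+\mathcal G_{3,2}^{p.v.}$ according to the two pieces of this identity.

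\emph{The $p$-term.} The factor $p\operatorname{p.v.}\operatorname{csch}(\frac{\pi p}{2\mu})$ is a bounded, exponentially decaying, smooth function of $p$, so the singularity disappears. The resulting trilinear expression is
\begin{equation*}
\int e^{it\Phi_3}\kappa(p)\,M(\xi,a,b,c)\,\widetilde g(a)\widetilde g(b)\overline{\widetilde g(c)}\,{\rm d}a\,{\rm d}b\,{\rm d}p,
\end{equation*}
with $\kappa\in L^1$ and $M$ a bounded symbol. Note that $(ab+\mu^2)$ times $\mathfrak m$'s times $\frac{1}{(\xi+i\mu)(a-i\mu)}$ is bounded up to a factor $b$, which is absorbed by placing a weight $\xi^k$ on $\widetilde g(b)$. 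For each fixed $p$, the inner $(a,b)$ integral has the same form as the localized cubic terms. I write it in physical space as $\int \overline{\psi}\,e^{ipx}$ type expressions, that is, as a Fourier transform of a product of three linear Schr\"odinger waves. Equivalently, it is a product of three functions $e^{-itH_\eta}\mathcal F^{-1}(\cdots)$ tested against a kernel whose $x$-transform is $\kappa$. This gives an $L^\infty_\xi$ bound by $\|\kappa\|_{L^1_p}$ times $\|\cdot\|_{L^\infty_x}$ of two factors times the $L^1$ of one localized factor. Lemma~\ref{Stapha} with \eqref{a1}--\eqref{a3} gives $t^{-1/2}$ for each factor, so a naive bound yields $t^{-1}$ only. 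To gain, I will move the spatial localization (here coming from $\widehat\kappa$, essentially $S_\eta^2$-type weights) onto the product. Then the third factor is controlled by $\|S_\eta\, e^{-itH}f\|$, and the improved local decay $t^{-1/2}\cdot t^{-1/2}\cdot t^{-1/2+\delta}$ of localized waves applies, with a loss of $t^{\delta}$ from $\|\partial_\xi\widetilde g\|_{L^2}$. This gives $\varepsilon_1^3t^{-3/2+\delta}$, which is acceptable.

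\emph{The $\partial_b\Phi_3$-term.} I write $e^{it\Phi_3}\partial_b\Phi_3=\frac{1}{it}\partial_b e^{it\Phi_3}$ and integrate by parts in $b$ at fixed $(\xi,a,p)$. This gains $t^{-1}$. The derivative lands either on the smooth symbols, which are fine since $t_\eta$ and $\mathfrak m_\pm$ are smooth with bounded derivatives, or on $\widetilde g(b)$ or $\overline{\widetilde g(c)}$ with $c=a+b-\xi+p$. The remaining integral still carries $\operatorname{p.v.}\operatorname{csch}(\frac{\pi p}{2\mu})$. I treat it as a Hilbert-type transform: its $x$-side kernel is $T_\eta$-like, bounded but not decaying, so I bound the trilinear form in $L^\infty_\xi$ by $\|\cdot\|_{L^1_x}$ of a product, using Lemma~\ref{Deipd} and the $L^2$ bounds. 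Concretely, one factor of the product is $e^{-itH_\eta}\mathcal F^{-1}\partial_\xi\widetilde g$, which is in $L^2$ with norm $\lesssim\varepsilon_1 t^{\delta}$. Another factor is in $L^2$ with norm $\lesssim\varepsilon_1$, and the third is in $L^\infty$ with norm $\lesssim\varepsilon_1t^{-1/2}$. Altogether this gives $t^{-1}\cdot t^{\delta}\cdot t^{-1/2}\varepsilon_1^3$, which is again integrable.

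\emph{Main obstacle.} The main difficulty is making the principal value in $p$ rigorous together with the integration by parts. The p.v.\ kernel is not absolutely integrable, so I would first decompose $\operatorname{p.v.}\operatorname{csch}(\frac{\pi p}{2\mu})=\frac{2\mu}{\pi}\operatorname{p.v.}\frac1p+(\text{smooth, decaying})$. The smooth part is handled as in the $p$-term. The $\operatorname{p.v.}\frac1p$ part corresponds in physical space to multiplication by $\operatorname{sgn}(x)$, which is bounded on $L^\infty_x$ products but not directly in $L^1\to L^\infty_\xi$. I therefore expect to use $L^2_\xi$ boundedness of the Hilbert transform combined with one-dimensional Sobolev embedding $\|F\|_{L^\infty_\xi}\lesssim\|F\|_{L^2}^{1/2}\|\partial_\xi F\|_{L^2}^{1/2}$, accepting a loss of $t^{C\delta}$, which the margin $t^{-3/2}$ absorbs since $\rho<1/2026$.
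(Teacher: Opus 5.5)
Your proposal is essentially the paper's proof: it uses the same transmission-coefficient identity, then integrates by parts in $b$ at fixed $(\xi,a,p)$ for the $\partial_b\Phi_3$ piece, using \eqref{Ftt} to bound the result by $\frac1t$ times the $L^1_x$ norm of $T_\eta$ times three waves (two in $L^2$, one in $L^\infty$ via Lemma~\ref{Deipd}); for the $p$-piece it uses the localization $p\operatorname{csch}(\frac{\pi p}{2\mu})\propto\widehat{S_\eta^2}$ from \eqref{Ftsq} with three $L^\infty_x$ factors each $\lesssim\varepsilon_1t^{-1/2}$, so no improved local decay is needed. Your ``main obstacle'' is not one: $\operatorname{p.v.}\operatorname{csch}(\frac{\pi p}{2\mu})$ is a constant multiple of $\widehat{T_\eta}$ (and $\operatorname{p.v.}\frac1p$ of $\widehat{\operatorname{sgn}}$), and multiplying by a bounded function is harmless for the $L^1_x\to L^\infty_\xi$ bound you already used, so no Hilbert-transform or Sobolev-in-$\xi$ argument is required.
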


\begin{proof}
Recall that  $t_\eta(r)=\frac{\mathfrak m_+(r)}{\mathfrak m_-(r)}$ and the formula (\ref{cc2}) 
\begin{equation*}
q_{3}^{p.v.}(\xi,a,b,c)
=\frac{ab+\mu^2}{8\pi i\mu}\mathfrak m_+(\xi)\mathfrak m_-(a)\mathfrak m_-(b)\mathfrak m_+(c)\left(1-\overline{t_\eta(\xi)}t_\eta(a)t_\eta(b)\overline{t_\eta(c)}\right)\operatorname{p.v.}\operatorname{csch}\left(\frac{\pi p}{2\mu}\right).
\end{equation*}
Since $p=\xi-a-b+c$, we write $c=a+b-\xi+p$ and take $\partial_b$ at fixed $(\xi,a,p)$ throughout the following decomposition and integration by parts. Thus $\partial_b\Phi_3(\xi,a,b,a+b-\xi+p)=2(c-b)=2(a-\xi+p)$ and $\partial_b p=0$. Direct calculations show 
\begin{align*}
&1-\overline{t_\eta(\xi)}t_\eta(a)t_\eta(b)\overline{t_\eta(c)}\\
=&\Bigl(1-\overline{t_\eta(\xi)}t_\eta(a)\Bigr)+\overline{t_\eta(\xi)}t_\eta(a)\Bigl(1-t_\eta(b)\overline{t_\eta(c)}\Bigr)\\
=&\frac{2i\mu(a-\xi)}{(\xi+i\mu)(a-i\mu)}
+\frac{2i\mu\overline{t_\eta(\xi)}t_\eta(a)(b-c)}
{(b-i\mu)(c+i\mu)}\\
=&\partial_b\Phi_3
\left(\frac{i\mu}{(\xi+i\mu)(a-i\mu)}
-\frac{i\mu\overline{t_\eta(\xi)}t_\eta(a)}
{(b-i\mu)(c+i\mu)}\right)
-\frac{2i\mu p}{(\xi+i\mu)(a-i\mu)}.
\end{align*}
The definitions of $\mathfrak m_\pm$ and $t_\eta$ then yield 
\begin{align*}
q_3^{p.v.}(\xi,a,b,c)
={}&\frac{ab+\mu^2}{8\pi}\partial_b\Phi_3
\frac{\mathfrak m_+(\xi)\mathfrak m_-(a)\mathfrak m_-(b)\mathfrak m_+(c)}
{(\xi+i\mu)(a-i\mu)}
\operatorname{p.v.}\operatorname{csch}\left(\frac{\pi p}{2\mu}\right)\\
&-\frac{ab+\mu^2}{8\pi}\partial_b\Phi_3
\frac{\mathfrak m_+(\xi)\mathfrak m_-(a)\mathfrak m_-(b)\mathfrak m_+(c)
\overline{t_\eta(\xi)}t_\eta(a)}
{(b-i\mu)(c+i\mu)}
\operatorname{p.v.}\operatorname{csch}\left(\frac{\pi p}{2\mu}\right)\\
&-\frac{ab+\mu^2}{4\pi}
\frac{\mathfrak m_+(\xi)\mathfrak m_-(a)\mathfrak m_-(b)\mathfrak m_+(c)}
{(\xi+i\mu)(a-i\mu)}
p\operatorname{p.v.}\operatorname{csch}\left(\frac{\pi p}{2\mu}\right)\\
={}&\frac{ab+\mu^2}{8\pi}\partial_b\Phi_3
\left(
\frac{\mathfrak m_-(b)\mathfrak m_+(c)}
{\langle\xi\rangle_\mu\langle a\rangle_\mu}
-\frac{\mathfrak m_-(\xi)\mathfrak m_+(a)}
{\langle b\rangle_\mu\langle c\rangle_\mu}
\right)
\operatorname{p.v.}\operatorname{csch}\left(\frac{\pi p}{2\mu}\right)\\
&-\frac{ab+\mu^2}{4\pi}
\frac{\mathfrak m_-(b)\mathfrak m_+(c)}
{\langle\xi\rangle_\mu\langle a\rangle_\mu}
p\operatorname{p.v.}\operatorname{csch}\left(\frac{\pi p}{2\mu}\right).
\end{align*}

Therefore, we have 
\begin{align*}
\mathcal{G}_{3}^{p.v.}(t,\xi)
={}&-\frac{i}{16\pi}\int_{\mathbb R^3}e^{it\Phi_3}\partial_b\Phi_3(ab+\mu^2)
\left(
\frac{\mathfrak m_-(b)\mathfrak m_+(c)}
{\langle\xi\rangle_\mu\langle a\rangle_\mu}
-\frac{\mathfrak m_-(\xi)\mathfrak m_+(a)}
{\langle b\rangle_\mu\langle c\rangle_\mu}
\right)\nonumber\\
&\cdot \operatorname{p.v.}\operatorname{csch}\left(\frac{\pi p}{2\mu}\right)
\widetilde g(t,a)\widetilde g(t,b)\overline{\widetilde g(t,c)}
\,{\rm d}(a,b,c)\nonumber\\
&+\frac{i}{8\pi}\int_{\mathbb R^3}e^{it\Phi_3}
\frac{ab+\mu^2}{\langle\xi\rangle_\mu\langle a\rangle_\mu}
\mathfrak m_-(b)\mathfrak m_+(c)p\operatorname{p.v.}\operatorname{csch}\left(\frac{\pi p}{2\mu}\right)\nonumber\\
&\cdot \widetilde g(t,a)\widetilde g(t,b)\overline{\widetilde g(t,c)}
{\rm d}(a,b,c),
\end{align*}
which in the variables $(a,b,p)$ further writes as 
\begin{align}
\mathcal{G}_{3}^{p.v.}(t,\xi)=&-\frac{i}{16\pi}\int_{\mathbb R^3}e^{it\Phi_3}\partial_b\Phi_3(ab+\mu^2)
\left(
\frac{\mathfrak m_-(b)\mathfrak m_+(c)}
{\langle\xi\rangle_\mu\langle a\rangle_\mu}
-\frac{\mathfrak m_-(\xi)\mathfrak m_+(a)}
{\langle b\rangle_\mu\langle c\rangle_\mu}
\right)\nonumber\\
&\operatorname{p.v.}\operatorname{csch}\left(\frac{\pi p}{2\mu}\right)
 \widetilde g(t,a)\widetilde g(t,b)\overline{\widetilde g(t,c)}
\,{\rm d}(a,b,p)\nonumber\\
&+\frac{i}{8\pi}\int_{\mathbb R^3}e^{it\Phi_3}
\frac{ab+\mu^2}{\langle\xi\rangle_\mu\langle a\rangle_\mu}p
\mathfrak m_-(b)\mathfrak m_+(c)
\operatorname{p.v.}\operatorname{csch}\left(\frac{\pi p}{2\mu}\right)\nonumber\\
&\cdot \widetilde g(t,a)\widetilde g(t,b)\overline{\widetilde g(t,c)}
\,{\rm d}(a,b,p).\label{pvcti}
\end{align}
Integrating by parts in $b$ in the first term of
\eqref{pvcti}, we obtain
\begin{align*}
&\frac1{16\pi t}\int_{\mathbb R^3}
e^{it\Phi_3}
\operatorname{p.v.}\operatorname{csch}
\left(\frac{\pi p}{2\mu}\right)
(ab+\mu^2)
\left(
\frac{\mathfrak m_-(b)\mathfrak m_+(c)}
{\langle\xi\rangle_\mu\langle a\rangle_\mu}
-\frac{\mathfrak m_-(\xi)\mathfrak m_+(a)}
{\langle b\rangle_\mu\langle c\rangle_\mu}
\right)\\
&\qquad\qquad\qquad\qquad\cdot
\widetilde g(t,a)\partial_b\widetilde g(t,b)
\overline{\widetilde g(t,c)}
{\rm d}(a,b,p) 
+\{\text{similar and easier terms}\}.
\end{align*}
Using \eqref{Ftt}, Plancherel's identity, \eqref{a1}--\eqref{a3}, Corollary~\ref{cdfi} and Lemma~\ref{Deipd}, we get
\begin{equation}\label{pv3es}
\begin{aligned}
&\Bigg\|\frac1t\int_{\mathbb R^3}e^{it\Phi_3}\operatorname{p.v.}\operatorname{csch}\left(\frac{\pi p}{2\mu}\right)(ab+\mu^2)\frac{\mathfrak{m}_-(b)\mathfrak{m}_+(c)}{\langle\xi\rangle_\mu\langle a\rangle_\mu}\widetilde g(t,a)\partial_b\widetilde g(t,b)\overline{\widetilde g(t,c)}{\rm d}(a,b,p)\Bigg\|_{L^\infty_\xi}\\
\lesssim&\frac1t
\Bigg\|T_\eta\,
\mathcal F^{-1}\left(\frac{a}{\langle a\rangle_\mu}e^{-it\langle a\rangle_\mu^2}\widetilde g(t,a)\right)
\mathcal F^{-1}\left(b\mathfrak m_-(b)e^{-it\langle b\rangle_\mu^2}\partial_b\widetilde g(t,b)\right)\\
&\qquad\qquad\qquad\qquad\qquad\qquad\qquad\cdot
\overline{\mathcal F^{-1}\left(\mathfrak m_-(c)e^{-it\langle c\rangle_\mu^2}\widetilde g(t,c)\right)}
\Bigg\|_{L^1_x}+\{\text{easier term}\}\\
\lesssim{}&\frac1t
\Bigl\|\frac{a}{\langle a\rangle_\mu}\widetilde g(t,a)\Bigr\|_{L^2_a}
\Bigl\|b\partial_b\widetilde g(t,b)\Bigr\|_{L^2_b}
\Bigl\|\mathcal F^{-1}\left(\mathfrak m_-(c)e^{-it\langle c\rangle_\mu^2}\widetilde g(t,c)\right)\Bigr\|_{L^\infty_x}\\
\lesssim{}&\frac1t\|\widetilde g(t)\|_{L^2_\xi}\left(\|\partial_\xi\widetilde g(t)\|_{L^2_\xi}+\|\xi\partial_\xi\widetilde g(t)\|_{L^2_\xi}\right)\|D_\eta^{-1}z(t)\|_{W^{1,\infty}_x}\\
\lesssim{}&\varepsilon_1^3t^{-3/2+\delta}.
\end{aligned}
\end{equation}
For the second term of \eqref{pvcti}, the same argument as in \eqref{pv3es} gives
\begin{align*}
&\Bigg\|
\frac{i}{8\pi}\int_{\mathbb R^3}e^{it\Phi_3}
\frac{ab+\mu^2}{\langle\xi\rangle_\mu\langle a\rangle_\mu}
\mathfrak m_-(b)\mathfrak m_+(c)
p\operatorname{p.v.}\operatorname{csch}\left(\frac{\pi p}{2\mu}\right)
\widetilde g(t,a)\widetilde g(t,b)\overline{\widetilde g(t,c)}
\,{\rm d}(a,b,p)
\Bigg\|_{L^\infty_\xi}\\
\lesssim&
\Bigg\|
\frac{1}{\langle\xi\rangle_\mu}
\mathcal F\Bigg[
S_\eta^2\,
\mathcal F^{-1}\left(a\widehat{D_\eta^{-1}z}(t,a)\right)\,
\mathcal F^{-1}\left(b\langle b\rangle_\mu\mathfrak m_-(b)\widehat{D_\eta^{-1}z}(t,b)\right)
\overline{\mathcal F^{-1}\left(\langle c\rangle_\mu\mathfrak m_-(c)\widehat{D_\eta^{-1}z}(t,c)\right)}
\Bigg]
\Bigg\|_{L^\infty_\xi}\\
&+\{easier\ term\}\\
\lesssim&
\left\|\mathcal F^{-1}\left(a\widehat{D_\eta^{-1}z}(t,a)\right)\right\|_{L^\infty_x}
\left\|\mathcal F^{-1}\left(b\langle b\rangle_\mu\mathfrak m_-(b)\widehat{D_\eta^{-1}z}(t,b)\right)\right\|_{L^\infty_x}
\left\|\mathcal F^{-1}\left(\langle c\rangle_\mu\mathfrak m_-(c)\widehat{D_\eta^{-1}z}(t,c)\right)\right\|_{L^\infty_x}\\
\lesssim&
\|D_\eta^{-1}z(t)\|_{W^{1,\infty}_x}
\|D_\eta^{-1}z(t)\|_{W^{2,\infty}_x}
\|D_\eta^{-1}z(t)\|_{W^{1,\infty}_x}\\
\lesssim&
\varepsilon_1^3t^{-3/2}.
\end{align*}
\end{proof}

\subsection{Regular cubic terms}
We estimate the regular part of the original cubic interaction and the cubic terms generated by the quadratic normal form. We show that both contributions decay like \(\langle t\rangle^{-3/2}\) in \(L^\infty_\xi\), and hence belong to the integrable remainder in the profile equation.

Recall the definition of $\mathcal{G}^{r}_3$ in (\ref{cc})--(\ref{cc3}). And let 
\begin{align*}
\mathcal{G}_{3}^{nf}=-&\frac12\int_{\mathbb R^2}e^{it\Phi_{++}(\xi,a,b)}m_{++}(\xi,a,b)\left[\mathcal{G}_2(t,a)\widetilde g(t,b)+\widetilde g(t,a)\mathcal{G}_2(t,b)\right]{\rm d}a{\rm d}b\\
-&\frac12\int_{\mathbb R^2}e^{it\Phi_{+-}(\xi,a,b)}m_{+-}(\xi,a,b)\left[\mathcal{G}_2(t,a)\overline{\widetilde g(t,b)}+\widetilde g(t,a)\overline{\mathcal{G}_2(t,b)}\right]{\rm d}a{\rm d}b,
\end{align*}
where $\mathcal{G}_2$ is defined in (\ref{qc})--(\ref{qc1}), and $m_{++}$, $m_{+-}$ are defined in (\ref{qm}). 
\begin{prop}
We have $\|\mathcal{G}_3^{r}\|_{L^\infty_\xi}+\|\mathcal{G}_{3}^{nf}\|_{L^\infty_\xi}\lesssim\varepsilon_1^3\langle t\rangle^{-3/2}$.
\end{prop}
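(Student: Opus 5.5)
The plan is to treat both terms as spatially localized cubic expressions and estimate them in physical space, in the spirit of the ``easier term'' bounds used in the proof of Proposition~\ref{epvc}. The key point is that each symbol carries a localizing factor: $S_\eta^2$ in $q_3^r$, and $\operatorname{sech}$ in $m_{++}$, $m_{+-}$. Through \eqref{001} and Plancherel, this allows us to place all three waves in $L^\infty_x$. Each such wave is bounded by $\varepsilon_1\langle t\rangle^{-1/2}$ thanks to Proposition~\ref{z-decay} and Lemma~\ref{Deipd}. Three such factors then give the claimed $\varepsilon_1^3\langle t\rangle^{-3/2}$.

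\emph{The term $\mathcal G_3^r$.} By \eqref{cc3}, each piece of $q_3^r$ is a finite sum of products of the form
\[
\frac{P(\xi,a,b,c)}{\langle\xi\rangle_\mu\langle a\rangle_\mu\langle b\rangle_\mu\langle c\rangle_\mu}\int e^{-ipx}S_\eta^2T_\eta^j\,dx,
\]
where $P$ is a polynomial of total degree at most four. We expand $P$ into monomials $\xi^{k_0}a^{k_1}b^{k_2}c^{k_3}$ and reverse the computation that produced \eqref{cc}. This turns $\mathcal G_3^r$ into a finite sum of expressions
\[
e^{it\langle\xi\rangle_\mu^2}\frac{\xi^{k_0}}{\langle\xi\rangle_\mu}\,\mathcal F\bigl[S_\eta^2T_\eta^j\,F_1F_2\overline{F_3}\bigr](\xi),
\]
with $F_i=\mathcal F^{-1}\bigl(\lambda^{k_i}\langle\lambda\rangle_\mu^{-1}e^{-it\langle\lambda\rangle_\mu^2}\widetilde g\bigr)$. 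By Corollary~\ref{cdfi}, $\langle\lambda\rangle_\mu^{-1}\widetilde g\,e^{-it\langle\lambda\rangle_\mu^2}=\widehat{D_\eta^{-1}z}$, so $F_i=(-i\partial_x)^{k_i}D_\eta^{-1}z$. The outer factor $\xi^{k_0}/\langle\xi\rangle_\mu$ is handled by moving $\xi^{k_0}$ onto the product as $\partial_x^{k_0}$. When $k_0=1$ this factor is just bounded. When $k_0=2$ (possible, since $p_3^0$ contains $\xi a$, $\xi b$, etc.), we write $\xi^{k_0}\mathcal F[\cdot]=\mathcal F[(-i\partial_x)^{k_0}\cdot]$ and distribute the derivatives with Leibniz. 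Derivatives landing on $S_\eta^2T_\eta^j$ keep it localized, while those landing on $F_i$ raise the derivative count, which remains at most $N$. Then \eqref{001} gives
\[
\|\mathcal G_3^r\|_{L^\infty_\xi}\lesssim \|S_\eta^2\|_{L^1}\sum\prod_i\|D_\eta^{-1}z\|_{W^{4,\infty}}\lesssim\varepsilon_1^3\langle t\rangle^{-3/2},
\]
by Lemma~\ref{Deipd} (valid since $N\ge6$).

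\emph{The term $\mathcal G_3^{nf}$.} We substitute \eqref{qc} for $\mathcal G_2$. By \eqref{quker}, $\mathcal G_2(t,a)$ itself equals $e^{it\langle a\rangle_\mu^2}\widetilde{\mathcal F}(\mathcal N_2)(a)$ up to constants, where $\mathcal N_2$ is localized by $V_2,V_3$. The first summand is therefore
\[
-\tfrac12 e^{it\langle\xi\rangle_\mu^2}\int e^{-it\langle a\rangle^2_\mu-it\langle b\rangle^2_\mu}m_{++}\,e^{it\langle a\rangle_\mu^2}\widetilde{\mathcal F}(\mathcal N_2)(a)\,\widetilde g(b)\,da\,db.
\]
Since $\operatorname{sech}(\pi(\xi-a-b)/2\mu)$ is, up to constants, the Fourier transform of $S_\eta$, and the weights $\langle\xi\rangle_\mu/(\langle a\rangle_\mu\langle b\rangle_\mu)$ are of the same type as before, this is
\[
\frac{\langle\xi\rangle_\mu}{\sqrt{2\pi}}\,\mathcal F\bigl[S_\eta\,\mathcal A\,D_\eta^{-1}z\bigr](\xi),
\]
up to constants, where $\mathcal A$ is the function with $\widehat{\mathcal A}=\langle a\rangle_\mu^{-1}\widetilde{\mathcal F}(\mathcal N_2)$, i.e.\ $\mathcal A=D_\eta^{-1}P_e\mathcal N_2$. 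The factor $\langle\xi\rangle_\mu$ again becomes at most one derivative falling on the product.

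The main obstacle is bounding $\|\mathcal A\|_{W^{1,\infty}}\lesssim\varepsilon_1^2\langle t\rangle^{-1}$. For this we first apply $P_e$; $P_d\mathcal N_2$ is bounded by $\varepsilon_1^2\langle t\rangle^{-1}$ in any norm, by localization. We then apply $D_\eta^{-1}P_e=H_\eta^{-1}D_\eta^*$ together with Lemma~\ref{Deliest}, or directly the explicit integral formula used for $j=0$ in Lemma~\ref{Deipd}. Combined with $\|\mathcal N_2\|_{W^{2,\infty}}\lesssim\|z\|_{W^{3,\infty}}^2\lesssim\varepsilon_1^2\langle t\rangle^{-1}$, this gives the bound. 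Together with $\|D_\eta^{-1}z\|_{W^{1,\infty}}\lesssim\varepsilon_1\langle t\rangle^{-1/2}$ and $\|S_\eta\|_{W^{1,1}}<\infty$, it yields $\varepsilon_1^3\langle t\rangle^{-3/2}$. The $m_{+-}$ terms and the terms where $\mathcal G_2$ sits in the $b$-slot are handled identically, after conjugation. The weight $\langle a\rangle_\mu/(\langle\xi\rangle_\mu\langle b\rangle_\mu)$ in $m_{+-}$ puts $\langle a\rangle_\mu\widehat{D_\eta^{-1}P_e\mathcal N_2}=\widehat{D_\eta^*\mathcal N_2}$-type objects in place of $\mathcal A$, costing one more derivative on $\mathcal N_2$, which is affordable. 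For $t\le1$ the same bounds hold trivially, using the Sobolev bound \eqref{Hmze}.
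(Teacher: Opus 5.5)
Your proposal is correct. For $\mathcal G_3^r$ it is the paper's argument. Via Corollary~\ref{cdfi} the term becomes $\mathcal F[S_\eta^2T_\eta^j\,\cdot\,]$ of a product of derivatives of $D_\eta^{-1}z$, with a bounded outer factor. In fact every monomial in the $p_3^j$ has degree at most one in $\xi$ and at most two in each of $a,b,c$. So the case $k_0=2$ never arises, and $\|D_\eta^{-1}z\|_{W^{2,\infty}}$ suffices.

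For $\mathcal G_3^{nf}$ your route is genuinely different.

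\emph{The paper's route.} It substitutes \eqref{qc} and uses the null structure \eqref{quker}. It then computes the composite trilinear kernels $q_{+++},q_{++-},q_{+--}$ by convolving the two sech factors. The resulting $s$-integrals are identified as Fourier transforms of $S_\eta\partial_x^jS_\eta$, which leads to $\|D_\eta^{-1}z\|_{W^{6,\infty}}^3$.

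\emph{Your route.} You never open up $\mathcal G_2$. You write $\mathcal G_2(t,a)=-ie^{it\langle a\rangle_\mu^2}\widetilde{\mathcal F}\mathcal N_2(a)$; this is simply \eqref{gproev}, not \eqref{quker}. The weight $\langle a\rangle_\mu^{-1}$ or $\langle a\rangle_\mu$ is absorbed into $D_\eta^{-1}P_e\mathcal N_2$ or $D_\eta^*\mathcal N_2$. For the $m_{+-}$ term the correct identity is $\langle a\rangle_\mu\widetilde{\mathcal F}\mathcal N_2=\widehat{D_\eta^*\mathcal N_2}$, not the one with $\widehat{D_\eta^{-1}P_e\mathcal N_2}$ that you wrote. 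What remains is a localized bilinear expression, bounded via $\|D_\eta^{-1}P_eF\|_{W^{1,\infty}}\lesssim\|F\|_{L^\infty}$. This is exactly the device the paper itself uses later for $\mathcal G^{\rm nf}_{\rm mod}$ and $\mathcal G^{\rm nf}_{\ge4}$.

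\emph{What each buys.} Your version is shorter and needs only $\|z\|_{W^{2,\infty}}$ and $\|D_\eta^{-1}z\|_{W^{1,\infty}}$. It also absorbs, through the nonlocal operator $D_\eta^{-1}P_e$, the non-polynomial factors such as $\langle b-c+s\rangle_\mu^{-2}$ in the paper's formula for $q_{++-}$. Those factors do not fit directly into the reduction to $S_\eta\partial_x^jS_\eta$ times derivatives of $D_\eta^{-1}z$, and the paper covers them by ``similarly''. The paper's route, in exchange, produces explicit trilinear symbols with exponential localization in $\xi-a-b\mp c$, which is more structure than this $L^\infty_\xi$ bound needs.

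One small point: for the $L^\infty$ bound on $D_\eta^{-1}P_e$, use the explicit integral formula from the proof of Lemma~\ref{Deipd}, as you also suggest. Lemma~\ref{Deliest} as stated concerns $D_\eta H_\eta^{-1}$, not $H_\eta^{-1}D_\eta^*$.
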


\begin{proof}
We first estimate $\|\mathcal{G}_3^{r}\|_{L^\infty_\xi}$. Corollary~\ref{cdfi} gives
\begin{align*}
\mathcal{G}_3^{r}(t,\xi)=&-\frac{i}{8\pi^2}e^{it\langle\xi\rangle_\mu^2}\int_{\mathbb R^4}e^{-i(\xi-a-b+c)x}\frac{S_\eta^2(x)}{\langle\xi\rangle_\mu}\sum_{j=0}^4\langle\xi\rangle_\mu\langle a\rangle_\mu\langle b\rangle_\mu\langle c\rangle_\mu p_3^j(\xi,a,b,c)T_\eta^j(x)\\
&\qquad\qquad\qquad\cdot \widehat{D_\eta^{-1}z}(t,a)\widehat{D_\eta^{-1}z}(t,b)\overline{\widehat{D_\eta^{-1}z}(t,c)}{\rm d}x{\rm d}a{\rm d}b{\rm d}c.
\end{align*}
Therefore, one has 
\begin{align*}
\|\mathcal{G}_3^{r}\|_{L^\infty_\xi}\lesssim&\|D_\eta^{-1}z(t)\|_{W^{2,\infty}_x}^3\lesssim\varepsilon_1^3\langle t\rangle^{-3/2}.
\end{align*}
We next prove that $\|\mathcal{G}_{3}^{nf}\|_{L^\infty_\xi}\lesssim\varepsilon_1^3\langle t\rangle^{-3/2}$. Substituting $\mathcal{G}_2$ into $\mathcal{G}_3^{nf}$, we obtain
\begin{equation}
\begin{aligned}
\mathcal{G}_3^{nf}(t,\xi)=&-\frac{i}{2}\int_{\mathbb R^3}e^{it\Phi_{+++}(\xi,a,b,c)}q_{+++}(\xi,a,b,c)\widetilde g(t,a)\widetilde g(t,b)\widetilde g(t,c)\,{\rm d}a{\rm d}b{\rm d}c\\
-&\frac{i}{2}\int_{\mathbb R^3}e^{it\Phi_{++-}(\xi,a,b,c)}q_{++-}(\xi,a,b,c)\widetilde g(t,a)\widetilde g(t,b)\overline{\widetilde g(t,c)}\,{\rm d}a{\rm d}b{\rm d}c\\
-&\frac{i}{2}\int_{\mathbb R^3}e^{it\Phi_{+--}(\xi,a,b,c)}q_{+--}(\xi,a,b,c)\widetilde g(t,a)\overline{\widetilde g(t,b)}\overline{\widetilde g(t,c)}\,{\rm d}a{\rm d}b{\rm d}c,
\end{aligned}
\label{G3nf}
\end{equation}
where
$$
\Phi_{+++}(\xi,a,b,c)=\langle\xi\rangle_\mu^2-\langle a\rangle_\mu^2-\langle b\rangle_\mu^2-\langle c\rangle_\mu^2,\qquad
\Phi_{++-}(\xi,a,b,c)=\langle\xi\rangle_\mu^2-\langle a\rangle_\mu^2-\langle b\rangle_\mu^2+\langle c\rangle_\mu^2,
$$
and
$$
\Phi_{+--}(\xi,a,b,c)=\langle\xi\rangle_\mu^2-\langle a\rangle_\mu^2+\langle b\rangle_\mu^2+\langle c\rangle_\mu^2.
$$
Moreover,
\begin{align*}
q_{+++}(\xi,a,b,c)=&\frac12\int_{\mathbb R}\Bigl(m_{++}(\xi,r,c)q_{++}(r,a,b)+m_{++}(\xi,a,r)q_{++}(r,b,c)\Bigr){\rm d}r,\\
q_{++-}(\xi,a,b,c)=&\frac12\int_{\mathbb R}\Bigl(m_{++}(\xi,r,b)q_{+-}(r,a,c)+m_{++}(\xi,a,r)q_{+-}(r,b,c)\\
&\qquad\qquad+m_{+-}(\xi,r,c)q_{++}(r,a,b)-m_{+-}(\xi,a,r)\overline{q_{+-}(r,c,b)}\Bigr){\rm d}r,\\
q_{+--}(\xi,a,b,c)=&\frac12\int_{\mathbb R}\Bigl(m_{+-}(\xi,r,c)q_{+-}(r,a,b)-m_{+-}(\xi,a,r)\overline{q_{++}(r,b,c)}\Bigr){\rm d}r.
\end{align*}
We first claim that
$$
|q_{+++}(\xi,a,b,c)|\lesssim_{\mu}\frac{\langle\xi\rangle_\mu}{\langle a\rangle_\mu\langle b\rangle_\mu\langle c\rangle_\mu}(1+|a|+|b|+|c|)^2e^{-\frac{\pi}{4\mu}|\xi-a-b-c|}.
$$
Indeed, applying \eqref{quker} to $q_{+++}$, we obtain
\begin{align*}
q_{+++}(\xi,a,b,c)=&\frac{\langle\xi\rangle_\mu}{64\pi\langle a\rangle_\mu\langle b\rangle_\mu\langle c\rangle_\mu}\int_{\mathbb R}(r^2-a^2-b^2-\mu^2)\operatorname{sech}\left(\frac{\pi(\xi-r-c)}{2\mu}\right)\operatorname{sech}\left(\frac{\pi(r-a-b)}{2\mu}\right){\rm d}r\\
+&\frac{\langle\xi\rangle_\mu}{64\pi\langle a\rangle_\mu\langle b\rangle_\mu\langle c\rangle_\mu}\int_{\mathbb R}(r^2-b^2-c^2-\mu^2)\operatorname{sech}\left(\frac{\pi(\xi-a-r)}{2\mu}\right)\operatorname{sech}\left(\frac{\pi(r-b-c)}{2\mu}\right){\rm d}r.
\end{align*}
In the first integral we make the change of variables $r=a+b+s$, and in the second $r=b+c+s$. This gives
\begin{align*}
q_{+++}(\xi,a,b,c)=&\frac{\langle\xi\rangle_\mu}{32\pi\langle a\rangle_\mu\langle b\rangle_\mu\langle c\rangle_\mu}\int_{\mathbb R}\Bigl[ab+bc-\mu^2+(a+2b+c)s+s^2\Bigr]\\
&\qquad\qquad\cdot\operatorname{sech}\left(\frac{\pi(\xi-a-b-c-s)}{2\mu}\right)\operatorname{sech}\left(\frac{\pi s}{2\mu}\right){\rm d}s.
\end{align*}
Therefore, there holds 
\begin{align*}
|q_{+++}(\xi,a,b,c)|
\lesssim&\frac{\langle\xi\rangle_\mu}{\langle a\rangle_\mu\langle b\rangle_\mu\langle c\rangle_\mu}(1+|a|+|b|+|c|)^2\int_{\mathbb R}(1+|s|)^2e^{-\frac{\pi}{2\mu}|\xi-a-b-c-s|}e^{-\frac{\pi}{2\mu}|s|}{\rm d}s\\
\lesssim&\frac{\langle\xi\rangle_\mu}{\langle a\rangle_\mu\langle b\rangle_\mu\langle c\rangle_\mu}(1+|a|+|b|+|c|)^2e^{-\frac{\pi}{4\mu}|\xi-a-b-c|}\\
&\qquad\qquad\qquad\qquad\qquad\qquad\cdot\int_{\mathbb R}(1+|s|)^2e^{-\frac{\pi}{4\mu}|\xi-a-b-c-s|}e^{-\frac{\pi}{4\mu}|s|}{\rm d}s\\
\lesssim&\frac{\langle\xi\rangle_\mu}{\langle a\rangle_\mu\langle b\rangle_\mu\langle c\rangle_\mu}(1+|a|+|b|+|c|)^2e^{-\frac{\pi}{4\mu}|\xi-a-b-c|}.
\end{align*}
The same argument yields 
\begin{align*}
&q_{++-}(\xi,a,b,c)\\
=&-\frac{\langle\xi\rangle_\mu\langle a\rangle_\mu}{32\pi\langle b\rangle_\mu\langle c\rangle_\mu}
\int_{\mathbb R}\frac{(a-c+s)^2-a^2+c^2+\mu^2}{\langle a-c+s\rangle_\mu^2}
\operatorname{sech}\left(\frac{\pi(\xi-a-b+c-s)}{2\mu}\right)
\operatorname{sech}\left(\frac{\pi s}{2\mu}\right){\rm d}s\\
&-\frac{\langle\xi\rangle_\mu\langle b\rangle_\mu}{32\pi\langle a\rangle_\mu\langle c\rangle_\mu}
\int_{\mathbb R}\frac{(b-c+s)^2-b^2+c^2+\mu^2}{\langle b-c+s\rangle_\mu^2}
\operatorname{sech}\left(\frac{\pi(\xi-a-b+c-s)}{2\mu}\right)
\operatorname{sech}\left(\frac{\pi s}{2\mu}\right){\rm d}s\\
&-\frac{1}{32\pi\langle\xi\rangle_\mu\langle a\rangle_\mu\langle b\rangle_\mu\langle c\rangle_\mu}
\int_{\mathbb R}\langle a+b+s\rangle_\mu^2\bigl((a+b+s)^2-a^2-b^2-\mu^2\bigr)\\
&\qquad\qquad\qquad\cdot
\operatorname{sech}\left(\frac{\pi(\xi-a-b+c-s)}{2\mu}\right)
\operatorname{sech}\left(\frac{\pi s}{2\mu}\right){\rm d}s\\
&-\frac{\langle a\rangle_\mu\langle c\rangle_\mu}{16\pi\langle\xi\rangle_\mu\langle b\rangle_\mu}
\int_{\mathbb R}\frac{(c-b-s)^2-c^2+b^2+\mu^2}{\langle c-b-s\rangle_\mu^2}
\operatorname{sech}\left(\frac{\pi(\xi-a-b+c-s)}{2\mu}\right)
\operatorname{sech}\left(\frac{\pi s}{2\mu}\right){\rm d}s
\end{align*}
and
\begin{align*}
q_{+--}(\xi,a,b,c)=&\frac{\langle a\rangle_\mu}{32\pi\langle\xi\rangle_\mu\langle b\rangle_\mu\langle c\rangle_\mu}
\int_{\mathbb R}\Bigl[2\bigl((a-b+s)^2-a^2+b^2+\mu^2\bigr)
+\bigl((b+c-s)^2-b^2-c^2-\mu^2\bigr)\Bigr]\\
&\qquad\qquad\qquad\times\operatorname{sech}\left(\frac{\pi(\xi-a+b+c-s)}{2\mu}\right)
\operatorname{sech}\left(\frac{\pi s}{2\mu}\right){\rm d}s.
\end{align*}
Therefore,
\begin{align*}
|q_{+--}(\xi,a,b,c)|\lesssim&\frac{\langle a\rangle_\mu}{\langle\xi\rangle_\mu\langle b\rangle_\mu\langle c\rangle_\mu}(1+|a|+|b|+|c|)^2e^{-\frac{\pi}{4\mu}|\xi-a+b+c|},
\end{align*}
and \begin{align*}
|q_{++-}(\xi,a,b,c)|\lesssim&e^{-\frac{\pi}{4\mu}|\xi-a-b+c|}\Bigg(\frac{\langle\xi\rangle_\mu\langle a\rangle_\mu}{\langle b\rangle_\mu\langle c\rangle_\mu}(1+|a|+|b|+|c|)^2+\frac{\langle\xi\rangle_\mu\langle b\rangle_\mu}{\langle a\rangle_\mu\langle c\rangle_\mu}(1+|a|+|b|+|c|)^2\\
&\qquad\qquad+\frac{(1+|a|+|b|)^4}{\langle\xi\rangle_\mu\langle a\rangle_\mu\langle b\rangle_\mu\langle c\rangle_\mu}+\frac{\langle a\rangle_\mu\langle c\rangle_\mu}{\langle\xi\rangle_\mu\langle b\rangle_\mu}(1+|a|+|b|+|c|)^2\Bigg).
\end{align*}
By  Corollary~\ref{cdfi}, the first term of \eqref{G3nf} is a finite sum of terms of the form
\begin{equation*}
\begin{aligned}
&\langle\xi\rangle_\mu\int_{\mathbb R^3}a^{k_1}b^{k_2}c^{k_3}
\left[\int_{\mathbb R}s^j\operatorname{sech}\left(\frac{\pi(\xi-a-b-c-s)}{2\mu}\right)\operatorname{sech}\left(\frac{\pi s}{2\mu}\right){\rm d}s\right]\\
&\qquad\qquad\qquad\qquad\qquad\qquad\qquad\qquad\qquad\qquad\cdot\widehat{D_\eta^{-1}z}(t,a)\widehat{D_\eta^{-1}z}(t,b)\widehat{D_\eta^{-1}z}(t,c)\,{\rm d}a{\rm d}b{\rm d}c,
\end{aligned}
\end{equation*}
where $0\leq j\leq2$ and $k_1+k_2+k_3+j\leq2$.\\
By \eqref{Fotrseta}, we have
$$
\mathcal F^{-1}\left[\int_{\mathbb R}s^j\operatorname{sech}\left(\frac{\pi(\,\cdot-s\,)}{2\mu}\right)\operatorname{sech}\left(\frac{\pi s}{2\mu}\right){\rm d}s\right]=2\mu^2\sqrt{\frac2\pi}i^{-j}S_\eta\partial_x^jS_\eta,\quad 0\leq j\leq2.
$$
Hence
\begin{align*}
&\left\|\langle\xi\rangle_\mu\mathcal F\left[S_\eta\partial_x^jS_\eta
\left(\partial_x^{k_1}D_\eta^{-1}z\right)
\left(\partial_x^{k_2}D_\eta^{-1}z\right)
\left(\partial_x^{k_3}D_\eta^{-1}z\right)\right]\right\|_{L^\infty_\xi}\\
\lesssim&
\left\|S_\eta\partial_x^jS_\eta
\left(\partial_x^{k_1}D_\eta^{-1}z\right)
\left(\partial_x^{k_2}D_\eta^{-1}z\right)
\left(\partial_x^{k_3}D_\eta^{-1}z\right)\right\|_{W^{1,1}_x}\\
\lesssim&
\|D_\eta^{-1}z(t)\|_{W^{3,\infty}_x}^3
\lesssim\varepsilon_1^3\langle t\rangle^{-3/2}.
\end{align*}
Similarly,
$$\left\|\int_{\mathbb R^3}e^{it\Phi_{++-}}q_{++-}\widetilde g(t,a)\widetilde g(t,b)\overline{\widetilde g(t,c)}\,{\rm d}a{\rm d}b{\rm d}c\right\|_{L^\infty_\xi}\lesssim\|D_\eta^{-1}z(t)\|_{W^{6,\infty}_x}^3,$$
and
$$\left\|\int_{\mathbb R^3}e^{it\Phi_{+--}}q_{+--}\widetilde g(t,a)\overline{\widetilde g(t,b)}\overline{\widetilde g(t,c)}\,{\rm d}a{\rm d}b{\rm d}c\right\|_{L^\infty_\xi}\lesssim\|D_\eta^{-1}z(t)\|_{W^{4,\infty}_x}^3.$$
Therefore, we conclude 
\begin{align*}
\|\mathcal G_3^{nf}(t)\|_{L^\infty_\xi}
\lesssim&\|D_\eta^{-1}z(t)\|_{W^{3,\infty}_x}^3+\|D_\eta^{-1}z(t)\|_{W^{6,\infty}_x}^3+\|D_\eta^{-1}z(t)\|_{W^{4,\infty}_x}^3
\lesssim\varepsilon_1^3\langle t\rangle^{-3/2}.
\end{align*}
\end{proof}

\subsection{Modulation and higher order terms}
The aim of this section is to control the remaining terms in \eqref{nfpe}.

Recall the definition of $\mathcal{G}_{\rm mod}$ in (\ref{mc}) and its decomposition into the sum of $\mathcal{G}^{r}_{\rm mod}$ and $\mathcal{G}^s_{\rm mod}$, which are defined  in (\ref{mr})--(\ref{ms}). 
\begin{prop}
Let $\rho$ and $\delta_*$ be the constants fixed in (\ref{Nota}). Then, for every $t\geq0$,
$$\left\|\langle\xi\rangle_\mu^k\mathcal{G}_{\rm mod}(t)\right\|_{L^2_\xi}\lesssim\varepsilon_1^3\langle t\rangle^{-1-\delta_*},\quad0\leq k\leq N-2,$$
and
$$\left\|\xi^k\mathcal{G}_{\rm mod}(t)\right\|_{L^\infty_\xi}\lesssim\varepsilon_1^3\langle t\rangle^{-1-\rho},\quad0\leq k\leq N-3.$$
\end{prop}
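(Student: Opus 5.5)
The plan is to bound the modulation prefactors and the transformed functionals separately, treating $\mathcal G_{\rm mod}^r$ and $\mathcal G_{\rm mod}^s$ in turn. By \eqref{paraimp} and $T_2=\infty$ we have $|\dot y|+|\dot\phi+\eta\dot y|\lesssim\varepsilon_1\langle t\rangle^{-1-\delta_*}$. A sharper bound is also available: Propositions~\ref{pmodre} and \ref{Rz2p} and the estimate $|R_2(z^*)|\lesssim\varepsilon_1^2t^{-1-\delta_*}$ from the proof of \eqref{paraimp} give $|\dot y|+|\dot\phi|\lesssim\varepsilon_1^2\langle t\rangle^{-1-\delta_*}$, and one uses this quadratic version. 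The factor $e^{it\langle\xi\rangle_\mu^2}$ is unimodular and irrelevant in both norms. It therefore suffices to show that $\widetilde{\mathcal F}(\partial_xz)$, $\widetilde{\mathcal F}(S_\eta z^2)$ and $\widetilde{\mathcal F}(T_\eta z)$ are $O(\varepsilon_1)$ in $\langle\xi\rangle_\mu^k L^2_\xi$ and in $\xi^kL^\infty_\xi$, with a small loss $\langle t\rangle^{\delta_*-\rho}$ permitted in the $L^\infty$ case.

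\emph{Regular part.} For the $L^2$ bound, unitarity of $\widetilde{\mathcal F}$ and $\widetilde{\mathcal F}\mathcal L_\eta=\langle\xi\rangle_\mu^2\widetilde{\mathcal F}$ give $\|\langle\xi\rangle_\mu^k\widetilde{\mathcal F}f\|_{L^2}\lesssim\|f\|_{H^k}$. Hence $\partial_xz$ and $S_\eta z^2$ are controlled by $\|z\|_{H^{N-1}}\lesssim\varepsilon_1$, using Proposition~\ref{Sobes}. For the $L^\infty$ bound on $\partial_xz$, I would not use \eqref{001}, since it would require $L^1$ control. Instead, $\widetilde{\mathcal F}(\partial_xz)=\widehat{D_\eta^*\partial_xz}/\langle\xi\rangle_\mu$ by \eqref{dftdbx}. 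Writing $D_\eta^*\partial_x z=\partial_x D_\eta^* z+\mu(\partial_xT_\eta)z$, the first piece becomes $i\xi\langle\xi\rangle_\mu^{-1}\cdot\langle\xi\rangle_\mu\widetilde g$ up to the phase, which is bounded by \eqref{a3}. The second piece is localized, so \eqref{001} and $\|S_\eta^2z\|_{W^{k,1}}\lesssim\varepsilon_1$ apply. The term $S_\eta z^2$ is handled by \eqref{001} directly.

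\emph{Singular part.} By \eqref{sms}, the second kernel carries the factor $(\langle\xi\rangle_\mu-\langle a\rangle_\mu)^2\operatorname{csch}(\pi(\xi-a)/2\mu)$. This vanishes to first order at $\xi=a$ and decays exponentially, so it is a bounded Schur kernel, and after weighting by $\langle\xi\rangle_\mu^k\lesssim\langle a\rangle_\mu^k\langle\xi-a\rangle^k$ it maps $L^2\to L^2$ and $L^\infty\to L^\infty$. It is then controlled by \eqref{a1} and \eqref{a3}.

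The main obstacle is the principal value kernel $\operatorname{p.v.}\operatorname{csch}(\pi(\xi-a)/2\mu)$ acting on $e^{-it\langle a\rangle_\mu^2}\widetilde g$. In $L^2$ this is harmless: it is a Fourier multiplier, namely convolution with the transform of $T_\eta$ (see \eqref{Ftt}), which is bounded, and weights are handled as before. In $L^\infty_\xi$ the Hilbert-type operator is unbounded. I would split $|\xi-a|\le t^{-1}$ from its complement. On the near region, oddness of the kernel lets one subtract the value at $\xi$, and the remainder is bounded by $\|\partial_a(e^{-it\langle a\rangle^2_\mu}\widetilde g)\|_{L^2}t^{-1/2}\lesssim \varepsilon_1 t^{1/2+\delta}$, which is not acceptable. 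The better route is to return to physical space, $\widetilde{\mathcal F}(T_\eta z)=\widehat{D_\eta^*(T_\eta z)}/\langle\xi\rangle_\mu$, and observe that $D_\eta^*(T_\eta z)=T_\eta D_\eta^*z-\mu S_\eta^2 z$. The localized piece $S_\eta^2z$ is handled by \eqref{001}. The piece $T_\eta w$ with $w=D_\eta^*z$ is handled by Corollary~\ref{restaph}: $\|\widehat{T_\eta w}\|_{L^\infty}\lesssim t^{1/2}\|e^{itH_\eta}(T_\eta w)\|_{L^\infty}+t^{-1/4}\|x e^{itH_\eta}T_\eta w\|_{L^2}$. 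Commuting $T_\eta$ through $J$ costs only $t\|\partial_xT_\eta\, w\|$, which is localized and decays like $t^{1/2}$. This produces at worst a loss $t^{\delta}$, i.e.\ $\|\xi^k\widetilde{\mathcal F}(T_\eta z)\|_{L^\infty}\lesssim\varepsilon_1\langle t\rangle^{\delta}$. Combined with the prefactor $\varepsilon_1^2\langle t\rangle^{-1-\delta_*}$, this gives $\varepsilon_1^3\langle t\rangle^{-1-\delta_*+\delta}\le\varepsilon_1^3\langle t\rangle^{-1-\rho}$, since $\delta_*>3/2026>\delta+\rho$ by \eqref{Nota}. The loss of one power of $\xi$ in the range $k\le N-3$ accounts for the extra derivative spent in the $J$-commutator.
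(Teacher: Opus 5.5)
There is a genuine gap, in the $L^\infty_\xi$ bound for the principal value term $\widetilde{\mathcal F}(T_\eta z)$. The rest of your outline is sound: the $L^2$ bounds, the regular part (your identity $D_\eta^*\partial_xz=\partial_xD_\eta^*z+\mu^2S_\eta^2z$ is exactly how the paper reduces to $\xi^{k+1}\widetilde g$), and the Schur bound for the second kernel in \eqref{sms}. Your switch to the quadratic bound $|\dot y|+|\dot\phi|\lesssim\varepsilon_1^2\langle t\rangle^{-1-\delta_*}$ is also correct, and is what the $\varepsilon_1^3$ actually requires.

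\emph{Where the argument fails.} Apply Corollary~\ref{restaph} to $f=e^{itH_\eta}(T_\eta w)$. Its first term is $t^{1/2}\|T_\eta w\|_{L^\infty}$, since $e^{it\partial_x^2}e^{itH_\eta}=e^{it\mu^2}$. The second term requires $\|xf\|_{L^2}=\|J(T_\eta w)\|_{L^2}$. The commutator $2it(\partial_xT_\eta)w=2i\mu tS_\eta^2w$ has $L^2$ norm of size $t\cdot\varepsilon_1t^{-1/2}=\varepsilon_1t^{1/2}$.

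This cannot be improved in general. The profile $\langle\xi\rangle_\mu\widetilde g$ of $w=D_\eta^*z$ does not vanish at $\xi=0$, so $|w|\sim t^{-1/2}$ on compact sets. Equivalently, on the dispersive scale $T_\eta\approx\operatorname{sgn}(x/2t)$, so the profile of $T_\eta w$ develops a jump at $\xi=0$, smoothed over a width $\sim t^{-1}$, whose $\xi$-derivative has $L^2$ norm $\sim t^{1/2}$.

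Your route therefore yields only $\|\xi^k\widetilde{\mathcal F}(T_\eta z)\|_{L^\infty}\lesssim\varepsilon_1t^{-1/4}\cdot t^{1/2}=\varepsilon_1t^{1/4}$, not $\varepsilon_1t^{\delta}$. The resulting bound $\|\xi^k\mathcal G_{\rm mod}^s\|_{L^\infty}\lesssim\varepsilon_1^3t^{-3/4-\delta_*}$ is neither the claimed rate nor integrable in time, and integrability is exactly what \eqref{odef} needs. Any argument that passes through the $H^{0,1}$ weighted norm of $T_\eta w$ loses in this way.

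\emph{The missing idea.} It lies in the frequency-side approach you abandoned. Do not take differences of the oscillating function $e^{-it\langle a\rangle_\mu^2}\widetilde g(t,a)$; keep the phase in the kernel instead.
\begin{itemize}
\item With $p=\xi-a$, write the term as $\operatorname{p.v.}\int e^{it(2\xi p-p^2)}\operatorname{csch}(\frac{\pi p}{2\mu})\widetilde g(t,\xi-p)\,dp$.
\item Split $\xi^k\widetilde g(t,\xi-p)=\xi^k\widetilde g(t,\xi)+(\xi^k-(\xi-p)^k)\widetilde g(t,\xi-p)+\bigl((\xi-p)^k\widetilde g(t,\xi-p)-\xi^k\widetilde g(t,\xi)\bigr)$.
\item The second piece carries a factor $p$. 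The third is at most $|p|^{1/2}\|\partial_\xi(\xi^k\widetilde g)\|_{L^2}\lesssim|p|^{1/2}\varepsilon_1\langle t\rangle^\delta$. Both are integrable against $|\operatorname{csch}(\frac{\pi p}{2\mu})|$.
\item The main piece needs the uniform bound $\sup_{t,\xi}\bigl|\operatorname{p.v.}\int e^{it(2\xi p-p^2)}\operatorname{csch}(\frac{\pi p}{2\mu})\,dp\bigr|<\infty$. Replace $\operatorname{csch}$ by $\frac{2\mu}{\pi}\chi(p)/p$ up to an $L^1$ error. For $\operatorname{p.v.}\int e^{it(2\xi p-p^2)}\,dp/p$, rescale $p\mapsto p/\sqrt t$ and use the Fresnel identity \eqref{bfpi}, which is bounded uniformly in $\lambda=2\sqrt t\,\xi$. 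Handle the $(\chi(p)-1)/p$ part by integration by parts away from the stationary point $p=\xi$.
\end{itemize}
This gives $\varepsilon_1\langle t\rangle^{\delta}$ for $\|\xi^k\widetilde{\mathcal F}(T_\eta z)\|_{L^\infty}$. Combined with the quadratic prefactor, the estimate closes since $\delta_*>\delta+\rho$.
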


\begin{proof}
We first prove the weighted $L^2_\xi$ estimate.  Applying Proposition~\ref{disfour} and the Leibniz rule to the definition of $\mathcal{G}_{\rm mod}$, and using \eqref{paraimp} and Propositions~\ref{z-decay} and~\ref{Sobes}, we obtain, for $0\leq k\leq N-2$, 
\begin{align*}
\left\|\langle\xi\rangle_\mu^k\mathcal{G}_{\rm mod}(t)\right\|_{L^2_\xi}
&\lesssim\bigl(|\dot y|+|\dot\phi|\bigr)\Bigg(\left\|\langle\xi\rangle_\mu^{k-1}\left[-i\xi\mathcal F(\partial_xz)-\frac{\mu}{\sqrt{2\pi}}\widehat{T_\eta}*\mathcal F(\partial_xz)\right]\right\|_{L^2_\xi}\\
&\qquad+\left\|\langle\xi\rangle_\mu^{k-1}\left[-i\xi\mathcal F(T_\eta z)-\frac{\mu}{\sqrt{2\pi}}\widehat{T_\eta}*\mathcal F(T_\eta z)\right]\right\|_{L^2_\xi}\\
&\qquad+\left\|\langle\xi\rangle_\mu^{k-1}\left[-i\xi\mathcal F(S_\eta z^2)-\frac{\mu}{\sqrt{2\pi}}\widehat{T_\eta}*\mathcal F(S_\eta z^2)\right]\right\|_{L^2_\xi}\Bigg)\\
&\lesssim\bigl(|\dot y|+|\dot\phi|\bigr)\left(\|\partial_xz\|_{H^k_x}+\|T_\eta z\|_{H^k_x}+\|S_\eta z^2\|_{H^k_x}\right)\\
&\lesssim\varepsilon_1^3\langle t\rangle^{-1-\delta_*}.
\end{align*}
A further application of Proposition~\ref{disfour} gives for $0\leq k\leq N-3$ that 
$$
e^{it\langle\xi\rangle_\mu^2}\widetilde{\mathcal F}(\partial_xz)(t,\xi)=i\xi\widetilde g(t,\xi)+\frac{\mu^2e^{it\langle\xi\rangle_\mu^2}}{\langle\xi\rangle_\mu}\widehat{S_\eta^2z}(t,\xi).
$$
For $0\leq k\leq N-3$, notice that by Proposition~\ref{disfour} again
\begin{align*}
\Bigl\|\xi^ke^{it\langle\xi\rangle_\mu^2}\widetilde{\mathcal F}(\partial_xz)(t)\Bigr\|_{L^\infty_\xi}
&\lesssim\Bigl\|\xi^{k+1}\widetilde g(t)\Bigr\|_{L^\infty_\xi}
+\Bigl\|\frac{\xi^k}{\langle\xi\rangle_\mu}\widehat{S_\eta^2z}(t)\Bigr\|_{L^\infty_\xi}
\lesssim\varepsilon_1,\\[1ex]
\left\|\xi^ke^{it\langle\xi\rangle_\mu^2}\widetilde{\mathcal F}(S_\eta z^2)\right\|_{L^\infty_\xi}
&=\left\|\frac{\xi^k}{\langle\xi\rangle_\mu}\mathcal{F}\Bigl(D_\eta^*(S_\eta z^2)\Bigr)\right\|_{L^\infty_\xi}
\lesssim\varepsilon_1^2\langle t\rangle^{-1}.
\end{align*}
These estimates imply 
$\left\|\xi^k\mathcal{G}_{\rm mod}^{r}\right\|_{L^\infty_\xi}\lesssim\varepsilon_1^3\langle t\rangle^{-1-\rho}, 0\leq k\leq N-3.$

It remains to dominate $\left\|\xi^k\mathcal{G}_{\rm mod}^{s}\right\|_{L^\infty_\xi}$. For the second term of \eqref{sms},  when $0\le k\le N-3$, we have
\begin{align*}
&\sup_{\xi\in\mathbb R}\left|\xi^k\int_{\mathbb R}e^{it(\langle\xi\rangle_\mu^2-\langle a\rangle_\mu^2)}\frac{(\langle\xi\rangle_\mu-\langle a\rangle_\mu)^2}{\langle\xi\rangle_\mu\langle a\rangle_\mu}\operatorname{csch}\left(\frac{\pi(\xi-a)}{2\mu}\right)\widetilde g(t,a)\,da\right|\\
\lesssim&\sup_{\xi\in\mathbb R}\int_{\mathbb R}\left(|a|^k|\xi-a|^2+|\xi-a|^{k+2}\right)\left|\operatorname{csch}\left(\frac{\pi(\xi-a)}{2\mu}\right)\right||\widetilde g(t,a)|{\rm d}a\\
\lesssim&\|a^k\widetilde g(t,a)\|_{L^\infty_a}\int_{\mathbb R}|r|^2\left|\operatorname{csch}\left(\frac{\pi r}{2\mu}\right)\right|{\rm d}r+\|\widetilde g(t,a)\|_{L^\infty_a}\int_{\mathbb R}|r|^{k+2}\left|\operatorname{csch}\left(\frac{\pi r}{2\mu}\right)\right|{\rm d}r\\
\lesssim&\|a^k\widetilde g(t,a)\|_{L^\infty_a}+\|\widetilde g(t,a)\|_{L^\infty_a}\\
\lesssim&\varepsilon_1.
\end{align*}
For the first term in \eqref{sms}, we claim that
$$
\left\|\xi^k\operatorname{p.v.}\int_{\mathbb R}e^{it(\langle\xi\rangle_\mu^2-\langle a\rangle_\mu^2)}\operatorname{csch}\left(\frac{\pi(\xi-a)}{2\mu}\right)\widetilde g(t,a)\,da\right\|_{L^\infty_\xi}\lesssim\varepsilon_1\langle t\rangle^\delta, 0\leq k\leq N-3.
$$
The change of variables $p=\xi-a$ gives
$$\operatorname{p.v.}\int_{\mathbb R}e^{it(\langle\xi\rangle_\mu^2-\langle a\rangle_\mu^2)}\operatorname{csch}\left(\frac{\pi(\xi-a)}{2\mu}\right)\widetilde g(t,a)\,{\rm d}a=\operatorname{p.v.}\int_{\mathbb R}e^{it(2\xi p-p^2)}\operatorname{csch}\left(\frac{\pi p}{2\mu}\right)\widetilde g(t,\xi-p)\,{\rm d}p.$$
Splitting $\widetilde g(t,\xi-p)$ into the sum of $\widetilde g(t,\xi)$ and $\widetilde g(t,\xi-p)-\widetilde g(t,\xi)$, we thus have 
\begin{equation}\label{xipv}
\begin{aligned}
&\xi^k\operatorname{p.v.}\int_{\mathbb R}e^{it(2\xi p-p^2)}\operatorname{csch}\left(\frac{\pi p}{2\mu}\right)\widetilde g(t,\xi-p){\rm d}p\\
=&\xi^k\widetilde g(t,\xi)\operatorname{p.v.}\int_{\mathbb R}e^{it(2\xi p-p^2)}\operatorname{csch}\left(\frac{\pi p}{2\mu}\right){\rm d}p+\xi^k\int_{\mathbb R}e^{it(2\xi p-p^2)}\operatorname{csch}\left(\frac{\pi p}{2\mu}\right)\bigl(\widetilde g(t,\xi-p)-\widetilde g(t,\xi)\bigr){\rm d}p.
\end{aligned}
\end{equation}
For the second term of \eqref{xipv}, using
$$
\xi^k\bigl(\widetilde g(t,\xi-p)-\widetilde g(t,\xi)\bigr)=\bigl(\xi^k-(\xi-p)^k\bigr)\widetilde g(t,\xi-p)+\bigl((\xi-p)^k\widetilde g(t,\xi-p)-\xi^k\widetilde g(t,\xi)\bigr),
$$
the fundamental theorem of calculus and the Cauchy--Schwarz inequality, we obtain 
\begin{align*}
&\left\|\xi^k\int_{\mathbb R}e^{it(2\xi p-p^2)}\operatorname{csch}\left(\frac{\pi p}{2\mu}\right)\bigl(\widetilde g(t,\xi-p)-\widetilde g(t,\xi)\bigr){\rm d}p\right\|_{L^\infty_\xi}\\
\le&\sup_{\xi\in\mathbb R}\int_{\mathbb R}\left|\operatorname{csch}\left(\frac{\pi p}{2\mu}\right)\right|\left|\xi^k-(\xi-p)^k\right|\left|\widetilde g(t,\xi-p)\right|{\rm d}p+\left\|\partial_\xi\bigl(\xi^k\widetilde g(t,\xi)\bigr)\right\|_{L^2_\xi}\int_{\mathbb R}|p|^{1/2}\left|\operatorname{csch}\left(\frac{\pi p}{2\mu}\right)\right|{\rm d}p\\
\lesssim&\varepsilon_1\langle t\rangle^\delta.
\end{align*}
Thus it suffices to estimate the first term of \eqref{xipv}.
Let $\chi\in C_c^\infty(\mathbb R, [0,1])$ be an even function such that $\chi(p)=1$ for $|p|\leq1$ and $\chi(p)=0$ for $|p|\ge2$. 
We decompose
$$
\operatorname{csch}\left(\frac{\pi p}{2\mu}\right)=\frac{2\mu}{\pi}\frac{\chi(p)}{p}+\left[\operatorname{csch}\left(\frac{\pi p}{2\mu}\right)-\frac{2\mu}{\pi}\frac{\chi(p)}{p}\right].
$$
Since $\operatorname{csch}\left(\frac{\pi p}{2\mu}\right)-\frac{2\mu}{\pi}\frac{\chi(p)}{p}\in L^1(\mathbb R)$, it remains to prove
$$
\sup_{t\geq0}\sup_{\xi\in\mathbb R}\left|\operatorname{p.v.}\int_{\mathbb R}e^{it(2\xi p-p^2)}\frac{\chi(p)}{p}{\rm d}p\right|\lesssim1.
$$
This  bound is immediate for $t\le 1$, indeed 
\begin{align*}
&\left|\operatorname{p.v.}\int_{\mathbb R}e^{it(2\xi p-p^2)}\frac{\chi(p)}{p}{\rm d}p\right|\\
\lesssim&\left|\int_0^2e^{-itp^2}\chi(p)\frac{\sin(2t\xi p)}{p}{\rm d}p\right|\\
\lesssim&\left|\int_0^2\frac{\sin(2t\xi p)}{p}{\rm d}p\right|+\int_0^2\frac{|\chi(p)-1|}{p}{\rm d}p+\int_0^2\frac{|e^{-itp^2}-1|}{p}{\rm d}p\\
\lesssim&1.
\end{align*}
For $t\ge1$, we split
\begin{equation}
\label{cpde}
\int_{\mathbb R}e^{it(2\xi p-p^2)}\frac{\chi(p)}{p}{\rm d}p
=\int_{\mathbb R}e^{it(2\xi p-p^2)}\frac{1}{p}{\rm d}p
+\int_{\mathbb R}e^{it(2\xi p-p^2)}\frac{\chi(p)-1}{p}{\rm d}p.
\end{equation}
For the first term of \eqref{cpde}, setting $x=\sqrt{t}p$, $\lambda=2\sqrt{t}\xi$, and using \eqref{bfpi}, we obtain
$$
\sup_{t\ge1,\,\xi\in\mathbb R}\left|\operatorname{p.v.}\int_{\mathbb R}e^{it(2\xi p-p^2)}\frac{{\rm d}p}{p}\right|\lesssim1.
$$
For the second term of \eqref{cpde}, we split again
$$
\int_{\mathbb R}e^{it(2\xi p-p^2)}\frac{\chi(p)-1}{p}{\rm d}p=\left(\int_{|\xi-p|<1}+\int_{|\xi-p|\ge1}\right)e^{it(2\xi p-p^2)}\frac{\chi(p)-1}{p}{\rm d}p.
$$
The first regime integral is immediately dominated as 
$$
\left|\int_{|\xi-p|<1}e^{it(2\xi p-p^2)}\frac{\chi(p)-1}{p}{\rm d}p\right|\lesssim\left\|\frac{\chi(p)-1}{p}\right\|_{L^\infty_p}\lesssim1.
$$
In the region $|\xi-p|\geq1$, integration by parts in $p$ yields
\begin{align*}
&\left|\int_{|\xi-p|\geq1}e^{it(2\xi p-p^2)}\frac{\chi(p)-1}{p}{\rm d}p\right|\\
\lesssim&\frac{1}{t}\left(\left\|\frac{\chi(p)-1}{p}\right\|_{L^\infty_p}+\int_{|\xi-p|\geq1}\frac{1}{|\xi-p|}\left|\partial_p\left(\frac{\chi(p)-1}{p}\right)\right|{\rm d}p+\int_{|\xi-p|\geq1}\frac{1}{|\xi-p|^2}\left|\frac{\chi(p)-1}{p}\right|{\rm d}p\right)\\
\lesssim&\frac{1}{t}\left(\left\|\frac{\chi(p)-1}{p}\right\|_{L^\infty_p}+\left\|\partial_p\left(\frac{\chi(p)-1}{p}\right)\right\|_{L^1_p}+\left\|\frac{\chi(p)-1}{p}\right\|_{L^\infty_p}\int_{|\xi-p|\geq1}\frac{{\rm d}p}{|\xi-p|^2}\right)\\
\lesssim&1.
\end{align*}
Therefore,  the proposition is proved.
\end{proof}
\begin{prop}

Let $\rho$ be the constants fixed in (\ref{Nota}). Define
\begin{align*}
\mathcal{G}_{\rm mod}^{\rm nf}(t,\xi)=-&\frac12\int_{\mathbb R^2}e^{it\Phi_{++}(\xi,a,b)}m_{++}(\xi,a,b)\mathcal{G}_{\rm mod}(t,a)\widetilde g(t,b){\rm d}a{\rm d}b\\
-&\frac12\int_{\mathbb R^2}e^{it\Phi_{++}(\xi,a,b)}m_{++}(\xi,a,b)\widetilde g(t,a)\mathcal{G}_{\rm mod}(t,b){\rm d}a{\rm d}b\\
-&\frac12\int_{\mathbb R^2}e^{it\Phi_{+-}(\xi,a,b)}m_{+-}(\xi,a,b)\mathcal{G}_{\rm mod}(t,a)\overline{\widetilde g(t,b)}{\rm d}a{\rm d}b\\
-&\frac12\int_{\mathbb R^2}e^{it\Phi_{+-}(\xi,a,b)}m_{+-}(\xi,a,b)\widetilde g(t,a)\overline{\mathcal{G}_{\rm mod}(t,b)}{\rm d}a{\rm d}b,
\end{align*}
where $\Phi_{++}, \Phi_{+-}$ are defined in (\ref{mpj}), and $m_{++}$, $m_{+-}$ are defined in (\ref{qm}) respectively. 
Then
$$
\|\mathcal{G}_{\rm mod}^{\rm nf}\|_{L^\infty_\xi}\lesssim\varepsilon_1^4\langle t\rangle^{-1-\rho}.
$$
\end{prop}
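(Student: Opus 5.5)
The plan is to estimate each of the four bilinear integrals in $\mathcal G^{\rm nf}_{\rm mod}$ by returning to physical space, exactly as was done for $\mathcal G_3^{nf}$ and $\mathcal G_3^r$. The kernels $m_{++}$, $m_{+-}$ in \eqref{qm} are, up to the symbols $\langle\xi\rangle_\mu^{\pm1}\langle a\rangle_\mu^{\mp1}\langle b\rangle_\mu^{-1}$, functions of $\xi-a\mp b$ only, namely $\operatorname{sech}(\pi(\xi-a\mp b)/(2\mu))$. By \eqref{Fotrseta} this sech is, up to constants, the flat Fourier transform of $S_\eta$. Hence each term is a flat Fourier transform of a product
\begin{equation*}
S_\eta\cdot F_1\cdot F_2, \qquad F_1=e^{-it H_\eta}\mathcal F^{-1}\bigl(\langle a\rangle_\mu^{\pm1}\mathcal G_{\rm mod}(t,a)\bigr),\quad F_2=e^{-itH_\eta}\mathcal F^{-1}\bigl(\langle b\rangle_\mu^{-1}\widetilde g(t,b)\bigr)
\end{equation*}
(or their conjugates), with the outer symbol $\langle\xi\rangle_\mu^{\pm1}$ applied afterwards. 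The outer factor $\langle\xi\rangle_\mu$ in $m_{++}$ is traded for one $x$-derivative through the $W^{1,1}_x\to L^\infty_\xi$ bound $\|\langle\xi\rangle\widehat F\|_{L^\infty}\lesssim\|F\|_{W^{1,1}}$. The factor $\langle a\rangle_\mu$ appearing in $m_{+-}$ is absorbed into the weighted norms of $\mathcal G_{\rm mod}$.

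For each product I would put $S_\eta$ and $F_1$ in $L^2_x$ and $F_2$ in $L^\infty_x$. By Plancherel, $\|F_1\|_{H^1}\lesssim\|\langle\xi\rangle_\mu^{2}\mathcal G_{\rm mod}\|_{L^2_\xi}\lesssim\varepsilon_1^3\langle t\rangle^{-1-\delta_*}$, using the preceding proposition with $k\le 2\le N-2$. For $F_2$, Corollary~\ref{cdfi} and Proposition~\ref{disfour} identify $\langle b\rangle_\mu^{-1}\widetilde g$ (transported back by $e^{-itH_\eta}$) with $\widehat{D_\eta^{-1}z}$, and $\widetilde g$ with $\langle b\rangle_\mu\widehat{D_\eta^{-1}z}$. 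Lemma~\ref{Deipd} then gives $\|F_2\|_{W^{1,\infty}}\lesssim\|D_\eta^{-1}z\|_{W^{2,\infty}}\lesssim\varepsilon_1\langle t\rangle^{-1/2}$. Combining,
\begin{equation*}
\|S_\eta F_1F_2\|_{W^{1,1}}\lesssim\|S_\eta\|_{W^{1,2}}\|F_1\|_{H^1}\|F_2\|_{W^{1,\infty}}\lesssim\varepsilon_1^4\langle t\rangle^{-3/2-\delta_*},
\end{equation*}
which is better than the claimed $\varepsilon_1^4\langle t\rangle^{-1-\rho}$. If the $t^{-1/2}$ decay of $F_2$ were unavailable, the $L^2$ bound alone would still give $\langle t\rangle^{-1-\delta_*}$, which also suffices since $\delta_*>\rho$.

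The main obstacle is bookkeeping rather than analysis. One must check that the $\mathfrak m_\pm$-type and $\langle\cdot\rangle_\mu$ symbols, together with the conjugations in the $\Phi_{+-}$ terms where $\overline{\mathcal G_{\rm mod}(t,b)}$ appears, are all bounded multipliers or cost at most finitely many derivatives covered by $k\le N-2$. The singular piece $\mathcal G^s_{\rm mod}$ needs no separate treatment here, because only the weighted $L^2_\xi$ bound for $\mathcal G_{\rm mod}$ from the previous proposition is used, and that bound already includes it. For $t\le1$ the same estimates hold with $\langle t\rangle\sim1$.
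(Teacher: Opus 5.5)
Your proof is correct, and its skeleton matches the paper's. Both arguments use \eqref{Fotrseta} to write each bilinear term as $\langle\xi\rangle_\mu^{\pm1}$ times the flat Fourier transform of a product $S_\eta F_1F_2$. Both identify the $\widetilde g$ factor with $D_\eta^{-1}z$ through Corollary~\ref{cdfi} and the orthogonality $P_ez=z$, and both pay for the outer $\langle\xi\rangle_\mu$ of $m_{++}$ with a $W^{1,1}_x$ norm.

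The difference lies in how you control the factor coming from $\mathcal G_{\rm mod}$. The paper identifies it in physical space as $D_\eta^{-1}P_eG_{\rm mod}$, where $G_{\rm mod}=\dot y(\partial_xz-\frac\mu4S_\eta z^2)-i(\dot\phi+\eta\dot y)(T_\eta z-\frac14S_\eta z^2)$. It then puts both non-$S_\eta$ factors in $W^{1,\infty}_x$, using $\|D_\eta^{-1}P_eG_{\rm mod}\|_{W^{1,\infty}}\lesssim\|G_{\rm mod}\|_{L^\infty}$. That bound follows from the integral representation of $D_\eta^{-1}$ in the proof of Lemma~\ref{Deipd}, combined with the pointwise decay of $z$ and $\partial_xz$. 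This yields a second factor $\langle t\rangle^{-1/2}$, so the paper's chain actually gives $\langle t\rangle^{-2-\delta_*}$.

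You instead put that factor in $H^1_x$ by Plancherel and use the weighted $L^2_\xi$ bound for $\mathcal G_{\rm mod}$ from the preceding proposition. This gives $\langle t\rangle^{-3/2-\delta_*}$, which is still far better than needed. Your route only needs $L^2$-based information on $\mathcal G_{\rm mod}$, so it avoids any $L^\infty$ mapping property of $D_\eta^{-1}P_e$. As you note, it also makes the p.v. piece $T_\eta z$ automatically harmless.

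One bookkeeping correction: in the fourth integral, the weight $\langle a\rangle_\mu$ from $m_{+-}$ falls on $\widetilde g(t,a)$, not on $\mathcal G_{\rm mod}$. That factor is therefore $D_\eta^*z$ rather than $D_\eta^{-1}z$. It is still bounded in $L^\infty_x$ with decay by \eqref{Linw}. Since the outer symbol there is $\langle\xi\rangle_\mu^{-1}$, only an $L^1_x$ bound on the product is required.
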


\begin{proof}
The four integrals are estimated in the same way. We only consider the first one.
Set
$$
G_{\rm mod}=\dot y\left(\partial_x z-\frac{\mu}{4}S_\eta z^2\right)-i(\dot\phi+\eta\dot y)\left(T_\eta z-\frac14S_\eta z^2\right).
$$
Then $\mathcal{G}_{\rm mod}(t,a)=e^{it\langle a\rangle_\mu^2}(\widetilde{\mathcal F}G_{\rm mod})(t,a).$
Using the definition of $m_{++}$, Corollary~\ref{cdfi}, and \eqref{Fotrseta}, we have
\begin{align*}
&\int_{\mathbb R^2}e^{it\Phi_{++}(\xi,a,b)}m_{++}(\xi,a,b)(\partial_t\widetilde g)_{\rm mod}(t,a)\widetilde g(t,b){\rm d}a{\rm d}b\\
=&\frac{e^{it\langle\xi\rangle_\mu^2}}{4\sqrt{2\pi}}\langle\xi\rangle_\mu\int_{\mathbb R^2}\operatorname{sech}\left(\frac{\pi(\xi-a-b)}{2\mu}\right)\frac{(\widetilde{\mathcal F}G_{\rm mod})(t,a)}{\langle a\rangle_\mu}\frac{(\widetilde{\mathcal F}z)(t,b)}{\langle b\rangle_\mu}{\rm d}a{\rm d}b\\
=&\frac{\mu}{2}e^{it\langle\xi\rangle_\mu^2}\langle\xi\rangle_\mu\mathcal F\left((S_\eta D_\eta^{-1}P_eG_{\rm mod})(D_\eta^{-1}z)\right)(t,\xi).
\end{align*}
Then Lemma~\ref{Deipd}   yields
\begin{align*}
&\left\|\int_{\mathbb R^2}e^{it\Phi_{++}(\xi,a,b)}m_{++}(\xi,a,b)(\partial_t\widetilde g)_{\rm mod}(t,a)\widetilde g(t,b){\rm d}a{\rm d}b\right\|_{L^\infty_\xi}\\
\lesssim&\left\|S_\eta D_\eta^{-1}P_eG_{\rm mod}D_\eta^{-1}z\right\|_{W^{1,1}_x}\\
\lesssim&\left\|D_\eta^{-1}P_eG_{\rm mod}\right\|_{W^{1,\infty}_x}\left\|D_\eta^{-1}z\right\|_{W^{1,\infty}_x}\\
\lesssim&\|G_{\rm mod}\|_{L^\infty_x}\|z\|_{L^\infty_x}\\
\lesssim&\bigl(|\dot y|+|\dot\phi|\bigr)\left(\|\partial_xz\|_{L^\infty_x}+\|z\|_{L^\infty_x}+\|z\|_{L^\infty_x}^2\right)\|z\|_{L^\infty_x}\\
\lesssim&\varepsilon_1^4\langle t\rangle^{-1-\rho}.
\end{align*}
The remaining three integrals satisfy the same bound. This proves the proposition.
\end{proof}
Recall the definition of $\mathcal{G}_{\ge 4}$ in (\ref{hc}) and (\ref{nonlinear}). By (\ref{001}), we get
$$\|\mathcal{G}_{\ge4}(t)\|_{L^\infty_\xi}\lesssim\|\mathcal N_{\ge4}(t)\|_{L^1_x}.$$
Moreover, using (\ref{v12}), Proposition~\ref{z-decay} and Proposition~\ref{Sobes}, we obtain
$$
\|\mathcal N_{\ge4}(t)\|_{L^1_x}\lesssim\varepsilon_1^4\langle t\rangle^{-3/2}.
$$

\begin{prop}
Define
\begin{align*}
\mathcal{G}_{\ge4}^{\rm nf}(t,\xi)=&\frac{i}{2}\int_{\mathbb R^2}e^{it\Phi_{++}(\xi,a,b)}m_{++}(\xi,a,b)e^{it\langle a\rangle_\mu^2}\widetilde{\mathcal F}\left(\mathcal N_3+\mathcal N_{\ge4}\right)(t,a)\widetilde g(t,b){\rm d}a{\rm d}b\\
+&\frac{i}{2}\int_{\mathbb R^2}e^{it\Phi_{++}(\xi,a,b)}m_{++}(\xi,a,b)\widetilde g(t,a)e^{it\langle b\rangle_\mu^2}\widetilde{\mathcal F}\left(\mathcal N_3+\mathcal N_{\ge4}\right)(t,b){\rm d}a{\rm d}b\\
+&\frac{i}{2}\int_{\mathbb R^2}e^{it\Phi_{+-}(\xi,a,b)}m_{+-}(\xi,a,b)e^{it\langle a\rangle_\mu^2}\widetilde{\mathcal F}\left(\mathcal N_3+\mathcal N_{\ge4}\right)(t,a)\overline{\widetilde g(t,b)}{\rm d}a{\rm d}b\\
-&\frac{i}{2}\int_{\mathbb R^2}e^{it\Phi_{+-}(\xi,a,b)}m_{+-}(\xi,a,b)\widetilde g(t,a)e^{-it\langle b\rangle_\mu^2}\overline{\widetilde{\mathcal F}\left(\mathcal N_3+\mathcal N_{\ge4}\right)(t,b)}{\rm d}a{\rm d}b,
\end{align*}
where $\Phi_{++},\Phi_{+-}$ and $m_{++}$, $m_{+-}$ are defined in (\ref{mpj}) and (\ref{qm}) respectively, and $\mathcal{N}_3,\mathcal{N}_{\ge 4}$ are defined in (\ref{nonlinear}). 
Then
$$
\|\mathcal{G}_{\ge4}^{\rm nf}\|_{L^\infty_\xi}\lesssim\varepsilon_1^4\langle t\rangle^{-2}.
$$
\end{prop}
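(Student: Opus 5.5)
The plan is to reuse the physical-space representation of the normal-form bilinear operators from the proof of the preceding proposition on $\mathcal G^{\rm nf}_{\rm mod}$. Write $F=\mathcal N_3+\mathcal N_{\ge4}$, so that $e^{it\langle a\rangle_\mu^2}\widetilde{\mathcal F}F(t,a)$ plays the role that $\mathcal G_{\rm mod}(t,a)$ played there.

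For the first integral, the explicit form \eqref{qm} of $m_{++}$, Corollary~\ref{cdfi} and \eqref{Fotrseta} give
\begin{equation*}
\int_{\mathbb R^2}e^{it\Phi_{++}}m_{++}\,e^{it\langle a\rangle_\mu^2}\widetilde{\mathcal F}F(t,a)\,\widetilde g(t,b)\,{\rm d}a\,{\rm d}b=\frac{\mu}{2}e^{it\langle\xi\rangle_\mu^2}\langle\xi\rangle_\mu\mathcal F\bigl((S_\eta D_\eta^{-1}P_eF)(D_\eta^{-1}z)\bigr)(\xi).
\end{equation*}
The $+-$ integrals are handled the same way. There $m_{+-}$ carries the weight $\langle a\rangle_\mu/(\langle\xi\rangle_\mu\langle b\rangle_\mu)$, so one factor appears as $P_eF$ or $D_\eta^*z$ (through \eqref{dftdbx}) instead of $D_\eta^{-1}$, and the other as the conjugate of $D_\eta^{-1}$ applied to $z$ or to $P_eF$. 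The outer factor $\langle\xi\rangle_\mu^{\pm1}$ is absorbed by bounding the $L^\infty_\xi$ norm by a $W^{1,1}_x$ norm.

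This reduces every term to an estimate of the form
\begin{equation*}
\bigl\|S_\eta\,(\mathcal D_1 P_eF)(\mathcal D_2 z)\bigr\|_{W^{1,1}_x}\lesssim\|\mathcal D_1P_eF\|_{W^{1,\infty}}\,\|\mathcal D_2 z\|_{W^{2,\infty}},
\end{equation*}
where $\mathcal D_j\in\{D_\eta^{-1},\mathrm{Id},D_\eta^*\}$ and the $L^1$ integrability comes from the weight $S_\eta$. Lemma~\ref{Deipd} and Proposition~\ref{z-decay} then give $\|\mathcal D_2z\|_{W^{2,\infty}}\lesssim\varepsilon_1\langle t\rangle^{-1/2}$.

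It remains to show
\begin{equation*}
\|D_\eta^{-1}P_eF\|_{W^{2,\infty}}+\|P_eF\|_{W^{1,\infty}}\lesssim\varepsilon_1^3\langle t\rangle^{-3/2}.
\end{equation*}
The bound for $F$ itself follows from Proposition~\ref{z-decay} and Proposition~\ref{Sobes}. $\mathcal N_3$ is cubic in $z,\partial_xz$, so $\|F\|_{W^{2,\infty}}\lesssim\|z\|_{W^{3,\infty}}^3\lesssim\varepsilon_1^3\langle t\rangle^{-3/2}$, and $\mathcal N_{\ge4}$ is even smaller. $P_dF$ is a multiple of $S_\eta$ with coefficient bounded by $\|F\|_{L^\infty}$.

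The main obstacle is bounding $D_\eta^{-1}P_eF$ in $L^\infty$ by $\|F\|_{L^\infty}$. Lemma~\ref{Deipd} only treats $D_\eta^{-1}z$, and it uses the orthogonality \eqref{zorthog}. I would repeat its ODE argument with $f=P_eF$. Since $\int S_\eta f=0$,
\begin{equation*}
D_\eta^{-1}f=S_\eta^{-1}\int_{-\infty}^xS_\eta f\quad\text{for }x\le0,\qquad D_\eta^{-1}f=-S_\eta^{-1}\int_x^{\infty}S_\eta f\quad\text{for }x\ge0,
\end{equation*}
which gives $\|D_\eta^{-1}f\|_{L^\infty}\lesssim\|f\|_{L^\infty}$. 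The higher derivatives then follow from the identity $\partial_xD_\eta^{-1}f=f+\mu T_\eta D_\eta^{-1}f$ by induction.

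Combining these bounds gives $\varepsilon_1^3\langle t\rangle^{-3/2}\cdot\varepsilon_1\langle t\rangle^{-1/2}=\varepsilon_1^4\langle t\rangle^{-2}$ for $t\ge1$. For $t\le1$ the same argument works with the uniform Sobolev bounds.
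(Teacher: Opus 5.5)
Your proposal is correct and follows essentially the same route as the paper. Each normal-form integral is rewritten via Corollary~\ref{cdfi} and \eqref{Fotrseta} as $\langle\xi\rangle_\mu^{\pm1}\mathcal F$ of an $S_\eta$-weighted product, the $L^\infty_\xi$ norm is bounded by a $W^{1,1}_x$ norm, and one concludes with $\|D_\eta^{-1}P_eF\|_{W^{1,\infty}}\lesssim\|F\|_{L^\infty}$, Lemma~\ref{Deipd} and Proposition~\ref{z-decay}.

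Your explicit rerun of the ODE argument for $D_\eta^{-1}P_eF$, using $\int S_\eta P_eF=0$, fills in a step the paper leaves implicit. One small correction: in the third integral the weight $\langle a\rangle_\mu$ turns the $F$ factor into $D_\eta^*F$ rather than $P_eF$, but your $W^{2,\infty}$ bound on $F$ already covers this.
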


\begin{proof}
The four integrals are estimated in the same way, so it suffices to consider the first one.
Note that
\begin{align*}
&\left\|\int_{\mathbb R^2}e^{it\Phi_{++}(\xi,a,b)}m_{++}(\xi,a,b)e^{it\langle a\rangle_\mu^2}\widetilde{\mathcal F}\left(\mathcal N_3+\mathcal N_{\ge4}\right)(t,a)\widetilde g(t,b)\,{\rm d}a\,{\rm d}b\right\|_{L^\infty_\xi}\\
\lesssim&\left\|\left(S_\eta D_\eta^{-1}P_e\left(\mathcal N_3+\mathcal N_{\ge4}\right)\right)\left(D_\eta^{-1}z\right)\right\|_{W^{1,1}_x}\\
\lesssim&\left\|D_\eta^{-1}P_e\left(\mathcal N_3+\mathcal N_{\ge4}\right)\right\|_{W^{1,\infty}_x}\left\|D_\eta^{-1}z\right\|_{W^{1,\infty}_x}\\
\lesssim&\left\|\mathcal N_3+\mathcal N_{\ge4}\right\|_{L^\infty_x}\|z\|_{L^\infty_x}\\
\lesssim&\varepsilon_1^4\langle t\rangle^{-2}.
\end{align*}
The remaining three integrals satisfy the same bound. This proves the proposition.
\end{proof}

Recall $\widetilde{h}$ introduced in (\ref{nfpr}) and its evolution equation (\ref{nfpe}). Combining the preceding estimates in this subsection, we obtain
\begin{equation}\label{odef}
\partial_t\widetilde h(t,\xi) = \frac{i}{4t}\langle\xi\rangle_\mu^2|\widetilde g(t,\xi)|^2\widetilde g(t,\xi)+\mathcal R(t,\xi).
\end{equation}
where
$$
\|\mathcal R(t)\|_{L^\infty_\xi}\lesssim_N\varepsilon_1^3 t^{-1-\rho},\quad t\ge1.
$$
We now turn to prove the modified scattering for the radiation $u$.

\subsection{Modified scattering}\label{submodi}

The aim of this subsection is to prove \eqref{L2aspthm}.  Note that
\begin{align*}
\|u(t)-\mathbf W(t)\|_{L^2_x}
\le&\|u(t)-z(t)\|_{L^2_x}\\
+&\left\|z(t,x)-\frac{e^{ix^2/(4t)-i\mu^2t-i\pi/4}}{\sqrt{2t}}\frac{i\zeta-\mu T_\eta(x)}{\langle\zeta\rangle_\mu}\widetilde g(t,\zeta)\right\|_{L^2_x}\\
+&\left\|\frac{e^{ix^2/(4t)-i\mu^2t-i\pi/4}}{\sqrt{2t}}\frac{i\zeta-\mu T_\eta(x)}{\langle\zeta\rangle_\mu}
\left(\widetilde g(t,\zeta)-e^{\frac{i}{4}\langle\zeta\rangle_\mu^2|W_\infty(\zeta)|^2\log t}W_\infty(\zeta)\right)\right\|_{L^2_x}.
\end{align*}
The change of variables \eqref{u-z} and the estimates \eqref{zinfdecay} and \eqref{Hmze} give
\begin{equation*}
\|u(t)-z(t)\|_{L^2_x}\lesssim\|z(t)\|_{L^\infty_x}^2\|z(t)\|_{L^2_x}\lesssim\varepsilon_1^3\langle t\rangle^{-1}.
\end{equation*}
Combining the representation \eqref{profile-inv} with the formula \eqref{gee}  yields
$$
z(t,x)=\frac{e^{-i\mu^2t}}{\sqrt{2\pi}}\int_{\mathbb R}e^{ix\xi-it\xi^2}\frac{i\xi-\mu T_\eta(x)}{\langle\xi\rangle_\mu}\widetilde g(t,\xi){\rm d}\xi,
$$
which can be rewritten as
$$
z(t,x)=e^{-i\mu^2t}e^{it\partial_x^2}\mathcal F^{-1}\left(\frac{i\xi\widetilde g(t,\xi)}{\langle\xi\rangle_\mu}\right)(x)-\mu T_\eta(x)e^{-i\mu^2t}e^{it\partial_x^2}\mathcal F^{-1}\left(\frac{\widetilde g(t,\xi)}{\langle\xi\rangle_\mu}\right)(x).
$$
By (\ref{freesch}), for $t\ge 1$, we get
$$
\left\|e^{it\partial_x^2}\mathcal F^{-1}\left(\frac{i\xi\widetilde g(t,\xi)}{\langle\xi\rangle_\mu}\right)-\frac{e^{ix^2/(4t)-i\pi/4}}{\sqrt{2t}}\frac{i\zeta\widetilde g(t,\zeta)}{\langle\zeta\rangle_\mu}\right\|_{L^2_x}
\lesssim t^{-1/2}\left\|\partial_\xi\left(\frac{i\xi\widetilde g(t,\xi)}{\langle\xi\rangle_\mu}\right)\right\|_{L^2_\xi},
$$
and
$$
\left\|e^{it\partial_x^2}\mathcal F^{-1}\left(\frac{\widetilde g(t,\xi)}{\langle\xi\rangle_\mu}\right)-\frac{e^{ix^2/(4t)-i\pi/4}}{\sqrt{2t}}\frac{\widetilde g(t,\zeta)}{\langle\zeta\rangle_\mu}\right\|_{L^2_x}
\lesssim t^{-1/2}\left\|\partial_\xi\left(\frac{\widetilde g(t,\xi)}{\langle\xi\rangle_\mu}\right)\right\|_{L^2_\xi}.
$$
Therefore, it suffices to show that
\begin{equation}\label{gmodsca}
\left\|e^{-\frac{i}{4}\langle\xi\rangle_\mu^2|W_\infty(\xi)|^2\log t}\widetilde g(t,\xi)-W_\infty(\xi)\right\|_{L^2_\xi}\longrightarrow0, \ as \ t\to\infty.
\end{equation}
Indeed, 
\begin{align*}
&\Big\|m(t)-R_{\phi_\infty}\Big(\omega_*+\operatorname{Re}\mathbf W(t)n_*+\operatorname{Im}\mathbf W(t)p_*\Big)(\cdot-y_\infty)\Big\|_{L^2_x}\\
\lesssim&
\|u(t)-\mathbf W(t)\|_{L^2_x}
+\|\alpha(t)\|_{L^2_x}
+\bigl(|y(t)-y_\infty|+|\phi(t)-\phi_\infty|\bigr)\bigl(1+\|u(t)\|_{H^1_x}\bigr).
\end{align*}

For $t\geq1$, we define
$$
\Theta(t,\xi)=\frac14\langle\xi\rangle_\mu^2\int_1^t\frac{|\widetilde g(s,\xi)|^2}{s}\,\mathrm ds.
$$
Therefore \eqref{odef} yields
\begin{align*}
\partial_t\left(e^{-i\Theta(t,\xi)}\widetilde h(t,\xi)\right)
&=e^{-i\Theta(t,\xi)}\left(\mathcal R(t,\xi)+\frac{i}{4t}\langle\xi\rangle_\mu^2|\widetilde g(t,\xi)|^2\bigl(\widetilde g(t,\xi)-\widetilde h(t,\xi)\bigr)\right).
\end{align*}
By \eqref{nfpr}, \eqref{qm}, Corollary~\ref{cdfi}, and \eqref{Fotrseta}, we obtain
$$
-\frac12\int_{\mathbb R^2}e^{it\Phi_{++}(\xi,a,b)}m_{++}(\xi,a,b)\widetilde g(t,a)\widetilde g(t,b)\,{\rm d}a\,{\rm d}b=-\frac{\mu}{4}e^{it\langle\xi\rangle_\mu^2}\langle\xi\rangle_\mu\mathcal F\left[S_\eta\bigl(D_\eta^{-1}z(t)\bigr)^2\right](\xi).
$$
The other quadratic contribution satisfies
$$
-\frac12\int_{\mathbb R^2}e^{it\Phi_{+-}(\xi,a,b)}m_{+-}(\xi,a,b)\widetilde g(t,a)\overline{\widetilde g(t,b)}\,{\rm d}a\,{\rm d}b=\frac{\mu}{2}e^{it\langle\xi\rangle_\mu^2}\langle\xi\rangle_\mu^{-1}\mathcal F\left[S_\eta(D_\eta^*z(t))\overline{D_\eta^{-1}z(t)}\right](\xi).
$$
Therefore (\ref{nfpr}) gives 
\begin{align*}
\widetilde h(t,\xi)-\widetilde g(t,\xi)
=&-\frac{\mu}{4}e^{it\langle\xi\rangle_\mu^2}\langle\xi\rangle_\mu\mathcal F\left(S_\eta\bigl(D_\eta^{-1}z(t)\bigr)^2\right)(\xi)\\
&+\frac{\mu}{2}e^{it\langle\xi\rangle_\mu^2}\langle\xi\rangle_\mu^{-1}\mathcal F\left(S_\eta(D_\eta^*z(t))\overline{D_\eta^{-1}z(t)}\right)(\xi).
\end{align*}
Using Proposition~\ref{Deipd} and \eqref{Linw}, we have
\begin{align*}
\|\langle\xi\rangle_\mu^2(\widetilde h(t)-\widetilde g(t))\|_{L^\infty_\xi}
&\lesssim\sum_{k=0}^3\left\|\partial_x^k\left[S_\eta\bigl(D_\eta^{-1}z(t)\bigr)^2\right]\right\|_{L^1_x}+\sum_{k=0}^1\left\|\partial_x^k\left[S_\eta(D_\eta^*z(t))\overline{D_\eta^{-1}z(t)}\right]\right\|_{L^1_x}\\
&\lesssim\|D_\eta^{-1}z(t)\|_{W^{3,\infty}_x}^2+\|D_\eta^*z(t)\|_{W^{1,\infty}_x}\|D_\eta^{-1}z(t)\|_{W^{1,\infty}_x}\\
&\lesssim\varepsilon_1^2\langle t\rangle^{-1}.
\end{align*}
Hence  for any $t\geq1$ 
\begin{align*}
\left\|\partial_t\left(e^{-i\Theta(t)}\widetilde h(t)\right)\right\|_{L^\infty_\xi}
&\lesssim\varepsilon_1^3t^{-1-\rho}+\frac1t\|\widetilde g(t)\|_{L^\infty_\xi}^2\|\langle\xi\rangle_\mu^2(\widetilde g(t)-\widetilde h(t))\|_{L^\infty_\xi}\lesssim\varepsilon_1^3t^{-1-\rho}.
\end{align*}
Letting
$$
w_\infty(\xi)=\widetilde h(1,\xi)+\int_1^\infty\partial_s\left(e^{-i\Theta(s,\xi)}\widetilde h(s,\xi)\right){\rm d}s,
$$
we find
\begin{equation}\label{ghwinf}
\begin{aligned}
\left\|e^{-i\Theta(t)}\widetilde g(t)-w_\infty\right\|_{L^\infty_\xi}
&\le\left\|\widetilde g(t)-\widetilde h(t)\right\|_{L^\infty_\xi} + \left\|e^{-i\Theta(t)}\widetilde h(t)-w_\infty\right\|_{L^\infty_\xi}\\
&\lesssim\varepsilon_1^2t^{-\rho}.
\end{aligned}
\end{equation}
The uniform bound \eqref{gLlt} gives
\begin{equation}
\label{ginfde}
|\widetilde g(t,\xi)|+|w_\infty(\xi)|\lesssim\varepsilon_1\langle\xi\rangle^{-(N-2)},
\end{equation}
which together with \eqref {ghwinf}  implies
\begin{equation}
\label{igwest}
\begin{aligned}
&\left\|\int_1^\infty\frac1s\langle\xi\rangle_\mu^2\left(|\widetilde g(s)|^2-|w_\infty|^2\right)\,{\rm d}s\right\|_{L^\infty_\xi}\\
\le{}&\int_1^\infty\frac1s\left\|\langle\xi\rangle_\mu^2\left(|\widetilde g(s)|+|w_\infty|\right)\right\|_{L^\infty_\xi}
\left\||\widetilde g(s)|-|w_\infty|\right\|_{L^\infty_\xi}\,{\rm d}s\\
\le{}&\int_1^\infty\frac1s\left\|\langle\xi\rangle_\mu^2\left(|\widetilde g(s)|+|w_\infty|\right)\right\|_{L^\infty_\xi}
\left\|e^{-i\Theta(s)}\widetilde g(s)-w_\infty\right\|_{L^\infty_\xi}\,{\rm d}s\\
\lesssim{}&\ \varepsilon_1^3,
\end{aligned}
\end{equation}
since $N\ge 6$.
We may thus define
$$
\Gamma_\infty(\xi)=\frac14\langle\xi\rangle_\mu^2\int_1^\infty\frac{|\widetilde g(s,\xi)|^2-|w_\infty(\xi)|^2}{s}{\rm d}s.
$$
Then
$$\Theta(t,\xi)=\frac14\langle\xi\rangle_\mu^2|w_\infty(\xi)|^2\log t+\Gamma_\infty(\xi)-r(t,\xi),$$
where
$$
r(t,\xi) = \frac14\langle\xi\rangle_\mu^2\int_t^\infty\frac{|\widetilde g(s,\xi)|^2-|w_\infty(\xi)|^2}{s}{\rm d}s.
$$
The argument used for \eqref{igwest} also gives
\begin{equation}\label{rees}
\begin{aligned}
\|r(t)\|_{L^\infty_\xi}
&\le\frac14\int_t^\infty\frac1s\left\|\langle\xi\rangle_\mu^2\left(|\widetilde g(s)|^2-|w_\infty|^2\right)\right\|_{L^\infty_\xi}{\rm d}s\\
&\lesssim\int_t^\infty\left(\varepsilon_1^3s^{-2}+\varepsilon_1^4s^{-1-\rho}\right){\rm d}s\\
&\lesssim\varepsilon_1^3t^{-\rho},\quad N\ge 6.
\end{aligned}
\end{equation}
Let $W_\infty(\xi) = e^{i\Gamma_\infty(\xi)}w_\infty(\xi),$ note that 
\begin{align*}
e^{-\frac{i}{4}\langle\xi\rangle_\mu^2|W_\infty(\xi)|^2\log t}\widetilde g(t,\xi)=e^{i\Gamma_\infty(\xi)}e^{-ir(t,\xi)}e^{-i\Theta(t,\xi)}\widetilde g(t,\xi).
\end{align*}
By 
\eqref{ghwinf} and \eqref{rees}, we obtain
\begin{align*}
&\bigl\|e^{-\frac{i}{4}\langle\xi\rangle_\mu^2|W_\infty|^2\log t}\widetilde g(t)-W_\infty\bigr\|_{L^\infty_\xi}\\
\lesssim&\bigl\|r(t)\|_{L^\infty_\xi}\|\widetilde g(t)\bigr\|_{L^\infty_\xi} + \bigr\| e^{-i\Theta(t)}\widetilde g(t)-w_\infty\bigr\|_{L^\infty_\xi}\\
\lesssim&\varepsilon_1^2t^{-\rho}.
\end{align*}
The bound \eqref{ginfde} gives
$$
\left|e^{-\frac{i}{4}\langle\xi\rangle_\mu^2|W_\infty(\xi)|^2\log t}\widetilde g(t,\xi)-W_\infty(\xi)\right|\lesssim\varepsilon_1\langle\xi\rangle^{-(N-2)}.
$$
The dominated convergence theorem therefore yields \eqref{gmodsca}.

\appendix
\section{Conservation laws}
\label{appcsv}

In this appendix, we verify directly the conservation of the two quantities \eqref{H2eta} and \eqref{H4eta}. Recall that, under the rotation $\mathbf M=R_{\eta x}m$, the quantities \eqref{H2eta} and \eqref{H4eta} become
\begin{align*}
\mathcal H_\eta^1(m)&=\int_{\mathbb R}\left(|\mathbf M_x|^2+\mu^2(1-\mathbf M_1^2)\right){\rm d}x,\\
\mathcal H_\eta^2(m)&=\int_{\mathbb R}\left(|\mathbf M_{xx}|^2-\frac54|\mathbf M_x|^4+\mu^2\left(\frac32\mathbf M_1^2|\mathbf M_x|^2-|\partial_x\mathbf M_1|^2\right)+\frac{\mu^4}{4}(1-\mathbf M_1^4)\right){\rm d}x.
\end{align*}
Using \eqref{LLM} and integrating by parts, we obtain
\begin{align*}
\frac{{\rm d}}{{\rm d}t}\mathcal H_\eta^1(m(t))&=-2\int_{\mathbb R}(\mathbf M_{xx}+\mu^2\mathbf M_1e_1)\cdot\mathbf M_t{\rm d}x\\
&=-2\int_{\mathbb R}(\mathbf M_{xx}+\mu^2\mathbf M_1e_1)\cdot\left[\mathbf M\wedge(\mathbf M_{xx}+\mu^2\mathbf M_1e_1)\right]{\rm d}x=0.
\end{align*}
For the second quantity, integrating by parts in $x$ gives
\begin{align*}
\frac{{\rm d}}{{\rm d}t}\mathcal H_\eta^2(m(t))={}&\int_{\mathbb R}\mathbf M_t\cdot\Big[2\mathbf M_{xxxx}+5\partial_x(|\mathbf M_x|^2\mathbf M_x)\\
&\qquad+\mu^2\big(3\mathbf M_1|\mathbf M_x|^2e_1-3\partial_x(\mathbf M_1^2\mathbf M_x)+2(\partial_x^2\mathbf M_1)e_1\big)-\mu^4\mathbf M_1^3e_1\Big]{\rm d}x.
\end{align*}
Using \eqref{LLM}, $|\mathbf M|=1$, and integrating by parts in $x$, we obtain
\begin{align*}
2\int_{\mathbb R}\mathbf M_t\cdot\mathbf M_{xxxx}{\rm d}x
=&2\int_{\mathbb R}\mathbf M_{xxxx}\cdot(\mathbf M\wedge\mathbf M_{xx}){\rm d}x
+2\mu^2\int_{\mathbb R}\mathbf M_1\mathbf M_{xxxx}\cdot(\mathbf M\wedge e_1){\rm d}x\\
=&-2\int_{\mathbb R}\mathbf M_{xxx}\cdot(\mathbf M_x\wedge\mathbf M_{xx}){\rm d}x
+2\mu^2\int_{\mathbb R}\mathbf M_{xx}\cdot\partial_x^2\bigl(\mathbf M_1(\mathbf M\wedge e_1)\bigr){\rm d}x\\
=&5\int_{\mathbb R}(\partial_x|\mathbf M_x|^2)\mathbf M\cdot(\mathbf M_x\wedge\mathbf M_{xx}){\rm d}x
-2\mu^2\int_{\mathbb R}(\partial_x^2\mathbf M_1)\partial_x\bigl[\mathbf M\cdot(\mathbf M_x\wedge e_1)\bigr]{\rm d}x\\
&-4\mu^2\int_{\mathbb R}(\partial_x\mathbf M_1)\Bigl[\mathbf M_1\mathbf M\cdot(\mathbf M_x\wedge\mathbf M_{xx})
+|\mathbf M_x|^2\mathbf M\cdot(\mathbf M_x\wedge e_1)\Bigr]{\rm d}x.
\end{align*}
Next,
\begin{align*}
5\int_{\mathbb R}\mathbf M_t\cdot\partial_x(|\mathbf M_x|^2\mathbf M_x){\rm d}x
={}&-5\int_{\mathbb R}(\partial_x|\mathbf M_x|^2)\mathbf M\cdot(\mathbf M_x\wedge\mathbf M_{xx}){\rm d}x\\
-5\mu^2\int_{\mathbb R}\mathbf M_1&\Bigl[(\partial_x|\mathbf M_x|^2)\mathbf M\cdot(\mathbf M_x\wedge e_1)+|\mathbf M_x|^2\partial_x\bigl[\mathbf M\cdot(\mathbf M_x\wedge e_1)\bigr]\Bigr]{\rm d}x.
\end{align*}
Furthermore, 
\begin{align*}
3\mu^2\int_{\mathbb R}\mathbf M_1|\mathbf M_x|^2\mathbf M_t\cdot e_1{\rm d}x
=3\mu^2\int_{\mathbb R}\mathbf M_1|\mathbf M_x|^2\partial_x\bigl[\mathbf M\cdot(\mathbf M_x\wedge e_1)\bigr]{\rm d}x.
\end{align*}
Similarly,
\begin{align*}
-3\mu^2\int_{\mathbb R}\mathbf M_t\cdot\partial_x(\mathbf M_1^2\mathbf M_x){\rm d}x
={}&6\mu^2\int_{\mathbb R}\mathbf M_1(\partial_x\mathbf M_1)\mathbf M\cdot(\mathbf M_x\wedge\mathbf M_{xx}){\rm d}x\\
&+6\mu^4\int_{\mathbb R}\mathbf M_1^2(\partial_x\mathbf M_1)\mathbf M\cdot(\mathbf M_x\wedge e_1){\rm d}x\\
&+3\mu^4\int_{\mathbb R}\mathbf M_1^3\partial_x\bigl[\mathbf M\cdot(\mathbf M_x\wedge e_1)\bigr]{\rm d}x.
\end{align*}
Moreover,
$$
2\mu^2\int_{\mathbb R}(\partial_x^2\mathbf M_1)\mathbf M_t\cdot e_1{\rm d}x
=2\mu^2\int_{\mathbb R}(\partial_x^2\mathbf M_1)\partial_x\bigl[\mathbf M\cdot(\mathbf M_x\wedge e_1)\bigr]{\rm d}x.
$$
Finally,
$$
-\mu^4\int_{\mathbb R}\mathbf M_1^3\mathbf M_t\cdot e_1{\rm d}x
=-\mu^4\int_{\mathbb R}\mathbf M_1^3\partial_x\bigl[\mathbf M\cdot(\mathbf M_x\wedge e_1)\bigr]{\rm d}x,
$$
Combining the preceding identities and using
$$
|\mathbf M_x|^2\partial_x\bigl[\mathbf M\cdot(\mathbf M_x\wedge e_1)\bigr]
=\frac12(\partial_x|\mathbf M_x|^2)\mathbf M\cdot(\mathbf M_x\wedge e_1)
-(\partial_x\mathbf M_1)\mathbf M\cdot(\mathbf M_x\wedge\mathbf M_{xx}),
$$
we obtain
\begin{align*}
\frac{{\rm d}}{{\rm d}t}\mathcal H_\eta^2(m(t))
=&-4\mu^2\int_{\mathbb R}\partial_x\Bigl(\mathbf M_1|\mathbf M_x|^2\mathbf M\cdot(\mathbf M_x\wedge e_1)\Bigr){\rm d}x\\
&+2\mu^4\int_{\mathbb R}\partial_x\Bigl(\mathbf M_1^3\mathbf M\cdot(\mathbf M_x\wedge e_1)\Bigr){\rm d}x=0.
\end{align*}

\section{Local well-posedness}
\label{applwp}

For an integer $k\geq1$, and $i<j$,  we define $\nabla_\eta = \partial_x+\eta e_1\wedge $ and
\begin{align*}
\mathcal E_\eta^k(\mathbb R) &=\left\{m\in L^1_{\mathrm{loc}}(\mathbb R;\mathbb S^2): (m_2,m_3)\in L^2(\mathbb R)\times L^2(\mathbb R),\ 
\nabla_\eta m\in H^{k-1}(\mathbb R)\right\},\\
\mathcal E^k_{i,j}(\mathbb R)&=\left\{m\in L^1_{\mathrm{loc}}(\mathbb R;\mathbb S^2):(m_i,m_j)\in L^2(\mathbb R)\times L^2(\mathbb R),\ 
\partial_x m\in H^{k-1}(\mathbb R)\right\}.
\end{align*}
Following the energy space metric used in \cite{LG2018}, and after a permutation of the coordinate components, for $\{i,j,\ell\}=\{1,2,3\}$, we equip $\mathcal E_{i,j}^k(\mathbb R)$ with the metric induced by the norm
$$\|v\|_{\mathcal E_{i,j}^k} = \left( \|v_i\|_{H^k}^2 +\|v_j\|_{H^k}^2 +\|v_\ell\|_{L^\infty}^2+\|\partial_xv_\ell\|_{H^{k-1}}^2\right)^{1/2},$$
namely,
$$d_{\mathcal E_{i,j}^k}(m,\widetilde m)=\|m-\widetilde m\|_{\mathcal E_{i,j}^k}.$$
We equip $\mathcal E_\eta^k(\mathbb R)$ with the pullback metric
$$d_{\mathcal E_\eta^k}(m,\widetilde m)=d_{\mathcal E_{2,3}^k}\bigl(R_{\eta x}m,R_{\eta x}\widetilde m\bigr).$$
\begin{thm}
\label{LWPth}
Let $|\eta|<1$ and let $N\ge6$ be an integer. Assume that
$m_0:\mathbb R\to\mathbb S^2, m_0-\omega_*\in H^N(\mathbb R).$
Then there exist a maximal time $T_{\max}>0$ and a unique solution
$m:\mathbb R\times[0,T_{\max})\to\mathbb S^2$
to
$$
\partial_t m
=
m\wedge
\left(
\partial_x^2m
+2\eta e_1\wedge\partial_xm
+m_1e_1
\right),
\qquad
m(0)=m_0.
$$
Moreover, the following statements hold.

\begin{enumerate}
\item[(i)]
For every $0<T<T_{\max}$, $m\in L^\infty\bigl([0,T];\mathcal E_\eta^N(\mathbb R)\bigr), \partial_t m
\in L^\infty\bigl([0,T];H^{N-2}(\mathbb R)\bigr).$

\item[(ii)]
If $T_{\max}<\infty$, then
$$
\int_0^{T_{\max}}
\|\nabla_\eta m(t)\|_{L^\infty(\mathbb R)}^2\,dt =
\infty.
$$

\item[(iii)]
For every $0<T<T_{\max}$, the flow map $m_0\longmapsto m$
is well defined and locally Lipschitz continuous from $\mathcal E_\eta^N(\mathbb R)$
to $C\bigl([0,T];\mathcal E_\eta^{N-1}(\mathbb R)\bigr).$

\item[(iv)] If $\ell>N$ is an integer and $m_0-\omega_*\in H^\ell(\mathbb R)$, then, for every $0<T<T_{\max}$, $m\in L^\infty\bigl([0,T];\mathcal E_\eta^\ell(\mathbb R)\bigr)$, $\partial_t m
\in L^\infty\bigl([0,T];H^{\ell-2}(\mathbb R)\bigr).$
\end{enumerate}
\end{thm}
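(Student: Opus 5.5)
We will prove Theorem~\ref{LWPth} by working with $\mathbf M=R_{\eta x}m$, which solves \eqref{LLM} and removes the DMI derivative term. Since $\omega_*$ maps to $\Omega_*$ and $m_0-\omega_*\in H^N$, we have $\mathbf M_0-\Omega_*\in H^N$, with $\nabla_\eta m$ corresponding to $\partial_x\mathbf M$. Thus $m\in\mathcal E^N_\eta$ if and only if $\mathbf M\in\mathcal E^N_{2,3}$, so it suffices to prove the analogous statements for \eqref{LLM} in $\mathcal E^k_{2,3}$. We plan to follow the parabolic regularization scheme (as in de Laire--Gravejat \cite{LG2015,LG2018}). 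For $\varepsilon>0$ we consider the Landau--Lifshitz--Gilbert approximation
$$\partial_t\mathbf M^\varepsilon=\mathbf M^\varepsilon\wedge H(\mathbf M^\varepsilon)-\varepsilon\,\mathbf M^\varepsilon\wedge\bigl(\mathbf M^\varepsilon\wedge H(\mathbf M^\varepsilon)\bigr),\qquad H(\mathbf M)=\partial_x^2\mathbf M+\mu^2\mathbf M_1e_1.$$
This equation preserves $|\mathbf M^\varepsilon|=1$ and is locally well posed for $\mathbf M^\varepsilon-\Omega_*\in H^N$, by a fixed point in $C([0,T_\varepsilon];\Omega_*+H^N)$ using the heat semigroup.

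The core step is to prove \emph{uniform in $\varepsilon$} energy estimates on a time interval independent of $\varepsilon$. We will estimate $\|\mathbf M_2\|_{L^2}^2+\|\mathbf M_3\|_{L^2}^2$ together with $\|\partial_x^{k}\mathbf M\|_{L^2}^2$ for $1\le k\le N$. For the top order we will differentiate the equation $k$ times and test against $\partial_x^{k}\mathbf M$. The worst term $\mathbf M\wedge\partial_x^{k+2}\mathbf M\cdot\partial_x^k\mathbf M$ integrates by parts to terms with at most $k+1$ derivatives, and the remaining commutators cost one derivative via $\partial_x\mathbf M\wedge\partial_x^{k+1}\mathbf M$. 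This loss must be recovered, and here we expect the main obstacle. Following the standard trick for Schr\"odinger maps, we will use the geometric structure: $\partial_x^{k+1}\mathbf M\cdot\mathbf M$ is of lower order by $|\mathbf M|=1$. We will therefore replace $\partial_x^k\mathbf M$ by the covariant derivative $\nabla_x^{k-1}\partial_x\mathbf M$, or equivalently project onto the tangent space. For this modified energy the highest-order cubic terms become total derivatives. The result is a Gronwall bound
$$\frac{d}{dt}\mathcal E_k\lesssim C\bigl(\|\partial_x\mathbf M\|_{L^\infty}\bigr)\bigl(1+\|\partial_x\mathbf M\|_{L^\infty}^2\bigr)\mathcal E_k,$$
uniform in $\varepsilon$; the damping term contributes a favorable sign up to lower-order errors. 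The $L^2$ bounds on $\mathbf M_2,\mathbf M_3$ follow from $\partial_t\mathbf M$ being controlled in $H^{N-2}$ and from the sphere constraint, since $1-\mathbf M_1^2=\mathbf M_2^2+\mathbf M_3^2$.

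Once the uniform estimates hold, we will pass to the limit $\varepsilon\to0$ by weak-$*$ compactness in $L^\infty([0,T];\mathcal E^N_{2,3})$ and Aubin--Lions, with $\partial_t\mathbf M^\varepsilon$ bounded in $H^{N-2}$. This yields a solution with the regularity in (i). For uniqueness and the Lipschitz dependence in (iii), we will estimate the difference $\mathbf M-\widetilde{\mathbf M}$ of two solutions in $\mathcal E^{N-1}_{2,3}$, again with a covariant or tangential correction to absorb the derivative loss. The cost of this correction is one derivative, which is exactly the drop from $N$ to $N-1$.

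The blow-up criterion (ii) will follow because the energy inequality only involves $\|\partial_x\mathbf M\|_{L^\infty}^2=\|\nabla_\eta m\|_{L^\infty}^2$ at the exponential level. If $\int_0^{T_{\max}}\|\nabla_\eta m\|_{L^\infty}^2\,dt<\infty$, then $\mathcal E_N$ stays bounded and the solution extends past $T_{\max}$. Persistence of regularity (iv) follows from the same estimate at level $\ell$: the Gronwall factor again depends only on $\|\partial_x\mathbf M\|_{L^\infty}$, so higher regularity propagates on the whole interval $[0,T_{\max})$.
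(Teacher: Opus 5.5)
You reduce to $\mathbf M=R_{\eta x}m$, so that $\partial_x\mathbf M=R_{\eta x}\nabla_\eta m$ and $m\in\mathcal E^N_\eta$ exactly when $\mathbf M\in\mathcal E^N_{2,3}$; the metrics match by definition. This is also the paper's first step, but after it the two routes have nothing in common. The paper proves no estimates at all. It makes the further change of coordinates $n=P\mathbf M$, with $P$ swapping the first two components. This turns \eqref{LLM} into the biaxial Landau--Lifshitz equation $\partial_tn+n\wedge(\partial_x^2n-\mu^2n_1e_1-\mu^2n_3e_3)=0$, whose limits are $\pm e_2$ at $\pm\infty$. The paper then checks that $n_0-n_*\in H^N$ with $n_*=(S_\eta,T_\eta,0)\in\mathcal E^N_{1,3}$, and reads off (i)--(iv) from Theorem~5 of \cite{LG2018} with $\lambda_1=\lambda_3=\mu^2$. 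You instead re-prove that theorem (parabolic regularization, tangential energy estimates, compactness, difference estimates), which is essentially the machinery behind the cited result. Your route is self-contained and shows why the statement has its shape: the $\int\|\nabla_\eta m\|_{L^\infty}^2$ criterion and the drop from $\mathcal E^N$ to $\mathcal E^{N-1}$ in (iii). The paper's route takes three lines; its only content is spotting the coordinate permutation that places the problem inside a known well-posedness theorem.

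If you carry out your version, three steps of the sketch need tightening.

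\emph{The regularized problem.} It cannot be solved by a heat-semigroup fixed point. Its principal part is $(\varepsilon+\mathbf M\wedge)\partial_x^2$, so the Schr\"odinger term is second order with an $O(1)$ coefficient and is not perturbative relative to $e^{\varepsilon t\partial_x^2}$. You need quasilinear parabolic theory, or a Galerkin scheme closed by the same energy estimates.

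\emph{The Gronwall factor.} Criterion (ii) in the stated form requires the factor to be $C_k\bigl(1+\|\partial_x\mathbf M\|_{L^\infty}^2\bigr)$ with $C_k$ an absolute constant. A nonconstant prefactor $C(\|\partial_x\mathbf M\|_{L^\infty})$, as in your display, only gives divergence of $\int C(\|\partial_x\mathbf M\|_{L^\infty})(1+\|\partial_x\mathbf M\|_{L^\infty}^2)\,dt$. The constant form is true, since three tangent vectors have zero triple product. Inserting $2\,\mathbf M\cdot\partial_x^{j}\mathbf M=-\sum_{i=1}^{j-1}\binom{j}{i}\partial_x^i\mathbf M\cdot\partial_x^{j-i}\mathbf M$ then makes every top-order term quartic, of total order $2k+2$. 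One-dimensional Gagliardo--Nirenberg bounds such terms by $\|\partial_x\mathbf M\|_{L^\infty}^2\|\partial_x^k\mathbf M\|_{L^2}^2$. This is the heart of (ii) and must be checked, not asserted.

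\emph{The difference estimate.} The difference $\mathbf w=\mathbf M-\widetilde{\mathbf M}$ is not a tangent field, so no covariant derivative is available. The derivative loss in its estimate is removed through the identity $\mathbf w\cdot\mathbf M=\frac12|\mathbf w|^2$, which makes the normal part of $\mathbf w$ quadratic. The coefficients then need one derivative more than the level at which $\mathbf w$ is estimated, and that is where the loss in (iii) comes from.
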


\begin{proof}
Let $n=P\mathbf M,$ where
$$P=
\begin{pmatrix}
0&1&0\\
1&0&0\\
0&0&1
\end{pmatrix}.$$

A direct computation gives
$$\partial_tn
+
n\wedge\left(\partial_x^2n-\mu^2n_1e_1-\mu^2n_3e_3\right)=0.$$
Thus the equation is exactly the anisotropic Landau--Lifshitz equation considered in Theorem~5 of \cite{LG2018}, with $\lambda_1=\lambda_3=\mu^2>0.$

Since $n_0-n_*=PR_{\eta x}(m_0-\omega_*)\in H^N(\mathbb R)$ and $n_*=(S_\eta,T_\eta,0)\in\mathcal E^N_{1,3}(\mathbb R)$, we have $n_0\in\mathcal E^N_{1,3}(\mathbb R)$.
Therefore, Theorem~5 of \cite{LG2018} applies to $n$ and yields all the conclusions above.
\end{proof}

The same argument yields the following local well-posedness result for $\mathbf M$.
\begin{thm}
\label{thmMlw}
Let $N\ge6$ be an integer. Assume that
$\mathbf M_0:\mathbb R\to\mathbb S^2$ and $\mathbf M_0-\Omega_*\in H^N(\mathbb R)$.
Then there exist a maximal time $T_{\max}>0$ and a unique solution
$\mathbf M:\mathbb R\times[0,T_{\max})\to\mathbb S^2$
to
$$\partial_t \mathbf M=\mathbf M\wedge\left(\partial_x^2\mathbf M+\mu^2\mathbf M_1e_1\right),\quad\mathbf M(0)=\mathbf M_0.$$
Moreover, the following statements hold.

\begin{enumerate}
\item[(i)]
For every $0<T<T_{\max}$,
$\mathbf M\in L^\infty\bigl([0,T];\mathcal E^N_{2,3}(\mathbb R)\bigr)$ and
$\partial_t \mathbf M\in L^\infty\bigl([0,T];H^{N-2}(\mathbb R)\bigr)$.

\item[(ii)]
If $T_{\max}<\infty$, then
$$\int_0^{T_{\max}}\|\partial_x\mathbf M(t)\|_{L^\infty(\mathbb R)}^2{\rm d}t=\infty.$$

\item[(iii)]
For every $0<T<T_{\max}$, the flow map $\mathbf M_0\longmapsto \mathbf M$
is well defined and locally Lipschitz continuous from $\mathcal E^N_{2,3}(\mathbb R)$ to $C\bigl([0,T];\mathcal E^{N-1}_{2,3}(\mathbb R)\bigr)$.

\item[(iv)]
If $\ell>N$ is an integer and $\mathbf M_0-\Omega_*\in H^\ell(\mathbb R)$,
then, for every $0<T<T_{\max}$, $\mathbf M\in L^\infty\bigl([0,T];\mathcal E^\ell_{2,3}(\mathbb R)\bigr)$ and $\partial_t \mathbf M\in L^\infty\bigl([0,T];H^{\ell-2}(\mathbb R)\bigr)$.
\end{enumerate}
\end{thm}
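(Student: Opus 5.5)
The plan is to reduce Theorem~\ref{thmMlw} to Theorem~\ref{LWPth}, using the rotation $\mathbf M=R_{\eta x}m$ that conjugates \eqref{mLLM} into \eqref{LLM}. I could instead redo the argument of Theorem~\ref{LWPth} directly: permute coordinates by $P$, so that $n=P\mathbf M$ solves the anisotropic Landau--Lifshitz equation of \cite{LG2018} with $\lambda_1=\lambda_3=\mu^2$, and $n_*=P\Omega_*=(S_\eta,T_\eta,0)$. The cleaner route is to define $m_0:=R_{-\eta x}\mathbf M_0$ and check the hypotheses of Theorem~\ref{LWPth}.

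First, $m_0-\omega_*=R_{-\eta x}(\mathbf M_0-\Omega_*)$. The matrix $R_{-\eta x}$ has entries that are smooth and bounded, with all derivatives bounded. Hence $m_0-\omega_*\in H^N$ by the Leibniz rule, and $m_0$ takes values in $\mathbb S^2$ because $R_{-\eta x}$ is orthogonal. Theorem~\ref{LWPth} then gives a maximal solution $m$ on $[0,T_{\max})$. Setting $\mathbf M=R_{\eta x}m$ gives a solution of \eqref{LLM} by the computation recorded at the start of Section~\ref{3}. Uniqueness transfers the same way, since the correspondence $m\leftrightarrow\mathbf M$ is a bijection between solution classes.

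Next I transfer each item, using two facts. The first is the identity $\partial_x(R_{\eta x}v)=R_{\eta x}\nabla_\eta v$, which follows from $\partial_xR_{\eta x}=\eta R_{\eta x}(e_1\wedge\cdot)$ and the fact that $R_{\eta x}$ commutes with $e_1\wedge$. It gives $\partial_x\mathbf M=R_{\eta x}\nabla_\eta m$, so $|\partial_x\mathbf M|=|\nabla_\eta m|$ pointwise. The second is that $(\mathbf M_2,\mathbf M_3)$ is the rotation of $(m_2,m_3)$ by angle $\eta x$, and $\mathbf M_1=m_1$.

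With these facts, $m\in\mathcal E^N_\eta$ is equivalent to $\mathbf M\in\mathcal E^N_{2,3}$, which is exactly the definition of the pullback metric $d_{\mathcal E^k_\eta}$. This yields (i) and (iv), with $\partial_t\mathbf M=R_{\eta x}\partial_tm\in H^{N-2}$ again by the Leibniz rule. The blow-up criterion (ii) transfers verbatim because $\|\partial_x\mathbf M\|_{L^\infty}=\|\nabla_\eta m\|_{L^\infty}$. For (iii), since $d_{\mathcal E^k_\eta}$ is by definition the pullback of $d_{\mathcal E^k_{2,3}}$ under $R_{\eta x}$, the map $m\mapsto R_{\eta x}m$ is an isometry for both $k=N$ and $k=N-1$. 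Local Lipschitz continuity of the flow map therefore transfers directly.

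The only mildly delicate point is bookkeeping. One must confirm that Theorem~\ref{LWPth} was stated for all data with $m_0-\omega_*\in H^N$, not only perturbations, and that in (iii) the spaces match exactly under $R_{\eta x}$. Both follow from the definitions, so I expect no real obstacle.
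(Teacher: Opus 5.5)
Your reduction is correct, but it is not the route the paper takes.

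The paper proves Theorem~\ref{thmMlw} by rerunning the proof of Theorem~\ref{LWPth} one level down. It applies Theorem~5 of \cite{LG2018} directly to $n=P\mathbf M$, which solves the anisotropic Landau--Lifshitz equation with $\lambda_1=\lambda_3=\mu^2$. It uses only that $P$ maps $\mathcal E^k_{2,3}$ isometrically onto $\mathcal E^k_{1,3}$ and sends $\Omega_*$ to $n_*=(S_\eta,T_\eta,0)$, so the gauge $R_{\eta x}$ never appears.

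You instead use Theorem~\ref{LWPth} as a black box and pull everything back through $m=R_{-\eta x}\mathbf M$. This is legitimate, since Theorem~\ref{LWPth} is proved without Theorem~\ref{thmMlw}. Your transfer facts are right:
\begin{itemize}
\item $\partial_x(R_{\eta x}v)=R_{\eta x}\nabla_\eta v$;
\item $R_{\pm\eta x}$ preserves every $H^s$;
\item $d_{\mathcal E^k_\eta}$ is by definition the pullback of $d_{\mathcal E^k_{2,3}}$.
\end{itemize}
Hence uniqueness and (i)--(iv) transfer verbatim. Section~\ref{3} only asserts that the two equations are intertwined. That follows from $\partial_x^2m+2\eta e_1\wedge\partial_xm+m_1e_1=\nabla_\eta^2m+\mu^2m_1e_1+\eta^2m$, since the last term is killed by $m\wedge$.

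The trade-off is as follows. Your route is somewhat circuitous, because the proof of Theorem~\ref{LWPth} already passes through $\mathbf M=R_{\eta x}m$ before permuting to $n$. Your chain is therefore effectively $\mathbf M\to m\to\mathbf M\to n$, and the $\mathbf M$-statement is the more primitive of the two. On the other hand, your argument makes the $m\leftrightarrow\mathbf M$ transfer explicit: solution classes, the blow-up quantity $\|\nabla_\eta m\|_{L^\infty}=\|\partial_x\mathbf M\|_{L^\infty}$, and the Lipschitz flow. The paper's proof of Theorem~\ref{LWPth} leaves exactly this step implicit. The paper's direct route for Theorem~\ref{thmMlw} needs nothing beyond the trivial isometry $P$.
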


\section{Global well-posedness}
\label{appgwp}

\begin{thm}
Let $|\eta|<1$ and let $N\ge6$ be an integer. Assume that
$m_0:\mathbb R\to\mathbb S^2$ and $m_0-\omega_*\in H^N(\mathbb R)$.
Then the solution given by Theorem~\ref{LWPth} is global, namely, $T_{\max}=\infty$.
Moreover, for every $T>0$, $m\in L^\infty\bigl([0,T];\mathcal E_\eta^N(\mathbb R)\bigr),\partial_t m\in L^\infty\bigl([0,T];H^{N-2}(\mathbb R)\bigr).$
\end{thm}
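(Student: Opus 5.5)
By the blow-up alternative in Theorem~\ref{LWPth}(ii), global existence for $m$ follows once we show that $\int_0^T\|\nabla_\eta m(t)\|_{L^\infty}^2\,dt<\infty$ for every finite $T$. Since $\nabla_\eta m=R_{-\eta x}\partial_x\mathbf M$, this is the same as bounding $\|\partial_x\mathbf M(t)\|_{L^\infty}$ uniformly on bounded time intervals. The plan is to work with $\mathbf M=R_{\eta x}m$, which solves \eqref{LLM}, and to obtain a priori bounds from the conserved energies of Appendix~\ref{appcsv}. The conclusion about the regularity classes $\mathcal E_\eta^N$ and $\partial_tm\in H^{N-2}$ on $[0,T]$ then follows from Theorem~\ref{LWPth}(i),(iv) once $T_{\max}=\infty$.

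\textbf{Step 1: $H^1$-level control.} The first energy $\mathcal H^1_\eta$ is conserved and finite for data with $m_0-\omega_*\in H^N$. Since $\mu^2(1-\mathbf M_1^2)\ge0$, it gives $\|\partial_x\mathbf M(t)\|_{L^2}^2\le\mathcal H^1_\eta(m_0)$ for all $t$.

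\textbf{Step 2: $H^2$-level control.} The second energy $\mathcal H^2_\eta$ is also conserved. Its leading term is $\|\mathbf M_{xx}\|_{L^2}^2$. The lower-order terms are bounded as follows:
\begin{itemize}
\item $\int|\mathbf M_x|^4$ is controlled via Gagliardo--Nirenberg, $\|\mathbf M_x\|_{L^4}^4\lesssim\|\mathbf M_x\|_{L^2}^3\|\mathbf M_{xx}\|_{L^2}$;
\item the terms $\mathbf M_1^2|\mathbf M_x|^2$ and $|\partial_x\mathbf M_1|^2$ are bounded by $\|\mathbf M_x\|_{L^2}^2$, using $|\mathbf M_1|\le1$;
\item the term $1-\mathbf M_1^4=(1-\mathbf M_1^2)(1+\mathbf M_1^2)$ is bounded by $2(1-\mathbf M_1^2)$.
\end{itemize}
Combining these with Young's inequality yields
\[
\|\mathbf M_{xx}(t)\|_{L^2}^2\le 2\mathcal H^2_\eta(m_0)+C\bigl(\mathcal H^1_\eta(m_0)\bigr).
\]
Hence $\partial_x\mathbf M$ is uniformly bounded in $H^1$, and by Sobolev embedding in $L^\infty$. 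This rules out the blow-up alternative, so $T_{\max}=\infty$.

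\textbf{Step 3: justification and propagation of higher regularity.} Two points need care.

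First, the conservation of $\mathcal H^2_\eta$ in Appendix~\ref{appcsv} was derived formally and requires enough regularity, roughly $H^4$ for $\partial_x\mathbf M$. This holds for $N\ge6$ by Theorem~\ref{LWPth}(i), but only on $[0,T_{\max})$. The boundary terms in the integrations by parts must vanish; this follows from the $L^2$ and $H^{N-1}$ membership of the relevant factors. Where needed, we approximate smoother data and use the continuity of the flow from Theorem~\ref{LWPth}(iii).

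Second, the blow-up criterion is stated at regularity $N$, so the $H^2$ bound must be propagated to $H^N$. We do this with a standard energy estimate: the equation for $\partial_x^k\mathbf M$ is linear in the top derivative, and $\mathbf M\wedge\partial_x^{k+2}\mathbf M$ is skew, so
\[
\frac{d}{dt}\|\partial_x^k\mathbf M\|_{L^2}^2\lesssim C\bigl(\|\partial_x\mathbf M\|_{W^{1,\infty}}\bigr)\|\partial_x\mathbf M\|_{H^{k}}^2 .
\]
The control of $\|\mathbf M_{xx}\|_{L^\infty}$ appearing here is itself obtained by this estimate at level $k=3$, together with the uniform $H^2$ bound and Gronwall. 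Gronwall's inequality then gives at most exponential growth on bounded intervals, which is finite for every $T$.

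\textbf{Main obstacle.} I expect the hardest part to be the rigorous bookkeeping of the energy estimate at top order. A commutator-type term $\partial_x\mathbf M\wedge\partial_x^{k+1}\mathbf M$ appears that would lose a derivative. It must be handled by integrating by parts, using $\mathbf M\cdot\partial_x^{k}\mathbf M$ identities that come from $|\mathbf M|=1$. This is where the structure exploited in \cite{LG2018} is needed; if it becomes too involved, I would instead cite their global continuation argument directly, with the a priori $H^2$ bound from Step 2 as the input.
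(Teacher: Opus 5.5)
Your proposal is correct and follows the paper's proof: conservation of $\mathcal H^1_\eta$ and $\mathcal H^2_\eta$, Gagliardo--Nirenberg plus Young to bound $\|\mathbf M_{xx}\|_{L^2}$ uniformly on $[0,T_{\max})$, then a uniform bound on $\|\mathbf M_x\|_{L^\infty}=\|\nabla_\eta m\|_{L^\infty}$, which contradicts the blow-up alternative (ii) if $T_{\max}<\infty$. You can drop the second half of your Step 3, because criterion (ii) involves only $\|\nabla_\eta m\|_{L^\infty}$ and (i) already gives the $\mathcal E^N_\eta$ and $\partial_t m\in H^{N-2}$ bounds on every $[0,T]$ once $T_{\max}=\infty$; moreover, as sketched that propagation argument would be circular, since the $k=3$ estimate you use to control $\|\mathbf M_{xx}\|_{L^\infty}$ already presupposes that control.
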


\begin{proof}
It is enough to prove that
$$
\sup_{0\leq t<T_{\max}}\|\mathbf M_x(t)\|_{L^\infty}<\infty.
$$
By the conservation of \eqref{H2eta}, together with
$\mathbf M_x=R_{\eta x}\nabla_\eta m$ and $\mathbf M_1=m_1$, we obtain
$$
\|\mathbf M_x(t)\|_{L^2}^2
\le
\mathcal H_\eta^1(m(t))
=
\mathcal H_\eta^1(m_0),
\quad
0\leq t<T_{\max}.
$$
Moreover, $\mathbf M_{xx}=R_{\eta x}\nabla_\eta^2m, \partial_x\mathbf M_1=\partial_xm_1,$
so the conservation of \eqref{H4eta} gives
$$\mathcal H_\eta^2(m_0)
=\|\mathbf M_{xx}(t)\|_{L^2}^2-\frac54\|\mathbf M_x(t)\|_{L^4}^4 +\mu^2\int
\left(\frac32\mathbf M_1^2|\mathbf M_x|^2-|\partial_x\mathbf M_1|^2\right)
+\frac{\mu^4}{4}\int(1-\mathbf M_1^4).$$
Therefore,
$$
\|\mathbf M_{xx}(t)\|_{L^2}^2
\lesssim
|\mathcal H_\eta^2(m_0)|+\|\mathbf M_x(t)\|_{L^4}^4+\|\mathbf M_x(t)\|_{L^2}^2.
$$
The one dimensional Gagliardo--Nirenberg inequality gives
$$
\|\mathbf M_x(t)\|_{L^4}^4\lesssim\|\mathbf M_x(t)\|_{L^2}^3\|\mathbf M_{xx}(t)\|_{L^2}.
$$
Using
$$
\|\mathbf M_x(t)\|_{L^2}^3\|\mathbf M_{xx}(t)\|_{L^2}\lesssim\|\mathbf M_{xx}(t)\|_{L^2}^2 + \|\mathbf M_x(t)\|_{L^2}^6,
$$
we obtain
$$
\|\mathbf M_{xx}(t)\|_{L^2}^2\lesssim |\mathcal H_\eta^2(m_0)| + \|\mathbf M_x(t)\|_{L^2}^6 + \|\mathbf M_x(t)\|_{L^2}^2.
$$
Therefore one has 
$$
\|\mathbf M_{xx}(t)\|_{L^2}^2
\lesssim
|\mathcal H_\eta^2(m_0)|
+\bigl(\mathcal H_\eta^1(m_0)\bigr)^3
+\mathcal H_\eta^1(m_0).
$$
Applying the one-dimensional Gagliardo--Nirenberg inequality once again to $\mathbf M_x$, we obtain
$$
\|\mathbf M_x(t)\|_{L^\infty}^2
\lesssim\|\mathbf M_x(t)\|_{L^2}\|\mathbf M_{xx}(t)\|_{L^2}
<\infty.
$$
\end{proof}

The same argument yields the following global well-posedness result for $\mathbf M$.
\begin{thm}
\label{gwpM}
Let $N\ge6$ be an integer. Assume that
$\mathbf M_0:\mathbb R\to\mathbb S^2$ and $\mathbf M_0-\Omega_*\in H^N(\mathbb R)$.
Then the solution given by Theorem~\ref{thmMlw} is global,
namely, $T_{\max}=\infty$.
Moreover, for every $T>0$,
$\mathbf M\in L^\infty\bigl([0,T];\mathcal E^N_{2,3}(\mathbb R)\bigr)$ and
$\partial_t \mathbf M\in L^\infty\bigl([0,T];H^{N-2}(\mathbb R)\bigr)$.
\end{thm}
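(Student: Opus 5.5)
The statement is Theorem~\ref{gwpM}: global well-posedness for $\mathbf M$. I would follow the argument just given for $m$, now working directly with \eqref{LLM}.

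The first step is the blow-up criterion. By Theorem~\ref{thmMlw}(ii), it suffices to show that $\sup_{0\le t<T_{\max}}\|\partial_x\mathbf M(t)\|_{L^\infty}<\infty$. This bound makes $\int_0^{T_{\max}}\|\partial_x\mathbf M\|_{L^\infty}^2\,dt$ finite whenever $T_{\max}<\infty$, which contradicts the criterion. Once $T_{\max}=\infty$ is known, the regularity statements for every $T>0$ follow from Theorem~\ref{thmMlw}(i).

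The second step uses two conserved quantities, written in the $\mathbf M$ variables:
\[
E_1(\mathbf M)=\int_{\mathbb R}|\mathbf M_x|^2+\mu^2(1-\mathbf M_1^2)\,dx,
\]
and $E_2(\mathbf M)$, the second-order expression displayed at the start of Appendix~\ref{appcsv}. Appendix~\ref{appcsv} verifies directly from \eqref{LLM} that both are conserved, and that computation never uses $m$.

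Finiteness of $E_1$ and $E_2$ at $t=0$ needs a short check. Since $\mathbf M_0-\Omega_*\in H^N$ and $\Omega_*$ has $\partial_x\Omega_*\in H^\infty$ and $1-(\Omega_*)_1^2=S_\eta^2\in L^1$, all integrands are integrable. For example, $1-\mathbf M_1^4=(1-\mathbf M_1^2)(1+\mathbf M_1^2)$ and $1-\mathbf M_1^2=\mathbf M_2^2+\mathbf M_3^2\in L^1$. The integrations by parts in Appendix~\ref{appcsv} are justified at the regularity $N\ge6$ of Theorem~\ref{thmMlw}(i), by a density argument if needed.

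The third step is the a priori estimate. Conservation of $E_1$ gives $\|\mathbf M_x(t)\|_{L^2}^2\le E_1(\mathbf M_0)$. Conservation of $E_2$ gives
\[
\|\mathbf M_{xx}\|_{L^2}^2\lesssim|E_2(\mathbf M_0)|+\|\mathbf M_x\|_{L^4}^4+\|\mathbf M_x\|_{L^2}^2,
\]
using $|\mathbf M_1|\le1$ and $|\partial_x\mathbf M_1|\le|\mathbf M_x|$ to control the $\mu^2$ terms. Gagliardo--Nirenberg gives $\|\mathbf M_x\|_{L^4}^4\lesssim\|\mathbf M_x\|_{L^2}^3\|\mathbf M_{xx}\|_{L^2}$. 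Absorbing with Young's inequality yields a uniform bound on $\|\mathbf M_{xx}\|_{L^2}$. Finally, $\|\mathbf M_x\|_{L^\infty}^2\lesssim\|\mathbf M_x\|_{L^2}\|\mathbf M_{xx}\|_{L^2}$ gives the uniform $L^\infty$ bound, which closes the argument.

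The main obstacle is only the rigorous justification of conservation at the given regularity, specifically the boundary terms at infinity. I would handle this either by the density and continuous-dependence statement of Theorem~\ref{thmMlw}(iii), applied with smoother data via part (iv), or simply by noting that $\mathbf M=R_{\eta x}m$ reduces the claim to the theorem just proved for $m$.
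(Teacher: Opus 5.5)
Your proposal is correct and follows essentially the same route as the paper, which proves Theorem~\ref{gwpM} by repeating the argument for $m$. Both use the blow-up criterion of Theorem~\ref{thmMlw}(ii), conservation of the first- and second-order energies from Appendix~\ref{appcsv}, and Gagliardo--Nirenberg with Young's inequality to bound $\|\mathbf M_{xx}\|_{L^2}$, and hence $\|\mathbf M_x\|_{L^\infty}$, uniformly in time. The $\frac{\mu^4}{4}\int(1-\mathbf M_1^4)$ term, which you did not mention, is nonnegative and can simply be dropped in the upper bound for $\|\mathbf M_{xx}\|_{L^2}^2$.
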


\section{Basic identities and Fourier transform formulas}
We collect here several elementary identities and Fourier transform formulas used throughout the paper.
\begin{align}
S_\eta^2+T_\eta^2&=1,
\label{STide}\\
\partial_xS_\eta&=-\mu S_\eta T_\eta,
\label{bSederi}\\
\partial_xT_\eta&=\mu S_\eta^2.
\label{Tederi}
\end{align}
The standard Fresnel integrals are
\begin{equation*}
\int_{\mathbb R}e^{-ix^2}{\rm d}x=\sqrt{\pi}e^{-i\pi/4},
\end{equation*}
and
\begin{equation*}
\int_{\mathbb R}e^{ix^2}{\rm d}x=\sqrt{\pi}e^{i\pi/4}.
\end{equation*}
Moreover,
\begin{equation}
\label{bfpi}
\operatorname{p.v.}\int_{\mathbb R}e^{i(\lambda x-x^2)}\frac{{\rm d}x}{x}
=i\sqrt{\pi}e^{-i\pi/4}\int_0^\lambda e^{is^2/4}{\rm d}s.
\end{equation}
With the definition of the Fourier transform given in Section~\ref{2}, we have
\begin{equation}
\label{Fotrseta}
\widehat{S_\eta}(p)
=
\sqrt{\frac{\pi}{2}}\frac{1}{\mu}
\operatorname{sech}\left(\frac{\pi p}{2\mu}\right),
\qquad p\in\mathbb R,
\end{equation}
and
\begin{equation}
\label{Ftsq}
\widehat{S_\eta^2}(p)
=
\sqrt{\frac{\pi}{2}}\frac{p}{\mu^2}
\operatorname{csch}\left(\frac{\pi p}{2\mu}\right),
\qquad p\in\mathbb R.
\end{equation}
Furthermore,
\begin{equation}
\label{Ftone}
\widehat{1}(p)=\sqrt{2\pi}\,\delta(p)
\qquad\text{in }\mathcal S'(\mathbb R),
\end{equation}
and
\begin{equation}
\label{Ftt}
\widehat{T_\eta}(p)
=-i\sqrt{\frac{\pi}{2}}\frac{1}{\mu}
\operatorname{p.v.}\operatorname{csch}\left(\frac{\pi p}{2\mu}\right)
\quad\text{in }\mathcal S'(\mathbb R).
\end{equation}
Finally,
\begin{equation}
\label{Teseta}
\widehat{T_\eta S_\eta^2}(p)
= -\frac{i}{2}\sqrt{\frac{\pi}{2}}\frac{p^2}{\mu^3}
\operatorname{csch}\left(\frac{\pi p}{2\mu}\right),
\quad p\in\mathbb R.
\end{equation}

\end{document}